\renewenvironment{proof}[1][\proofname]{\par
  \pushQED{\qed}%
  \normalfont \topsep-5\p@\@plus6\p@\relax
  \trivlist
  \item[\hskip\labelsep
    #1\@addpunct{.}]\ignorespaces
}{%
  \popQED\endtrivlist\@endpefalse
}
\titlespacing\section{0pt}{-3pt plus 2pt minus 1pt}{-7pt plus 2pt minus 1pt}
\titlespacing\subsection{0pt}{-3pt plus 2pt minus 1pt}{-7pt plus 2pt minus 1pt}
\titlespacing\subsubsection{0pt}{10pt plus 4pt minus 2pt}{1pt plus 2pt minus 2pt}
\numberwithin{equation}{section}
\newtheoremstyle{reduced_space}{}{-0.5\baselineskip}{}{}{\bfseries}{}{.5em}{}
\theoremstyle{reduced_space}
\newtheorem{thm}{Theorem}[section]
\newtheorem{prp}[thm]{Proposition}
\newtheorem{lmm}[thm]{Lemma}
\newtheorem{crl}[thm]{Corollary}
\theoremstyle{definition}
\newtheorem{dfn}[thm]{Definition}
\theoremstyle{remark}
\newtheorem{rmk}[thm]{Remark}
\def\BE#1{\begin{equation}\label{#1}}
\def\EE{\end{equation}}
\def\eref#1{(\ref{#1})}
\def\BEnum#1{\begin{enumerate}[label=#1,leftmargin=*,topsep=-10pt,itemsep=-3pt]}
\def\EEnum{\end{enumerate}}
\def\ov#1{\overline{#1}}
\def\sf#1{\textsf{#1}}
\def\wt#1{\widetilde{#1}}
\def\tn#1{\textnormal{#1}} 
\def\lr#1{\langle{#1}\rangle}
\def\blr#1{\big\langle{#1}\big\rangle}
\def\wh#1{\widehat{#1}}
\def\unbr#1#2{\underset{#2}{\underbrace{#1}}}
\def\sm#1{\begin{small}#1\end{small}}
\def\flr#1{\left\lfloor{#1}\right\rfloor}
\def\lra{\longrightarrow}
\def\Lra{\Longrightarrow}
\def\xlra#1{\xrightarrow{{#1}}}
\def\cB{\mathcal B}
\def\C{\mathbb C}
\def\cC{\mathcal C}
\def\bfC{\mathbf C}
\def\D{\mathbb D}
\def\cD{\mathcal D}
\def\cJ{\mathcal J}
\def\M{\mathfrak M}
\def\cM{\mathcal M}
\def\cN{\mathcal N}
\def\P{\mathbb P}
\def\cP{\mathcal P}
\def\R{\mathbb R}
\def\Q{\mathbb Q}
\def\cS{\mathcal S}
\def\cZ{\mathcal Z}
\def\Z{\mathbb Z}
\def\al{\alpha}
\def\be{\beta}
\def\ep{\epsilon}
\def\ga{\gamma}
\def\io{\iota}
\def\la{\lambda}
\def\si{\sigma}
\def\om{\omega}
\def\th{\theta}
\def\vt{\vartheta}
\def\De{\Delta}
\def\Ga{\Gamma}
\def\La{\Lambda}
\def\Om{\Omega}
\def\Si{\Sigma}
\def\fb{\mathfrak b}
\def\fbb{\mathfrak{bb}}
\def\fd{\mathfrak d}
\def\ff{\mathfrak f}
\def\fj{\mathfrak j}
\def\fo{\mathfrak o}
\def\fs{\mathfrak s}
\def\u{\mathbf u}
\def\codim{\tn{codim}}
\def\nd{\tn{d}}
\def\dim{\tn{dim}}
\def\dom{\tn{dom}}
\def\ev{\tn{ev}}
\def\evb{\tn{evb}}
\def\evi{\tn{evi}}
\def\id{\tn{id}}
\def\Id{\tn{Id}}
\def\Im{\tn{Im}}
\def\lk{\tn{lk}}
\def\nod{\tn{nd}}
\def\PD{\tn{PD}}
\def\pt{\tn{pt}}
\def\sgn{\tn{sgn}}
\def\fiber{\times_\tn{fb}}
\def\Dks{\tn{Dcs}}
\def\Bds{\tn{Bds}}
\def\FPt{\tn{FPt}}
\def\FPC{\tn{FPC}}
\def\PC{\tn{PC}}
\def\i{\infty}
\def\eset{\emptyset}
\def\prt{\partial}
\def\dbar{\ov\partial}
\def\st{\bigstar}
\def\bt{\mathbf t}
\def\os{\mathfrak{os}}
\def\bu{\bullet}
\def\v{\vee}
\def\bsl{\backslash}
\begin{document}

\title{A Geometric Depiction of\\ 
Solomon-Tukachinsky's Construction of Open GW-Invariants}
\author{Xujia Chen\thanks{Supported by NSF grant DMS 1901979}}
\date{\today}

\maketitle

\begin{abstract}
\noindent
The 2016 papers of J.~Solomon and S.~Tukachinsky use bounding chains in 
Fukaya's $A_{\i}$-algebras to define numerical disk counts relative to a Lagrangian
under certain regularity assumptions on the moduli spaces of disks.
We present a (self-contained) direct geometric analogue of their construction under
weaker topological assumptions, extend it over arbitrary rings in the process, and
sketch an extension without any assumptions over rings containing the rationals.
This implements the intuitive suggestion represented by their drawing 
and P.~Georgieva's perspective.
We also note a curious relation for the standard Gromov-Witten invariants
readily deducible from their~work.
In a sequel, we use the geometric perspective of this paper to relate 
Solomon-Tukachinsky's invariants to Welschinger's open invariants of
symplectic sixfolds, confirming their belief and G.~Tian's related expectation
concerning K.~Fukaya's earlier construction.
\end{abstract}

\tableofcontents
\setlength{\parskip}{\baselineskip}

\section{Introduction}
\label{intro_sec}

Let $(X,\om)$ be a compact symplectic manifold of real dimension~$2n$
with $n\!\not\in\!2\Z$, $Y\!\subset\!X$ be a compact Lagrangian submanifold, and 
$$\wh{H}^{2*}(X,Y;R)\equiv  H^{n+1}(X,Y;R)\oplus
\bigoplus_{\begin{subarray}{c}p\in\Z\\ 2p\neq n+1\end{subarray}} \!\!\!\!\!
H^{2p}(X;R)$$
for any commutative ring $R$ with unity~1.
Fix a relative OSpin-structure $\os\!\equiv\!(\fo,\fs)$ on~$Y$,
i.e.~a pair consisting of an orientation~$\fo$ on~$Y$ and 
a relative Spin-structure~$\fs$ on the oriented manifold~$(Y,\fo)$.

Based on $A_{\i}$-algebra considerations, 
K.~Fukaya~\cite{Fuk11} uses  \sf{bounding chains} to define counts 
$$\lr{}_{\be,0}^{\om,\os}\in\R, \qquad\hbox{with}\quad \be\!\in\!H_2(X,Y;\Z),$$
of $J$-holomorphic degree~$\be$ disks in~$X$ with boundary in~$Y$ under the assumption
that $(X,\om)$ is a Calabi-Yau threefold and the \sf{Maslov index} 
\BE{Maslovdfn_e}\mu_Y^{\om}\!:H_2(X,Y;\Z)\lra\Z\EE
of~$Y$ vanishes.
These counts do not depend on the choice of bounding chains, but
may depend on the choice of the almost complex structure~$J$
compatible with~$(X,\om)$.

Motivated by~\cite{Fuk10,Fuk11} and after some preparation  in~\cite{JS1},
J.~Solomon and S.~Tukachinsky~\cite{JS2} use bounding chains in
Fukaya's $A_{\i}$-algebras to define counts 
\BE{JSinvdfn_e}\lr{\cdot,\ldots,\cdot}_{\be,k}^{\om,\os}\!: 
\bigoplus_{l=0}^{\i}\wh{H}^{2*}(X,Y;\R)^{\oplus l}\lra\R, \quad 
\be\!\in\!H_2(X,Y;\Z),~k\!\in\!\Z^{\ge0},\EE
of $J$-holomorphic disks in~$(X,Y)$ under the assumption that 
the (uncompactified) moduli spaces $\M_{0,0}(\be;J)$ on unmarked $J$-holomorphic degree~$\be$ 
disks are regular and the evaluation~maps 
$$\evb_1\!:\M_{1,0}(\be;J)\lra Y$$
from the moduli spaces of disks with one boundary marked point are submersions.
If $Y$ is an $\R$-homology odd-dimensional sphere, the relevant bounding chains exist
and the counts~\eref{JSinvdfn_e} are independent of the choices of bounding chains.
These counts also remain invariant under deformation of the almost complex structure
that respect the above regularity assumptions.
The authors of~\cite{JS2} call the counts~\eref{JSinvdfn_e}
\sf{open Gromov-Witten invariants}.
Inline with G.~Tian's perspective on K.~Fukaya's construction in~\cite{Fuk11},
they expect these invariants 
to be related to Welschinger's open Gromov-Witten invariants~\cite{Wel13},
which count multi-disks weighted by self-linking numbers.

As informally noted by the authors of~\cite{JS1,JS2} and by P.~Georgieva
(who described the idea below to the author), 
the construction in~\cite{JS2} based on primarily algebraic considerations should
have a geometric interpretation generalizing~\cite{Wel13}
via linking numbers of arbitrary-dimensional cycles in~$Y$.
More precisely,
suppose $\bfC$ is a generic collection of constraints in~$Y$ and~$X$ so that
the (expected) dimension of the space $\Dks(\be,\bfC)$
of $J$-holomorphic degree~$\be$ disks in~$(X,Y)$ 
passing through~$\bfC$ is~0.
A relative OSpin-structure~$\os$ then determines 
a signed cardinality~$n_{\be}^{\os}(\bfC)$ of this finite~set. 
For any splittings 
\BE{DiskSplit_e}\be=\be_1\!+\!\be_2\in H_2(X,Y;\Z) \qquad\hbox{and}\qquad 
\bfC=\bfC_1\!\sqcup\!\bfC_2\EE
of the degree and constraints, the total (expected) dimension of
the circle bundles $\Bds(\be_1,\bfC_1)$ and $\Bds(\be_2,\bfC_2)$ in~$Y$
formed by the boundaries of
the disks in $\Dks(\be_1,\bfC_1)$ and $\Dks(\be_2,\bfC_2)$, respectively, is $n\!-\!1$,
the correct dimension for taking a linking number~$\lk_{\be_1,\be_2}^{\os}\!(\bfC_1,\bfC_2)$
of $\Bds(\be_1,\bfC_1)$ and $\Bds(\be_2,\bfC_2)$ in~$Y$.
A lift of a generic path of almost complex structures~$J_t$ and constraints~$\bfC_t$
to~$\Dks(\be,\bfC)$ could terminate at a nodal disk corresponding to a pair of disks in 
$\Dks(\be_1,\bfC_1)$ and $\Dks(\be_2,\bfC_2)$ intersecting along their boundaries.
Its lift to $\Dks(\be_1,\bfC_1)\!\times\!\Dks(\be_2,\bfC_2)$ also passes 
through this pair of disks,
with a change in the associated linking number~$\lk_{\be_1,\be_2}^{\os}\!(\bfC_1,\bfC_2)$.
One might thus hope that some combination of the numbers~$n_{\be}^{\os}(\bfC)$
and~$\lk_{\be_1,\be_2}^{\os}\!(\bfC_1,\bfC_2)$, with $(\be_1,\bfC_1)$ and~$(\be_2,\bfC_2)$
as in~\eref{DiskSplit_e}, remains invariant over a generic path 
of almost complex structures~$J_t$ and constraints~$\bfC_t$.

If $n\!=\!3$, the families $\Dks(\be_1,\bfC_1)$ and $\Dks(\be_2,\bfC_2)$ are compact,
as needed for defining a linking number~$\lk_{\be_1,\be_2}^{\os}\!(\bfC_1,\bfC_2)$ of
$\Bds(\be_1,\bfC_1)$ and $\Bds(\be_2,\bfC_2)$.
The reasoning in the previous paragraph then leads to the open Gromov-Witten invariants
of~\cite{Wel13} enumerating linked multi-disks. 
If $n\!>\!3$, these families are generally not compact, as the disks 
in $\Dks(\be_i,\bfC_i)$ might degenerate to nodal disks.
The resulting (codimension~1) boundaries of $\Bds(\be_i,\bfC_i)$
then need to be canceled in some~way.
J.~Solomon and S.~Tukachinsky do so by choosing auxiliary differential forms,
which are then integrated over moduli spaces of $J$-holomorphic disks,
in a consistent manner.

The present paper is a geometric ``translation'' of (some of) the definitions 
and arguments in~\cite{JS1,JS2} in terms of auxiliary bordered pseudocycles to~$Y$,
chosen in a consistent manner,
which are intersected with moduli spaces of $J$-holomorphic disks.
This ``translation''
makes sense of the linking number picture above and realizes \cite[Fig~1]{JS2}.
It applies over any commutative ring~$R$ with~unity under the topological assumptions~that
\BE{strongpos_e1}\om(B)>0,~~\blr{c_1(X,\om),B}\ge 3\!-\!\dim\,Y \qquad\Lra\qquad
\blr{c_1(X,\om),B}\ge 0\EE
for every spherical class $B\!\in\!H_2(X;\Z)$ and  
\BE{strongpos_e2}\om(\be)>0,~~\mu_Y^{\om}(\be)\ge 3\!-\!\dim\,Y \qquad\Lra\qquad
\mu_Y^{\om}(\be)>0\EE
for every $\be\!\in\!H_2(X,Y;\Z)$ representable by a map from~$(\D^2,S^1)$. 
In order for the regularity assumptions of~\cite{JS2} to hold 
for a fixed almost complex structure~$J$,
the last inequalities in~\eref{strongpos_e1} and~\eref{strongpos_e2} must hold for all~$B$ representable by $J$-holomorphic maps from~$S^2$ and
for all~$\be$ representable by $J$-holomorphic maps from~$(\D^2,S^1)$,
respectively.
By~\eref{strongpos_e1} and~\eref{strongpos_e2}, 
the regularity assumptions of~\cite{JS2} hold over the moduli spaces of {\it simple}
$J$-holomorphic disk maps
for  a generic $\om$-compatible almost complex structure~$J$ on~$X$ and  
the images of the multiply covered maps under evaluation maps are of codimension
at least~2.
If $Y$ is an $\R$-homology odd-dimensional sphere, the disk counts we construct
are independent of the choice of~$J$ and thus are invariants of~$(X,\om,Y,\os)$;
this is a stronger invariance property than in~\cite{JS2}.
The main statements of the present paper are Theorems~\ref{countinv_thm} and~\ref{main_thm}.
In Appendix~\ref{vfc_app}, we sketch an adaptation of the geometric construction described in
this paper compatible with standard virtual class approaches.

The idea behind the notion of \sf{bounding chain} of Definition~\ref{bndch_dfn},
which is a geometric analogue of the notions used in \cite{Fuk11,JS1,JS2}, 
can be roughly described as follows.
Let $\be\!\in\!H_2(X,Y;\Z)$ and
$\bfC$ be a generic collection of constraints in~$Y$ and~$X$.
A boundary stratum~$\cS$ of $\Bds(\be,\bfC)$ is the total space of the $S^1\!\vee\!S^1$-bundle
formed by the fibers of $\Bds(\be_1,\bfC_1)$ over $\Dks(\be_1,\bfC_1)$
and $\Bds(\be_2,\bfC_2)$ over $\Dks(\be_2,\bfC_2)$ that intersect in~$Y$,
for some~$(\be_1,\bfC_1)$ and~$(\be_2,\bfC_2)$ as in~\eref{DiskSplit_e}.
Suppose, by inductive hypothesis, that we have already defined
closed cycles~$\fbb(\be_1,\bfC_1)$ and~$\fbb(\be_2,\bfC_2)$ in~$Y$
containing~$\Bds(\be_1,\bfC_1)$ and~$\Bds(\be_2,\bfC_2)$, respectively.
If $Y$ is a homology sphere and the dimension of~$\Bds(\be_2,\bfC_2)$
is not~0 or~$n$, 
we can take a bordered pseudocycle~$\fb(\be_2,\bfC_2)$ in~$Y$
that bounds~$\fbb(\be_2,\bfC_2)$.
The fibers of~$\Bds(\be_1,\bfC_1)$ 
that intersect~$\fb(\be_2,\bfC_2)$ in~$Y$ form an $S^1$-bundle 
$\fbb(\be_1,\bfC_1;\be_2,\bfC_2)$.
The fibers of~$\Bds(\be_1,\bfC_1)$ that intersect 
\hbox{$\Bds(\be_2,\bfC_2)\!\subset\!\prt\fb(\be_2,\bfC_2)$} in~$Y$
form part of $\prt\fbb(\be_1,\bfC_1;\be_2,\bfC_2)$.
Since this part is isomorphic to~$\cS$, we can  glue $\fbb(\be_1,\bfC_1;\be_2,\bfC_2)$ 
to~$\Bds(\be,\bfC)$ along their common boundary.
Doing this for all $(\be_1,\bfC_1)$ and $(\be_2,\bfC_2)$ satisfying~\eref{DiskSplit_e},
we eliminate the boundary of~$\Bds(\be,\bfC)$.
(By the nature of this construction,
the remaining parts of the boundaries of the various pseudocycles 
$\fbb(\be_1,\bfC_1;\be_2,\bfC_2)$ cancel with each other in a similar manner.)
We thus  
obtain a closed cycle~$\fbb(\be,\bfC)$ in~$Y$ and complete the inductive step; 
see Lemma~\ref{BCpseudo_lmm} and the proof of Proposition~\ref{bndch_prp}.
If the dimension of this cycle is~$n$, we can take its degree and obtain
a count of $J$-holomorphic disks in~$(X,Y)$ as in~\eref{JSinvdfn_e}.
If $Y$ is a homology sphere, this count does not depend on the choice of~$\fb(\be_2,\bfC_2)$
above; see Section~\ref{Bc_subs}.

Analogously to~\cite{JS2}, we relate the disk counts arising from 
the bounding chains of Definition~\ref{bndch_dfn} to the real Gromov-Witten invariants 
of~\cite{Wel6,Wel6b,Jake,Penka2} in the appropriate real settings;
see Theorems~\ref{WelReal_thm} and~\ref{PenkaReal_thm}.
We also translate the statements of the WDVV-type relations for the open Gromov-Witten invariants 
obtained in~\cite{JS3} into relations for these disk counts; see Theorem~\ref{OpenWDVV_thm}.
Combining Theorems~\ref{main_thm} and~\ref{OpenWDVV_thm}, 
we obtain an intriguing relation between the standard, closed Gromov-Witten invariants
of~$(X,\om)$; see Corollary~\ref{OpenWDVV_crl}.
In~\cite{JakeSaraWel}, we show that the open invariants of Theorem~\ref{main_thm} 
reduce to 
the open Gromov-Witten invariants of~\cite{Wel13} if $n\!=\!3$, 
as expected in~\cite{JS2} and earlier envisioned by G.~Tian based on~\cite{Fuk11}.
This then yields WDVV-type relations for Welschinger's open invariants.

We hope that our geometric interpretation of Solomon-Tukachinsky's construction 
of open GW-invariants will make them accessible to a broader audience and 
will be developed further.
Via relatively orientable pseudocycles (as defined in~\cite{RealWDVV}),
this interpretation might lead to a construction of such invariants
for the cases when $n$ is even or the Lagrangian $Y$ is not orientable.
Along with a similar geometric interpretation of~\cite{JS3},
this should in turn lead to WDVV-type equations for open GW-invariants in such settings as~well.

\vspace{.1in}

\begin{rmk}\label{neven_rmk}
The present paper is based on the first version of~\cite{JS2}.
While this paper was being completed, the second version of~\cite{JS2} 
partly extended~\eref{JSinvdfn_e} to odd-dimensional cohomology on~$(X,Y)$ and even-dimensional~$Y$. 
The signs are a more delicate issue in these cases. 
\end{rmk}

\vspace{-.1in}

The author would like to thank Penka Georgieva for hosting her at 
the Institut de Math\'ematiques de Jussieu in March~2019 and
for the enlightening discussions that inspired the present paper.
She would also like to thank Sara Tukachinsky and Jake Solomon
 for clarifying some statements in~\cite{JS3} and Aleksey Zinger for many detailed discussions and help with the exposition.

\section{Setup and main statements}
\label{main_sec}

\subsection{Notation and terminology}
\label{Notation_subs}

For $k\!\in\!\Z^{\ge0}$, we define $[k]=\{1,2,\ldots,k\}$. 
We denote by $\D^2\!\subset\!\C$ the unit disk with the induced complex structure,
by  $\D^2\!\v\!\D^2$ the union of two disks joined at a pair of boundary points,
and by $S^1\!\subset\!\D^2$ and $S^1\!\v\!S^1\!\subset\!\D^2\!\v\!\D^2$ 
the respective boundaries.
We orient the boundaries counterclockwise; 
thus, starting from a smooth point~$x_0$ of $S^1\!\v\!S^1$, 
we proceed counterclockwise to the node~$\nod$,
then circle the second copy of~$S^1$ counterclockwise back to~$\nod$,
and return to~$x_0$ counterclockwise from~$\nod$.
We call smooth points $x_0,x_1,\ldots,x_k$ on $S^1$ or $S^1\!\v\!S^1$ \sf{ordered by position}
if they are traversed in counterclockwise order; 
see the first diagram in Figure~\ref{cSetavt_fig} on page~\pageref{cSetavt_fig}.

Let $Y$ be a smooth compact manifold.
For a continuous map $f\!:\cZ\!\lra\!Y$,  let 
$$\Om(f)=\bigcap_{K\subset \cZ\text{~cmpt}}\!\!\!\!\!\!\!\!\ov{f(\cZ\!-\!K)}$$
be \sf{the limit set of~$f$}.
A continuous map \hbox{$f\!:\cZ\!\lra\!Y$} from a manifold, possibly with boundary,
is a \sf{$\Z_2$-pseudocycle} into~$Y$ if there exists a smooth map $h\!:\cZ'\!\lra\!Y$ such~that 
$$\dim\,\cZ'\le \dim\,\cZ\!-\!2 \qquad\hbox{and}\qquad
f(\prt\cZ),\Om(f)\subset h(\cZ')\,.$$
The \sf{codimension} of such a $\Z_2$-pseudocycle is \hbox{$\dim\,Y\!-\!\dim\,\cZ$}.
A continuous map \hbox{$\wt{f}\!:\wt\cZ\!\lra\!Y$} is a
\sf{bordered $\Z_2$-pseudocycle with boundary} \hbox{$f\!:\cZ\!\lra\!Y$} if
there exist an open subset $\cZ^*\!\subset\!\cZ$ and
a smooth map $\wt{h}\!:\wt\cZ'\!\lra\!Y$ such~that 
$$\cZ^*\subset\prt\wt\cZ, \quad \wt{f}|_{\cZ^*}=f|_{\cZ^*},\quad
\dim\,\wt\cZ'\le \dim\,\wt\cZ\!-\!2,\quad
f(\cZ\!-\!\cZ^*),\wt{f}\big(\prt\wt\cZ\!-\!\cZ^*\big),
\Om(\wt{f})\subset\wt{h}(\wt\cZ')\,.$$
Throughout the paper, we take oriented pseudocycles with coefficients
in a commutative ring~$R$ with unity.
Every $R$-homology class in a manifold can be represented
by a pseudocycle in this sense, which is unique up to equivalence;
see Theorem~1.1 in~\cite{pseudo}.

Let $(X,\om)$ be a compact symplectic manifold of dimension~$2n$,
$Y\!\subset\!X$ be a compact Lagrangian submanifold, 
\BE{H2omXYdfn_e} H_2^{\om}(X,Y)=
\big\{\be\!\in\!H_2(X,Y;\Z)\!:\om(\be)\!>\!0~\hbox{or}~\be\!=\!0\big\},\EE
and $\cJ_{\om}$ be the space of $\om$-compatible almost complex structures on~$X$.
We denote by $\PC(X)$ the collection of pseudocycles to~$X$ with coefficients in~$R$,
by $\FPC(X)$ the collection of finite subsets of~$\PC(X)$, 
and by $\FPt(Y)$ the collection of finite subsets of~$Y$.
Let 
\BE{cComYdfn_e}\cC_{\om}(Y)=\big\{(\be,K,L)\!:\,\be\!\in\!H_2^{\om}(X,Y),\,
K\!\in\!\FPt(Y),\,L\!\in\!\FPC(X),\,
(\be,K,L)\!\neq\!(0,\eset,\eset)\big\}.\EE
This collection has a natural partial order:
\BE{cComYdfn_e2}(\be',K',L')\preceq(\be,K,L) \qquad\hbox{if}\qquad
\be\!-\!\be'\in H_2^{\om}(X,Y), \quad
K'\!\subset\!K, \quad\hbox{and}\quad L'\!\subset\!L.\EE
The elements $(0,K,L)$ of $\cC_{\om}(Y)$ with $|K|\!+\!|L|\!=\!1$ are minimal 
with respect to this partial order.
For each element $\al\!\equiv\!(\be,K,L)$ of~$\cC_\om(Y)$, we define 
\begin{gather*}
\be(\al)\equiv\be,\qquad K(\al)\equiv K,\qquad L(\al)\equiv L, \\
\dim(\al)
=\mu_Y^{\om}(\be)\!+\!n\!-\!3\!-\!(n\!-\!1)|K|\!-\!\sum_{\Ga\in L}\!\!\big(\codim\,\Ga\!-\!2\big),\quad
\cC_{\om;\al}(Y)=\big\{\al'\!\in\!\cC_{\om}(Y)\!:\,\al'\!\prec\!\al\big\}.
\end{gather*}

\vspace{-.15in}

For $\al\!\equiv\!(\be,K,L)\!\in\!\cC_{\om}(Y)$, let
\BE{cDomdfn_e}\begin{split}
\cD_{\om}(\al)=\bigg\{\!\big(\be_{\bu},k_{\bu},L_{\bu},(\al_i)_{i\in[k_{\bu}]}\big)
\!\!:\be_{\bu}\!\in\!H_2^{\om}(X,Y),\,k_{\bu}\!\in\!\Z^{\ge0},\,L_{\bu}\!\subset\!L,\,
\al_i\!\in\!\cC_{\om}(Y)\,\forall\,i\!\in\![k_{\bu}],&\\
(\be_{\bu},k_{\bu},L_{\bu})\!\neq\!(0,1,\eset),\,
\be_{\bu}\!+\!\sum_{i=1}^{k_{\bu}}\be(\al_i)\!=\!\be,\,
\bigsqcup_{i=1}^{k_{\bu}}\!K(\al_i)\!=\!K,~
L_{\bu}\!\sqcup\!\bigsqcup_{i=1}^{k_{\bu}}\!L(\al_i)\!=\!L&\!\bigg\}.
\end{split}\EE
Since $\al_i\!\prec\!\al$ for every  
\BE{degenelemdfn_e} \eta\equiv\big(\be_{\bu},k_{\bu},L_{\bu},(\al_i)_{i\in[k_{\bu}]}\big)
\equiv\big(\be_{\bu},k_{\bu},L_{\bu},(\be_i,K_i,L_i)_{i\in[k_{\bu}]}\big)
\in \cD_{\om}(\al)\EE
and every $i\!\in\![k_{\bu}]$, $k_{\bu}\!=\!0$ if $\al$ is a minimal element 
of~$\cC_{\om}(Y)$.
Thus,
$$\cD_{\om}\big(0,\{\pt\},\eset\big)=\eset~~\forall\,\pt\!\in\!Y
\quad\hbox{and}\quad
\cD_{\om}\big(0,\eset,\{\Ga\}\big)=\big\{\big(0,0,\{\Ga\},()\!\big)\big\}
~~\forall\,\Ga\!\in\!\PC(X)\,.$$
For $\eta\!\in\!\cD_{\om}(\al)$ as in~\eref{degenelemdfn_e} and $i\!\in\![k_{\bu}]$, 
we define 
\begin{gather*}
\be_{\bu}(\eta)=\be_{\bu},\quad k_{\bu}(\eta)=k_{\bu}, \quad L_{\bu}(\eta)=L_{\bu}, \\
\be_i(\eta)=\be_i,\quad K_i(\eta)=K_i, \quad L_i(\eta)=L_i, \quad
\al_i(\eta)=\al_i=(\be_i,K_i,L_i).
\end{gather*}

\vspace{-.15in}

We denote by $\wt\PC(X)$ the collection of bordered pseudocycles to~$[0,1]\!\times\!X$ 
with coefficients in~$R$ and boundary in $\{0,1\!\}\!\times\!X$,
by $\wt\FPC(X)$ the collection of finite subsets of~$\wt\PC(X)$, 
and by $\wt\FPt(Y)$ the collection of finite sets of paths in $[0,1]\!\times\!Y$
from~$\{0\}\!\times\!Y$ to~$\{1\}\!\times\!Y$.
We define the partially ordered set~$\wt\cC_{\om}(Y)$ as in~\eref{cComYdfn_e}
and~\eref{cComYdfn_e2} with~$\FPt(Y)$ replaced by~$\wt\FPt(Y)$
and~$\FPC(X)$ by~$\wt\FPC(X)$.
For an element $\wt\al\!\equiv\!(\be,\wt{K},\wt{L})$
of $\wt\cC_{\om}(Y)$, 
we define the collection~$\cD_{\om}(\wt\al)$ as in~\eref{cDomdfn_e}
with $\cC_{\om}(Y)$, $K$, and $L$ replaced~by $\wt\cC_{\om}(Y)$, $\wt{K}$, and~$\wt{L}$,
respectively.
Let
\begin{gather*}
\dim(\wt\al)=\mu_Y^{\om}(\be)\!+\!n\!-\!3\!-\!(n\!-\!1)|\wt{K}|
\!-\!\sum_{\wt\Ga\in\wt{L}}\!\!\big(\codim\,\wt\Ga\!-\!2\big),\quad
\wt\cC_{\om;\wt\al}(Y)=\big\{\wt\al'\!\in\!\wt\cC_{\om}(Y)\!:\,\wt\al'\!\prec\!\wt\al\big\}.
\end{gather*}
We define $\wt\be(\wt\al),\wt{K}(\wt\al),\wt{L}(\wt\al)$ for $\wt\al\!\in\!\wt\cC_{\om}(Y)$
and $\be_{\bu}(\wt\eta),k_{\bu}(\wt\eta),\wt{L}_{\bu}(\wt\eta)$,
$\be_i(\wt\eta),\wt{K}_i(\wt\eta),\wt{L}_i(\wt\eta),\wt\al_i(\wt\eta)$ for 
\hbox{$\wt\eta\!\in\!\cD_{\om}(\wt\al)$} similarly to the analogous objects
for $\al\!\in\!\cC_{\om}(Y)$ and \hbox{$\eta\!\in\!\cD_{\om}(\al)$}.

For $\Ga_0,\Ga_1\!\in\!\FPC(X)$ and $\wt\Ga\!\in\!\wt\FPC(X)$, we write
$$\prt\wt\Ga=\{1\}\!\times\!\Ga_1\!-\!\{0\}\!\times\!\Ga_0$$
if $|\Ga_0|,|\Ga_1|\!=\!|\wt\Ga|$ and there is an ordering $\Ga_{0;i},\Ga_{1;i},\wt\Ga_i$
of the elements of $\Ga_0,\Ga_1,\wt\Ga$, respectively, so~that
$$\prt\wt\Ga_i=\{1\}\!\times\!\Ga_{1;i}-\{0\}\!\times\!\Ga_{0;i} \qquad\forall\,i.$$
For $K_0,K_1\!\in\!\FPt(Y)$ and $\wt{K}\!\in\!\wt\FPt(Y)$, we write
$$\prt\wt{K}=\{1\}\!\times\!K_1\!-\!\{0\}\!\times\!K_0$$
if the analogous condition holds.
For $\al_0,\al_1\!\in\!\cC_{\om}(Y)$ and $\wt\al\!\in\!\wt\cC_{\om}(Y)$, we write
\begin{gather*}
\prt\wt\al=\{1\}\!\times\!\al_1\!-\!\{0\}\!\times\!\al_0 \qquad\hbox{if}\quad
\be(\al_0),\be(\al_1)=\be(\wt\al),\\
\prt\wt{K}(\wt\al)=\{1\}\!\times\!K(\al_1)\!-\!\{0\}\!\times\!K(\al_0), ~~
\prt\wt{L}(\wt\al)=\{1\}\!\times\!L(\al_1)\!-\!\{0\}\!\times\!L(\al_0)\,.
\end{gather*}
If in addition $\eta_0\!\in\!\cD_{\om}(\al_0)$, 
$\eta_1\!\in\!\cD_{\om}(\al_1)$, and $\wt\eta\!\in\!\cD_{\om}(\wt\al)$,
we write
\begin{gather*}
\prt\wt\eta=\{1\}\!\times\!\eta_1\!-\!\{0\}\!\times\!\eta_0 
\qquad\hbox{if}\quad
\be_{\bu}(\eta_0),\be_{\bu}(\eta_1)=\be_{\bu}(\wt\eta),~~
k_{\bu}(\eta_0),k_{\bu}(\eta_1)=k_{\bu}(\wt\eta),\\
\prt\wt{L}_{\bu}(\wt\eta)=\{1\}\!\times\!L_{\bu}(\eta_1)
\!-\!\{0\}\!\times\!L_{\bu}(\eta_0),~~
\prt\al_i(\wt\eta)=\{1\}\!\times\!\al_i(\eta_1)
\!-\!\{0\}\!\times\!\al_i(\eta_0)~\forall\,i\!\in\!\big[k_{\bu}(\wt\eta)\!\big].
\end{gather*}

Let $k\!\in\!\Z^{\ge0}$, $L$ be a finite set, $\be\!\in\!H_2^{\om}(X,Y)$, and 
$J\!\in\!\cJ_{\om}$.
We denote~by $\M_{k,L}^{\st}(\be;J)$ the moduli space of
stable simple $J$-holomorphic degree~$\be$ maps from $(\D^2,S^1)$ and $(\D^2\!\v\!\D^2,S^1\!\v\!S^1)$
to~$(X,Y)$ with the interior marked points indexed by~$L$ and 
the boundary marked points indexed by $1,\ldots,k$ and ordered by the position.
A relative OSpin-structure $\os$ on $Y$ determines 
an orientation~$\fo_\os$ of~$\M_{k,L}^{\st}(\be;J)$; 
see Section~\ref{Ms_subs}. 
For $i\in[k]$ and $i\!\in\!L$, let
$$\evb_i\!:\M_{k,L}^{\st}(\be;J)\lra Y \qquad\hbox{and}\qquad
\evi_i\!:\M_{k,L}^{\st}(\be;J)\lra X$$
be the evaluation morphisms at the $i$-th boundary marked point and 
the $i$-th interior marked point, respectively. 
If $M\!\subset\!\M_{k,L}^\st(\be;J)$, 
we denote the restrictions of~$\evb_i$ and~$\evi_i$ to~$M$ also by~$\evb_i$ and~$\evi_i$. 

If in addition $m,m'\!\in\!\Z^{\ge0}$, 
$$\big(\fb_s\!:Z_{\fb_s}\!\lra\!Y\big)_{\!s\in[m]}
\qquad\hbox{and}\qquad
\big(\Ga_s\!:Z_{\Ga_s}\!\lra\!X\big)_{\!s\in [m']}$$
are tuples of maps and $i_1,\ldots,i_m\!\in\![k]$ and $j_1,\ldots,j_{m'}\!\in\!L$ are
distinct elements, 
let
\begin{equation*}\begin{split}
&M\!\!\fiber\!\big(\!(i_s,\fb_s)_{s\in[m]};(j_s,\Ga_s)_{s\in[m']}\big)\\
&~\equiv 
M_{(\evb_{i_1},\ldots,\evb_{i_m},\evi_{j_1},\ldots,\evi_{j_{m'}})}\!\!
\times_{\fb_1\times\ldots\times\fb_m\times\Ga_1\times\ldots\times\Ga_{m'}}\!\!
\big(Z_{\fb_1}\!\times\!\ldots\!\times\!Z_{\fb_m}\!\times\!
Z_{\Ga_1}\!\times\!\ldots\!\times\!Z_{\Ga_{m'}}\!\big)
\end{split}\end{equation*} 
be their fiber product with~$M$; see Section~\ref{Fp_subs}. 
If $M$ is an oriented manifold and~$\fb_s$ and~$\Ga_s$ are smooth maps 
from oriented manifolds satisfying the appropriate transversality conditions, 
then we orient this space as in Section~\ref{Fp_subs}. 
For $i\!\in\![k]$ with $i\!\neq\!i_s$ for any $s\!\in\![m]$ 
(resp. $i\!\in\!L$ with $i\!\neq\!j_s$ for any $s\!\in\![m']$), 
we define
$$\evb_i \ (\textnormal{resp. }\evi_i)\!:
M\!\fiber\!\big(\!(i_s,\fb_s)_{s\in[m]};(j_s,\Ga_s)_{s\in[m']}\big)
\lra Y\ (\tn{resp. }X)$$
to be the composition of the evaluation map~$\evb_i$ (resp. $\evi_i$) defined above with 
the projection to the first component.

For a path $\wt{J}\!\equiv\!(J_t)_{t\in[0,1]}$ in~$\cJ_{\om}$, let
$$\M_{k,L}^{\st}(\be;\wt{J})=
\big\{(t,\u)\!:t\!\in\![0,1],\,\u\!\in\!\M_{k,L}^{\st}(\be;J_t)\big\}.$$
For $i\in[k]$ and $i\!\in\!L$, we define 
\begin{alignat*}{2}
\wt\evb_i\!:\M_{k,L}^{\st}(\be;\wt{J})&\lra[0,1]\!\times\!Y, &\qquad
\wt\evb_i(t,\u)&=\big(t,\evb_i(\u)\!\big), \qquad\hbox{and}\\
\wt\evi_i\!:\M_{k,L}^{\st}(\be;\wt{J})&\lra[0,1]\!\times\!X, &\qquad
\wt\evi_i(t,\u)&=\big(t,\evi_i(\u)\!\big),
\end{alignat*}
respectively.
For $\wt M\!\subset\!\M_{k,L}^{\st}(\be;\wt{J})$, 
tuples $(\wt\fb_s)_{s\in[m]}$ and $(\wt\Ga_s)_{s\in[m']}$ of maps 
to $[0,1]\!\times\!Y$ and $[0,1]\!\times\!X$, respectively, 
$i_1,\ldots,i_m\!\in\![k]$, $j_1,\ldots,j_{m'}\!\in\!L$, and 
$i\!\in\![k]$ (resp. $i\!\in\!L$) as above, we define
$$\wt\evb_i \ (\textnormal{resp. }\wt\evi_i)\!:
\wt M\!\fiber\!\!\big(\!(i_s,\wt\fb_s)_{s\in[m]};(j_s,\wt\Ga_s)_{s\in[m']}\big)
\lra [0,1]\!\times\!Y\ (\tn{resp. }[0,1]\!\times\!X)$$
as in the previous paragraph.

A relative OSpin-structure~$\os$ on~$Y$ determines an orientation 
on $\M_{k,L}^{\st}(\be;\wt{J})$ with the base direction {\it first}.
In other words, the exact sequence
$$0\lra T_{\u}\M_{k,L}^{\st}(\be;J_t)\lra T_{(t,\u)}\M_{k,L}^{\st}(\be;\wt{J})
\xlra{\nd_{(t,\u)}e} T_t[0,1]\lra0$$
induced by the projection~$e$ to $[0,1]$ at a regular point $(t,\u)$ of~$e$ 
is orientation-compatible (as defined in Section~\ref{Fp_subs})
if and only if the dimension of $\M_{k,L}^{\st}(\be;J_t)$ is even.
If $\wt\fb_s$ and~$\wt\Ga_s$ are smooth maps from oriented manifolds,
then a relative OSpin-structure $\os$ on $Y$ also determines an orientation 
on the above fiber product space.

By the assumptions~\eref{strongpos_e1} and~\eref{strongpos_e2}, products of the evaluation maps 
from $\M_{k,L}^{\st}(\be;J)$ are bordered pseudocycles for 
a generic $\om$-compatible almost complex structure~$J$ on~$X$.
Since $\mu_Y^{\om}(\be)\!\in\!2\Z$ for all~$\be$,
the same applies to products of evaluation maps from $\M_{k,L}^{\st}(\be;\wt{J})$
for a generic path~$\wt{J}$ of  $\om$-compatible almost complex structures
between two generic $\om$-compatible almost complex structures~$J_0,J_1$.

\subsection{Bounding chains}
\label{Bc_subs}

Let $R$, $(X,\om,Y)$, $n$, and $\os$ be as before with $n\!\ge\!3$ odd. 
Thus,
\BE{dimeven_e}\dim(\al)\in 2\Z\qquad \forall\,\al\!\in\!\cC_{\om}(Y)\EE
if the dimension of every pseudocycle $\Ga\!\in\!L(\al)$ is even.
This implies that the pseudocycles~$\fb_{\al'}$, $\fb_{\wt\al'}$, 
$\fbb_{\al'}$,  and~$\fbb_{\wt\al'}$
of Definitions~\ref{bndch_dfn} and~\ref{psisot_dfn}, \eref{fbbdfn_e}, 
and~\eref{wtfbbdfn_e} below satisfy
\BE{bdimprp_e} \dim\,\fb_{\al'},\,\dim\,\fbb_{\wt\al'}\in2\Z  \quad\hbox{and}\quad
\dim\,\fbb_{\al'},\,\dim\,\fb_{\wt\al'}\not\in2\Z
\qquad\forall\,\al'\!\in\!\cC_{\om;\al}(Y),\,\wt\al'\!\in\!\wt\cC_{\om;\wt\al}(Y)\,.\EE 
For $\eta\in\cD_\om(\al)$ for some $\al\!\in\!\cC_{\om}(Y)$ and
$J\in\cJ_\om$, let
\BE{fMetaJdfn_e}\M_{\eta;J}\equiv\M^\st_{k_\bu(\eta),L_\bu(\eta)}(\be_\bu(\eta);J),\qquad
\M^+_{\eta;J}\equiv\M^\st_{k_\bu(\eta)+1,L_\bu(\eta)}(\be_\bu(\eta);J).\EE
For $\wt\eta\in\cD_\om(\wt\al)$ for some $\wt\al\!\in\!\wt\cC_{\om}(Y)$
and a path~$\wt J$ in~$\cJ_\om$, define 
$$\M_{\wt\eta;\wt J}\equiv\M^\st_{k_\bu(\wt\eta),\wt{L}_\bu(\wt\eta)}
(\be_\bu(\wt\eta);\wt J),
\qquad
\M^+_{\wt\eta;\wt J}\equiv\M^\st_{k_\bu(\wt\eta)+1,\wt{L}_\bu(\wt\eta)}(\be_\bu(\wt\eta);\wt J).$$
For a point $\pt\!\in\!Y$, we denote its inclusion into~$Y$ also by~$\pt$.

\begin{dfn}\label{bndch_dfn}
Let $\al\!\equiv\!(\be,K,L)\!\in\!\cC_{\om}(Y)$ be generic so that
the dimension of every pseudocycle $\Ga\!\in\!L(\al)$ is even
and $J\!\in\!\cJ_{\om}$ be generic.
A \sf{bounding chain} on~$(\al,J)$ is 
a collection $(\fb_{\al'})_{\al'\in\cC_{\om;\al}(Y)}$ of bordered pseudocycles to~$Y$ such~that
\BEnum{(BC\arabic*)}

\item\label{BCdim_it} $\dim\,\fb_{\al'}\!=\!\dim(\al')\!+\!2$ 
for all $\al'\!\in\!\cC_{\om;\al}(Y)$;

\item\label{BC0_it} $\fb_{\al'}\!=\!\eset$ if $\dim(\al')\!\le\!-2$ 
and $\al'\!\neq\!(0,\{\pt\},\eset)$ for any $\pt\!\in\!K$ or if $\dim(\al')\!\ge\!n\!-\!1$;

\item\label{BCi_it} $\fb_{(0,\{\pt\},\eset)}\!=\!\pt$ for all $\pt\!\in\!K$;

\item\label{BCprt_it} for all $\al'\!\in\!\cC_{\om;\al}(Y)$ such that 
$\dim(\al')\!\le\!n\!-\!2$,
\BE{BCprt_e}\prt\fb_{\al'}=
\bigg(\!\evb_1\!:\!\!\!\bigcup_{\eta\in\cD_{\om}(\al')}\hspace{-.12in}
(-1)^{k_\bu(\eta)}\M^+_{\eta;J}\!\fiber\!\!
\big(\!(i\!+\!1,\fb_{\al_{i}(\eta)})_{i\in[k_{\bu}(\eta)]}; 
(i,\Ga_i)_{\Ga_i\in L_{\bu}(\eta)}\big)
\lra Y\!\bigg).\EE
\EEnum
\end{dfn}

Since the dimension of every pseudocycle $\Ga\!\in\!L$
to the even-dimensional space~$X$ is even,
Lemma~\ref{fibprodflip_lmm} implies that the oriented morphism
\BE{fbbetadfn_e0}
\fbb_{\eta}\!\equiv \bigg(\!\evb_1\!:(-1)^{k_\bu(\eta)}\M^+_{\eta;J}\!\fiber\!
\big(\! (i\!+\!1,\fb_{\al_{i}(\eta)})_{i\in[k_{\bu}(\eta)]};
(i,\Ga_i)_{\Ga_i\in L_{\bu}(\eta)}\big)\lra Y\!\bigg)\EE
in~\eref{BCprt_e} does not depend on the choice of identification of~$L_{\bu}(\eta)$
with~$[|L_{\bu}(\eta)|]$; 
see the first diagram in Figure~\ref{cSetavt_fig} on page~\pageref{cSetavt_fig}.
By Lemma~\ref{BCpseudo_lmm}, the~map
\BE{fbbdfn_e}
\fbb_{\al'}\!\equiv\bigcup_{\eta\in\cD_{\om}(\al')}\hspace{-.1in}\!\!\fbb_{\eta}\EE
with orientation induced by the OSpin-structure $\os$ is a pseudocycle 
for every \hbox{$\al'\!\in\!\cC_{\om;\al}(Y)\!\cup\!\{\al\}$}. 
If in addition $\dim(\al)\!=\!n\!-\!1$, then $\fbb_{\al}$ is a pseudocycle of codimension~0.
It then has a well-defined degree, and we set
\BE{JSinvdfn_e2}\lr{L}_{\be;K}^{\om,\os}=\deg\fbb_{\al}.\EE
In general, this degree may depend on the choices of $J$, $|K|$ points in~$Y$, 
pseudocycle representatives $\Ga\!\in\!L$ for their homology classes~$[\Ga]_X$ in~$X$, 
and the bordered pseudocycles $(\fb_{\al'})_{\al'\in\cC_{\om;\al}(Y)}$.

\begin{dfn}\label{psisot_dfn}
Suppose $\al_0,\al_1\!\in\!\cC_{\om}(Y)$ and $J_0,J_1\!\in\!\cJ_{\om}$ are generic,
the dimension of every pseudocycle $\Ga\!\in\!L(\al_0)$ is even, and
$(\fb_{0;\al'})_{\al'\in\cC_{\om;\al_0}(Y)}$ and $(\fb_{1;\al'})_{\al'\in\cC_{\om;\al_1}(Y)}$
are bounding chains on $(\al_0,J_0)$ and $(\al_1,J_1)$,  respectively.
Let $\wt\al\!\in\!\wt\cC_{\om}(Y)$ be generic~with 
\BE{prtwtalcond_e}\prt\wt\al=\{1\}\!\times\!\al_1\!-\!\{0\}\!\times\!\al_0\EE
and $\wt{J}\!\equiv\!(J_t)_{t\in[0,1]}$ be a generic path in $\cJ_{\om}$ 
from~$J_0$ to~$J_1$.
A \sf{pseudo-isotopy} on $(\wt\al,\wt{J})$ 
between $(\fb_{0;\al'})_{\al'\in\cC_{\om;\al_0}(Y)}$ and 
$(\fb_{1;\al'})_{\al'\in\cC_{\om;\al_1}(Y)}$ is 
a collection $(\fb_{\wt\al'})_{\wt\al'\in\wt\cC_{\om;\wt\al}(Y)}$ of bordered pseudocycles 
to~$[0,1]\!\times\!Y$ such~that
\BEnum{(PS\arabic*)}

\item\label{isodim_it} $\dim\,\fb_{\wt\al'}\!=\!\dim(\wt\al')\!+\!3$ 
for all $\wt\al'\!\in\!\wt\cC_{\om;\wt\al}(Y)$;

\item\label{isozero_it} $\fb_{\wt\al'}\!=\!\eset$ if $\dim(\wt\al')\!\le\!-2$ 
and $\wt\al'\!\neq\!(0,\{\wt\pt\},\eset)$ for any $\wt\pt\!\in\!\wt{K}(\wt\al)$ or if 
$\dim(\wt\al')\!\ge\!n\!-\!1$;

\item\label{isoi_it} $\fb_{(0,\{\wt\pt\},\eset)}\!=\!\wt\pt$ for all $\wt\pt\!\in\!\wt{K}$;

\item\label{isoprt_it} for all $\al_0'\!\in\!\cC_{\om;\al_0}(Y)$,
$\al_1'\!\in\!\cC_{\om;\al_1}(Y)$, and $\wt\al'\!\in\!\wt\cC_{\om;\wt\al}(Y)$
such~that 
\BE{isoprtit_e} \prt\wt\al'=\{1\}\!\times\!\al_1'\!-\!\{0\}\!\times\!\al_0'\EE
and $-2\!<\!\dim(\wt\al')\!\le\!n\!-\!2$,
\begin{equation*}\begin{split}{}\hspace{-.5in}\prt\fb_{\wt\al'}=&
\bigg(\!\wt\evb_1\!\!:\!\!\!\!\bigcup_{\wt\eta\in\cD_{\om}(\wt\al')}\hspace{-.12in}
(-1)^{\binom{k_{\bu}(\wt\eta)}{2}}\M^+_{\wt\eta;\wt J}\!\fiber\!\!
\big(\!(i\!+\!1,\fb_{\wt\al_i(\wt\eta)})_{i\in[k_\bu(\wt\eta)]};
(i,\wt\Ga_i)_{\wt\Ga_i\in \wt{L}_\bu(\wt\eta)}\big)
\lra [0,1]\!\times\!Y\!\bigg)\\
&+\!\{1\}\!\times\!\fb_{1;\al_1'}\!-\!\{0\}\!\times\!\fb_{0;\al_0'}\,.
\end{split}\end{equation*}

\EEnum
\end{dfn}

\begin{dfn}\label{psisot_dfn2}
Let $\al_0,\al_1$ and $J_0,J_1$ be as in Definition~\ref{psisot_dfn}.
Bounding chains $(\fb_{0;\al'})_{\al'\in\cC_{\om;\al_0}(Y)}$ 
and $(\fb_{1;\al'})_{\al'\in\cC_{\om;\al_1}(Y)}$
on~$(\al_0,J_0)$ and~$(\al_1,J_1)$, respectively, are \sf{pseudo-isotopic} if 
there exist~$\wt{J}$  and~$\wt\al$ as in Definition~\ref{psisot_dfn}
such that $\wt\Ga\!\cap\!Y\!=\!\eset$ for every $\wt\Ga\!\in\!\wt{L}(\wt\al)$
with $\dim\,\wt\Ga\!=\!n$
and a pseudo-isotopy $(\fb_{\wt\al'})_{\al'\in\wt\cC_{\om;\wt\al}(Y)}$
on $(\wt\al,\wt{J})$ 
between $(\fb_{0;\al'})_{\al'\in\cC_{\om;\al_0}(Y)}$ and 
$(\fb_{1;\al'})_{\al'\in\cC_{\om;\al_1}(Y)}$.
\end{dfn}

\vspace{-.1in}

With the notation and setup as in Definition~\ref{psisot_dfn},
the dimension of every pseudocycle $\wt\Ga\!\in\!\wt{L}(\wt\al)$
to the odd-dimensional space $[0,1]\!\times\!X$ is odd.
Along with Lemma~\ref{fibprodflip_lmm}, this implies that the oriented morphism
\BE{wtfbbetadfn_e0}
\fbb_{\wt\eta}\!\equiv 
\bigg(\!\wt\evb_1\!\!:
(-1)^{\binom{k_{\bu}(\wt\eta)}{2}}\M^+_{\wt\eta;\wt J}\!\fiber\!\!
\big(\!(i\!+\!1,\fb_{\wt\al_i(\wt\eta)})_{i\in[k_\bu(\wt\eta)]};
(i,\wt\Ga_i)_{\wt\Ga_i\in \wt{L}_\bu(\wt\eta)}\big)
\lra [0,1]\!\times\!Y\!\bigg)\EE
in~\ref{isoprt_it} does not depend on the choice of identification of~$\wt{L}_{\bu}(\wt\eta)$
with~$[|\wt{L}_{\bu}(\wt\eta)|]$.

Let $\al_0'\!\in\!\cC_{\om;\al_0}(Y)\!\cup\!\{\al_0\}$, 
$\al_1'\!\in\!\cC_{\om;\al_1}(Y)\!\cup\!\{\al_1\}$, and 
$\wt\al'\!\in\!\wt\cC_{\om;\wt\al}(Y)\!\cup\!\{\wt\al\}$ 
be so that~\eref{isoprtit_e} holds
and $\fbb_{0;\al_0'}$ and $\fbb_{1;\al_1'}$
be the pseudocycles as in~\eref{fbbdfn_e} determined by the bounding chains
$(\fb_{0;\al'})_{\al'\in\cC_{\om;\al_0}(Y)}$ and $(\fb_{1;\al'})_{\al'\in\cC_{\om;\al_1}(Y)}$, 
respectively.
By Lemma~\ref{psisot_lmm}, the boundary of the bordered pseudocycle 
\BE{wtfbbdfn_e}\fbb_{\wt\al'}\equiv 
\bigcup_{\wt\eta\in\cD_{\om}(\wt\al')}\hspace{-.15in}\fbb_{\wt\eta}\EE
is $\{0\}\!\times\!\fbb_{0;\al_0'}\!-\!\{1\}\!\times\!\fbb_{1;\al_1'}$.
If in addition $\dim(\al_0)\!=\!n\!-\!1$ (or equivalently $\dim(\al_1)\!=\!n\!-\!1$),
then the above implies~that
$$\deg\fbb_{0;\al_0}=\deg\fbb_{1;\al_1}\,.$$
Thus, pseudo-isotopic bounding chains determine the same numbers~\eref{JSinvdfn_e2}.

Propositions~\ref{bndch_prp} and~\ref{psisot_prp} below are geometric analogues of 
the surjectivity and injectivity statements of \cite[Thm~2]{JS2}.
They guarantee the existence of bounding chains 
and their uniqueness up to pseudo-isotopy under the topological conditions
determined by the authors of~\cite{JS2}.

\begin{prp}\label{bndch_prp}
Let $\al$ and $J$ be as in Definition~\ref{bndch_dfn}.
If $Y$ is an $R$-homology sphere, then
there exists a bounding chain $(\fb_{\al'})_{\al'\in\cC_{\om;\al}(Y)}$
on~$(\al,J)$.
\end{prp}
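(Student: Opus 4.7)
The plan is to construct the pseudocycles $\fb_{\al'}$ by induction on the partial order~$\prec$ restricted to $\cC_{\om;\al}(Y)$. For the base cases $(0,\{\pt\},\eset)$, condition~\ref{BCi_it} forces $\fb_{(0,\{\pt\},\eset)} = \pt$, and conditions~\ref{BCdim_it}--\ref{BCprt_it} hold trivially (note that $\cD_\om(0,\{\pt\},\eset) = \eset$ by the discussion just after~\eref{cDomdfn_e}). For the remaining $\al' \in \cC_{\om;\al}(Y)$, assume $\fb_{\al''}$ has been constructed for every $\al'' \prec \al'$, and form the oriented map $\fbb_{\al'}$ of~\eref{fbbdfn_e}. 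By Lemma~\ref{BCpseudo_lmm}, this is a closed pseudocycle in $Y$ of dimension $\dim(\al') + 1$, oriented via~$\os$.

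The construction of $\fb_{\al'}$ then splits into three subcases. If $\dim(\al') \ge n - 1$, set $\fb_{\al'} = \eset$; condition~\ref{BC0_it} is satisfied and~\ref{BCprt_it} is not imposed. If $\dim(\al') \le -2$ and $\al'$ is not of the base-case form, then $\dim \fbb_{\al'} \le -1$ forces the domain of $\fbb_{\al'}$ to be empty, so $\fb_{\al'} = \eset$ is consistent with all four conditions. In the remaining range $-2 < \dim(\al') \le n - 2$, the parity observation~\eref{dimeven_e} (applied with $n$ odd and every $\Ga \in L(\al)$ of even dimension) shows that $\dim(\al')$ is even; hence $\dim(\al') \in \{0, 2, \ldots, n - 3\}$ and $\dim \fbb_{\al'}$ is an odd integer with $1 \le \dim \fbb_{\al'} \le n - 2$. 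The $R$-homology sphere hypothesis then gives $H_{\dim \fbb_{\al'}}(Y; R) = 0$, so $[\fbb_{\al'}] = 0$; by Theorem~1.1 of~\cite{pseudo}, there exists a bordered pseudocycle $\fb_{\al'}$ of dimension $\dim(\al') + 2$ with $\prt\fb_{\al'} = \fbb_{\al'}$, completing the induction.

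The main technical obstacle is the invocation of Lemma~\ref{BCpseudo_lmm}, which asserts that the map $\fbb_{\al'}$ of~\eref{fbbdfn_e} really is a closed pseudocycle: the codimension-one strata in its domain, contributed by the various $\eta \in \cD_\om(\al')$, must cancel pairwise. These strata arise from disk bubbling, from collisions of boundary markings with the node, and from the boundaries~$\prt\fb_{\al_i(\eta)}$ of the inductively built factors. The key combinatorial identification matches a bubbling stratum in $\M^+_{\eta;J}$ with the contribution of $\evb_1$ on a piece of $\prt\fb_{\al_i(\eta')}$ for a refined decomposition~$\eta'$, with opposite sign; the sign cancellation is governed by the factor $(-1)^{k_\bu(\eta)}$ in~\eref{fbbetadfn_e0} and by the orientation conventions induced by~$\os$ described in Section~\ref{Ms_subs}.
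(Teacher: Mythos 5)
Your proof is correct and follows essentially the same inductive argument as the paper's: apply Lemma~\ref{BCpseudo_lmm} to the partial collection built so far to see that $\fbb_{\al'}$ is a closed pseudocycle of odd dimension strictly between $0$ and $n$, then invoke the $R$-homology sphere hypothesis to bound it. Your write-up is somewhat more explicit than the paper's in spelling out the forced base cases and the cases $\dim(\al')\le-2$ and $\dim(\al')\ge n-1$, but the core idea and the key reduction to Lemma~\ref{BCpseudo_lmm} are identical.
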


\begin{prp}\label{psisot_prp}
Let $\al_0,\al_1,\wt\al$, $J_0,J_1,\wt{J}$, and 
$(\fb_{0;\al'})_{\al'\in\cC_{\om;\al_0}(Y)}$ and $(\fb_{1;\al'})_{\al'\in\cC_{\om;\al_1}(Y)}$
be as in Definition~\ref{psisot_dfn} so that $\wt\Ga\!\cap\!Y\!=\!\eset$ 
for every $\wt\Ga\!\in\!\wt{L}(\wt\al)$ with $\dim\,\wt\Ga\!=\!n$.
If $Y$ is an $R$-homology sphere, then there exists a pseudo-isotopy
$(\fb_{\wt\al'})_{\wt\al'\in\wt\cC_{\om;\wt\al}(Y)}$ on $(\wt\al,\wt{J})$ 
between $(\fb_{0;\al'})_{\al'\in\cC_{\om;\al_0}(Y)}$ and 
$(\fb_{1;\al'})_{\al'\in\cC_{\om;\al_1}(Y)}$. 
\end{prp}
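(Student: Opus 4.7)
The plan is to mirror the inductive construction used for Proposition~\ref{bndch_prp}, now over the partial order on $\wt\cC_{\om;\wt\al}(Y)$ and with the ambient space $[0,1]\!\times\!Y$ in place of~$Y$. The base cases from Definition~\ref{psisot_dfn} are handled directly: set $\fb_{\wt\al'}\!=\!\eset$ in the dimension ranges prescribed by (PS2), and $\fb_{(0,\{\wt\pt\},\eset)}\!=\!\wt\pt$ for each path $\wt\pt\!\in\!\wt{K}(\wt\al)$ as required by (PS3). For the inductive step, fix $\wt\al'$ with $-1\!\le\!\dim(\wt\al')\!\le\!n\!-\!2$ and corresponding $\al_0',\al_1'$ as in~\eref{isoprtit_e}, and assume $\fb_{\wt\al''}$ has been constructed for all $\wt\al''\!\prec\!\wt\al'$; we must produce $\fb_{\wt\al'}$ whose boundary is the right-hand side of the equation in~(PS4).

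Introduce the candidate boundary
$$\cT_{\wt\al'}\equiv\fbb_{\wt\al'}+\{1\}\!\times\!\fb_{1;\al_1'}-\{0\}\!\times\!\fb_{0;\al_0'}\,,$$
with $\fbb_{\wt\al'}$ as in~\eref{wtfbbdfn_e}. The first task is to show that $\cT_{\wt\al'}$ is a closed pseudocycle into $[0,1]\!\times\!Y$, which is the parametric counterpart of Lemma~\ref{BCpseudo_lmm} and should be the content of Lemma~\ref{psisot_lmm}. The boundary of $\fbb_{\wt\al'}$ breaks into three types of strata: (i)~nodal degenerations of the stable disk maps in $\M^+_{\wt\eta;\wt J}$, producing matchings between the extra boundary marked point and an existing one; (ii)~contributions from the boundaries $\prt\fb_{\wt\al_i(\wt\eta)}$ of the previously constructed pseudo-isotopies fed into the fiber products; and (iii)~restrictions of the interior constraints $\wt\Ga\!\in\!\wt{L}_\bu(\wt\eta)$ and of the $\fb_{\wt\al_i(\wt\eta)}$ to $t\!=\!0,1$. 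Strata of types (i) and (ii) pair off via the same combinatorial identification used in the closed case, with the sign $(-1)^{\binom{k_\bu(\wt\eta)}{2}}$ in~\eref{wtfbbetadfn_e0} arranged precisely to produce this cancellation in the parametric setting. Strata of type (iii) cancel against the boundary of $\{1\}\!\times\!\fb_{1;\al_1'}\!-\!\{0\}\!\times\!\fb_{0;\al_0'}$ via the bounding-chain identity~\eref{BCprt_e} at $\al_0'$ and $\al_1'$.

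Once $\cT_{\wt\al'}$ is known to be a closed pseudocycle, we produce $\fb_{\wt\al'}$ by bounding it in $[0,1]\!\times\!Y$. This ambient space deformation retracts onto the $R$-homology sphere~$Y$, so its $R$-homology vanishes in all degrees strictly between $0$ and~$n$. The dimension of $\cT_{\wt\al'}$ equals $\dim(\wt\al')\!+\!2\!\in\!\{1,\ldots,n\}$ throughout the inductive range. When $1\!\le\!\dim\cT_{\wt\al'}\!\le\!n\!-\!1$, the pseudocycle-homology correspondence recalled in Section~\ref{Notation_subs} yields a bordered pseudocycle $\fb_{\wt\al'}$ with $\prt\fb_{\wt\al'}\!=\!\cT_{\wt\al'}$; its dimension is one higher than that of $\cT_{\wt\al'}$, so (PS1) holds automatically. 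The only subtle case is $\dim(\wt\al')\!=\!n\!-\!2$, in which $[\cT_{\wt\al'}]\!\in\!H_n([0,1]\!\times\!Y;R)\!\cong\!R$ is a priori an obstruction to the filling. The hypothesis $\wt\Ga\!\cap\!Y\!=\!\eset$ for every $\wt\Ga\!\in\!\wt{L}(\wt\al)$ with $\dim\wt\Ga\!=\!n$ is included for exactly this reason: it eliminates the only top-dimensional contributions that could yield a nontrivial class, forcing $[\cT_{\wt\al'}]\!=\!0$ and allowing the filling to proceed.

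The principal obstacle is the careful sign and boundary bookkeeping required to establish that $\cT_{\wt\al'}$ is closed, i.e., the parametric analogue of Lemma~\ref{BCpseudo_lmm}. The extra base direction $[0,1]$ contributes an additional orientation factor compared to the closed setting of Proposition~\ref{bndch_prp}, so the convention placing the base direction first (Section~\ref{Notation_subs}) and the binomial twist $(-1)^{\binom{k_\bu(\wt\eta)}{2}}$ must combine cleanly with the fiber-product orientation rule to produce the desired cancellations. Once this closure is verified, the homological filling step is direct.
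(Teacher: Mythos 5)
Your overall structure matches the paper's: induct on $\prec$, use Lemma~\ref{psisot_lmm} together with \eref{BCprt_e} to see that $\cT_{\wt\al'}\!\equiv\!\fbb_{\wt\al'}\!+\!\{1\}\!\times\!\fb_{1;\al_1'}\!-\!\{0\}\!\times\!\fb_{0;\al_0'}$ is a closed pseudocycle of dimension $\dim(\wt\al')\!+\!2$, then bound it in $[0,1]\!\times\!Y$ using the $R$-homology-sphere hypothesis. Your sketch of the cancellations behind Lemma~\ref{psisot_lmm} is also in the right spirit.

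Your final paragraph, however, contains a genuine error. You claim the "subtle case" is $\dim(\wt\al')\!=\!n\!-\!2$, so that $\cT_{\wt\al'}$ is $n$-dimensional and its class in $H_n([0,1]\!\times\!Y;R)\!\cong\!R$ could obstruct the filling, and that the hypothesis $\wt\Ga\!\cap\!Y\!=\!\eset$ is inserted to kill this class. Neither half of this is right. By \eref{bdimprp_e} (equivalently, the parity observation \eref{dimeven_e} adapted to $\wt\cC_{\om}(Y)$), $\dim(\wt\al')$ is \emph{even} for every $\wt\al'$ in the relevant range, while $n\!-\!2$ is odd since $n$ is odd; so $\dim(\wt\al')\!=\!n\!-\!2$ never occurs, and $\dim\cT_{\wt\al'}\!\le\!n\!-\!1$ automatically. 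The paper makes precisely this point by remarking that the dimension of the cycle to be filled is even; no top-dimensional obstruction ever arises. The hypothesis $\wt\Ga\!\cap\!Y\!=\!\eset$ is actually consumed inside the \emph{proof of Lemma~\ref{psisot_lmm}} — not in the filling step — where it is needed to show that a particular degenerate boundary stratum is empty: the one corresponding to a degree-$0$ disk bubble with no boundary marked points carrying a single interior constraint $\wt\Ga$, which is forced to have dimension $n$ by the dimension count around \eref{etadimsplit_e}. In other words, it is a \emph{closedness} hypothesis for $\fbb_{\wt\al'}$, not a hypothesis about the homology class of $\cT_{\wt\al'}$. Once you correct this misattribution and drop the nonexistent case, the argument is exactly the paper's.
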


\vspace{.2in}

\begin{rmk}\label{transverse_rmk}
By the assumption~\eref{strongpos_e2} and Definition~\ref{bndch_dfn}\ref{BC0_it},
$\fb_{\al'}\!=\!\eset$ if $K(\al'),L(\al')\!=\!\eset$.
Thus, all non-empty bordered pseudocycles~$\fb_{\al_i(\eta)}$ 
in the fiber product~\eref{BCprt_e} are distinct.
This implies that this fiber product is transverse if the bordered pseudocycles~$\fb_{\al''}$
with $\al''\!\prec\!\al'$ are chosen generically.
The same considerations apply to the fiber product in Definition~\ref{psisot_dfn}\ref{isoprt_it}.
\end{rmk}

\subsection{Definitions of open Gromov-Witten invariants}
\label{OpenGWs_subs}

Let $(X,\om,Y)$, $n$, and $\os$ be as before.
Suppose in addition that $\be\!\in\!H_2^{\om}(X,Y)$,
$K\!\subset\!Y$ is a finite subset, and 
$L\!\equiv\!\{\Ga_1,\ldots,\Ga_l\}$ are generic pseudocycles to~$X$ of even dimensions.
The genericity assumptions in particular include that each $\Ga_i$ is traverse to~$Y$
and thus disjoint from~$Y$ if the dimension of~$\Ga_i$ is less than~$n$.
We denote~by
$$[\Ga_i]\in H_*(X;R) ~~\big(\hbox{resp.}~[\Ga_i]\in H_*(X,Y;R)\big)$$ 
the homology class of~$\Ga_i$ if the dimension~$\Ga_i$ is not $n\!-\!1$
(resp.~is $n\!-\!1$). 

Let $\al\!=\!(\be,K,L)$, $J\!\in\!\cJ_{\om}$ be a generic,
and $(\fb_{\al'})_{\al'\in\cC_{\om;\al}(Y)}$ be a bounding chain on~$(\al,J)$.
For $\pt\!\in\!K$ and $\Ga\!\in\!L$, define
$$\al_{\pt}^c=\big(\be,K\!-\!\{\pt\},L\big) \qquad\hbox{and}\qquad
\al_{\Ga}^c=\big(\be,K,L\!-\!\{\Ga\}\big).$$
The bounding chain $(\fb_{\al'})_{\al'\in\cC_{\om;\al_{\pt}^c}(Y)}$ determines a count
$$\blr{L}_{\be;K-\{\pt\}}^{\om,\os}\equiv\deg\fbb_{\al_{\pt}^c}$$
as in~\eref{JSinvdfn_e2}
of $J$-holomorphic multi-disks through $k$~points in~$Y$, 
the pseudocycles~$\Ga_i$, and the auxiliary pseudocycles~$\fb_{\al'}$ 
with $\al'\!\prec\!\al_{\pt}^c$.
As noted after Definition~\ref{psisot_dfn}, this count does not depend on 
the input $(\al_{\pt}^c,J)$ and 
$(\fb_{i;\al'})_{\al'\in\cC_{\om;\al_{\pt}^c}(Y)}$ that differs by a pseudo-isotopy.
Below we provide geometric interpretations of two other versions of such counts. 
In an analogy with Lemma~4.9 in~\cite{JS3},
the three counts agree on the overlaps of the domains of their definitions
in suitable settings; see Theorem~\ref{countinv_thm}\ref{equivdfn_it}.

We denote the signed cardinality of a finite set~$S$ of signed points by~$|S|^{\pm}$. 
If $S$ is not a finite set of signed points, we set $|S|^{\pm}\!\equiv\!0$. 
For $\Ga\!\in\!L$, let
\BE{cDomGadfn_e}
\cD_\om^{\Ga}(\al)=\big\{\eta\!\in\!\cD_\om(\al)\!:\Ga\!\in\!L_{\bu}(\eta)\big\}.\EE
If in addition $\eta\!\in\!\cD_\om(\al)$, let
$$s^*(\eta)\equiv
\begin{cases}\frac{1}{k_\bu(\eta)}\!-\!\frac{1}{2},&
\hbox{if}~k_\bu(\eta)\!\neq\!0,\\
1,&\hbox{if}~k_\bu(\eta)\!=\!0;
\end{cases}\qquad
s^{\circ}(\eta)\equiv
\begin{cases}\frac{1}{k_\bu(\eta)},&
\hbox{if}~k_\bu(\eta)\!\neq\!0,\\
1,&\hbox{if}~k_\bu(\eta)\!=\!0.
\end{cases}$$
For $\wt\eta\!\in\!\cD_\om(\wt\al')$ for some $\wt\al'\!\in\!\wt\cC_{\om}(Y)$,
we define~$s^*(\wt\eta)$ similarly.
Define
\begin{gather}\label{JSinvdfn_e2b}\begin{split}
\lr{L}_{\be;K}^*
\equiv &\sum_{\eta\in\cD_{\om}(\al)}\hspace{-0.25cm}(-1)^{k_\bu(\eta)}s^*(\eta)
\Big|\M_{\eta;J}\!\fiber\!\!\big(\!(i,\fb_{\al_i(\eta)})_{i\in[k_\bu(\eta)]};
(i,\Ga_i)_{\Ga_i\in L_\bu(\eta)}\big)\Big|^{\pm}\\
&\qquad  +\frac12\sum_{\pt\in K}\!\!\blr{\Ga}_{\be;K-\{\pt\}}^{\om,\os}\,;
\end{split}\\
\label{JSinvdfn_e2c}
\blr{L\!-\!\{\Ga\}}_{\be;K}^{\Ga}
\equiv \sum_{\eta\in\cD_{\om}^{\Ga}(\al)}\hspace{-0.25cm}(-1)^{k_\bu(\eta)}
s^{\circ}(\eta)
\Big|\M_{\eta;J}\!\fiber\!\!\big(\!(i,\fb_{\al_i(\eta)})_{i\in[k_\bu(\eta)]};
(i,\Ga_i)_{\Ga_i\in L_\bu(\eta)}\big)\Big|^{\pm}\,.
\end{gather}
Both numbers above vanish unless $\dim(\al)\!=\!0$. 
Let
\BE{qYdfn_e} q_Y\!:H_2(X;\Z)\lra H_2(X,Y;\Z)\EE 
be the natural homomorphism.
By Definition~\ref{psisot_dfn}, a pseudo-isotopy between bounding chains
on pairs~$(\al_0,J_0)$ and~$(\al_1,J_1)$ determines a bijection between
the sets~$L(\al_0)$ and~$L(\al_1)$ of pseudocycles to~$X$.

\begin{thm}\label{countinv_thm}
Suppose $(X,\om)$ is a compact symplectic manifold of real dimension~$2n$ with $n\!\ge\!3$ odd,
$Y\!\subset\!X$ is a compact Lagrangian submanifold, 
$\be\!\in\!H_2(X,Y;\Z)$,  $\os$ is a relative OSpin-structure on~$Y$,
and the conditions~\eref{strongpos_e1} and~\eref{strongpos_e2} hold.
\BEnum{(\arabic*)}

\item\label{countinv_it} Let $\al_0,\al_1$, $J_0,J_1$, and 
$(\fb_{0;\al'})_{\al'\in\cC_{\om;\al_0}(Y)}$ and $(\fb_{1;\al'})_{\al'\in\cC_{\om;\al_1}(Y)}$
be as in Definition~\ref{psisot_dfn}.
If the bounding chains $(\fb_{0;\al'})_{\al'\in\cC_{\om;\al_0}(Y)}$ and 
$(\fb_{1;\al'})_{\al'\in\cC_{\om;\al_1}(Y)}$ are pseudo-homotopic,
then the numbers~\eref{JSinvdfn_e2} associated to them are the same.
If in addition
\BE{nosphbubb_e} K(\al_0)\neq\eset \qquad\hbox{or}\qquad 
\be\not\in\Im\big(q_Y\!:H_2(X;\Z)\!\lra\!H_2(X,Y;\Z)\!\big),\EE
then the numbers~\eref{JSinvdfn_e2b}
(resp.~\eref{JSinvdfn_e2c}) associated to the two bounding chains are also the same.

\item\label{equivdfn_it} Let $\al$, $J$, and $(\fb_{\al'})_{\al'\in\cC_{\om;\al}(Y)}$
be as in Definition~\ref{bndch_dfn}.
If $\pt\!\in\!K(\al)$, then 
\BE{equivdfn_e0a} \lr{L}_{\be;K-\{\pt\}}^{\om,\os}=\lr{L}_{\be;K}^*\,.\EE
If $\Ga\!\in\!L$ and the condition~\eref{nosphbubb_e} with $\al_0\!=\!\al$
is satisfied, then
\BE{equivdfn_e0c} \blr{L\!-\!\{\Ga\}}_{\be;K}^{\Ga}=\lr{L}_{\be;K}^*\,.\EE

\EEnum
\end{thm}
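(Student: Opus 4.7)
The plan is to realize each count as the degree (or signed cardinality) of an oriented $0$-dimensional pseudocycle and then to compare them through explicit bordisms: for part~(1) the bordisms come from the pseudo-isotopy $(\fb_{\wt\al'})$ itself, and for part~(2) they come from forgetful maps between moduli spaces with different numbers of boundary marked points.

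\textbf{Part (1).} The invariance of $\lr{L}_{\be;K}^{\om,\os}\!=\!\deg\fbb_\al$ is essentially already recorded: Lemma~\ref{psisot_lmm} gives $\prt\fbb_{\wt\al}=\{0\}\!\times\!\fbb_{0;\al_0}\!-\!\{1\}\!\times\!\fbb_{1;\al_1}$, and the degree of a top-dimensional pseudocycle is a bordism invariant. For the starred and $\Ga$-decorated counts, I would assemble, for each $\wt\eta\!\in\!\cD_\om(\wt\al)$, the oriented fiber product
$$\wt\cS_{\wt\eta}\equiv\M_{\wt\eta;\wt J}\!\fiber\!\!\big(\!(i,\fb_{\wt\al_i(\wt\eta)})_{i\in[k_\bu(\wt\eta)]};(i,\wt\Ga_i)_{\wt\Ga_i\in\wt L_\bu(\wt\eta)}\big),$$
weighted by the signs and by the $s^*$- or $s^\circ$-factors from~\eref{JSinvdfn_e2b}--\eref{JSinvdfn_e2c}, and restricted in the second case to those $\wt\eta$ with $\Ga\!\in\!\wt L_\bu(\wt\eta)$. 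The disjoint union $\wt\cS\!\equiv\!\bigsqcup_{\wt\eta}\wt\cS_{\wt\eta}$ is a bordered oriented pseudocycle of dimension~$1$, and an analogue of Lemma~\ref{BCpseudo_lmm} shows that its codimension-one boundary decomposes into: (a) the $t\!=\!0$ and $t\!=\!1$ slices, which recover the two counts to be compared; (b) strata coming from boundary nodes in the $J_t$-holomorphic domain; and (c) strata coming from interior sphere nodes. Strata of type~(b) cancel in pairs by the boundary formula~\ref{isoprt_it}, with the weights $s^*$ and $s^\circ$ tuned so that the pairing is exact.

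Type~(c) strata are the key obstruction, and this is where~\eref{nosphbubb_e} enters. A sphere bubble of spherical class $B$ attached to a principal disk of class $\be\!-\!q_Y(B)$ has image in $X$ of codimension at least~$2$ by~\eref{strongpos_e1}; combined with the single dimension already absorbed by the $t$-direction, this makes the type-(c) contribution of non-positive expected dimension provided one further constraint in $Y$ is available. The first alternative $K(\al_0)\!\neq\!\eset$ supplies a boundary point constraint from $K(\al_0)$, while the second alternative $\be\!\notin\!\Im(q_Y)$ excludes the degenerate case of a constant principal disk (which would carry no fiber-product constraint and would force $\be\!=\!q_Y(B)\!\in\!\Im(q_Y)$). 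Either way the type-(c) boundary is empty in the transverse setup, and the two endpoint counts coincide.

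\textbf{Part (2).} For~\eref{equivdfn_e0a} I would compare $\deg\fbb_{\al_\pt^c}$, built from $\M^+_{\eta;J}$ with an extra boundary marked point evaluated at $\pt\!\in\!K$ via $\evb_1$, with the sum~\eref{JSinvdfn_e2b}, built from $\M_{\eta;J}$ without such a marked point. The forgetful map $\M^+_{\eta;J}\!\to\!\M_{\eta;J}$ dropping the distinguished boundary point has $S^1$-fibers; reindexing this extra point as one of the $k_\bu(\eta)$ existing boundary points (using~\ref{BCi_it} to identify $\fb_{(0,\{\pt\},\eset)}$ with~$\pt$) contributes each choice with weight $1/k_\bu(\eta)$, while the $-\tfrac12$ appears as an orientation correction when the inserted point collides with an existing one, and the remainder $\tfrac12\sum_{\pt\in K}\lr\Ga_{\be;K-\{\pt\}}^{\om,\os}$ collects the resulting collision terms. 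Equation~\eref{equivdfn_e0c} follows from the parallel forgetful-map argument applied to the interior marked point labeled by~$\Ga$, producing the cleaner weight $s^\circ(\eta)\!=\!1/k_\bu(\eta)$ (no $-\tfrac12$, since interior and boundary marked points cannot collide) and invoking~\eref{nosphbubb_e} to eliminate the residual sphere-bubble error term. The main difficulty throughout is sign and weight bookkeeping: both parts require careful tracking of the Lemma~\ref{fibprodflip_lmm} conventions through many fiber-product reorderings, and in part~(1) one must verify that each alternative in~\eref{nosphbubb_e} genuinely kills the type-(c) boundary in the transverse setup.
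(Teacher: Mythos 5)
Your high-level plan is right in outline: part~(1) really does reduce to analyzing the boundary of a one-dimensional weighted fiber product over the pseudo-isotopy, and part~(2) really is an exercise in reconciling counts built from $\M^+_{\eta;J}$ with those built from $\M_{\eta;J}$. But two of the concrete mechanisms you propose are wrong in a way that would block the argument if pushed through.

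First, in part~(1) you misidentify the ``sphere bubbling'' that~\eref{nosphbubb_e} is there to kill. You invoke~\eref{strongpos_e1} to argue that sphere bubbles attached to a principal disk have codimension~$\ge2$ images. That is correct but irrelevant: those are the bubbles that the positivity hypotheses already handle. The problematic codimension-one stratum is $\cS^0_\eta\subset\prt\ov\M_{\eta;J}$, the locus where the \emph{entire boundary circle} of the disk contracts to a point of~$Y$ and the map descends to a sphere; this is codimension~1 regardless of~\eref{strongpos_e1}. The way~\eref{nosphbubb_e} excludes it is purely combinatorial, not dimension-theoretic: since every $\eta\in\cD_\om(\al)$ must satisfy $\bigsqcup_i K(\al_i)=K$, the condition $K(\al_0)\neq\eset$ forces $k_\bu(\eta)\ge1$ for every~$\eta$, and a disk carrying a boundary marked point has no boundary-contraction stratum. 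Under the alternative $\be\notin\Im(q_Y)$, the only $\eta$ with $k_\bu(\eta)=0$ has $\be_\bu(\eta)=\be$, so the would-be sphere degree $B$ with $q_Y(B)=\be_\bu(\eta)$ cannot exist. Your ``one further constraint in $Y$ is available'' heuristic does not capture either of these and would not show the stratum is empty.

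Second, in part~(2) the attribution of the $-\tfrac12$ in $s^*(\eta)=1/k_\bu(\eta)-\tfrac12$ to ``an orientation correction when the inserted point collides with an existing one'' is not something the paper uses, and I do not see how a collision-degeneration argument would produce exactly this weight in a clean way. The actual source of the $\tfrac12$ is a symmetrization identity on fiber products: the boundary relation
\begin{equation*}
-\prt\big(\fb_{\al_1}\!\!\fiber\!\fb_{\al_2}\big)
=\fbb_{\al_1}\!\!\fiber\!\fb_{\al_2}-\fbb_{\al_2}\!\!\fiber\!\fb_{\al_1}
\end{equation*}
is combined with the dichotomy $\pt\in K(\al_1)$ or $\pt\in K(\al_2)$ for each splitting $\al_1+\al_2=\al$ to write the sum $\bigsqcup_{\al_1+\al_2=\al}\fbb_{\al_1}\!\fiber\!\fb_{\al_2}$ as twice the contribution of one side plus a boundary term; the $-\tfrac12$ in $s^*$ then undoes this doubling. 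This is an algebraic bookkeeping step over $\cC_\om^*(Y)$, not an analysis of colliding marked points, and it is what you need to recover the $\tfrac12\sum_{\pt'}\lr{L}^{\om,\os}_{\be;K-\{\pt'\}}$ term in~\eref{JSinvdfn_e2b}. You would also need to pin down the relation
\begin{equation*}
\M_{\eta;J}\!\fiber\!\big((j,\fb_{\al_j(\eta)})_{j};(j,\Ga_j)_{\Ga_j\in L_\bu(\eta)}\big)
=(-1)^{k_\bu(\eta)-1}\fbb_{\eta\bsl1}\!\!\fiber\!\fb_{\al_1(\eta)}
\end{equation*}
(which requires the sign conventions of Lemmas~\ref{fibprodflip_lmm} and~\ref{fibprodisom_lmm1}, and the fact that $\dim\fb_{\al'}$ is even while $\dim Y$ is odd), rather than a heuristic about $S^1$-fibers of the forgetful map.

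The remaining parts of your proposal agree with the paper: the $\lr{L}^{\om,\os}_{\be;K}$ case of part~(1) follows immediately from Lemma~\ref{psisot_lmm}; the pairwise cancellation of disk-bubble boundary terms for the weighted version requires the rotation/reflection bookkeeping of Lemma~\ref{rhoRsgn_lmm}; and~\eref{equivdfn_e0c} does come out cleaner because $s^\circ$ has no $-\tfrac12$, for the reason the proof of~\eref{equivdfn_e0a} already exhibits.
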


\vspace{.2in}

For $\al$, $J$, and $(\fb_{\al'})_{\al'\in\cC_{\om;\al}(Y)}$
as in Definition~\ref{bndch_dfn}, $\al'\!\in\!\cC_{\om;\al}(Y)$, and 
$\eta\!\in\!\cD_\om(\al')$,
define 
$$\fbb_{\eta;J}^*
=(-1)^{\binom{k_\bu(\eta)}{2}}\M_{\eta;J}\!\fiber\!\!
\big(\!(i,\fb_{\al_i(\eta)})_{i\in[k_\bu(\eta)]};(i,\Ga_i)_{\Ga_i\in L_\bu(\eta)}\big)\,.$$ 
With the assumptions as in Theorem~\ref{countinv_thm}\ref{countinv_it},
let $(\wt\al,\wt{J})$ and 
$(\fb_{\wt\al'})_{\wt\al'\in\wt\cC_{\om;\wt\al}(Y)}$ be as in Definition~\ref{psisot_dfn2}.
For $\wt\al'\!\in\!\wt\cC_{\om;\wt\al}(Y)$ and $\wt\eta\!\in\!\cD_\om(\wt\al')$, define 
$$\fbb_{\wt\eta}^*=(-1)^{\binom{k_\bu(\wt\eta)}{2}}\M_{\wt\eta;\wt{J}}\!\fiber\!\!
\big(\!(i,\fb_{\wt\al_i(\wt\eta)})_{i\in[k_\bu(\wt\eta)]};
(i,\wt\Ga_i)_{\wt\Ga_i\in\wt{L}_\bu(\wt\eta)}\big)\,.$$

\vspace{-.1in}
 
As already noted, the sentence containing~\eref{wtfbbdfn_e} implies the claim
of Theorem~\ref{countinv_thm}\ref{countinv_it} concerning the numbers~\eref{JSinvdfn_e2}.
Suppose $\dim(\wt\al)\!=\!0$.
We show in Section~\ref{countinv_subs} that
\BE{cob_e}
\prt\bigg(\bigsqcup_{\wt\eta\in\cD_\om(\wt\al)}\hspace{-.15in}
s^*(\wt\eta)\fbb_{\wt\eta}^*\bigg)
=\bigsqcup_{\eta_1\in\cD_\om(\al_1)}\hspace{-.2in}
s^*(\eta_1)\big(\{1\}\!\times\!\fbb_{\eta_1;J_1}^*\big)
-\bigsqcup_{\eta_0\in\cD_\om(\al_0)}\hspace{-.2in}
s^*(\eta_0)\big(\{0\}\!\times\!\fbb_{\eta_0;J_0}^*\big)\,.\EE
This implies the claim
of Theorem~\ref{countinv_thm}\ref{countinv_it} concerning the numbers~\eref{JSinvdfn_e2b}.
The claim
of Theorem~\ref{countinv_thm}\ref{countinv_it} concerning the numbers~\eref{JSinvdfn_e2c}
then follows from~\eref{equivdfn_e0c}.
The condition~\eref{nosphbubb_e} precludes sphere bubbling;
it ensures that the stable map compactification of~$\M_{\eta;J}$ contains no 
additional codimension~one boundary for any $\eta\!\in\!\cD_{\om}(\al)$.
The proof of Theorem~\ref{countinv_thm}\ref{equivdfn_it} in Section~\ref{countequiv_subs} 
is a fairly straightforward application of the orientation comparisons for fiber products
collected in Section~\ref{Fp_subs}.

Let $\be\!\in\!H_2^{\om}(X,Y)$, $k\!\in\!\Z^{\ge0}$, and 
$L\!\equiv\!\{\Ga_1,\ldots,\Ga_l\}$ be as above.
If the number~\eref{JSinvdfn_e2} with $|K|\!=\!k\!-\!1$
and the number~\eref{JSinvdfn_e2b} with $|K|\!=\!k$ do not depend 
on the choices of the relevant bounding chains or 
on $K\!\subset\!Y$, we will denote all three numbers~\eref{JSinvdfn_e2}, \eref{JSinvdfn_e2b},
and~\eref{JSinvdfn_e2c} by $\lr{\Ga_1,\ldots,\Ga_l}_{\be;k}^{\om,\os}$;
there is then no ambiguity in this notation by Theorem~\ref{countinv_thm}\ref{equivdfn_it}.
If in addition this number does not depend on generic choices of pseudocycle representatives
for the homology classes~$[\Ga_i]$, let
\BE{OGWdfn_e} 
\blr{[\Ga_1],\ldots,[\Ga_l]}_{\be;k}^{\om,\os}
=\blr{\Ga_1,\ldots,\Ga_l}_{\be;k}^{\om,\os}\in R\,.\EE
In this case, we obtain open GW-invariants as in~\eref{JSinvdfn_e} via
the Poincare and Lefschetz Dualities 
\BE{PDisom_e} \PD_X\!: H_p\big(X;R\big)\stackrel{\approx}{\lra} H^{2n-p}\big(X;R\big),
\quad
\PD_{X,Y}\!: H_p\big(X\!-\!Y;R\big)\stackrel{\approx}{\lra} H^{2n-p}\big(X,Y;R\big);\EE
see Theorems~67.1 and~70.2 in~\cite{Mu2}, for example.
By Propositions~\ref{bndch_prp} and~\ref{psisot_prp}, 
all of the above independence assumptions are satisfied in particular 
if $Y$ is an $R$-homology sphere.
For dimensional reasons, the numbers~\eref{OGWdfn_e} vanish unless 
\BE{dimcond_e} \mu_Y^{\om}(\be)\!+\!n\!-\!3=(n\!-\!1)k\!+\!
\sum_{i=1}^l\big(\codim\,\Ga_i\!-\!2).\EE

\begin{rmk}\label{Zdfn_rmk}
If $(X,\om,Y)$ satisfies~\eref{strongpos_e1} and~\eref{strongpos_e2},
there are no nonzero counts of $J$-holomorphic disks without constraints
due to dimensional reasons.
As shown in the proof of~\eref{equivdfn_e0a} in Section~\ref{countequiv_subs}, 
the signed cardinalities of the $k_{\bu}(\eta)$ fiber products in~\eref{JSinvdfn_e2c}
obtained by circularly permuting the components of $(\al_i(\eta)\!)_{i\in[k_{\bu}(\eta)]}$
are the same.
Therefore, the sum in~\eref{JSinvdfn_e2c} can be re-written without 
$s_{\bu}(\eta)\!=\!1/k_{\bu}(\eta)$.
Along with Theorem~\ref{countinv_thm}\ref{equivdfn_it},  
this implies that \eref{JSinvdfn_e2}, \eref{JSinvdfn_e2b}, and~\eref{JSinvdfn_e2c} 
provide counts  of $J$-holomorphic disks in $(X,\om,Y)$ 
with coefficients in any commutative ring~$R$ with unity
under the assumptions~\eref{strongpos_e1} and~\eref{strongpos_e2}.
\end{rmk}

\subsection{Properties of open Gromov-Witten invariants}
\label{OWGprp_subs}

Let $(X,\om)$ be a compact symplectic manifold of real dimension~$2n$,
$Y\!\subset\!X$ be a connected compact Lagrangian submanifold,
and $\os$ be a relative OSpin-structure on~$Y$.
Denote by \hbox{$[Y]_X\!\in\!H_n(X;R)$} the image of the fundamental class
of~$Y$ with respect to the orientation determined by~$\os$. 
The kernel of the homomorphism
$$H_{n-1}(X\!-\!Y;R)\lra H_{n-1}(X;R)$$
is generated by the homology class~$[S(\cN_yY)]$ of
a unit sphere $S(\cN_yY)$ in the fiber of~$\cN Y$ over any $y\!\in\!Y$.
We orient~$S(\cN_yY)$ as in \cite[Sec~2.5]{RealWDVV3} and
denote the image of~$[S(\cN_yY)]$  under the $p\!=\!n\!-\!1$ case of 
the Lefschetz Duality isomorphism~\eref{PDisom_e} by~$\eta_{X,Y}^{\circ}$.
For $B\!\in\!H_2(X;\Z)$, let
$$\lr{\cdot,\ldots,\cdot}_B^{\om}\!: \bigoplus_{l=1}^{\i}H^{2*}(X;R)^{\oplus l}\lra R$$ 
be the standard GW-invariants of~$(X,\om)$.

The properties of the open GW-invariants~\eref{geomJSinvdfn_e}
stated below are as in Theorem~4 of~\cite{JS2} and Corollary~1.5 and Theorem~6 of~\cite{JS3}.
The first four of them are the direct analogues of standard properties
of the closed GW-invariants.
The fifth property, called {\it Wall crossing} in~\cite{JS3}, is 
the direct generalization of Proposition~2.1 in~\cite{RealWDVV3}.
The two remaining properties describe new geometric phenomena 
discovered in~\cite{JS3}.


\begin{thm}\label{main_thm}
Let $(X,\om)$ be a compact symplectic $2n$-fold with $n\!\ge\!3$ odd,
$Y$ be a connected compact Lagrangian submanifold, $\os$ be a relative OSpin-structure on~$Y$,
$k,l\!\in\!\Z^{\ge0}$, and \hbox{$\be\!\in\!H_2^{\om}(X,Y)$} with 
\BE{nosphbubb_e2} k\neq\eset \qquad\hbox{or}\qquad 
\be\not\in\Im\big(q_Y\!:H_2(X;\Z)\!\lra\!H_2(X,Y;\Z)\!\big).\EE
Suppose the assumptions~\eref{strongpos_e1} and~\eref{strongpos_e2} are satisfied
and
the numbers~\eref{JSinvdfn_e2b} with $|K|\!=\!k$
do not depend on the choices of the relevant bounding chains, $K\!\subset\!Y$,
or generic choices of pseudocycle representatives $\Ga_i\!\in\!L$
in their homology classes~$[\Ga_i]$.
The open GW-invariants~\eref{OGWdfn_e} then determine 
symmetric multilinear functionals
\BE{geomJSinvdfn_e}
\lr{\cdot,\ldots,\cdot}_{\be,k}^{\om,\os}\!: 
\bigoplus_{l=0}^{\i}\wh{H}^{2*}(X,Y;R)^{\oplus l}\lra R\EE 
with the following properties.
\BEnum{(OGW\arabic*)}

\item\label{dimvan_it} $\lr{\ga_1,\ldots,\ga_l}_{\be,k}^{\om,\os}\!=\!0$ unless 
\eref{dimcond_e} with $\codim\,\Ga_i$ replaced by $\deg\ga_i$ holds.

\item\label{deg0_it} If $\be\!=\!0$, $\displaystyle\lr{\ga_1,\ldots,\ga_l}_{\be,k}^{\om,\os}=
\begin{cases}
\lr{\ga_1,\pt},&\hbox{if}~(k,l)\!=\!(1,1);\\
0,&\hbox{otherwise}.\end{cases}$

\item\label{ins1_it} $\displaystyle\lr{1,\ga_2,\ldots,\ga_l}_{\be,k}^{\om,\os}=
\begin{cases}
1,&\hbox{if}~(\be,k,l)\!=\!(0,1,1);\\
0,&\hbox{otherwise}.\end{cases}$

\item\label{div_it} If $\ga_0\!\in\!H^2(X,Y;R)$, 
$\displaystyle\lr{\ga_0|_X,\ga_1,\ldots,\ga_l}_{\be,k}^{\om,\os}=\lr{\ga_0,\be}
\lr{\ga_1,\ldots,\ga_l}_{\be,k}^{\om,\os}$.

\item\label{sphere_it} $\displaystyle\lr{\ga_{X,Y}^{\circ},\ga_1,\ldots,\ga_l}_{\be,k}^{\om,\os}=
\lr{\ga_1,\ldots,\ga_l}_{\be,k+1}^{\om,\os}$.

\item\label{lagl_it1} 
If $k\!=\!1$ and $\ga_0\!\in\!H^n(X;R)$,
$${}\hspace{-.8in}
\lr{\ga_0,[Y]_X}\blr{\ga_1,\ldots,\ga_l}_{\be,k}^{\om,\os}
=\sum_{B\in q_Y^{-1}(\be)}\!\!\!\!\!\!\!(-1)^{\lr{w_2(\os),B}}
\blr{\PD_X\big([Y]_X\big),\ga_0,\ga_1|_X,\ldots,\ga_l|_X}_B^{\om}\,.$$ 

\item\label{lagl_it2} 
If $[Y]_X\!\neq\!0$ and $k\!\ge\!2$,
then $\lr{\ga_1,\ldots,\ga_l}_{\be,k}^{\om,\os}\!=\!0$.

\EEnum
\end{thm}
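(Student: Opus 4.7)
My plan is to verify each of~(OGW1)--(OGW7) directly from the geometric formulations of Section~\ref{OpenGWs_subs}, working with whichever of~\eref{JSinvdfn_e2}, \eref{JSinvdfn_e2b}, or~\eref{JSinvdfn_e2c} is most convenient. Properties~(OGW1)--(OGW4) are formal consequences of the definitions, paralleling their closed-theory analogues. For~(OGW1), the asserted condition is exactly the requirement that the fiber products in~\eref{JSinvdfn_e2b} be $0$-dimensional. For~(OGW2), every $J$-holomorphic disk of class $\be\!=\!0$ is constant, so $\M_{k,L}^{\st}(0;J)$ reduces to $Y$ with marked points, no bounding chain is needed, and a dimension count singles out $(k,l)\!=\!(1,1)$ with $\ga_1\!\in\!H^0(X;R)$, yielding the pairing $\lr{\ga_1,\pt}$. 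For~(OGW3), represent $1\!\in\!H^0(X;R)$ by the identity pseudocycle $X\!\to\!X$; the corresponding interior constraint is vacuous, and a forgetful-map argument comparing moduli spaces with and without this marked point forces vanishing except in the degenerate case. For~(OGW4), the usual divisor equation adapts: a generic codimension-two representative of $\ga_0|_X$ meets a fixed disk in class~$\be$ in $\lr{\ga_0,\be}$ transverse points, producing the multiplicative factor.

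For~(OGW5), use that $\ga_{X,Y}^{\circ}$ is Lefschetz dual to the class of the normal sphere $S(\cN_yY)$ at a point $y\!\in\!Y$. A $J$-holomorphic disk transverse to $S(\cN_yY)$ has its interior marked point close to~$y$; shrinking the sphere to~$y$ shows that such a configuration is equivalent to forcing an additional boundary point constraint at~$y$, converting the LHS of~(OGW5) to its RHS. The factors $s^{\circ}(\eta)$ in~\eref{JSinvdfn_e2c} absorb the overcounting from the cyclic symmetry of the $k\!+\!1$ boundary marked points.

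The most delicate property is~(OGW6). I would represent both $\PD_X([Y]_X)$ on the right and the implicit boundary constraint on the left by the inclusion $Y\!\hookrightarrow\!X$, and represent $\ga_0\!\in\!H^n(X;R)$ by a generic pseudocycle~$\Gamma_0$ of codimension~$n$ transverse to~$Y$, meeting it in $\lr{\ga_0,[Y]_X}$ signed points. Each disk with boundary at one of these intersection points doubles across~$Y$ into a closed $J$-holomorphic sphere in some class $B\!\in\!q_Y^{-1}(\be)$ passing through~$Y$ (hence through $\PD_X([Y]_X)$), through~$\Gamma_0$, and through the remaining constraints~$\ga_i|_X$. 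Summing over the $\lr{\ga_0,[Y]_X}$ intersection points on the left and over $B\!\in\!q_Y^{-1}(\be)$ on the right produces the stated identity, with the sign $(-1)^{\lr{w_2(\os),B}}$ arising from comparing the $\os$-orientation on the open moduli space with the complex orientation on the sphere moduli space under doubling. Finally,~(OGW7) follows by iterating~(OGW5) to reduce any $k\!\ge\!2$ configuration to $k\!=\!1$ with extra $\ga_{X,Y}^{\circ}$-insertions, applying~(OGW6) after choosing $\ga_0$ with $\lr{\ga_0,[Y]_X}\!\neq\!0$ (possible since $[Y]_X\!\neq\!0$), and observing that $\ga_{X,Y}^{\circ}|_X\!=\!0$ in $H^{n+1}(X;R)$ because $[S(\cN_yY)]$ bounds the normal disk bundle in~$X$, forcing the resulting closed GW-invariant to vanish.

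The main technical obstacle is the sign-tracking in~(OGW6): the factor $(-1)^{\lr{w_2(\os),B}}$ requires a precise comparison between the OSpin-orientation on the disk moduli space and the complex orientation on the sphere moduli space under the doubling correspondence. This is exactly where the relative OSpin-structure interacts with the Stiefel--Whitney data of the sphere class~$B$, and getting the sign right uniformly across all contributing~$B$ is the technical heart of the argument.
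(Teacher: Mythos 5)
Your treatment of (OGW1)--(OGW3) is in the spirit of the paper's arguments: dimension count, constancy of degree-$0$ maps, and the forgetful map dropping an interior point. For (OGW4), however, the observation that ``the usual divisor equation adapts'' hides the real content: in the open setting the divisor factor must be propagated through the entire bounding-chain hierarchy, not just through the top-level fiber product. The paper's proof defines a rescaled collection $\fb'_{\al'}\!=\!\lr{\ga_0,\be(\al')}\fb_{\al'^c_{\Ga_0}}$, verifies inductively that it is again a bounding chain, and only then extracts the factor $\lr{\ga_0,\be}$. Your sketch of (OGW5) captures the right geometric picture (normal sphere $\to$ boundary point constraint at $p_0$), and the paper does indeed adapt Proposition~2.1 of~\cite{RealWDVV3}, passing through the $\Ga_0$-definition~\eref{JSinvdfn_e2c}.

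The serious problem is (OGW6). You describe the mechanism as a doubling correspondence: ``each disk with boundary at one of these intersection points doubles across $Y$ into a closed $J$-holomorphic sphere.'' That mechanism does not exist in this generality. Doubling a disk to a sphere requires an anti-holomorphic involution $\phi$ of $(X,J)$ fixing $Y$ (as in the paper's Appendix~\ref{RealGWs_sec}), and Theorem~\ref{main_thm} makes no such assumption. The correct source of the relation between disk and sphere counts is the \emph{sphere-bubbling boundary stratum} of a one-dimensional compactified moduli space: maps from $(\D^2,S^1)$ to $(X,Y)$ with $S^1$ contracted to a point of $Y$, so that the image is a genuine $J$-holomorphic sphere passing through $Y$. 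The paper's Proposition~\ref{LangOGW_prp} packages this as an identity among boundary contributions of a one-dimensional fiber product; its left-hand side collects a ghost-disk term $|\io_Y\!\fiber\!\Ga_0|^{\pm}\lr{L}_{\be;K}^{\om,\os}$ and a $\prt\Ga_0$ term, while the right-hand side is the sphere-bubbling contribution, oriented via Lemma~\ref{Scompare2_lmm}. The sign $(-1)^{\lr{w_2(\os),B}}$ has nothing to do with a reflection of the disk; it is the discrepancy between the relative-OSpin orientation of $\det D_{J;\u}$ near the sphere-bubbling wall and the complex orientation of the glued sphere operator, computed in~\cite{FOOO}. So the heart of your proposed proof rests on a picture that is simply unavailable, and the whole argument for (OGW6) needs to be replaced by a degeneration analysis.

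For (OGW7) you propose iterating (OGW5), then invoking (OGW6), and observing that $\ga_{X,Y}^{\circ}|_X\!=\!0$ because $S(\cN_{p_0}Y)$ bounds the normal disk bundle. This is a valid alternative to the paper's derivation (the paper instead notes that the right-hand side of~\eref{LangOGW_e} vanishes for dimensional reasons when $K\!\neq\!\eset$ and chooses $\Ga_0$ with $|\io_Y\!\fiber\!\Ga_0|^{\pm}\!\neq\!0$). Your route is a nice deduction from the earlier properties; but since it crucially relies on (OGW6), which is unproved in your sketch, the overall argument still has a gap.
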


\vspace{.1in}

The vanishing property~\ref{dimvan_it}  holds because the dimensions of 
the relevant moduli spaces and the constraints are different unless 
\eref{dimcond_e} with $\codim\,\Ga_i$ replaced by $\deg\ga_i$ holds. 
The symmetry property of the open GW-invariants~\eref{geomJSinvdfn_e} is immediate
from the fiber products in~\eref{JSinvdfn_e2}, \eref{JSinvdfn_e2b}, and~\eref{JSinvdfn_e2b}
being independent of the order of the elements $\Ga_1,\ldots,\Ga_l$ of~$L$.
Both properties apply to the counts~\eref{JSinvdfn_e2}, \eref{JSinvdfn_e2b}, 
and~\eref{JSinvdfn_e2c} without any assumptions on 
the independence of these counts of the choice of the bounding chain.

We establish the remaining properties of the GW-invariants~\eref{geomJSinvdfn_e}
stated in Theorem~\ref{main_thm} in Section~\ref{GWmainpf_sec} by showing that 
the stated properties are satisfied by the numbers~\eref{OGWdfn_e}.
We note which of the many assumptions of Theorem~\ref{main_thm} are actually necessary 
for each given property to be satisfied by the numbers~\eref{JSinvdfn_e2}, 
\eref{JSinvdfn_e2b}, and~\eref{JSinvdfn_e2c}.

\subsection{WDVV-type relations}
\label{OpenWDVV_subs}

We now translate the statements of the WDVV-type equations of Theorem~3 of~\cite{JS3}
to relations for the open GW-invariants~\eref{geomJSinvdfn_e} 
under the assumption that $R$ is a field.

Let $(X,\om)$ be a connected compact symplectic manifold and
$Y\!\subset\!X$ be a connected compact Lagrangian submanifold.
Define 
$$\La_Y^{\om}\equiv
\big\{(\Psi\!:H_2^{\om}(X,Y)\!\lra\!R)\!:
\big|\{B\!\in\!H_2^{\om}(X,Y)\!:
\Psi(B)\!\neq\!0,\,\om(B)\!<\!E\}\big|\!<\!\i~\forall\,E\!\in\!\R\big\}.$$
We write an element $\Psi$ of $\La_Y^{\om}$ as 
$$\Psi\equiv\sum_{B\in H_2^{\om}(X,Y)}\!\!\!\!\!\!\!\!\Psi(B)q^B$$
and multiply two such elements as powers series in $q$ with the exponents in $H_2^{\om}(X,Y)$.

Since $\dim\,Y\!=\!n$, the cohomology long exact sequence for the pair~$(X,Y)$
implies that the restriction homomorphism
$$\wh{H}^{2*}(X,Y;R)\lra H^{2*}(X;R)$$
is surjective.
Let 
$$\ga_1^{\st}\!\equiv\!1,\ga_2^{\st},\ldots,\ga_N^{\st}\in\wh{H}^{2*}(X,Y;R)$$
be homogeneous elements such that $\ga_1^{\st},\ga_2^{\st}|_X,\ldots,\ga_N^{\st}|_X$ 
is a basis for $H^{2*}(X;R)$,
$(g_{ij})_{i,j}$ be the $N\!\times\!N$-matrix given~by
\BE{giijdfn_e}g_{ij}=\blr{\ga_i^{\st}\ga_j^{\st},[X]}\EE
and $(g^{ij})_{i,j}$ be its inverse.
Let $\Ga_1^{\st},\Ga_2^{\st},\ldots,\Ga_N^{\st}$ be generic pseudocycles to~$X$
representing the Poincare duals of $\ga_2^{\st},\ldots,\ga_N^{\st}$.
For a tuple $\bt\!\equiv\!(t_1,\ldots,t_N)$ of formal variables, let
$$\ga_{\bt}^{\st}=\ga_1^{\st}t_1\!+\!\ldots\!+\!\ga_N^{\st}t_N\,.$$

For a finite set $L$, $B\!\in\!H_2(X;\Z)$, and an $\om$-tame almost complex 
structure~$J$, we denote~by $\M^\C_L(B;J)$
the moduli space of stable $J$-holomorphic degree~$B$ maps with marked points
indexed by the set~$L$.
It carries a canonical orientation.
For each $i\!\in\!L$, let 
$$\ev_i\!:\M^\C_L(B;J)\lra X$$
be the evaluation morphism at the $i$-th marked point.
If in addition $\Ga_1,\ldots,\Ga_l$ are maps to~$X$, let
$$\M^\C_{0\sqcup[l]}(B;J)\!\fiber\!\big(\!(i,\Ga_i)_{i\in[l]}\big)
\equiv \M^\C_{0\sqcup[l]}(B;J) _{(\ev_1,\ldots,\ev_l)}\!\!\times\!\!
_{\Ga_1\times\ldots\times\Ga_l}\!\big(\!(\dom\,\Ga_1)\!\times\!\ldots\!\times\!(\dom\,\Ga_l)\!\big).$$
If $J$ is generic and $\Ga_1,\ldots,\Ga_l$ are pseudocycles in general position, then
$$f^\C_{B,(\Ga_i)_{i\in[l]}}\equiv
\Big(\ev_0\!:\M^\C_{0\sqcup[l]}(B;J)\!\!\fiber\!\!\big(\!(i,\Ga_i)_{i\in[l]}\big)\lra X\Big)$$
is a pseudocycle of dimension 
$$\dim\,f^\C_{B,(\Ga_i)_{i\in[l]}}
=\mu_{\om}\big(q_Y(B)\!\big)\!+\!2(n\!-\!2)
\!-\!\sum_{i=1}^l\!\big(\codim\,\Ga_i\!-\!2\big)$$
transverse to~$Y$.
With $\ga_i\!=\!\PD_X([\Ga_i])$,
let $(\la_{B,(\ga_i)_{i\in[l]}}^j)_{j\in[N]}\!\in\!R^N$ be such that
$$\big[f^\C_{B,(\Ga_i)_{i\in[l]}}\big]
=\sum_{j=1}^N\la_{B,(\ga_i)_{i\in[l]}}^j
\PD_X\big(\ga_j^{\st}|_X\big)\in H_*(X;R);$$
the tuple $(\la_{B,(\ga_i)_{i\in[l]}}^j)_{j\in[N]}$ depends
only on $B$, $\ga_1,\ldots,\ga_l$, and $\ga_2^{\st},\ldots,\ga_N^{\st}$.

Suppose in addition $\os$ is a relative OSpin-structure.
If $[Y]_X\!=\!0$ and $\ga$ is an $(n\!-\!1)$-dimensional pseudocycle to~$X\!-\!Y$ 
bounding a pseudocycle~$\Ga$ to~$X$ transverse to~$Y$, we define
$$\lk_{\os}(\ga)\equiv\big|\Ga\!\fiber\!\io_Y\big|^{\pm}\,,$$
where $\io_Y\!:Y\!\lra\!X$ is the inclusion;
see Section~\ref{Fp_subs} for the sign conventions for fiber products.
This \sf{linking number} of~$\ga$ and~$Y$ with the orientation determined
by the relative OSpin-structure~$\os$ does not depend on the choice of~$\Ga$.
We set $\lk_{\os}(\ga)\!=\!0$ if $\ga$ is not an $(n\!-\!1)$-dimensional pseudocycle.

For the purpose of WDVV-type equations for the open GW-invariants~\eref{geomJSinvdfn_e},
we extend these signed disk counts to the pairs $(k,\be)$ not 
satisfying~\eref{nosphbubb_e2},
i.e.~$k\!=\!0$ and $\be\!\in\!H_2^{\om}(X,Y)$ is in the image of 
the homomorphism~$q_Y$ in~\eref{qYdfn_e},  as follows.
Let $\ga_1,\ldots,\ga_l\!\in\!\wh{H}^{2*}(X,Y;R)$.
If $[Y]_X\!\neq\!0$, we~define
$$\blr{\ga_1,\ldots,\ga_l}_{\be,0}^{\om,\os}=0.$$
Suppose next that $[Y]_X\!=\!0$.
Let $\Ga_1,\ldots,\Ga_l$ be generic pseudocycles to~$X$ 
representing the Poincare duals of~$\ga_1,\ldots,\ga_l$.
Define
\begin{equation*}\begin{split}
\blr{\ga_1,\ldots,\ga_l}_{\be,0}^{\om,\os}=&
\hbox{RHS of~\eref{JSinvdfn_e2b} with $\al\!=\!\big(\be,\eset,\{\Ga_1,\ldots,\Ga_l\}\big)$}\\
&+\sum_{B\in q_Y^{-1}(\be)}\!\!\!\!\!\!(-1)^{\blr{w_2(\os),B}}
\lk_{\os}\!\Big(f^\C_{B,(\Ga_i)_{i\in[l]}}-
\sum_{j=1}^N\!\la_{B,(\ga_i)_{i\in[l]}}^j\Ga_j^{\st}\Big)
\end{split}\end{equation*}
in this case.
This number depends on the span of the chosen elements $\ga_i^{\st}$ of $H^{n+1}(X,Y;R)$.
By the proof of~\eref{cob_e}, 
pseudo-isotopic bounding chains $(\fb_{0;\al'})_{\al'\in\cC_{\om;\al_0}(Y)}$ 
and $(\fb_{1;\al'})_{\al'\in\cC_{\om;\al_1}(Y)}$
determine the same numbers $\lr{\ga_1,\ldots,\ga_l}_{\be,0}^{\om,\os}$.


We define $\Phi_{\om}^{\os}\!\in\!\La_Y^{\om}[[t_1,\ldots,t_N]]$
and $\Om_{\om}^{\os}\!\in\!\La_Y^{\om}[[s,t_1,\ldots,t_N]]$ by
\begin{gather*}
\Phi_{\om}^{\os}(t_1,\ldots,t_N)=
\sum_{\begin{subarray}{c}\be\in H_2^{\om}(X,Y)\\ l\in\Z^{\ge0}\end{subarray}}\!\!\!
\Bigg(\sum_{\begin{subarray}{c}B\in H_2(X;\Z)\\
q_Y(B)=\be\end{subarray}}\!\!\!\!\!\!\!(-1)^{\lr{w_2(\os),B}}
\blr{\underset{l}{\underbrace{\ga_{\bt}^{\st}|_X,\ldots,\ga_{\bt}^{\st}|_X}}}_B^{\om}\!\Bigg)
\frac{q^{\be}}{l!}\,,\\
\Om_{\om}^{\os}(s,t_1,\ldots,t_N)=
\sum_{\begin{subarray}{c}\be\in H_2^{\om}(X,Y)\\ k,l\in\Z^{\ge0}\end{subarray}}\!\!\!
\blr{
\underset{l}{\underbrace{\ga_{\bt}^{\st},\ldots,\ga_{\bt}^{\st}}}}_{B,k}^{\om,\os}
\frac{q^{\be}s^k}{k!l!}\,.
\end{gather*}
By Gromov's Compactness Theorem,
the inner sum in the definition of~$\Phi_{\om}^{\os}$ has finitely nonzero terms.
For the same reason, the coefficients of the powers of $t_1,\ldots,t_N,u$ in
$\Phi_{\om}^{\os}$ and $\Om_{\om}^{\os}$ lie in~$\La_Y^{\om}$.

\begin{thm}\label{OpenWDVV_thm}
Suppose $R$ is a field, 
$(X,\om,Y)$ and~$\os$ are as in Theorem~\ref{main_thm} with~$X$ and~$Y$ connected, 
and the independence assumptions of Theorem~\ref{main_thm} are satisfied
by the numbers~\eref{JSinvdfn_e2} even if the condition~\eref{nosphbubb_e2} does not~hold.
For all \hbox{$u\!\in\!\{t_1,\ldots,t_N\}$} and 
\hbox{$v,w\!\in\!\{s,t_1,\ldots,t_N\}$},
\BE{OpenWDVV_e} \begin{split}
&\sum_{1\le i,j\le N}\!\!\!\!\!\!\!\big(\prt_u\prt_v\prt_{t_i}\Phi_{\om}^{\os}\big)g^{ij}
\big(\prt_w\prt_{t_j}\Om_{\om}^{\os}\big)+
\big(\prt_u\prt_v\Om_{\om}^{\os}\big)\!
\big(\prt_s\prt_w\Om_{\om}^{\os}\big)\\
&\hspace{1.4in}=
\sum_{1\le i,j\le N}\!\!\!\!\!\!\!\big(\prt_u\prt_w\prt_{t_i}\Phi_{\om}^{\os}\big)g^{ij}
\big(\prt_v\prt_{t_j}\Om_{\om}^{\os}\big)+
\big(\prt_u\prt_w\Om_{\om}^{\os}\big)\!\big(\prt_s\prt_v\Om_{\om}^{\os}\big).
\end{split}\EE
\end{thm}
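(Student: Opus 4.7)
The plan is to realize both sides of~\eref{OpenWDVV_e} as the signed counts of the two endpoints of a canonical oriented compact one-manifold built from a one-parameter family of constrained disks; then~\eref{OpenWDVV_e} becomes the familiar statement that a compact one-manifold has zero signed boundary.

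More precisely, for each monomial $q^\be s^k t_1^{a_1}\!\cdots\!t_N^{a_N}$ appearing on either side of~\eref{OpenWDVV_e}, I would fix generic pseudocycle representatives $\Ga^u,\Ga^v,\Ga^w$ of the classes indexed by $u,v,w$ together with $a_i$ generic representatives of each class $\ga_i^\st$. I would then work in the moduli space $\M^\st_{k+2,L}(\be;J)$ whose interior labelling set~$L$ contains two distinguished labels carrying~$v,w$ and the remaining labels carrying the~$\Ga_i^\st$ copies, and whose $k\!+\!2$ ordered boundary marked points contain two reserved positions carrying the $u$-insertion and the ``free'' boundary marking produced by the $\prt_s$ operator. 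Attaching the unreserved boundary markings to bounding chains $\fb_{\al_i(\eta)}$ exactly as in~\eref{BCprt_e}, evaluating interior markings against the chosen pseudocycles, and evaluating the reserved boundary point against~$\Ga^u$ produces a bordered pseudocycle to~$Y$ of expected dimension one. By the cancellation argument in Lemma~\ref{BCpseudo_lmm} and property~\ref{BCprt_it}, the only surviving codimension-one boundary pieces come from genuine nodal degenerations of the underlying stable map.

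Next I would enumerate these surviving boundary strata and identify them with the four terms of~\eref{OpenWDVV_e}. Interior-node degenerations correspond to a $J$-holomorphic sphere of some class $B\!\in\!H_2(X;\Z)$ with $q_Y(B)$ part of~$\be$ bubbling off at an interior node: the sphere carries either~$v$ or~$w$ together with some subset of the $\ga_i^\st$-insertions, while the disk carries the remaining interior markings and all the boundary constraints. Inserting the diagonal class $\sum_{i,j}g^{ij}\,\ga_i^\st\!\otimes\!\ga_j^\st$ at the node and summing over partitions of the interior markings produces the two $\Phi\!\cdot\!\Om$ terms of~\eref{OpenWDVV_e}. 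Boundary-node degenerations correspond to the disk splitting into two sub-disks along an interior boundary node: one sub-disk carries~$u$ and one of~$v,w$, the other carries the remaining one of~$v,w$, and the node on each side is evaluated in~$Y$ and plays the role of an additional boundary marked point—which is exactly what $\prt_s$ tracks. Summing over partitions of the remaining insertions and degrees yields the two $\Om\!\cdot\!\Om$ terms. The two endpoints of the one-parameter family correspond to the two cyclic orderings of~$v$ and~$w$ relative to the fixed position of~$u$, giving the left-hand and right-hand sides of~\eref{OpenWDVV_e} respectively.

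The principal obstacle is the combinatorial and orientational bookkeeping: matching the factorials $1/(k!l!)$ in~$\Om_\om^\os$ and $1/l!$ in~$\Phi_\om^\os$ against the symmetries of the boundary strata, and tracking signs through the fiber-product conventions of Section~\ref{Fp_subs}, the $\os$-induced orientation on $\M^\st_{k+2,L}(\be;J)$, and the sign $(-1)^{k_\bu(\eta)}$ of~\eref{BCprt_e}. A subsidiary point is that when the hypothesis~\eref{nosphbubb_e2} fails, the interior-node stratum produces sphere contributions not captured by the original definition of $\lr{\cdot}_{\be,0}^{\om,\os}$; this is precisely what forces the extended definition with $\lk_\os$ and the correction $\la_{B,(\ga_i)_{i\in[l]}}^j$ introduced immediately before the theorem, so that the $\Phi$-term on each side of~\eref{OpenWDVV_e} absorbs exactly those contributions. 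Modulo these checks, which follow the pattern of the cobordism argument proving~\eref{cob_e} indicated in Section~\ref{countinv_subs}, the identity~\eref{OpenWDVV_e} reduces to the vanishing of the signed boundary of the one-manifold constructed above.
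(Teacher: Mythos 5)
The paper does not actually carry out a proof of Theorem~\ref{OpenWDVV_thm}: it treats the statement as ``mostly a translation of Theorem~3 of~\cite{JS3}'' to the geometrically defined invariants and indicates that a self-contained geometric argument can be obtained by lifting a bordism in a Deligne-Mumford moduli of marked disks, with Proposition~\ref{LangOGW_prp} supplied precisely as the ingredient that converts sphere-bubbling strata into the closed GW-invariants in~$\Phi_{\om}^{\os}$. Your proposal is a sketch of exactly that suggested geometric route, and the architecture is right: lift a one-parameter family of constrained disks, cancel the bounding-chain boundary contributions via Lemma~\ref{BCpseudo_lmm}, match interior-node degenerations (a sphere bubble carrying a diagonal-class insertion $\sum_{i,j} g^{ij}\ga_i^{\st}\!\otimes\!\ga_j^{\st}$) with the $\Phi_{\om}^{\os}\!\cdot\!\Om_{\om}^{\os}$ terms, match boundary-node degenerations (disk splittings with both ends evaluated in~$Y$) with the $\Om_{\om}^{\os}\!\cdot\!\Om_{\om}^{\os}$ terms, and note that the extended definition of $\lr{\cdot}_{\be,0}^{\om,\os}$ via $\lk_{\os}$ is exactly what absorbs the residual sphere contribution when~\eref{nosphbubb_e2} fails. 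You also correctly flag that the remaining work is orientation and factorial bookkeeping.

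Two points need fixing before this would constitute a proof. First, your assignment of marked points is inverted: since $u\!\in\!\{t_1,\ldots,t_N\}$ indexes a class in $\wh{H}^{2*}(X,Y;R)$, the constraint $\Ga^u$ is imposed at an \emph{interior} marked point, not at one of the $k\!+\!2$ boundary marked points as you write; a pseudocycle to~$X$ cannot constrain a boundary marked point, which evaluates to~$Y$. The free boundary marked point is what $\prt_s$ tracks, and $v,w$ are assigned to boundary marked points only in the case they equal~$s$ — your fixed assignment of both to interior labels only covers $v,w\in\{t_1,\ldots,t_N\}$. Second, describing the two sides of~\eref{OpenWDVV_e} as ``the two endpoints of the one-parameter family'' coming from ``two cyclic orderings of~$v$ and~$w$'' is not the correct picture: the relation arises from lifting a path between two boundary divisors of an appropriate one-dimensional Deligne-Mumford moduli of marked disks, and the total signed boundary of that lift — whose pieces distribute (after Corollaries~\ref{Scompare_crl} and~\ref{Scompare2_crl}) among all four terms on both sides of~\eref{OpenWDVV_e} — vanishes. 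Neither issue is a different ``approach'' so much as a misassembly of the standard one, but in an argument whose entire substance is these matchings, they carry the burden of proof.
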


By Propositions~\ref{bndch_prp} and~\ref{psisot_prp},  
the above independence assumptions are satisfied if $Y$ is an $R$-homology sphere. 
Theorem~\ref{OpenWDVV_thm} is mostly a translation of Theorem~3 of~\cite{JS3} 
to the geometrically defined invariants of Theorem~\ref{main_thm}.
The framework of lifting bordisms from the Deligne-Mumford moduli spaces of stable curves 
to the moduli spaces of stable maps as in~\cite{RealWDVV,RealWDVV3} 
can be used for 
a self-contained geometric analogue of the proof in~\cite{JS3} establishing 
relations between the disk counts~\eref{JSinvdfn_e2b} which arise from a fixed bounding
chain, without any independence assumptions of Theorem~\ref{OpenWDVV_thm}.
The independence assumptions are used to present these relations 
succinctly as  the partial differential equations~\eref{OpenWDVV_e}.
This geometric analogue applies over an arbitrary field~$R$ and allows 
taking~$v$ or~$w$ to be~$s$ even if $[Y]_X\!\neq\!0$;
this case is excluded from the statement of Corollary~1.6 of~\cite{JS3}.
 
For $\be\!\in\!H_2(X,Y;\Z)$ and $l\!\in\!\Z^{\ge0}$, let
\begin{equation*}\begin{split}
\cP_{\C}(\be)&=\big\{(B_1,B_2)\!\in\!H_2(X;\Z)\!\oplus\!H_2(X;\Z)\!:
q_Y(B_1\!+\!B_2)\!=\!\be\big\},\\
\cP_{12;}(l)&=\big\{(I,J)\!\in\!\cP(l)\!:
\{1,2,\ldots,l\}\!=\!I\!\sqcup\!J,~1,2\!\in\!I\big\},\\
\cP_{1;2}(l)&=\big\{(I,J)\!\in\!\cP(l)\!:
\{1,2,\ldots,l\}\!=\!I\!\sqcup\!J,~1\!\in\!I,~2\!\in\!J\big\}.\\
\end{split}\end{equation*}
For a tuple $\ga\!\equiv\!(\ga_1,\ldots,\ga_l)$ of elements of $H^{2*}(X;R)$ and
$I\!\subset\!\{1,2,\ldots,l\}$, we denote by~$\ga_I$ 
the $|I|$-tuple consisting of the entries of~$\ga$ indexed by~$I$.
Let  $\ga_1^{\st},\ldots,\ga_N^{\st}$ be basis for $H^{2*}(X;R)$,
$(g_{ij})_{i,j}$ be the $N\!\times\!N$-matrix given by~\eref{giijdfn_e},
and $(g^{ij})_{i,j}$ be its inverse.

Corollary~\ref{OpenWDVV_crl} below for the standard (closed) GW-invariants of~$(X,\om)$
follows immediately from the case of Theorem~\ref{OpenWDVV_thm}
with 
$$[Y]_X\neq0, \qquad u,v\in\big\{t_1,\ldots,t_N\big\}, \qquad\hbox{and}\qquad w=s,$$ 
\ref{lagl_it1} and~\ref{lagl_it2} in Theorem~\ref{main_thm}, 
\BEnum{(\alph*)}

\item Propositions~\ref{bndch_prp} and~\ref{psisot_prp} above, and 

\item Theorem~1.1 in~\cite{JakeSaraWel};

\EEnum 
see also Section~1.2 in~\cite{JakeSaraWel}
concerning the second case in Corollary~\ref{OpenWDVV_crl}.
A slightly weaker version of Corollary~\ref{OpenWDVV_crl} follows from
the case of Theorem~\ref{OpenWDVV_thm} with  \hbox{$u,v,w\!\in\!\{t_1,\ldots,t_N\}$},
which is available in~\cite{JS3} for $R\!=\!\R$.

\begin{crl}\label{OpenWDVV_crl}
Suppose $R$ is a field, $(X,\om)$ is a compact symplectic $2n$-fold,
$Y\!\subset\!X$ is an oriented connected compact Lagrangian submanifold,
and $\ga_0\!\in\!H^n(X;R)$ with \hbox{$\lr{\ga_0,[Y]_X}\!=\!1$}.
If either
\BEnum{(\alph*)}

\item $n\!\ge\!3$ is odd and $Y$ is an $R$-homology sphere or

\item $n\!=\!3$, the homomorphism $H_1(Y;R)\!\lra\!H_1(X;R)$ induced by
the inclusion $Y\!\lra\!X$ is injective, and 
the homomorphism $H_2(Y;R)\!\lra\!H_2(X;R)$ is trivial,

\EEnum
then
\begin{equation*}\begin{split}
&\sum_{\begin{subarray}{c}(B_1,B_2)\in\cP_{\C}(\be)\\ 
(I,J)\in\cP_{12;}(l)\end{subarray}}
\sum_{i,j\in[N]}\!\!\!\!
\blr{\ga_I,\ga^\st_i}^{\!\om}_{\!B_1}g^{ij}
\blr{\ga^\st_j,\PD_X\big([Y]_X\big),\ga_0,\ga_J}^{\!\om}_{\!B_2}\\
&\hspace{1in}=\sum_{\begin{subarray}{c}(B_1,B_2)\in\cP_{\C}(\be)\\
(I,J)\in\cP_{1;2}(l)\end{subarray}}\hspace{-.3in}
\blr{\PD_X\big([Y]_X\big),\ga_0,\ga_I}^{\!\om}_{\!B_1}
\blr{\PD_X\big([Y]_X\big),\ga_0,\ga_J}^{\!\om}_{\!B_2}\,
\end{split}\end{equation*}
for all $\be\!\in\!H_2(X,Y;\Z)$ and $\ga_1,\ldots,\ga_l\!\in\!H^{2*}(X;R)$.
\end{crl}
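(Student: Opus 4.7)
The plan is to specialize the partial differential equation~\eref{OpenWDVV_e} of Theorem~\ref{OpenWDVV_thm} with $w\!=\!s$ and $u,v\!\in\!\{t_1,\ldots,t_N\}$ and then extract coefficients. Since $\Phi_{\om}^{\os}$ is independent of~$s$, the term $\sum g^{ij}(\prt_u\prt_s\prt_{t_i}\Phi_{\om}^{\os})(\prt_v\prt_{t_j}\Om_{\om}^{\os})$ on the right-hand side of~\eref{OpenWDVV_e} vanishes identically. Because $[Y]_X\!\neq\!0$, Theorem~\ref{main_thm}\ref{lagl_it2} gives $\blr{\ga_1,\ldots,\ga_l}_{\be,k}^{\om,\os}\!=\!0$ for all $k\!\ge\!2$, so $\Om_{\om}^{\os}$ is a polynomial of degree at most one in~$s$; in particular $\prt_s^2\Om_{\om}^{\os}\!=\!0$, killing the term $(\prt_u\prt_v\Om_{\om}^{\os})(\prt_s\prt_w\Om_{\om}^{\os})$ on the left-hand side. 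Equation~\eref{OpenWDVV_e} thus collapses to
$$\sum_{1\le i,j\le N}\!\!\!\!\!(\prt_u\prt_v\prt_{t_i}\Phi_{\om}^{\os})\,g^{ij}\,(\prt_s\prt_{t_j}\Om_{\om}^{\os})=(\prt_u\prt_s\Om_{\om}^{\os})(\prt_s\prt_v\Om_{\om}^{\os}).$$

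Next I would convert every $k\!=\!1$ open GW-invariant arising from $\prt_s\Om_{\om}^{\os}|_{s=0}$ into a closed GW-invariant via Theorem~\ref{main_thm}\ref{lagl_it1} applied with the fixed class~$\ga_0$ of the corollary; the normalization $\lr{\ga_0,[Y]_X}\!=\!1$ makes the substitution clean, replacing each coefficient $\blr{\ga_I^{\st}}_{\be,1}^{\om,\os}$ by $\sum_{B\in q_Y^{-1}(\be)}(-1)^{\lr{w_2(\os),B}}\blr{\PD_X([Y]_X),\ga_0,\ga_I^{\st}|_X}_B^{\om}$. After this substitution, both sides of the reduced equation become polynomials in $t_1,\ldots,t_N$ with coefficients in~$\La_Y^{\om}$ built entirely from closed GW-invariants of~$(X,\om)$. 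Setting $u\!=\!t_{a_1}$, $v\!=\!t_{a_2}$ for a pair of basis indices and reading off the coefficient of $q^{\be}t_{a_3}\!\cdots\!t_{a_l}/(l\!-\!2)!$ then produces, after multilinearity over a basis $\ga_i\!=\!\ga_{j_i}^{\st}|_X$, precisely the sums over $(I,J)\!\in\!\cP_{12;}(l)$ on the left and $(I,J)\!\in\!\cP_{1;2}(l)$ on the right stated in the corollary; the product of the two generating series contributes the splitting $(B_1,B_2)\!\in\!\cP_{\C}(\be)$, and the additivity of $\lr{w_2(\os),\cdot}$ ensures that the sign $(-1)^{\lr{w_2(\os),B}}$ appears with identical exponent $\lr{w_2(\os),B_1\!+\!B_2}$ on both sides and hence cancels from the final identity.

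The hypotheses~(a) and~(b) are needed precisely to secure the independence assumptions that underlie Theorems~\ref{main_thm} and~\ref{OpenWDVV_thm}: under~(a), Propositions~\ref{bndch_prp} and~\ref{psisot_prp} supply the existence and pseudo-isotopy uniqueness of bounding chains directly from the $R$-homology sphere hypothesis on~$Y$; under~(b), Theorem~1.1 of~\cite{JakeSaraWel} plays the analogous role in the six-dimensional setting, yielding the required independence under the weaker topological hypotheses on the maps $H_*(Y;R)\!\to\!H_*(X;R)$ by identifying the relevant counts with Welschinger's open invariants. I expect the main technical nuisance to be the combinatorial bookkeeping: carefully matching the factorials $1/(k!\,l!)$ in the definitions of~$\Phi_{\om}^{\os}$ and~$\Om_{\om}^{\os}$ against the two $t_{a_i}$-derivatives and the Cauchy-type convolution in~$q^{\be}$ so that the weighted partition of $\{3,\ldots,l\}$ arising from coefficient extraction gives exactly the index sets $\cP_{12;}(l)$ and $\cP_{1;2}(l)$ with their correct multiplicities.
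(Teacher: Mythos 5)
Your proposal follows the same route as the paper's (deliberately terse) argument, which is simply a list of ingredients: specialize Theorem~\ref{OpenWDVV_thm} to $u,v\!\in\!\{t_1,\ldots,t_N\}$, $w\!=\!s$; use $[Y]_X\!\neq\!0$ together with \ref{lagl_it2} to conclude $\prt_s^2\Om_{\om}^{\os}\!=\!0$ and $s$-independence of $\Phi_{\om}^{\os}$ to kill the symmetric RHS term, leaving the two-term identity you wrote; convert the surviving $k\!=\!1$ open counts into closed GW-invariants via \ref{lagl_it1} and the normalization $\lr{\ga_0,[Y]_X}\!=\!1$; extract coefficients; and supply the independence hypotheses of Theorem~\ref{OpenWDVV_thm} from Propositions~\ref{bndch_prp} and~\ref{psisot_prp} in case~(a) and from Theorem~1.1 of~\cite{JakeSaraWel} in case~(b). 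You have correctly identified all six inputs the paper names.

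One remark on the sign discussion. Having the factor $(-1)^{\lr{w_2(\os),B_1+B_2}}$ appear with the same exponent on both sides does not by itself make it ``cancel from the final identity''; it means the signed identity holds, summed over all $(B_1,B_2)\!\in\!\cP_\C(\be)$. The sign-free form stated in the corollary follows only if $(-1)^{\lr{w_2(\os),B'}}$ is constant as $B'$ ranges over $q_Y^{-1}(\be)$, i.e.\ if $w_2(\os)|_Y$ pairs trivially with $\iota_*H_2(Y;\Z)$, or if one can choose $\os$ with $w_2(\os)\!=\!0$. This detail is not spelled out in the paper either, so it is a shared elision rather than an error you introduced; but ``cancels'' is the wrong word, and a careful write-up should either justify why the exponent is constant under hypotheses~(a) and~(b) or phrase the intermediate identity with the sign retained before discharging it.
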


\section{Proofs of Propositions~\ref{bndch_prp} and~\ref{psisot_prp} 
and Theorem~\ref{countinv_thm}}
\label{mainpf}

We establish Propositions~\ref{bndch_prp} and~\ref{psisot_prp}, which guarantee that 
the open invariants~\eref{geomJSinvdfn_e} can actually be constructed 
via~\eref{JSinvdfn_e2}, \eref{JSinvdfn_e2b}, and~\eref{JSinvdfn_e2c} 
at least under some topological assumptions
on the Lagrangian submanifold~$Y$ of~$(X,\om)$, in Section~\ref{bcisot_subs}.
In Section~\ref{countinv_subs}, we show that bounding chains differing 
by a pseudo-isotopy determine the same counts~\eref{JSinvdfn_e2b}.
In Section~\ref{countequiv_subs}, we establish the equivalence of the three
definitions of the disk counts in~$(X,Y)$  as stated in
Theorem~\ref{countinv_thm}\ref{equivdfn_it}.

\subsection{Existence of bounding chains and pseudo-isotopies}
\label{bcisot_subs}

The main steps in the inductive proofs of Propositions~\ref{bndch_prp} and~\ref{psisot_prp}
are Lemmas~\ref{BCpseudo_lmm} and~\ref{psisot_lmm}, respectively, below.
They ensure that the right-hand sides of the identities in
Definitions~\ref{bndch_dfn}\ref{BCprt_it} and~\ref{psisot_dfn}\ref{isoprt_it} 
are closed pseudocycles of the required dimensions 
if the bordered pseudocycles~$\fb_{\al'}$ with $\al'\!\prec\!\al$
and~$\fb_{\wt\al'}$ with $\wt\al'\!\prec\!\wt\al$
satisfy all conditions of Definitions~\ref{bndch_dfn} and~\ref{psisot_dfn}.
Thus, these right-hand sides satisfy at least a necessary 
condition for the existence of bordered pseudocycles~$\fb_{\al'}$ and~$\fb_{\wt\al'}$.

\begin{lmm}\label{BCpseudo_lmm}
Let $\al$ and $J$ be as in Definition~\ref{bndch_dfn}.
If $(\fb_{\al'})_{\al'\in\cC_{\om;\al}(Y)}$ is a bounding chain
on~$(\al,J)$, then
the map~$\fbb_{\al}$ in~\eref{fbbdfn_e} 
is a pseudocycle of dimension~$\dim(\al)\!+\!1$.
\end{lmm}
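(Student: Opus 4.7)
The plan is to verify that $\fbb_\al=\bigcup_{\eta\in\cD_\om(\al)}\fbb_\eta$ meets the definition of an oriented pseudocycle of dimension $\dim(\al)+1$ by checking three things: that each $\fbb_\eta$ has source of the correct dimension; that the codimension-one topological boundary strata of the pieces match in oriented pairs across different $\eta\in\cD_\om(\al)$, so that $\fbb_\al$ carries no effective codimension-one boundary; and that what remains of the limit set has codimension at least two in~$Y$. The inductive design of the bounding-chain axiom~\ref{BCprt_it} is precisely what makes the second requirement go through.

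For the dimension count, I would simply compute: $\dim\M^+_{\eta;J}=n+\mu_Y^\om(\be_\bu(\eta))+k_\bu(\eta)-2+2|L_\bu(\eta)|$, each transverse fiber product with $\fb_{\al_i(\eta)}$ over~$Y$ subtracts $n-\dim(\al_i(\eta))-2$, and each transverse fiber product with $\Ga_i$ over~$X$ subtracts $\codim\Ga_i$. Substituting the defining formula for $\dim(\al_i(\eta))$ and using the splittings $\be=\be_\bu(\eta)+\sum\be_i(\eta)$, $K=\bigsqcup K_i(\eta)$, and $L=L_\bu(\eta)\sqcup\bigsqcup L_i(\eta)$, the various contributions collapse to $\dim\fbb_\eta=\dim(\al)+1$. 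This step is routine bookkeeping; the transversality needed for the fiber products is supplied by Remark~\ref{transverse_rmk}.

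The heart of the argument is the boundary analysis. The topological boundary of each $\fbb_\eta$ has two sources: (a)~codimension-one strata of $\M^+_{\eta;J}$, which by Gromov compactness correspond to boundary-nodal disks in which $\be_\bu(\eta)$ splits into two disk classes and the boundary and interior marked points and constraints distribute between the two components; and (b)~the boundaries $\prt\fb_{\al_i(\eta)}$ of the auxiliary bordered pseudocycles, which by~\ref{BCprt_it} equal the pseudocycles $\fbb_{\al_i(\eta)}$ and are therefore again unions of fiber products of the same shape. The key geometric observation is a natural pairing between these two sources: a stratum of type~(a) at some $\eta$ is the same underlying nodal configuration as a stratum of type~(b) at a different $\eta'\in\cD_\om(\al)$, obtained by declaring the component carrying the first boundary marked point to be the $\bu$-component of $\eta'$ and the other (bubbled) component to be encoded as one of the entries $\al_i(\eta')$, with the node realized as the $\evb_1$-condition on that entry. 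The signs $(-1)^{k_\bu(\eta)}$ in~\eqref{fbbetadfn_e0}, together with the fiber-product orientation rules of Section~\ref{Fp_subs} and Lemma~\ref{fibprodflip_lmm}, are calibrated so that the two paired strata enter $\fbb_\al$ with opposite orientations and cancel.

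It then remains to bound $\Om(\fbb_\al)$. Its contributions come from higher-codimension strata of the stable-map compactifications of the $\M^+_{\eta;J}$ (codimension at least two by standard dimension counts), images of multiply-covered disks and sphere bubbles (codimension at least two by~\eqref{strongpos_e1}--\eqref{strongpos_e2}), and the limit sets $\Om(\fb_{\al_i(\eta)})$ and $\Om(\Ga_i)$ propagated through the (transverse) fiber products, each of which has the required codimension by the (bordered) pseudocycle property of the corresponding factor. The main obstacle will be the orientation bookkeeping in the pairing of~(a) with~(b): one must check that the fiber-product orientation on a nodal configuration viewed as a boundary stratum of $\M^+_{\eta;J}$ agrees, up to an overall sign accounted for by $(-1)^{k_\bu(\eta)}$ versus $(-1)^{k_\bu(\eta')}$ and by the orientation of $\prt\fb_{\al_i(\eta')}=\fbb_{\al_i(\eta')}$ from~\ref{BCprt_it}, with the fiber-product orientation on the same configuration viewed inside the fiber product for $\eta'$. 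I expect this to reduce, via the sign manipulations of Section~\ref{Fp_subs}, to a combinatorial identity for which the exponent $k_\bu(\eta)$ in~\eqref{fbbetadfn_e0} has been chosen.
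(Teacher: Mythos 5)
The broad plan matches the paper's: the dimension count is as in the paper's first display, and the core idea of pairing the disk-bubbling boundary strata of $\M^+_{\eta;J}$ (your type~(a)) against the $\prt\fb_{\al_i(\eta)}$-boundary strata supplied by axiom~\ref{BCprt_it} (your type~(b)), with signs calibrated by $(-1)^{k_\bu(\eta)}$ and Lemma~\ref{fibprodflip_lmm}, is exactly the mechanism the paper implements via the bijection $\cD_\om^2(\al)\to\ov\cD_\om^2(\al)$ and Corollary~\ref{Scompare_crl}.

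However, you treat this pairing as if it were a perfect matching, and it is not. A type-(a) stratum of $\fbb_\eta$, when the bubbled-off component is re-indexed as a new entry $\al^2$ of some $\eta^1\in\cD_\om(\al)$, can be matched against a type-(b) contribution to $\fbb_{\eta^1}$ coming from $\prt\fb_{\al^2}$ only when axiom~\ref{BCprt_it} actually applies to $\al^2$, i.e.\ only when $\dim(\al^2)\le n-2$. If instead $\dim(\al^2)\ge n-1$, then $\fb_{\al^2}=\eset$ by~\ref{BC0_it}, hence $\fbb_{\eta^1}=\eset$, and there is no type-(b) partner: those orphan type-(a) strata do not cancel the way you claim. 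They are genuine codimension-one disk-bubbling strata, so none of the items in your codimension-two checklist for the limit set (``higher-codimension strata,'' multiple covers, sphere bubbles, propagated $\Om$'s of the $\fb$'s and $\Ga$'s) accounts for them.

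The paper disposes of the orphans by a separate argument, and it is the subtle part of the lemma. From $\dim\fbb_{\al^2}\ge n$ one deduces, as in~\eref{BCpseudo_e16}, that the orphan term $\fbb_\eta(\vt)$ factors through a manifold of dimension strictly less than $\dim(\al)$ --- hence contributes only to the limit set, not to $\prt\fbb_\al$ --- \emph{except} in the residual case $\be_\bu(\eta^1)=0$, $k_\bu(\eta^1)=2$, $L_\bu(\eta^1)=\eset$, where the auxiliary moduli space $\M_{\eta^1;J}$ (without the extra evaluation marked point) is unstable and the dimension-drop argument is unavailable. In that residual case the orphan terms pair up with one another under the circular permutation of $(\al_i(\eta))_i$ moving the first entry to the last; the sign computation via~\eref{BCpseudo_e10}, Lemma~\ref{fibprodflip_lmm}, and the parities $\dim Y\notin2\Z$, $\dim\fb_{\al_i}\in2\Z$ shows each pair enters $\prt\fbb_\al$ with opposite orientations and cancels. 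Your proposal would need to add this analysis to be complete.
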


\begin{proof} For $\eta\!\in\!\cD_{\om}(\al)$, let $\fbb_{\eta}$ be as in~\eref{fbbetadfn_e0}.
By Definition~\ref{bndch_dfn}\ref{BCdim_it} with~$\al'$ replaced by $\al_i(\eta)\!\prec\!\al$,
\BE{BCpseudo_e3}\begin{split}
\dim\,\fbb_{\eta}=\,&\big(\mu_Y^{\om}(\be_{\bu}(\eta)\!)\!+\!(n\!-\!3)\!+\!(k_{\bu}(\eta)\!+\!1)
\!+\!2|L_{\bu}(\eta)|\big)\\
&+\!\!\sum_{i=1}^{k_{\bu}(\eta)}\!\!\!
\big(\dim(\al_i(\eta)\!)\!+\!2\big)
\!-\!nk_{\bu}(\eta)\!-\!\!\!\sum_{\Ga\in L_{\bu}(\eta)}\!\!\!\!\!\!\codim\,\Ga
=\dim(\al)\!+\!1\,.
\end{split}\EE
Thus, the dimension of~$\fbb_{\al}$ is $\dim(\al)\!+\!1$.

We define
\BE{cDonm2dfn_e}\begin{split}
\cD_{\om}^2(\al)=\Big\{\!\big(\eta;\be_{\bu}^2,k_1,k_2,L_{\bu}^2\big)\!\!:\,
&\eta\!\in\!\cD_{\om}(\al),\,\be_{\bu}^2\!\in\!H_2^{\om}(X,Y),\,
k_1,k_2\!\in\![k_\bu(\eta)\!+\!2],\,L_{\bu}^2\!\subset\!L_{\bu}(\eta),\\
&\be_{\bu}(\eta)\!-\!\be_{\bu}^2\!\in\!H_2^{\om}(X,Y),\,k_1\!<\!k_2,\\
&(\be_{\bu}^2,k_2\!-\!1\!-\!k_1,L_{\bu}^2)\!\neq\!(0,0,\eset),(0,1,\eset),
(\be_{\bu}(\eta),k_{\bu}(\eta),L_{\bu}(\eta)\!)\!\Big\}.
\end{split}\EE
For an element $(\eta;\be_{\bu}^2,k_1,k_2,L_{\bu}^2)$ of $\cD_{\om}^2(\al)$, let
\begin{gather*}
K=[k_2\!-\!1]\!-\![k_1], ~ k_{\bu}^1=k_{\bu}(\eta)\!-\!|K|\!+\!1, 
~ k_{\bu}^2=|K|,  ~
K^2=\bigsqcup_{i\in K}\!K_{i-1}(\eta), ~
L^2=L_{\bu}^2\!\sqcup\!\bigsqcup_{i\in K}\!L_{i-1}(\eta),\\
\begin{split}
\be^2&=\be_{\bu}^2\!+\!\sum_{i\in K}\be_{i-1}(\eta),\\
\al^2&=\big(\be^2,K^2,L^2\big),
\end{split}
\quad
\al_i^1=\begin{cases}
\al_i(\eta),&\hbox{if}~i\!\in\![k_1\!-\!1];\\
\al^2,&\hbox{if}~i\!=\!k_1;\\
\al_{i-2+k_2-k_1}(\eta),&\hbox{if}~i\!\in\![k_{\bu}^1]\!-\![k_1].
\end{cases}
\end{gather*}
In particular, $\al^2\!\in\!\cC_{\om;\al}(Y)$,
\begin{equation*}\begin{split}
\eta^1&\equiv\big(\be_{\bu}(\eta)\!-\!\be_{\bu}^2,k_{\bu}^1,
L_{\bu}(\eta)\!-\!L_{\bu}^2,(\al_i^1)_{i\in[k_{\bu}^1]}\big)
\in\cD_{\om}(\al),\\
\eta^2&\equiv\big(\be_{\bu}^2,k_{\bu}^2,L_{\bu}^2,(\al_{i+k_1-1})_{i\in[k_{\bu}^2]}\big)
\in\cD_{\om}(\al^2).
\end{split}\end{equation*}
We note that the resulting map
\BE{BCpseudo_e11}\begin{split}
&\cD_{\om}^2(\al)\lra \ov\cD_{\om}^2(\al)\!\equiv\!\big\{(\eta;i,\eta')\!:\eta\!\in\!\cD_{\om}(\al),\,
i\!\in\![k_{\bu}(\eta)],\, \eta'\!\in\!\cD_{\om}(\al_i(\eta)\!)\big\},\\
&\hspace{1in} (\eta;\be_{\bu}^2,k_1,k_2,L_{\bu}^2)\lra(\eta^1;k_1,\eta^2),
\end{split}\EE
is bijective.

For $(\eta;i,\eta')\!\in\!\cD_{\om}^2(\al)$, define 
$$\fbb_{\eta}(i,\eta')=
\bigg(\!\evb_1\!\!:\M^+_{\eta;J}\!\fiber\!
\big(\!(j\!+\!1,\fb_{\al_{j}(\eta)})_{j\in[k_{\bu}(\eta)]-\{i\}},
(i\!+\!1,\fbb_{\eta'});(j,\Ga_j)_{\Ga_j\in L_{\bu}(\eta)}\big)\lra Y\!\bigg).$$
For $(\eta;\vt)\!\in\!\cD_{\om}^2(\al)$ with 
$\vt\!\equiv\!(\be_{\bu}^2,k_1,k_2,L_{\bu}^2)$, let
\BE{BCpseudo_e12}\cS_{\eta}^+(\vt)\subset \prt\M^+_{\eta;J}\EE
be the subspace consisting of $J$-holomorphic maps from $(\D^2\!\v\!\D^2,S^1\!\v\!S^1)$
to $(X,Y)$ with the second component of degree~$\be_{\bu}^2$
and carrying the boundary marked points indexed by $[k_2\!-\!1]\!-\![k_1]$ 
and the interior marked points indexed by~$L_{\bu}^2$.
These subspaces are the topological components of $\prt\M^+_{\eta;J}$
and thus inherit orientations from the orientation of~$\M^+_{\eta;J}$.
Let 
$$\fbb_\eta(\vt)\equiv 
\Big(\evb_1\!\!: \cS_{\eta}^+(\vt)
\!\fiber\!\!\big(\!(i\!+\!1,\fb_{\al_{i}(\eta)})_{i\in[k_\bu(\eta)]};
(i,\Ga_i)_{\Ga_i\in L_\bu(\eta)}\big)\lra Y\Big);$$
see the second diagram in Figure~\ref{cSetavt_fig}.
If $(\eta^1;k_1,\eta^2)$ is the image of $(\eta;\vt)$
under~\eref{BCpseudo_e11},
Corollary~\ref{Scompare_crl} with $B\!=\!\{\pt\}$ gives
\BE{BCpseudo_e10}\fbb_{\eta}(\vt)
=(-1)^{k_1+k_2}\cdot(-1)^{k_{\bu}(\eta^2)}\fbb_{\eta^1}\big(k_1,\eta^2\big)
=-\fbb_{\eta^1}\big(k_1,\eta^2\big).\EE

\begin{figure}
\begin{pspicture}(-2.5,-4.6)(10,.8)
\psset{unit=.5cm}
\pscircle[linewidth=.05](2,-3){2}
\pscircle*(0,-3){.1}\pscircle*(4,-3){.1}\pscircle*(2,-1){.1}\pscircle*(2,-5){.1}
\pscircle*(.59,-1.59){.1}\pscircle*(.59,-4.41){.1}
\pscircle*(3.41,-1.59){.1}\pscircle*(3.41,-4.41){.1}
\rput(-.4,-2.9){\sm{$1$}}\rput(.7,-3.9){\sm{$2$}}\rput(2,-4.5){\sm{$3$}}
\rput(3.1,-4){\sm{$4$}}\rput(3.5,-2.9){\sm{$5$}}\rput(3.1,-2){\sm{$6$}}
\rput(2,-1.5){\sm{$7$}}\rput(.7,-2.1){\sm{$8$}}
\rput(.2,-4.8){\sm{$\al_1$}}\rput(2,-5.5){\sm{$\al_2$}}\rput(3.8,-4.8){\sm{$\al_3$}}
\rput(4.6,-3.1){\sm{$\al_4$}}\rput(3.8,-1.2){\sm{$\al_5$}}\rput(2.1,-.5){\sm{$\al_6$}}
\pscircle*(1.5,-3.3){.1}\pscircle*(2.5,-3.3){.1}
\rput(.2,-1.1){\sm{$\al_7$}}
\rput(1.5,-2.75){\sm{$\Ga_1$}}\rput(2.6,-2.75){\sm{$\Ga_2$}}
\rput(2.2,.7){$\fbb_{\eta}$}
\rput(2.2,-6.7){\sm{$\eta\!=\!\big(\be_{\bu},7,\{\Ga_1,\Ga_2\},(\al_i)_{i\in[7]}\big)$}}
\pscircle[linewidth=.05](15,-3){1.5}\pscircle[linewidth=.05](18,-3){1.5}
\pscircle*(16.5,-3){.1}
\pscircle*(13.5,-3){.1}\pscircle*(15,-4.5){.1}\pscircle*(15,-1.5){.1}
\pscircle*(13.94,-4.06){.1}\pscircle*(13.94,-1.94){.1}
\pscircle*(19.5,-3){.1}\pscircle*(18,-4.5){.1}\pscircle*(18,-1.5){.1}
\rput(13.1,-2.9){\sm{$1$}}\rput(14.1,-3.5){\sm{$2$}}\rput(15,-4){\sm{$3$}}
\rput(18,-4){\sm{$4$}}\rput(19.1,-2.9){\sm{$5$}}\rput(18,-2){\sm{$6$}}
\rput(15,-2){\sm{$7$}}\rput(14.1,-2.5){\sm{$8$}}
\rput(13.4,-4.4){\sm{$\al_1$}}\rput(15,-5){\sm{$\al_2$}}\rput(18,-5){\sm{$\al_3$}}
\rput(20.1,-3.1){\sm{$\al_4$}}\rput(18,-1){\sm{$\al_5$}}\rput(15.1,-1){\sm{$\al_6$}}
\pscircle*(15.5,-3.3){.1}\pscircle*(17.5,-3.3){.1}
\rput(13.6,-1.5){\sm{$\al_7$}}
\rput(15.6,-2.8){\sm{$\Ga_2$}}\rput(17.7,-2.85){\sm{$\Ga_1$}}
\rput(16.5,.7){$\fbb_{\eta}(\vt)\!\approx\!-\fbb_{\eta^1}(k_1,\eta^2)$}
\rput(16.5,-6.7){\sm{$\vt\!=\!\big(\be_{\bu}^2,k_1\!=\!3,k_2\!=\!7,L_{\bu}^2\!=\!\{\Ga_1\}\big)$}}
\rput(16.5,-7.7){\sm{$\eta^1\!=\!\big(\be_{\bu}\!-\!\be_{\bu}^2,4,\{\Ga_2\},
(\al_1,\al_2,\al_6,\al_7)\!\big)$}}
\rput(16.5,-8.7){\sm{$\eta^2\!=\!\big(\be_{\bu}^2,3,\{\Ga_1\},(\al_3,\al_4,\al_5)\!\big)$}}
\end{pspicture}
\caption{Elements of the domains of $\fbb_{\eta}$ and $\fbb_{\eta}(\vt)$.}
\label{cSetavt_fig}
\end{figure}

For each $\eta\!\in\!\cD_{\om}(\al)$, 
Lemma~\ref{fibersign_lmm} and the first statement in~\eref{bdimprp_e} give
\begin{equation*}\begin{split}
\prt\,\dom(\fbb_\eta)= &
\bigsqcup_{(\eta;\vt)\in\cD_{\om}^2(\al)}\hspace{-.25in}\dom\big(\fbb_\eta(\vt)\!\big)\\
&~~~\sqcup
\bigsqcup_{i=1}^{k_\bu(\eta)}\!\!
\M^+_{\eta;J}\!\fiber\!\!\big(\!(j\!+\!1,\fb_{\al_{j}(\eta)})_{j\in[k_\bu(\eta)]-\{i\}},
(i\!+\!1,\prt\fb_{\al_i(\eta)});(j,\Ga_j)_{\Ga_j\in L_\bu(\eta)}\big).
\end{split}\end{equation*}
Combining this with~\ref{BC0_it} and~\ref{BCprt_it} in Definition~\ref{bndch_dfn} with $\al'$ 
replaced by $\al_i(\eta)\!\prec\!\al$, we obtain
\BE{bdrybb_e}
\prt\fbb_{\al}= \!\!\!
\bigsqcup_{(\eta;\vt)\in\cD_{\om}^2(\al)}\hspace{-.25in}\fbb_\eta(\vt)\sqcup
\bigsqcup_{\begin{subarray}{c}(\eta^1;i,\eta^2)\in\ov\cD_{\om}^2(\al)\\
\dim(\al_i(\eta^1))\le n-2\end{subarray}}\hspace{-.4in}
\fbb_{\eta^1}\big(i,\eta^2\big)\,.\EE
By the bijectivity of~\eref{BCpseudo_e11} and~\eref{BCpseudo_e10}, 
each term $\fbb_{\eta^1}(i,\eta^2)$ in~\eref{bdrybb_e} 
cancels with the corresponding term~$\fbb_\eta(\vt)$.
Below we show that the remaining terms~$\fbb_\eta(\vt)$ either do not contribute 
to~$\prt\fbb_{\al}$ for dimensional reasons or cancel in pairs.

Suppose $(\eta;\vt)\!\in\!\cD_{\om}^2(\al)$, 
$\vt\!=\!(\be_{\bu}^2,k_1,k_2,L_{\bu}^2)$, and
the image $(\eta^1;k_1,\eta^2)\!\in\!\ov\cD_{\om}^2(\al)$ of $(\eta;\vt)$
under~\eref{BCpseudo_e11} does not satisfy 
the inequality in~\eref{bdrybb_e} with $i\!=\!k_1$.
Thus, $\dim\,\fbb_{\al_{k_1}(\eta^1)}\!\ge\!n$.
Let $\be_{\bu}^1\!=\!\be_{\bu}(\eta^1)$, $k_{\bu}^1\!=\!k_{\bu}(\eta^1)$,
and $L_{\bu}^1\!=\!L_{\bu}(\eta^1)$.
Since
\begin{equation*}\begin{split}
\dim\,\M^+_{\eta^1;J}\!\fiber\!\!
\big(\!(i\!+\!1,\fb_{\al_i(\eta^1)})_{i\in[k_\bu^1]-\{k_1\}};
(i,\Ga_i)_{\Ga_i\in L_\bu^1}\big)
\!-\!\big(n\!-\!\dim\,\fbb_{\al_{k_1}}(\eta^1)\big)
&=\dim\,\fbb_{\eta}\!-\!1\\
&=\dim(\al) 
\end{split}\end{equation*}
by~\eref{BCpseudo_e3}, it follows~that
\BE{BCpseudo_e16}\begin{split}
&\dim\,\M_{k^1_\bu,L^1_\bu}(\be_\bu^1;J)
\!\fiber\!\!
\big(\!(i\!+\!1,\fb_{\al_i(\eta^1)})_{i\in[k_1-1]},
(i,\fb_{\al_i(\eta^1)})_{i\in[k_\bu^1]-[k_1]};
(i,\Ga_i)_{\Ga_i\in L_\bu^1}\big) 
<\dim(\al).
\end{split}\EE
If $\be_{\bu}^1\!\neq\!0$, or $k_{\bu}^1\!\ge\!3$, or $L_{\bu}^1\!\neq\!\eset$, 
the map $\fbb_{\eta}(\vt)$ thus factors through a manifold
of dimension less than~$\dim(\al)$.
Thus, $\fbb_{\eta}(\vt)$ does not contribute to~$\prt\fbb_{\al}$ in this case.

The remaining case is $\be_{\bu}^1\!=\!0$, $k_{\bu}^1\!=\!2$, and $L_{\bu}^1\!=\!\eset$.
The associated boundary terms come in pairs arising from two elements $\eta\!\in\!\cD_{\om}(\al)$
with the same $\be_{\bu}(\eta)$, $k_{\bu}(\eta)$, and $L_{\bu}(\eta)$ and with the tuples 
$(\al_i(\eta)\!)_{i\in k_{\bu}(\eta)}$ differing by the circular permutation moving
the first component to the last position.
The pair $(k_1,k_2)$ is $(2,k_{\bu}(\eta)\!+\!2)$ in one case and 
$(1,k_{\bu}(\eta)\!+\!1)$ in the other.
Since dimension of~$Y$ is odd and the dimension of every $\fb_{\al_i(\eta)}$ is even,
\eref{BCpseudo_e10} and Lemma~\ref{fibprodflip_lmm} imply that the boundary terms in
each such pair come with opposite orientations and thus cancel.
\end{proof}

\begin{proof}[{\bf{\emph{Proof of Proposition~\ref{bndch_prp}}}}]
We use induction with respect to the partial order $\prec$ on 
$\cC_{\om}(Y)$ defined in Section~\ref{Notation_subs}.
We assume that~$(\al,J)$ and $(\fb_{\al'})_{\al'\in\cC_{\om;\al}(Y)}$ 
are as in the statement of Lemma~\ref{BCpseudo_lmm} with 
$-2\!<\!\dim(\al)\!\le\!n\!-\!2$.
By this lemma, $\fbb_{\al}$ is then a pseudocycle~with
\BE{bndch_e3}\dim\,\fbb_{\al}=\dim(\al)\!+\!1\le n\!-\!1.\EE
By~\eref{dimeven_e}, this dimension is odd.
Since $Y$ is a rational homology sphere, there exists a bordered pseudocycle~$\fb_{\al}$
into~$Y$ satisfying~\ref{BCdim_it} and~\ref{BCprt_it} in Definition~\ref{bndch_dfn}
with~$\al'$ replaced by~$\al$.
\end{proof}

\vspace{.1in}

\begin{rmk}\label{bndch_rmk}
Without the condition~\eref{dimeven_e}, 
the pseudocycle~$\fbb_{\al}$ in~\eref{bndch_e3} could be of dimension~0. 
If $\eta\!\in\!\cD_{\om}(\al)$ and $\fbb_{\eta}\!\neq\!\eset$ in such a case,
then either
\begin{gather*}
\eta=\big(0,0,\{\Ga\},()\!\big)~~\hbox{with}~\Ga\in\PC(X),~\dim\,\Ga=n,
\qquad\hbox{or}\\
\eta=(0,2,\eset,(\al_1,\al_2)\!\big)~~\hbox{with}~\al_1,\al_2\in\cC_{\om;\al}(Y),~
\dim\,\fb_{\al_1}\!+\!\dim\,\fb_{\al_2}=\dim\,Y.
\end{gather*}
The first possibility could be excluded by requiring that $\Ga\!\cap\!Y\!=\eset$
whenever $\dim\,\Ga\!=\!n$.
Since \hbox{$\dim\,Y\!\not\in\!2\Z$}, 
Lemma~\ref{fibprodflip_lmm} implies that 
$$\fbb_{(0,2,\eset,(\al_1,\al_2))}=-\fbb_{(0,2,\eset,(\al_2,\al_1))}$$
in the second case.
Thus, the pseudocycles $\fbb_{\eta}$ in this case cancel in pairs.
In either case, we could thus take $\fb_{\al}\!=\!\eset$.
We alternatively could restrict the condition in Definition~\ref{bndch_dfn}\ref{BCprt_it} 
to $\al'\!\in\!\cC_{\om;\al}(Y)$ with $-2\!<\!\dim(\al')\!\le\!n\!-\!2$
and treat the additional special case~$\al_{k_1}\!(\eta^1)$ in the proof of Lemma~\ref{BCpseudo_lmm} 
just as in the proof of Lemma~\ref{psisot_lmm} below.
\end{rmk}

\begin{lmm}\label{psisot_lmm}
Let $\al_0,\al_1,\wt\al$, $J_0,J_1,\wt{J}$, and 
$(\fb_{0;\al'})_{\al'\in\cC_{\om;\al_0}(Y)}$ and $(\fb_{1;\al'})_{\al'\in\cC_{\om;\al_1}(Y)}$
be as in Definition~\ref{psisot_dfn} so that $\wt\Ga\!\cap\!Y\!=\!\eset$ for every 
$\wt\Ga\!\in\!\wt{L}(\wt\al)$ with $\dim\,\wt\Ga\!=\!n$.
If $(\fb_{\wt\al'})_{\wt\al'\in\wt\cC_{\om;\wt\al}(Y)}$ is a pseudo-isotopy 
on $(\wt\al,\wt{J})$ between $(\fb_{0;\al'})_{\al'\in\cC_{\om;\al_0}(Y)}$
and $(\fb_{1;\al'})_{\al'\in\cC_{\om;\al_1}(Y)}$,
then the map~$\fbb_{\wt\al}$ in~\eref{wtfbbdfn_e} is a bordered pseudocycle~with
\BE{psisot_e}\dim\,\fbb_{\wt\al}=\dim(\wt\al)\!+\!2 
\quad\hbox{and}\quad
\prt\fbb_{\wt\al}=\{0\}\!\times\!\fbb_{0;\al_0}\!-\!\{1\}\!\times\!\fbb_{1;\al_1}.\EE
\end{lmm}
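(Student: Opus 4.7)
The strategy is to mirror the proof of Lemma~\ref{BCpseudo_lmm}, with three new ingredients due to the extra base direction $[0,1]$ of the path~$\wt{J}$: dimensions shift by one, the parity relations flip as in~\eref{bdimprp_e}, and new boundary contributions appear from the endpoints $t\!=\!0,1$. First I would verify the dimension formula. Substituting $\dim\fb_{\wt\al_i(\wt\eta)}\!=\!\dim(\wt\al_i(\wt\eta))\!+\!3$ from~\ref{isodim_it} and $\dim\M^+_{\wt\eta;\wt{J}}\!=\!\dim\M^+_{\wt\eta;J_t}\!+\!1$ into the fiber product in~\eref{wtfbbetadfn_e0}, a calculation parallel to~\eref{BCpseudo_e3} yields $\dim\fbb_{\wt\eta}\!=\!\dim(\wt\al)\!+\!2$ for every $\wt\eta\!\in\!\cD_\om(\wt\al)$, confirming the first claim of~\eref{psisot_e}.

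Next I would identify the boundary of $\fbb_{\wt\al}$. By Lemma~\ref{fibersign_lmm}, the boundary of the domain of each $\fbb_{\wt\eta}$ decomposes into contributions from (a)~the nodal boundary of $\M^+_{\wt\eta;\wt{J}}$ over the interior of $[0,1]$, parametrized by an analogue $\cD_\om^2(\wt\al)$ of~\eref{cDonm2dfn_e} in the isotopy setting; (b)~the $t\!=\!0$ and $t\!=\!1$ endpoint fibers of the projection $\M^+_{\wt\eta;\wt{J}}\!\to\![0,1]$; and (c)~the boundaries $\prt\fb_{\wt\al_i(\wt\eta)}$ for each $i\!\in\![k_\bu(\wt\eta)]$. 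Applying~\ref{isoprt_it} expands each term of~(c) into three pieces: a nodal-disk fiber product analogous to a term from~(a), together with a $t\!=\!1$ insertion and a $t\!=\!0$ insertion. A bijection $\cD_\om^2(\wt\al)\!\to\!\ov\cD_\om^2(\wt\al)$ constructed as in~\eref{BCpseudo_e11}, together with a sign comparison from Corollary~\ref{Scompare_crl} in the spirit of~\eref{BCpseudo_e10}, then matches the nodal terms from~(a) with the nodal fiber-product pieces of~(c) and shows they cancel.

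The third step is to reassemble the $t\!=\!0,1$ remains into $\{0\}\!\times\!\fbb_{0;\al_0}$ and $\{1\}\!\times\!\fbb_{1;\al_1}$. Using the $\prt\wt\eta$ bookkeeping introduced in Section~\ref{Notation_subs}, each $\wt\eta\!\in\!\cD_\om(\wt\al)$ restricts at $t\!=\!0$ and $t\!=\!1$ to elements $\eta_0\!\in\!\cD_\om(\al_0)$ and $\eta_1\!\in\!\cD_\om(\al_1)$; summing (b) together with the $\{0\}$- and $\{1\}$-pieces of~(c) against these restrictions should rebuild the pseudocycles $\fbb_{\eta_0;J_0}$ and $\fbb_{\eta_1;J_1}$ of~\eref{fbbetadfn_e0}, giving the second claim of~\eref{psisot_e}. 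The analogue of the residual case $\be_\bu^1\!=\!0,\,k_\bu^1\!=\!2,\,L_\bu^1\!=\!\eset$ from the proof of Lemma~\ref{BCpseudo_lmm} is handled by Lemma~\ref{fibprodflip_lmm}, applied now over the even-dimensional base $[0,1]\!\times\!Y$ with odd-dimensional bordered pseudocycles~$\fb_{\wt\al_i(\wt\eta)}$ in view of~\eref{bdimprp_e}; the hypothesis $\wt\Ga\!\cap\!Y\!=\!\eset$ whenever $\dim\,\wt\Ga\!=\!n$ disposes of the subcase flagged in Remark~\ref{bndch_rmk}.

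The main obstacle will be the bookkeeping of signs. The prefactor $(-1)^{\binom{k_\bu(\wt\eta)}{2}}$ in~\eref{wtfbbetadfn_e0} differs from the $(-1)^{k_\bu(\eta)}$ in~\eref{fbbetadfn_e0}; the orientation of $\M^+_{\wt\eta;\wt{J}}$ is specified with the $[0,1]$-direction first and its compatibility with the product orientation depends on the parity of $\dim\M^+_{\wt\eta;J_t}$; and the signs from~\ref{isoprt_it} are $+\{1\}\!\times\!\fb_{1;\al_1'}\!-\!\{0\}\!\times\!\fb_{0;\al_0'}$, while~\eref{psisot_e} demands $+\{0\}\!\times\!\fbb_{0;\al_0}\!-\!\{1\}\!\times\!\fbb_{1;\al_1}$. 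Reconciling this overall sign reversal should come out of a careful application of Lemma~\ref{fibprodflip_lmm} and the orientation conventions of Section~\ref{Fp_subs}, tracked uniformly across all three kinds of boundary contributions above.
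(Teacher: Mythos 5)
Your proposal follows essentially the same route as the paper's proof: the dimension count via the analogue of~\eref{BCpseudo_e3}, the nodal-boundary cancellation via a bijection $\cD^2_\om(\wt\al)\!\to\!\ov\cD^2_\om(\wt\al)$ and Corollary~\ref{Scompare_crl}, the reassembly of the $t\!=\!0,1$ pieces into $\{0\}\!\times\!\fbb_{0;\al_0}\!-\!\{1\}\!\times\!\fbb_{1;\al_1}$, and dimension/pairing analysis of the residual terms. Two details deserve a closer look when you fill this in: the residual analysis here has an \emph{extra} branch not present in Lemma~\ref{BCpseudo_lmm}, since Definition~\ref{psisot_dfn}\ref{isoprt_it} also imposes the lower bound $-2\!<\!\dim(\wt\al')$, and when it fails the paper runs the dimension argument on the \emph{second} disk component $\wt\eta^2$ (with sub-cases $\be_\bu^2\!\neq\!0$ or $k_\bu^2\!+\!2|\wt L_\bu^2|\!\ge\!3$; $\be_\bu^2\!=\!0,k_\bu^2\!=\!0,\wt L_\bu^2\!=\!\{\wt\Ga\}$, which is where the $\wt\Ga\!\cap\!Y\!=\!\eset$ hypothesis enters; and $\be_\bu^2\!=\!0,k_\bu^2\!=\!2,\wt L_\bu^2\!=\!\eset$) rather than the first-component $\eta^1$ branch you cite from Lemma~\ref{BCpseudo_lmm}; and the endpoint boundary is computed not by inspection but by extending $\M^+_{\wt\eta;\wt J}$, the $\fb_{\wt\al'}$, and the $\wt\Ga_i$ over collars past $0,1\!\in\!\R$ and applying Lemma~\ref{Bdrop_lmm}, which is where the sign flip making $\{0\}\!\times\!\fbb_{0;\al_0}$ appear with a plus (opposite to the convention in~\ref{isoprt_it}) actually gets verified.
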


\begin{proof} The proof is similar to that of Lemma~\ref{BCpseudo_lmm}.
For each $\wt\eta\!\in\!\cD_{\om}(\wt\al)$, let $\fbb_{\wt\eta}$
be as in~\eref{wtfbbetadfn_e0}.
By Definition~\ref{psisot_dfn}\ref{isodim_it} with~$\wt\al'$ 
replaced by $\wt\al_i(\wt\eta)\!\prec\!\wt\al$,
\BE{BCpseudo_e3b}\begin{split}
\dim\,\fbb_{\wt\eta}=\,&
\big(\mu_Y^{\om}(\be_{\bu}(\wt\eta)\!)\!+\!(n\!-\!3)\!+\!(k_{\bu}(\wt\eta)\!+\!1)
\!+\!2|\wt{L}_{\bu}(\wt\eta)|\big)\!+\!1\\
&+\!\!\sum_{i=1}^{k_{\bu}(\wt\eta)}\!\!\!
\big(\dim(\wt\al_i(\wt\eta)\!)\!+\!3\big)
\!-\!(n\!+\!1)k_{\bu}(\wt\eta)
\!-\!\!\!\sum_{\wt\Ga\in\wt{L}_{\bu}(\wt\eta)}\!\!\!\!\!\codim\,\wt\Ga
=\dim(\wt\al)\!+\!2\,.
\end{split}\EE
Thus, the dimension of~$\fbb_{\wt\al}$ is $\dim(\wt\al)\!+\!2$.

We define $\cD_{\om}^2(\wt\al)$ and $\ov\cD_{\om}^2(\wt\al)$ as in~\eref{cDonm2dfn_e}
and~\eref{BCpseudo_e11} with~$\al$, $L_{\bu}(\eta)$, and $\al_i(\eta)$ replaced 
by~$\wt\al$, $\wt{L}_{\bu}(\eta)$, and~$\wt\al_i(\eta)$, respectively, and
a bijection
\BE{BCpseudo_e11b}\cD_{\om}^2(\wt\al) \lra \ov\cD_{\om}^2(\wt\al)\EE
as above~\eref{BCpseudo_e11}.
For $(\wt\eta;i,\wt\eta')\!\in\!\ov\cD_{\om}^2(\wt\al)$, let
$$\fbb_{\wt\eta}(i,\wt\eta')=
\bigg(\!\wt\evb_1\!\!:\M^+_{\wt\eta;\wt{J}}\!\fiber\!\!
\big(\!(j\!+\!1,\fb_{\wt\al_{j}(\wt\eta)})_{j\in[k_{\bu}(\wt\eta)]-\{i\}},
(i\!+\!1,\fbb_{\wt\eta'});(j,\wt\Ga_j)_{\wt\Ga_j\in\wt{L}_{\bu}(\wt\eta)}\big)
\lra [0,1]\!\times\!Y\!\!\bigg).$$
For $(\wt\eta;\vt)\!\in\!\cD_{\om}^2(\wt\al)$ with 
$\vt\!=\!(\be_{\bu}^2,k_1,k_2,\wt{L}_{\bu}^2)$, let
\BE{BCpseudo_e12b}\cS_{\wt\eta}^+(\vt)\subset \prt\M^+_{\wt\eta;\wt{J}}\EE 
be the subspace consisting of $\wt{J}$-holomorphic maps from $(\D^2\!\v\!\D^2,S^1\!\v\!S^1)$
to $[0,1]\!\times\!(X,Y)$ with the second component of degree~$\be_{\bu}^2$
and carrying the boundary marked points indexed by $[k_2\!-\!1]\!-\![k_1]$ 
and the interior marked points indexed by~$\wt{L}_{\bu}^2$.
This topological component of $\prt\M^+_{\wt\eta;\wt{J}}$
inherits an orientation from the orientation of~$\M^+_{\wt\eta;\wt{J}}$.
Define
$$\fbb_{\wt\eta}(\vt)\equiv 
\Big(\evb_1\!: \cS_{\wt\eta}^+(\vt)
\!\fiber\!\big(\!(i\!+\!1,\fb_{\wt\al_{i}(\wt\eta)})_{i\in[k_\bu(\wt\eta)]};
(i,\wt\Ga_i)_{\wt\Ga_i\in\wt{L}_\bu(\wt\eta)}\big)\lra [0,1]\!\times\!Y\!\Big).$$
If $(\wt\eta^1;k_1,\wt\eta^2)$ is the image of $(\wt\eta;\vt)$
under~\eref{BCpseudo_e11b},
Corollary~\ref{Scompare_crl} with $B\!=\![0,1]$, $k\!=\!k_{\bu}(\wt\eta)\!+\!1$, 
and $|I|\!=\!k_{\bu}(\wt\eta)$ give
\BE{BCpseudo_e10b}\begin{split}
(-1)^{\binom{k_{\bu}(\wt\eta)+1}{2}}\fbb_{\wt\eta}(\vt)
&=-(-1)^{k_2+k_{\bu}(\wt\eta)(k_1+k_2)}\cdot(-1)^{\binom{k_{\bu}(\wt\eta)+1}{2}}
\cdot(-1)^{\binom{k_{\bu}(\wt\eta^2)}{2}}\fbb_{\wt\eta^1}\big(k_1,\wt\eta^2\big)\\
&=-(-1)^{\binom{k_{\bu}(\wt\eta^1)+1}{2}-k_1}\fbb_{\wt\eta^1}\big(k_1,\wt\eta^2\big).
\end{split}\EE

\vspace{-.1in}

For $\wt\eta\!\in\!\cD_{\om}(\wt\al)$, 
Lemma~\ref{fibersign_lmm} and the last statement in~\eref{bdimprp_e} give
\begin{equation*}\begin{split}
&(-1)^{\binom{k_{\bu}(\wt\eta)}{2}}\!\cdot\!(-1)^{k_{\bu}(\wt\eta)}
\prt\,\dom\big(\fbb_{\wt\eta}\big|_{(0,1)}\big)
=\!\!
\bigsqcup_{(\wt\eta;\vt)\in\cD_{\om}^2(\wt\al)}\!\!\!\!\!\!\!\!\!\!
\fbb_{\wt\eta}(\vt)\\ 
&\hspace{1in}
\sqcup\!\bigsqcup_{i=1}^{k_\bu(\wt\eta)}\!\!(-1)^i
\M^+_{\wt\eta;\wt{J}}\!\fiber\!\!\big(\!(j\!+\!1,\fb_{\wt\al_j}(\wt\eta))_{j\in[k_\bu(\wt\eta)]-\{i\}},
(i\!+\!1,\prt\fb_{\wt\al_i(\wt\eta)});(j,\wt\Ga_j)_{\wt\Ga_j\in\wt{L}_\bu(\wt\eta)}\big).
\end{split}\end{equation*}
Combining this with~\ref{isozero_it} and~\ref{isoprt_it} 
in Definition~\ref{psisot_dfn} with $\wt\al'$ replaced by $\wt\al_i(\wt\eta)\!\prec\!\wt\al$, 
we obtain
\BE{wtcancel_e}
\fbb_{\wt\al}\big|_{(0,1)}=\!\!\!
\bigsqcup_{(\wt\eta;\vt)\in\cD_{\om}^2(\wt\al)}\hspace{-.22in}
(-1)^{\binom{k_{\bu}(\wt\eta)+1}{2}}\fbb_{\wt\eta}(\vt)
\sqcup\!\!\!
\bigsqcup_{\begin{subarray}{c}(\wt\eta^1;i,\wt\eta^2)\in\ov\cD_{\om}^2(\wt\al)\\
-2<\dim(\wt\al_i(\wt\eta^1))\le n-2\end{subarray}}\hspace{-.48in}
(-1)^{\binom{k_{\bu}(\wt\eta^1)+1}{2}+i}
\fbb_{\wt\eta^1}\big(i,\wt\eta^2\big)\,.\EE
By the bijectivity of~\eref{BCpseudo_e11b} and~\eref{BCpseudo_e10b}, 
each term $\fbb_{\wt\eta^1}(i,\wt\eta^2)$ in~\eref{wtcancel_e} 
cancels with the corresponding term~$\fbb_{\wt\eta}(\vt)$.
Below we show that the remaining terms~$\fbb_{\wt\eta}(\vt)$ either do not contribute 
to~$\prt\fbb_{\wt\al}|_{(0,1)}$ for dimensional reasons or cancel in pairs.

Let $(\wt\eta;\vt)\!\in\!\cD_{\om}^2(\wt\al)$, 
$\vt\!=\!(\be_{\bu}^2,k_1,k_2,\wt{L}_{\bu}^2)$, and
$(\wt\eta^1;k_1,\wt\eta^2)\!\in\!\ov\cD_{\om}^2(\wt\al)$ 
be the image of $(\wt\eta;\vt)$ under~\eref{BCpseudo_e11b}.
If the second inequality in~\eref{wtcancel_e} with $i\!=\!k_1$ fails, 
similar reasoning to that in the last two paragraphs
of the proof of Lemma~\ref{BCpseudo_lmm} and~\eref{BCpseudo_e3b}
imply that the term~$\fbb_{\wt\eta}(\vt)$ either does not contribute
to~$\prt\fbb_{\wt\eta}|_{(0,1)}$ for dimensional reasons or cancels with 
another term~$\fbb_{\wt\eta'}(\vt')$.

Suppose the first inequality in~\eref{wtcancel_e} with $i\!=\!k_1$ fails and 
$\wt\fbb_{\wt\eta^1}(k_1,\wt\eta^2)\!\neq\!\eset$.
Let $k_{\bu}^2\!=\!k_{\bu}(\wt\eta^2)$.
Since 
\BE{etadimsplit_e}\dim\,\M^+_{\wt\eta^2;\wt J}\!\fiber\!
\big(\!(i\!+\!1,\fb_{\wt\al_i(\wt\eta^2)})_{i\in[k_\bu^2]};
(i,\wt\Ga_i)_{\wt\Ga_i\in\wt{L}_{\bu}^2}\big)
=\dim\,\fbb_{\wt\eta^2}=\dim\big(\wt\al_{k_1}\!(\wt\eta^1)\!\big)\!+\!2,\EE
it follows that this dimension is~0.
Thus, 
$$\dim\,\M_{\wt\eta^2;\wt J}\!\fiber\!
\big(\!(i,\fb_{\wt\al_i(\wt\eta^2)})_{i\in[k_\bu^2]};
(i,\wt\Ga_i)_{\wt\Ga_i\in\wt{L}_{\bu}^2}\big)
=-1.$$
If $\be_{\bu}^2\!\neq\!0$ or $k_{\bu}^2\!+\!2|\wt{L}_{\bu}^2|\!\ge\!3$, 
this implies that $\wt\fbb_{\wt\eta^1}(k_1,\wt\eta^2)\!=\!\eset$.
If $\be_{\bu}^2\!=\!0$, $k_{\bu}^2\!=\!0$, and $\wt{L}_{\bu}^2\!=\!\{\wt\Ga\}$ 
is a single-element set,
then the dimension of~$\wt\Ga$ is~$n$.
Since $\wt\Ga$ is then disjoint from~$Y$, it follows that
$\wt\fbb_{\wt\eta^1}(k_1,\wt\eta^2)\!=\!\eset$ in this case as well.
The remaining case is $\be_{\bu}^2\!=\!0$, $k_{\bu}^2\!=\!2$, and $\wt{L}_{\bu}^2\!=\!\eset$.
The associated boundary terms come in pairs arising from the same $k_1$ and $k_2\!=\!k_1\!+\!3$
and from two elements $\wt\eta\!\in\!\cD_{\om}(\wt\al)$ with the same 
$\be_{\bu}(\wt\eta)$, $k_{\bu}(\wt\eta)$, and $\wt{L}_{\bu}(\wt\eta)$ 
and with the tuples $(\wt\al_i(\wt\eta))_{i\in k_{\bu}(\wt\eta)}$ 
differing by the transposition interchanging the $k_1\!+\!1$ and $k_1\!+\!2$ entries.
By Lemma~\ref{fibprodflip_lmm} and the last statement in~\eref{bdimprp_e},
the associated cycles~$\fbb_{\wt\eta^2}$ have opposite orientations.
Along with~\eref{BCpseudo_e10b}, this implies that
the paired up boundary terms~$\fbb_{\wt\eta}(\vt)$
come with opposite orientations as well and thus cancel.

Let $\wt\eta\!\in\!\cD_{\om}(\wt\al)$, $\eta_0\!\in\!\cD_{\om}(\al_0)$, 
and $\eta_1\!\in\!\cD_{\om}(\al_1)$ be so~that
\BE{eta01dfn_e}\prt\wt\eta=\{1\}\!\times\!\eta_1\!-\!\{0\}\!\times\!\eta_0 \,.\EE
In order to compute the signs of the boundary terms of $\fbb_{\wt\eta}$ over~0 and~1,
we extend $\M^+_{\wt\eta;\wt J}$, $\fb_{\wt\al'}$ with $\wt\al'\!\prec\!\wt\al$,
and $\wt\Ga_i\!\in\!\wt{L}_{\bu}(\wt\eta)$ past their boundaries over $0,1\!\in\!\R$.
In other words, let 
$$\M^{+'}_{\wt\eta;\wt J}=\Big(\!\!
\big(\!(-1,0]\!\times\!\M^+_{\eta_0;J_0}\big)\!\sqcup\!\M^+_{\wt\eta;\wt J}\!\sqcup\!
\big([1,2)\!\times\!\M^+_{\eta_1;J_1}\big)\!\!\Big)\big/\!\!\sim$$
with the identifications~$\sim$ of the elements $(0,\u)$ and $(1,\u)$ of $\M^+_{\wt\eta;\wt J}$
with same elements in the added collars.
Let
$$\wt\evb'_i\!:
\M^{+'}_{\wt\eta;\wt J}\lra \R\!\times\!Y\ (\tn{resp. }\R\!\times\!X)$$
be extensions of $\wt\evb_i$ with $i\!\in\![k_{\bu}(\wt\eta)]$ 
(resp.~$i\!\in\!\wt{L}_{\bu}(\wt\eta)$)
so that their compositions
with the projections to~$\R$ restrict over the two collars to the projections to the first factor.
We similarly extend the domains of $\fb_{\wt\al'}$ and $\wt\Ga_i$ by collars over
$(-1,0)$ and $(1,2)$ and then extend the maps $\fb_{\wt\al'}$ and $\wt\Ga_i$ to smooth maps
$\fb_{\wt\al'}'$ and~$\wt\Ga_i'$ to $\R\!\times\!Y$ and $\R\!\times\!X$, respectively.
We extend the use of the notation~$\fiber$ defined at the end of Section~\ref{Notation_subs}
to~$\M^{+'}_{\wt\eta;\wt J}$.
We assume that the map extensions above are chosen generically so that all 
relevant fiber products are smooth.
Let
$$e'\!: \M^{+'}_{\wt\eta;\wt J}\!\fiber\!
\big(\!(i\!+\!1,\wt\fb'_{\wt\al_i(\wt\eta)})_{i\in[k_\bu(\wt\eta)]};
(i,\wt\Ga'_i)_{\wt\Ga_i\in\wt{L}_\bu(\wt\eta)}\big)
\lra\R$$
be the projection map.

Let $\io\!:[0,1]\!\lra\!\R$ be the inclusion.
By Lemma~\ref{fibprodemd_lmm}, \eref{BCpseudo_e3b}, and~\eref{dimeven_e}, 
\begin{equation*}\begin{split}
&\M^+_{\wt\eta;\wt J}\!\fiber\!
\big(\!(i\!+\!1,\fb_{\wt\al_i(\wt\eta)})_{i\in[k_\bu(\wt\eta)]};
(i,\wt\Ga_i)_{\wt\Ga_i\in\wt{L}_\bu(\wt\eta)}\big)\\
&\hspace{1in}
=-\Big([0,1]_{\io}\!\!\times_{e'}\!\!
\Big(\M^{+'}_{\wt\eta;\wt J}\!\fiber\!\!
\big(\!(i\!+\!1,\fb'_{\wt\al_i(\wt\eta)})_{i\in[k_\bu(\wt\eta)]};
(i,\wt\Ga'_i)_{\wt\Ga_i\in\wt{L}_\bu(\wt\eta)}\big)\!\Big)\!\Big). 
\end{split}\end{equation*}
Along with Lemma~\ref{fibersign_lmm}, this implies that  
\begin{equation*}\begin{split}
&\prt\big(\dom\,\fbb_{\wt\eta}\big)
=(-1)^{\binom{k_{\bu}(\wt\eta)}{2}}\bigg(\!\!
\big(\{1\}\!-\!\{0\}\big)\!\,_{\io}\!\!\times_{e'}\!\!
\Big(\M^{+'}_{\wt\eta;\wt J}\!\fiber\!\!
\big(\!(i\!+\!1,\fb'_{\wt\al_i(\wt\eta)})_{i\in[k_\bu(\wt\eta)]};
(i,\wt\Ga'_i)_{\wt\Ga_i\in\wt{L}_\bu(\wt\eta)}\big)\!\Big)\\
&\hspace{2.2in}\sqcup[0,1]_{\io}\!\!\times_{e'}\!
\prt\Big(\M^{+'}_{\wt\eta;\wt J}\!\fiber\!\!
\big(\!(i\!+\!1,\fb'_{\wt\al_i(\wt\eta)})_{i\in[k_\bu(\wt\eta)]};
(i,\wt\Ga'_i)_{\wt\Ga_i\in\wt{L}_\bu(\wt\eta)}\big)\!\Big)\!\!\bigg).
\end{split}\end{equation*}
Applying Lemma~\ref{Bdrop_lmm}, we then obtain 
$$\prt\wt\fbb_\eta
=(-1)^{\binom{k_{\bu}(\wt\eta)}{2}}\!\cdot\!(-1)^{k_{\bu}(\wt\eta)+\binom{k_{\bu}(\wt\eta)+2}{2}}
\!\cdot\!(-1)^{k_{\bu}(\wt\eta)}\!
\big(\{1\}\!\times\!\fbb_{1;\eta_1}\!-\!\{0\}\!\times\!\fbb_{0;\eta_0}\big)
\!+\!\prt\fbb_{\wt\eta}\big|_{(0,1)}\,.$$
Since the last term above vanishes after summing over $\wt\eta\!\in\!\cD_{\om}(\wt\al)$, 
this establishes the claim.
\end{proof}

\begin{proof}[{\bf{\emph{Proof of Proposition~\ref{psisot_prp}}}}]
We use induction with respect to the partial order $\prec$ on 
$\cC_{\om}(Y)$ defined in Section~\ref{Notation_subs}.
We assume $\al_0,\al_1,\wt\al$, $J_0,J_1,\wt{J}$, 
$(\fb_{0;\al'})_{\al'\in\cC_{\om;\al_0\!}(Y)}$, $(\fb_{1;\al'})_{\al'\in\cC_{\om;\al_1\!}(Y)}$,
and $(\fb_{\wt\al'})_{\wt\al'\in\wt\cC_{\om;\wt\al}(Y)}$ 
are as in the statement of Lemma~\ref{psisot_lmm} with 
$$-2<\dim(\wt\al)\le n\!-\!2.$$
By this lemma and~\eref{BCprt_e}, 
$\fbb_{\wt\al}\!+\!\{1\}\!\times\!\fb_{1;\al_1}\!-\!\{0\}\!\times\!\fb_{0;\al_0}$
is then a pseudocycle~with
$$0<\dim\big(\fbb_{\wt\al}\!+\!\{1\}\!\times\!\fb_{1;\al_1}\!-\!\{0\}\!\times\!\fb_{0;\al_0}\big)
=\dim(\wt\al)\!+\!2\le n.$$ 
By~\eref{dimeven_e}, this dimension is even.
Since $Y$ is a rational homology sphere, there exists a bordered pseudocycle~$\fb_{\wt\al}$
to $[0,1]\!\times\!Y$ satisfying~\ref{isodim_it} and~\ref{isoprt_it} in Definition~\ref{psisot_dfn}
with $\wt\al'$ replaced by~$\wt\al$.
\end{proof}

\subsection{Pseudo-isotopies and invariance of disk counts}
\label{countinv_subs}

We now complete the proof of Theorem~\ref{countinv_thm}\ref{countinv_it} 
by establishing~\eref{cob_e}.
Its proof is similar to that of Lemma~\ref{psisot_lmm}, but is more combinatorially involved.
It uses Lemma~\ref{rhoRsgn_lmm} below.

For $\wt\al\!\in\!\wt\cC_{\om}(Y)$, 
let $\ov\cD_{\om}^2(\wt\al)$ be as in the proof of Lemma~\ref{psisot_lmm} and
\BE{ovCdom2dfn_e}\ov\cD_{\om}^{2*}(\wt\al)=
\big\{\!(\wt\eta;i,\wt\eta')\!\in\!\ov\cD_{\om}^2(\wt\al)\!:
(\be_{\bu}(\wt\eta),k_{\bu}(\wt\eta),\wt{L}_{\bu}(\wt\eta)\!)\!\neq\!(0,2,\eset)\big\}.\EE
We define a ``rotation" on the elements of $\cD_{\om}(\wt\al)$ by
\BE{rhodfn_e}
\rho\!:\cD_{\om}(\wt\al)\lra\cD_{\om}(\wt\al),\quad
\rho\big(\be_{\bu},k_{\bu},\wt{L}_{\bu},(\wt\al_i)_{i\in[k_{\bu}(\wt\eta)]}\big)
=\big(\be_{\bu},k_{\bu},\wt{L}_{\bu},
(\wt\al_2,\wt\al_3,\ldots,\wt\al_{k_{\bu}(\wt\eta)},\wt\al_1)\!\big).\EE
This bijection induces a bijection
\BE{rhoovcDdfn_e}\rho\!:\ov\cD_{\om}^2(\wt\al)\lra\ov\cD_{\om}^2(\wt\al), \quad
\quad \rho(\wt\eta;i,\wt\eta')=\begin{cases}(\rho(\wt\eta);i\!-\!1,\wt\eta'),&\hbox{if}~i\!>\!1;\\
(\rho(\wt\eta);k_{\bu}(\wt\eta),\wt\eta'),&\hbox{if}~i\!=\!1;
\end{cases}\EE
it restricts to a bijection on $\ov\cD_{\om}^{2*}(\wt\al)$.

For $i\!\in\![k_{\bu}(\wt\eta)]$, 
we define $\wt\eta\bsl i\!\in\!\cD_\om(\wt\al\!-\!\wt\al_i(\wt\eta)\!)$ by 
\BE{etabslidfn_e}
\begin{split}
\be_\bu(\wt\eta\bsl i)&=\be_\bu(\wt\eta),\\
k_\bu(\wt\eta\bsl i)&=k_\bu(\wt\eta)\!-\!1,\\
\wt{L}_\bu(\wt\eta\bsl i)&=\wt{L}_\bu(\wt\eta),
\end{split}\quad
\wt\al_j(\wt\eta\bsl i)=\begin{cases}
\wt\al_{j+i}(\wt\eta),&\hbox{if}~j\!\in\![k_\bu(\wt\eta)\!-\!i];\\
\wt\al_{j+i-k_\bu(\wt\eta)},&\hbox{if}~j\!\in\![k_\bu(\wt\eta)\!-\!1]\!-\![k_\bu(\wt\eta)\!-\!i].
\end{cases}\EE
Thus, $\wt\eta\bsl i$ is obtained from $\wt\eta$ by dropping the component $\wt\al_i(\wt\eta)$
and ordering the remaining components~$\wt\al_j(\wt\eta)$ starting from the next one
in the circular order. 

For $(\wt\eta;i,\wt\eta')\!\in\!\ov\cD_{\om}^{2*}(\wt\al)$, let
$$\wh\al_j^1=\begin{cases}\wt\al\!-\!\wt\al_i(\wt\eta),&\hbox{if}~j\!=\!1;\\
\wt\al_{j-1}(\wt\eta'),&\hbox{if}~j\!\in\![k_{\bu}(\wt\eta')\!+\!1]\!-\!\{1\};
\end{cases}\quad
\wh\eta=\big(\be_{\bu}(\wt\eta'),k_{\bu}(\wt\eta')\!+\!1,\wt{L}_{\bu}(\wt\eta'),
(\wh\al_j^1)_{j\in[k_{\bu}(\wt\eta')+1]}\big).$$
This construction induces a ``reflection"
\BE{Rdfn_e}R\!:\ov\cD_{\om}^{2*}(\wt\al)\lra \ov\cD_{\om}^{2*}(\wt\al), \quad
R(\wt\eta;i,\wt\eta')=(\wh\eta;1,\wt\eta\bsl i),\EE
such that $R^3\!=\!R$.
Furthermore, $R$ is invariant under the rotation~$\rho$ in~\eref{rhoovcDdfn_e} and
\BE{Rdimprp_e} \dim\big(\wt\al_i(\wt\eta)\!\big)+\dim\big(\wt\al_1(\wh\eta)\!\big)
=\dim(\wt\al)\!+\!n\!-\!3\EE
with the notation as in~\eref{Rdfn_e}.

For $\wt\eta\!\in\!\cD_{\om}(\wt\al)$, let $\fbb_{\wt\eta}$ be as in~\eref{wtfbbetadfn_e0}. 
If in addition $i\!\in\![k_{\bu}(\wt\eta)]$, define 
\BE{rhoMdfn_e}\begin{split}
\rho_{\M}\!:\M_{\wt\eta;\wt J}&\lra\M^+_{\wt\eta\bsl i;\wt J},\\
\big[u,(x_j)_{j\in[k_{\bu}(\wt\eta)]},(z_j)_{j\in\wt{L}(\wt\eta)}\big]
&\lra \big[u,(x_i,x_{i+1},\ldots,x_{k_{\bu}(\wt\eta)},x_1,\ldots,x_{i-1}),
(z_j)_{j\in\wt{L}(\wt\eta)}\big].
\end{split}\EE
For $(\wt\eta;i,\wt\eta')\!\in\!\ov\cD_{\om}^{2*}(\wt\al)$, let
$$\fbb^*_{\wt\eta}(i,\wt\eta')\equiv\M_{\wt\eta;\wt J}\!\fiber\!\!
\big(\!(j,\fb_{\wt\al_j(\wt\eta)})_{j\in[k_{\bu}(\wt\eta)]-\{i\}},
(i,\fbb_{\wt\eta'});(j,\wt\Ga_j)_{\wt\Ga_j\in\wt{L}_{\bu}(\wt\eta)}\big).$$ 
The ``backwards" cyclic
permutations of the boundary marked points of the moduli space in~\eref{rhoMdfn_e}
and of the pseudocycles to~$[0,1]\!\times\!Y$, 
\BE{rotdiffdfn_e}
\rho_{\fb}\!:\prod_{j\in[k_{\bu}(\wt\eta)]}\hspace{-.15in}(\dom\,\fb_{\wt\al_j(\wt\eta)})
\lra\prod_{j\in[k_{\bu}(\wt\eta)]}\hspace{-.15in}(\dom\,\fb_{\wt\al_j(\wt\eta)}),\EE
induce diffeomorphisms
$$\rho_{\wt\eta}\!:\fbb^*_{\wt\eta}\lra\fbb^*_{\rho(\wt\eta)}, \qquad
\rho_{\wt\eta;i,\wt\eta'}\!:\fbb^*_{\wt\eta}(i,\wt\eta')\lra
\begin{cases}\fbb^*_{\rho(\wt\eta)}(i\!-\!1,\wt\eta'),&\hbox{if}~i\!>\!1;\\
\fbb^*_{\rho(\wt\eta)}(k_{\bu}(\wt\eta),\wt\eta'),&\hbox{if}~i\!=\!1.
\end{cases}$$
The interchange of the moduli space components induces a diffeomorphism
$$R_{\wt\eta;i,\wt\eta'}\!: \fbb^*_{\wt\eta}(i,\wt\eta')\lra\fbb^*_{\wh\eta}(1,\wt\eta\bsl i),
\quad\hbox{where}~~(\wh\eta;1,\wt\eta\bsl i)\equiv R(\wt\eta;i,\wt\eta').$$
Let 
$$\ep(\wt\eta,i)=\binom{k_{\bu}(\wt\eta)\!+\!1}{2}\!+\!i\,.$$

\begin{lmm}\label{rhoRsgn_lmm}
Let $\wt\al\!\in\!\wt\cC_{\om}(Y)$.
The diffeomorphism $\rho_{\wt\eta}$ is orientation-preserving for every 
\hbox{$\wt\eta\!\in\!\cD_{\om}(\wt\al)$}.
The sign of the diffeomorphism
\begin{equation*}\begin{split}
&\M_{\wt\eta;\wt J}\!\fiber\!\!
\big(\!(j,\fb_{\wt\al_j(\wt\eta)})_{j\in[k_\bu(\wt\eta)]-\{i\}};
(j,\wt\Ga_j)_{\wt\Ga_j\in\wt{L}_\bu(\wt\eta)}\big)\\
&\hspace{1.5in}\approx
\M_{\wt\eta\bsl i;\wt J}^+\!\fiber\!\!
\big(\!(j\!+\!1,\fb_{\wt\al_j(\wt\eta\bsl i)})_{j\in[k_\bu(\wt\eta\bsl i)]};
(j,\wt\Ga_j)_{\wt\Ga_j\in\wt{L}_\bu(\wt\eta\bsl i)}\big)
\end{split}\end{equation*}
induced by $i\!-\!1$ ``backwards" cyclic permutations of the boundary marked points 
of the moduli space and of the pseudocycles to~$[0,1]\!\times\!Y$
is $(-1)^{i-1}$ for all \hbox{$\wt\eta\!\in\!\cD_{\om}(\wt\al)$} 
and $i\!\in\![k_{\bu}(\eta)]$.
The signs of the diffeomorphisms~$\rho_{\wt\eta;i,\wt\eta'}$ with $i\!\neq\!1$
and $R_{\wt\eta;i,\wt\eta'}$
are $-1$ and $(-1)^{\ep(\wt\eta,i)-\ep(\wh\eta,1)}$, respectively, 
for every \hbox{$(\wt\eta;i,\wt\eta')\!\in\!\ov\cD_{\om}^{2*}(\wt\al)$}.
\end{lmm}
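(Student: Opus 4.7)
The plan is to factor each of the four sign computations into two independent contributions: (i)~the change in orientation of the stable-map moduli space $\M_{\wt\eta;\wt{J}}$ (or $\M^+_{\wt\eta;\wt{J}}$) under the relabelling or splitting of its boundary marked points, determined by the OSpin orientation convention of Section~\ref{Ms_subs}, and (ii)~the change in orientation of the associated fiber product under the corresponding permutation of its factors, determined by the sign rule of Section~\ref{Fp_subs}. Throughout, the parity data I will use are: $\dim Y=n$ is odd; $\dim\wt\al_i(\wt\eta)\!\in\!2\Z$ for all $i$ by~\eref{dimeven_e}; $\dim\fb_{\wt\al'}$ is odd and $\dim\fbb_{\wt\al'}$ is even for every $\wt\al'\!\in\!\wt\cC_{\om;\wt\al}(Y)$ by~\eref{bdimprp_e}; and every $\wt\Ga\!\in\!\wt{L}_\bu(\wt\eta)$ is odd-dimensional since it is a pseudocycle to the odd-dimensional space $[0,1]\!\times\!X$.

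For the first claim, $\rho_{\wt\eta}$ is the composition of the self-map of $\M_{\wt\eta;\wt{J}}$ induced by the $k_\bu$-cycle relabelling of position-ordered boundary marked points with the $k_\bu$-cycle $\rho_{\fb}$ on the product of $\fb$-domains. First I would extract from Section~\ref{Ms_subs} that the relabelling acts on the orientation of $\M_{\wt\eta;\wt{J}}$ by the sign $(-1)^{k_\bu-1}$, and then observe that $\rho_{\fb}$, being $k_\bu-1$ adjacent transpositions of odd-dimensional factors, contributes the same sign by the rule in Section~\ref{Fp_subs}; the two signs cancel. The second claim is the same bookkeeping truncated after $i-1$ adjacent transpositions, contributing $(-1)^{i-1}$ on the $\fb$-side, while the moduli-side map $\rho_{\M}$ from~\eref{rhoMdfn_e} is designed to absorb the partial relabelling into the free marked point of $\M^+_{\wt\eta\bsl i;\wt{J}}$ without introducing a sign. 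The third claim follows by the same argument with the $i$-th $\fb$-factor replaced by the even-dimensional $\fbb_{\wt\eta'}$: compared with the first claim, the full cyclic shift now moves exactly one even factor past one odd factor on the $\fb$-side when $i\!\neq\!1$, flipping the sign once to give $-1$.

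The fourth claim, concerning the reflection $R_{\wt\eta;i,\wt\eta'}$, is the main obstacle. This map interchanges the two disk components of a nodal representative of a point in $\M^+_{\wt\eta;\wt{J}}$ over the codimension-one boundary stratum $\cS^+_{\wt\eta}(\vt)$ and redistributes the boundary and interior marked points between them, sending $(\wt\eta;i,\wt\eta')$ to $(\wh\eta;1,\wt\eta\bsl i)$. The moduli-space contribution comes from the component-interchange sign for maps from $(\D^2\!\v\!\D^2, S^1\!\v\!S^1)$ described in Section~\ref{Ms_subs}, depending on the dimensions of the two disk maps; by~\eref{Rdimprp_e}, the sum of these dimensions is fixed modulo~$2$, which lets the sign be expressed purely in terms of $k_\bu(\wt\eta)$ and $k_\bu(\wh\eta)$ together with the parities already determined by~\eref{bdimprp_e} and~\eref{dimeven_e}. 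On the fiber side, the interchange transposes the block of $\fb$- and $\wt\Ga$-factors attached to one bubble past the block attached to the other; since all these factors are odd-dimensional, the resulting sign is a product of odd-odd crossings that assembles into the binomial factors matching the $\binom{\cdot+1}{2}$ terms in $\ep(\wt\eta,i)$ and $\ep(\wh\eta,1)$, consistent with the signs already appearing in~\eref{BCpseudo_e10b} and in the proof of Lemma~\ref{psisot_lmm}. The hard part will be that the component-interchange in $\M^+$ interacts nontrivially with the cyclic ordering of the boundary marked points around the node and with the OSpin sign there, so unwinding these conventions consistently with those used in Corollary~\ref{Scompare_crl} and~\eref{BCpseudo_e10b} demands care; the relation $R^3=R$ noted after~\eref{Rdfn_e} will serve as a useful sanity check on the final sign.
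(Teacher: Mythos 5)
The factorizations you give for the first and third claims match the paper's argument (a commutative diagram relating $\rho_\M$, $\rho_Y$, $\rho_\fb$, with signs $(-1)^{k_\bu-1}$, $+1$, $(-1)^{k_\bu-1}$ respectively, combined by Lemma~\ref{fibprodflip_lmm}), and for the second claim your final answer is right even though the intermediate decomposition is off: the moduli-space side of $\rho_\M$ does carry a sign, namely $(-1)^{(k_\bu-1)(i-1)}$, and the $\fb$-side cyclic block shift contributes $(-1)^{(i-1)(k_\bu-i)}$ rather than $(-1)^{i-1}$; these combine to $(-1)^{i-1}$, but neither factor alone is what you claim. You should redo that bookkeeping carefully before writing it up.

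The real gap is the fourth claim. Your plan invokes ``the component-interchange sign for maps from $(\D^2\!\v\!\D^2,S^1\!\v\!S^1)$ described in Section~\ref{Ms_subs},'' but no such sign is defined there (nor anywhere in the paper); Section~\ref{diskbubb_subs} treats boundary strata and is not a component-interchange formula. Moreover $R_{\wt\eta;i,\wt\eta'}$ is \emph{not} a self-map of a nodal boundary stratum of $\M^+_{\wt\eta;\wt J}$: it is a diffeomorphism between two \emph{different} fiber-product spaces, $\fbb^*_{\wt\eta}(i,\wt\eta')$ built over $\M_{\wt\eta;\wt J}$ and $\fbb^*_{\wh\eta}(1,\wt\eta\bsl i)$ built over $\M_{\wh\eta;\wt J}=\M^+_{\wt\eta';\wt J}$, whose underlying moduli spaces are different. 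So the ``two independent contributions'' factorization you use for the first three claims has no analogue here, and the $R^3=R$ sanity check cannot rescue an argument built on a nonexistent lemma. The paper's proof avoids any component-interchange computation: it first uses Lemma~\ref{fibprodisom_lmm1} (together with Lemma~\ref{fibprodflip_lmm} and the parity data in~\eref{bdimprp_e}) to peel off the $\fbb_{\wt\eta'}$-insertion and express $\fbb^*_{\wt\eta}(i,\wt\eta')$ as the no-$i$-insertion fiber product fibered against $\dom\,\fbb_{\wt\eta'}$; then it applies the already-established \emph{second} claim to identify that no-$i$-insertion fiber product with $(-1)^{i-1}(-1)^{\binom{k_\bu(\wt\eta\bsl i)}{2}}\dom\,\fbb_{\wt\eta\bsl i}$; finally, since $\dom\,\fbb_{\wt\eta\bsl i}$, $\dom\,\fbb_{\wt\eta'}$, and $[0,1]\!\times\!Y$ are all even-dimensional, the fiber product $\fbb_{\wt\eta\bsl i}\!\fiber\!\fbb_{\wt\eta'}$ is invariant under swapping the two factors, and applying the same reduction with $\wh\eta$ in place of $\wt\eta$ yields the sign $(-1)^{\ep(\wt\eta,i)-\ep(\wh\eta,1)}$. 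Without this reduction to a manifestly symmetric expression, you have no handle on $R$.
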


\begin{proof}
The cyclic permutations of the boundary marked points of the elements of $\M_{\wt\eta;\wt J}$
and of the pseudocycles to~$[0,1]\!\times\!Y$ induce a commutative diagram
$$\xymatrix{\M_{\wt\eta;\wt J}\ar[r]^>>>>>{\ev}\ar[d]_{\rho_{\M}}&
\big([0,1]\!\times\!Y\big)^{k_{\bu}(\wt\eta)}\!\!\times\!\!
\big([0,1]\!\times\!X\big)^{\wt{L}_{\bu}(\wt\eta)}\ar[d]_{\rho_Y}&
\prod\limits_{j\in[k_{\bu}(\wt\eta)]}\hspace{-.15in}(\dom\,\fb_{\wt\al_j(\wt\eta)})
\!\times\!\!
\prod\limits_{\wt\Ga_j\in\wt{L}_{\bu}(\wt\eta)}
\hspace{-.12in}(\dom\,\wt\Ga_j) \ar[l]\ar[d]^{\rho_{\fb}}\\ 
\M_{\wt\eta;\wt J}\ar[r]^>>>>>{\ev}&
\big([0,1]\!\times\!Y\big)^{k_{\bu}(\wt\eta)}\!\!\times\!\!
\big([0,1]\!\times\!X\big)^{\wt{L}_{\bu}(\wt\eta)} &
\prod\limits_{j\in[k_{\bu}(\wt\eta)]}\hspace{-.15in}(\dom\,\fb_{\wt\al_j(\wt\eta)})
\!\times\!\!
\prod\limits_{\wt\Ga_j\in\wt{L}_{\bu}(\wt\eta)}\hspace{-.12in}(\dom\,\wt\Ga_j) \ar[l]}$$
so that the vertical arrows are diffeomorphisms.
Since the dimension of~$Y$ is odd, the diffeomorphism~$\rho_Y$ is orientation-preserving.
By the construction of the orientation on~$\M_{\wt\eta;\wt J}$ in Section~\ref{Ms_subs},
the sign of the diffeomorphism~$\rho_{\M}$ is~$(-1)^{k_{\bu}(\wt\eta)-1}$.
By the last statement in~\eref{bdimprp_e}, 
this is also the sign of the diffeomorphism~$\rho_{\fb}$.
The first claim of the lemma now follows from Lemma~\ref{fibprodflip_lmm}.
The claim concerning~$\rho_{\wt\eta;i,\wt\eta'}$ is obtained in the same way
by replacing the odd-dimensional insertion~$\fb_{\wt\al_i(\wt\eta)}$
by the even-dimensional insertion~$\fbb_{\wt\al_i(\wt\eta)}$.
Dropping the insertion~$\fb_{\wt\al_i(\wt\eta)}$ entirely,
we find that the diffeomorphism induced by the ``backwards" rotations is then
orientation-reversing; this establishes the second claim of the lemma.

By the second statement in~\eref{bdimprp_e} and 
Lemmas~\ref{fibprodflip_lmm} and~\ref{fibprodisom_lmm1}, 
$$\fbb^*_{\wt\eta}(i,\wt\eta')
\approx\Big(\M_{\wt\eta;\wt J}\!\fiber\!
\big(\!(j,\fb_{\wt\al_j(\wt\eta)})_{j\in[k_\bu(\wt\eta)]-\{i\}};
(j,\wt\Ga_j)_{\wt\Ga_j\in\wt{L}_\bu(\wt\eta)}\big)\!\!\Big)
\!\,_{\wt\evb_i}\!\!\!\times_{\fbb_{\wt\eta'}}\!\!\big(\dom\,\fbb_{\wt\eta'}\big).$$
Along with the second claim of the lemma, this gives
$$\fbb^*_{\wt\eta}(i,\wt\eta')
\approx (-1)^{i-1}(-1)^{\binom{k_\bu(\wt\eta\bsl i)}{2}}
\big(\dom\,\fbb_{\wt\eta\bsl i}\big)\!\,_{\fbb_{\wt\eta\bsl i}}
\!\!\times_{\fbb_{\wt\eta'}}\!\!\big(\dom\,\fbb_{\wt\eta'}\big).$$
Since the dimension of~$\fbb_{\wt\eta'}$ is even,
it follows that  
$$\fbb^*_{\wt\eta}(i,\wt\eta')
\approx (-1)^{\ep(\wt\eta,i)}\!\big(\fbb_{\wt\eta\bsl i}\!\fiber\!\fbb_{\wt\eta'}\big)
\approx (-1)^{\ep(\wt\eta,i)}\!\big(\fbb_{\wt\eta'}\!\fiber\!\fbb_{\wt\eta\bsl i}\big)
\approx (-1)^{\ep(\wt\eta,i)}(-1)^{\ep(\wh\eta,1)}
\fbb^*_{\wh\eta}(1,\wt\eta\bsl i).$$
This establishes the last claim of the lemma.
 \end{proof}

\begin{proof}[{\bf{\emph{Proof of~\eref{cob_e}}}}]
Let $\al_0,\al_1.\wt\al$, $J_0,J_1,\wt{J}$, 
$(\fb_{0;\al'})_{\al'\in\cC_{\om;\al_0}(Y)}$, $(\fb_{1;\al'})_{\al'\in\cC_{\om;\al_1}(Y)}$,
and $(\fb_{\wt\al'})_{\wt\al'\in\wt\cC_{\om;\wt\al}(Y)}$ 
be as in Definition~\ref{psisot_dfn2}.
With $\cD_{\om}^2(\wt\al)$ as in the proof of Lemma~\ref{psisot_lmm}, define
\BE{cDom2stdfn_e}\begin{split}
\cD_{\om}^{2*}(\wt\al)=&\big\{\!\big(\wt\eta;\be_{\bu}^2,k_1,k_2,\wt{L}_{\bu}^2\big)
\!\in\!\cD_{\om}^2(\wt\al)\!:k_2\!\le\!k_\bu(\wt\eta)\!+\!1,\,
(\be_{\bu}^2,k_2\!-\!k_1,\wt{L}_{\bu}^2)
\!\neq\!(\be_{\bu}(\wt\eta),k_{\bu}(\wt\eta),\wt{L}_{\bu}(\wt\eta)\!)\!\big\}\\
&\sqcup\big\{\!\big(\wt\eta;\be_{\bu}^2,0,1,\wt{L}_{\bu}^2\big)\!:k_\bu(\wt\eta)\!=\!0,\,
\big(\wt\eta;\be_{\bu}^2,1,2,\wt{L}_{\bu}^2\big)\!\in\!\cD_{\om}^2(\wt\al)\big\}.
\end{split}\EE
The rotation~$\rho$ on~$\cD_{\om}(\wt\al)$ defined in~\eref{rhodfn_e}
lifts to a bijection
\begin{gather}\label{rhocD2_e}
\rho\!:\cD_{\om}^{2*}(\wt\al)\lra \cD_{\om}^{2*}(\wt\al),\\
\notag
\rho\big(\wt\eta;\be_{\bu}^2,k_1,k_2,\wt{L}_{\bu}^2 \big)
=\begin{cases}
(\rho(\wt\eta);\be_{\bu}^2,k_1\!-\!1,k_2\!-\!1,\wt{L}_{\bu}^2),&\hbox{if}~k_1\!>\!1;\\
(\rho(\wt\eta);\be_{\bu}^2,k_{\bu}(\wt\eta),k_{\bu}(\wt\eta)\!+\!1,
\wt{L}_{\bu}^2),&\hbox{if}~k_2\!=\!2;\\
(\rho(\wt\eta);\be_{\bu}(\wt\eta)\!-\!\be_{\bu}^2,k_2\!-\!2,k_{\bu}(\wt\eta)\!+\!1,
\wt{L}_{\bu}(\wt\eta)\!-\!\wt{L}_{\bu}^2),&\hbox{if}~k_1\!=\!1,\,k_2\!>\!2;\\
(\rho(\wt\eta);\be_{\bu}(\wt\eta)\!-\!\be_{\bu}^2,0,1,
\wt{L}_{\bu}(\wt\eta)\!-\!\wt{L}_{\bu}^2),&\hbox{if}~k_1\!=\!0;
\end{cases}
\end{gather}
see Figure~\ref{rhocD2_fig}.

\begin{figure}
\begin{pspicture}(-2.5,-6)(10,-.2)
\psset{unit=.4cm}
\pscircle[linewidth=.05](2,-3){1.5}\pscircle[linewidth=.05](5,-3){1.5}
\pscircle*(3.5,-3){.1}
\pscircle*(2,-1.5){.1}\pscircle*(2,-4.5){.1}
\pscircle*(.94,-1.94){.1}\pscircle*(.94,-4.06){.1}
\pscircle*(5,-1.5){.1}\pscircle*(5,-4.5){.1}\pscircle*(6.06,-1.94){.1}
\rput(2.2,-5.2){\sm{$\wt\al_2(\wt\eta)$}}\rput(-.4,-4.3){\sm{$\wt\al_1(\wt\eta)$}}
\rput(7.3,-1.6){\sm{$\wt\al_j(\wt\eta)$}}
\psline[linewidth=.08]{->}(8,-3)(15.5,-3)\rput(12,-2.4){\sm{$\rho$}}
\pscircle[linewidth=.05](18,-3){1.5}\pscircle[linewidth=.05](21,-3){1.5}
\pscircle*(19.5,-3){.1}
\pscircle*(18,-1.5){.1}\pscircle*(18,-4.5){.1}
\pscircle*(16.94,-1.94){.1}\pscircle*(16.94,-4.06){.1}
\pscircle*(21,-1.5){.1}\pscircle*(21,-4.5){.1}\pscircle*(22.06,-1.94){.1}
\rput(21.2,-6.4){\sm{$\wt\al_1(\wt\eta')\!=\!\wt\al_2(\wt\eta)$}}
\rput(14,-6.4){\sm{$\wt\al_{k_{\bu}}(\wt\eta')\!=\!\wt\al_1(\wt\eta)$}}
\rput(25.4,-1.6){\sm{$\wt\al_{j-1}(\wt\eta')\!=\!\wt\al_j(\wt\eta)$}}
\psline[linewidth=.03]{->}(12.5,-5.8)(16.7,-4.2)
\psline[linewidth=.03]{->}(19,-6)(18.2,-4.8)
\pscircle[linewidth=.05](2,-11){1.5}\pscircle[linewidth=.05](5,-11){1.5}
\pscircle*(3.5,-11){.1}
\pscircle*(2,-9.5){.1}\pscircle*(2,-12.5){.1}
\pscircle*(.94,-9.94){.1}\pscircle*(.94,-12.06){.1}
\pscircle*(5,-9.5){.1}\pscircle*(5,-12.5){.1}\pscircle*(6.06,-9.94){.1}
\rput(2.2,-13.2){\sm{$\wt\al_1(\wt\eta)$}}\rput(5.2,-13.2){\sm{$\wt\al_2(\wt\eta)$}}
\rput(7.3,-9.6){\sm{$\wt\al_j(\wt\eta)$}}
\psline[linewidth=.08]{->}(8,-11)(15.5,-11)\rput(12,-10.4){\sm{$\rho$}}
\pscircle[linewidth=.05](18,-11){1.5}\pscircle[linewidth=.05](21,-11){1.5}
\pscircle*(19.5,-11){.1}
\pscircle*(18,-9.5){.1}\pscircle*(18,-12.5){.1}
\pscircle*(16.94,-12.06){.1}\pscircle*(22.06,-12.06){.1}
\pscircle*(21,-9.5){.1}\pscircle*(21,-12.5){.1}\pscircle*(22.06,-9.94){.1}
\rput(16,-8){\sm{$\wt\al_1(\wt\eta')\!=\!\wt\al_2(\wt\eta)$}}       
\rput(24,-9){\sm{$\wt\al_{k_{\bu}}(\wt\eta')\!=\!\wt\al_1(\wt\eta)$}} 
\rput(14,-14.4){\sm{$\wt\al_{j-1}(\wt\eta')\!=\!\wt\al_j(\wt\eta)$}}   
\psline[linewidth=.03]{->}(12.5,-13.8)(16.7,-12.2)
\psline[linewidth=.03]{->}(14.5,-8.7)(17.8,-9.3)
\end{pspicture}
\caption{The images $(\wt\eta'\!\equiv\!\rho(\wt\eta);\vt')$ of
two elements of $\cD_{\om}^{2*}(\wt\al)$ with $k_\bu\!\equiv\!k_\bu(\wt\eta)$ nonzero
under the map~$\rho$ in~\eref{rhocD2_e}.}
\label{rhocD2_fig}
\end{figure}

For an element $(\wt\eta;\be_{\bu}^2,k_1,k_2,\wt{L}_{\bu}^2)$ of $\cD_{\om}^2(\wt\al)$, let
\begin{gather*}
K=[k_2\!-\!1]\!-\![k_1], ~ k_{\bu}^1=k_{\bu}(\wt\eta)\!-\!|K|\!+\!1, 
~ k_{\bu}^2=|K|,  ~
\wt{K}^2=\bigsqcup_{i\in K}\!\wt{K}_i(\wt\eta), ~
\wt{L}^2=\wt{L}_{\bu}^2\!\sqcup\!\bigsqcup_{i\in K}\!\wt{L}_{i}(\wt\eta),\\
\begin{split}
\be^2&=\be_{\bu}^2\!+\!\sum_{i\in K}\be_i(\wt\eta),\\
\wt\al^2&=\big(\be^2,\wt{K}^2,\wt{L}^2\big),
\end{split}
\quad
\wt\al_i^1=\begin{cases}
\wt\al_i(\wt\eta),&\hbox{if}~i\!\in\![k_1];\\
\wt\al^2,&\hbox{if}~i\!=\!k_1\!+\!1;\\
\wt\al_{i-2+k_2-k_1}(\wt\eta),&\hbox{if}~i\!\in\![k_{\bu}^1]\!-\![k_1\!+\!1].
\end{cases}
\end{gather*}
In particular, $\wt\al^2\!\in\!\wt\cC_{\om;\wt\al}(Y)$,
\BE{corrdfn_e}\begin{split}
\wt\eta^1&\equiv\big(\be_{\bu}(\wt\eta)\!-\!\be_{\bu}^2,k_{\bu}^1,
\wt{L}_{\bu}(\wt\eta)\!-\!\wt{L}_{\bu}^2,(\wt\al_i^1)_{i\in[k_{\bu}^1]}\big)
\in\cD_{\om}(\wt\al),\\
\wt\eta^2&\equiv\big(\be_{\bu}^2,k_{\bu}^2,\wt{L}_{\bu}^2,
(\wt\al_{i+k_1})_{i\in[k_{\bu}^2]}\big)\in\cD_{\om}(\wt\al^2).
\end{split}\EE
The resulting map
\BE{BCpseudo_e11c}
\cD_{\om}^{2*}(\wt\al)\lra\ov\cD_\om^{2*}(\wt\al),\quad
\big(\wt\eta;\be_{\bu}^2,k_1,k_2,\wt{L}_{\bu}^2\big)\lra
\big(\wt\eta^1;k_1\!+\!1,\wt\eta^2\big),\EE
is well-defined and injective.
Its image consists of the elements $(\wt\eta;i,\wt\eta')$ of $\ov\cD_\om^{2*}(\wt\al)$
such~that either $i\!>\!1$ or $k_{\bu}(\wt\eta)\!+\!k_{\bu}(\wt\eta')\!=\!1$.
The map~\eref{BCpseudo_e11c} descends to a bijection from the quotient of 
the left-hand side by the equivalence relation generated 
by the map~$\rho$ in~\eref{rhocD2_e}
to the quotient of the right-hand side by the equivalence relation generated 
by~$\rho$ in~\eref{rhoovcDdfn_e} and~$R$ in~\eref{Rdfn_e}.

For $(\wt\eta;\vt)\!\in\!\cD_{\om}^{2*}(\wt\al)$ with 
$\vt\!=\!(\be_{\bu}^2,k_1,k_2,\wt{L}_{\bu}^2)$, we define
$\cS_{\wt\eta}(\vt)\!\subset\!\prt\M_{\wt\eta;\wt J}$ as in~\eref{BCpseudo_e12b}.
These subspaces are distinct if $k_{\bu}(\wt\eta)\!>\!0$; otherwise, the~tuples 
\BE{Sbndtuples_e} \big(\wt\eta;\be_{\bu}^2,0,1,\wt{L}_{\bu}^2\big) \quad\hbox{and}\quad 
\big(\wt\eta;\be_{\bu}(\wt\eta)\!-\!\be_{\bu}^2,0,1,\wt{L}_{\bu}(\wt\eta)\!-\!\wt{L}_{\bu}^2\big)\EE
describe the same subspace. 
Let
$$\fbb^*_{\wt\eta}(\vt)\equiv\cS_{\wt\eta}\big(\vt\big)
\!\!\fiber\!\!\big(\!(i,\fb_{\wt\al_i(\wt\eta)})_{i\in[k_\bu(\wt\eta)]};
(i,\wt\Ga_i)_{\wt\Ga_i\in\wt{L}_\bu(\wt\eta)}\big).$$
The bijection~\eref{rhocD2_e} induces a diffeomorphism 
$$\rho_{\wt\eta;\vt}\!:\fbb^*_{\wt\eta}(\vt)\lra \fbb^*_{\rho(\wt\eta)}(\vt'),
\quad\hbox{where}~\big(\rho(\wt\eta);\vt')\equiv\rho(\wt\eta;\vt).$$
By the first statement of Lemma~\ref{rhoRsgn_lmm}, this diffeomorphism is orientation-preserving.
If $(\wt\eta^1;k_1\!+\!1,\wt\eta^2)$ is the image of $(\wt\eta;\vt)$
under~\eref{BCpseudo_e11c}, 
Corollary~\ref{Scompare_crl} with $B\!=\![0,1]$ and $k,|I|\!=\!k_{\bu}(\wt\eta)$ gives
\BE{BCpseudo_e10c}\begin{split}
(-1)^{\binom{k_{\bu}(\wt\eta)+1}{2}}\fbb_{\wt\eta}^*(\vt)
&=(-1)^{\binom{k_{\bu}(\wt\eta)+1}{2}}\cdot(-1)^{k_2+k_{\bu}(\wt\eta)(k_1+k_2)}
\cdot(-1)^{\binom{k_{\bu}(\wt\eta^2)}{2}}\fbb_{\wt\eta^1}^*\big(k_1\!+\!1,\wt\eta^2\big)\\
&=-(-1)^{\ep(\wt\eta^1,k_1+1)}\fbb_{\wt\eta^1}^*\big(k_1\!+\!1,\wt\eta^2\big).
\end{split}\EE

For $\wt\eta\!\in\!\cD_{\om}(\wt\al)$, let 
$$s'(\wt\eta)=\begin{cases}s^*(\wt\eta),&\hbox{if}~k_\bu(\wt\eta)\!\neq\!0,\\
\frac12s^*(\wt\eta),&\hbox{if}~k_\bu(\wt\eta)\!=\!0.
\end{cases}$$
Similarly to~\eref{wtcancel_e}, 
\BE{wtcancel_ec}\begin{split}
\prt\bigg(\bigsqcup_{\wt\eta\in\cD_\om(\wt\al)}\hspace{-.15in}
s^*(\wt\eta)\fbb_{\wt\eta}^*\bigg)\bigg|_{(0,1)}
=&\!\!
\bigsqcup_{(\wt\eta;\vt)\in\cD_{\om}^2(\wt\al)}\hspace{-.22in}
(-1)^{\binom{k_{\bu}(\wt\eta)+1}{2}}s'(\wt\eta)\fbb_{\wt\eta}(\vt)\\
&\qquad
\sqcup\!\!\!
\bigsqcup_{\begin{subarray}{c}(\wt\eta^1;i,\wt\eta^2)\in\ov\cD_{\om}^{2*}(\wt\al)\\
-2<\dim(\wt\al_i(\wt\eta^1))\le n-2\end{subarray}}\hspace{-.48in}
(-1)^{\ep(\wt\eta^1,i)}s^*(\wt\eta^1)
\fbb_{\wt\eta^1}\big(i,\wt\eta^2\big)\,;
\end{split}\EE
if $(\wt\eta^1;i,\wt\eta^2)\!\in\!\ov\cD_{\om}^2(\al)\!-\!\ov\cD_{\om}^{2*}(\al)$, 
$\fbb_{\eta^1}^*(i,\wt\eta^2)\!=\!\eset$.

Let $(\eta;\vt)\!\in\!\cD_{\om}^{2*}(\wt\al)$, 
$(\wt\eta^1;i,\wt\eta^2)\!\in\!\ov\cD_{\om}^{2*}\!(\wt\al)$
be its image under~\eref{BCpseudo_e11c}, and
$$\big(\wh\eta^1;1,\wt\eta^1\bsl i\big)\equiv R\big(\wt\eta^1;i,\wt\eta^2\big)
\in\ov\cD_{\om}^{2*}(\wt\al).$$
Suppose $(\wt\eta^1;i,\wt\eta^2)$ satisfies both inequalities in~\eref{wtcancel_ec}
and $(\wh\eta^1;1,\wt\eta^1\bsl i)$ satisfies the corresponding inequalities.
By~\eref{BCpseudo_e10c}, the term $\fbb_{\wt\eta}(\vt)$ on the first line in~\eref{wtcancel_ec}
then appears with the opposite orientation of the corresponding term 
on the second line.
The $\rho$-orbits of $(\wt\eta^1;i,\wt\eta^2)$ and $(\wh\eta^1;1,\wt\eta^1\bsl i)$
in~$\ov\cD_{\om}^{2*}(\wt\al)$
contain $k_{\bu}(\wt\eta^1)$ and $k_{\bu}(\wh\eta^1)$ elements, respectively.
By the last two statements of Lemma~\ref{rhoRsgn_lmm}, all 
\hbox{$k_{\bu}(\wt\eta^1)s^*(\wt\eta^1)\!+\!k_{\bu}(\wh\eta^1)s^*(\wh\eta^1)$} 
associated copies of $\fbb_{\wt\eta^1}(i,\wt\eta^2)$ appear 
on the second line of~\eref{wtcancel_ec} with the same orientation.
If $k_{\bu}(\wt\eta)\!>\!0$, the $\rho$-orbit of~$(\wt\eta;\vt)$ 
in~$\cD_{\om}^{2*}(\wt\al)$ contains $k_{\bu}(\wt\eta)$~elements.
By the first statement of Lemma~\ref{rhoRsgn_lmm}, all $k_{\bu}(\wt\eta)s'(\wt\eta)$
associated copies of~$\fbb_{\wt\eta}(\vt)$ appear
on the first line of~\eref{wtcancel_ec} with the same orientation.
In this case,
$$k_{\bu}(\wt\eta^1)s^*(\wt\eta^1)\!+\!k_{\bu}(\wh\eta^1)s^*(\wh\eta^1)
=k_{\bu}(\wt\eta)s'(\wt\eta).$$
If $k_{\bu}(\wt\eta)\!=\!0$, the $\rho$-equivalence class of~$(\wt\eta;\vt)$ consists of
two elements as in~\eref{Sbndtuples_e}, 
which describe the same space on the first line in~\eref{wtcancel_ec}.
In this case,
$$k_{\bu}(\wt\eta^1)s^*(\wt\eta^1)\!+\!k_{\bu}(\wh\eta^1)s^*(\wh\eta^1)= 2s'(\wt\eta).$$ 
In either case, we conclude that the boundary terms corresponding to 
the elements of the $\rho$-equivalence class of~$(\wt\eta;\vt)$ 
in~$\cD_{\om}^{2*}(\wt\al)$ on the first line in~\eref{wtcancel_ec}
cancel with the boundary terms 
corresponding to the elements of the $(\rho,R)$-equivalence class 
of~$(\wt\eta^1;i,\wt\eta^2)$ in~$\ov\cD_{\om}^{2*}(\wt\al)$ on the second~line.

Suppose $(\wt\eta^1;i,\wt\eta^2)$ does not satisfy the first inequality in~\eref{wtcancel_ec}.
Similar reasoning to that in the proof of Lemma~\ref{psisot_lmm} then implies that 
the boundary term $\fbb_{\wt\eta}(\vt)$ on the first line in~\eref{wtcancel_ec}
is either empty or cancels with another boundary term~$\fbb_{\wt\eta'}(\vt')$;
the same happens with the term $\fbb_{\wh\eta^1}(1,\wt\eta^1\bsl i)$ on the second line.
The same reasoning with the two disk components interchanged applies if 
$(\wh\eta^1;1,\wt\eta^1\bsl i)$ does not satisfy the analogue of 
the second inequality in~\eref{wtcancel_ec}.

By the last two paragraphs and~\eref{Rdimprp_e},
\BE{wtcancel_ec2}
\prt\bigg(\bigsqcup_{\wt\eta\in\cD_\om(\wt\al)}\hspace{-.15in}
s^*(\wt\eta)\fbb_{\wt\eta}^*\bigg)\bigg|_{(0,1)}=\eset\EE
if $\dim(\wt\al)\!=\!0$. 
By the same reasoning as at the end of the proof of Lemma~\ref{psisot_lmm},
$$\prt\fbb_{\wt\eta}^*
=(-1)^{\binom{k_{\bu}(\wt\eta)}{2}}\!\cdot\!(-1)^{k_{\bu}(\wt\eta)
+\binom{k_{\bu}(\wt\eta)+2}{2}-1}
\cdot(-1)^{k_{\bu}(\wt\eta)}\big(\{1\}\!\times\!\fbb_{1;\eta_1}^*\!-\!
\{0\}\!\times\!\fbb_{0;\eta_0}^*\big)+\prt\fbb_{\wt\eta}^*\big|_{(0,1)}$$
with $\eta_0,\eta_1$ as in~\eref{eta01dfn_e}.
This establishes~\eref{cob_e}. 
\end{proof}

\subsection{Equivalence of definitions of disk counts}
\label{countequiv_subs}

We next complete the proof of Theorem~\ref{countinv_thm}\ref{equivdfn_it} 
by establishing~\eref{equivdfn_e0a} and~\eref{equivdfn_e0c} under 
the assumption that $\dim(\al)\!=\!0$. 
For $\eta\!\in\!\cD_{\om}(\al)$ and $i\!\in\![k_{\bu}(\eta)]$,
we define $\eta\bsl i\!\in\!\cD_{\om}(\al\!-\!\al_i(\eta)\!)$ 
by~\eref{etabslidfn_e} with~$\wt\al$ replaced by~$\al$.
Similarly to the second statement of Lemma~\ref{rhoRsgn_lmm}, 
\BE{countequiv_e1}\begin{split}
&\M_{\eta;J}\!\fiber\!\!
\big(\!(j,\fb_{\al_j(\eta)})_{j\in[k_\bu(\eta)]-\{i\}};
(j,\Ga_j)_{\Ga_j\in L_\bu(\eta)}\big)\\
&\hspace{1.5in}\approx
(-1)^{i-1}
\M_{\eta\bsl i;J}^+\!\fiber\!\!
\big(\!(j\!+\!1,\fb_{\al_j(\eta\bsl i)})_{j\in[k_\bu(\eta\bsl i)]};
(j,\Ga_j)_{\Ga_j\in L_\bu(\eta\bsl i)}\big)
\end{split}\EE
Let
$$\al_{\pt}\equiv(0,\{\pt\},\eset)\!\in\!\cC_{\om}(Y)~~\forall\,\pt\!\in\!Y, \quad
\cC_{\om}^*(Y)=\big\{\al\!\in\!\cC_{\om}(Y)\!:\al\!\neq\!\al_{\pt}
~\forall\,\pt\!\in\!K\big\}.$$

\begin{proof}[{\bf{\emph{Proof of~\eref{equivdfn_e0a}}}}]
Since the dimension of $\fb_{\al'}$ is even for every $\al'\!\in\!\cC_{\om;\al}(Y)$ 
by~\eref{bdimprp_e}
and the dimension of~$Y$ is odd,
Lemma~\ref{fibersign_lmm}, Definition~\ref{bndch_dfn}\ref{BCprt_it},
and Lemma~\ref{fibprodflip_lmm} give
\begin{gather}
\label{ountequiv_e2}-\prt\big(\fb_{\al_1}\!\!\fiber\!\fb_{\al_2}\big)
=\fbb_{\al_1}\!\!\fiber\!\fb_{\al_2}\!+\!
\fb_{\al_1}\!\!\fiber\!\fbb_{\al_2}
=\fbb_{\al_1}\!\!\fiber\!\fb_{\al_2}\!-\!\fbb_{\al_2}\!\!\fiber\!\fb_{\al_1}\\
\notag
\forall\quad \al_1,\al_2\in \cC_{\om;\al}(Y) ~~\hbox{s.t.}~~
 \dim(\al_1),\dim(\al_2)\le n\!-\!2.
\end{gather}
By Definition~\ref{bndch_dfn}\ref{BC0_it},
the first equality above also holds~if
$$\al_1,\al_2\in\cC_{\om}^*(Y) \quad\hbox{and}\quad
\dim(\al_1)\!+\!\dim(\al_2)
=n\!-\!3~~\big(\hbox{i.e.}~\dim\,\fbb_{\al_1}\!\!\fiber\!\fb_{\al_2}=0\big);$$
the second equality holds for all~$\al_1,\al_2$.
If $\pt\!\in\!K(\al)$ and $\dim(\al)\!=\!0$, 
\eref{ountequiv_e2} implies that 
\BE{ountequiv_e3}\begin{split}
\bigsqcup_{\begin{subarray}{c}\al_1,\al_2\in\cC_\om^*(Y)\\ 
\al_1+\al_2=\al\end{subarray}}\hspace{-.27in}\fbb_{\al_1}\!\!\fiber\!\fb_{\al_2}
&=\bigsqcup_{\begin{subarray}{c}\al_1,\al_2\in\cC_\om^*(Y)\\ 
\al_1+\al_2=\al\\ \pt\in K(\al_1)\end{subarray}}\hspace{-.27in}
\big(\fbb_{\al_2}\!\!\fiber\!\fb_{\al_1}\!\sqcup\!\prt(-\fb_{\al_1}\!\!\fiber\!\fb_{\al_2})\!\big)
\!\sqcup\!
\bigsqcup_{\begin{subarray}{c}\al_1,\al_2\in\cC_\om^*(Y)\\ 
\al_1+\al_2=\al\\ \pt\in K(\al_2)\end{subarray}}\hspace{-.27in}\fbb_{\al_1}\!\!\fiber\!\fb_{\al_2}\\
&=2\hspace{-.2in} 
\bigsqcup_{\begin{subarray}{c}\al_1,\al_2\in\cC_\om^*(Y)\\ 
\al_1+\al_2=\al\\ \pt\in K(\al_2)\end{subarray}}\hspace{-.27in}\fbb_{\al_1}\!\!\fiber\!\fb_{\al_2}
\sqcup~
\prt\hspace{-.2in} 
\bigsqcup_{\begin{subarray}{c}\al_1,\al_2\in\cC_\om^*(Y)\\ 
\al_1+\al_2=\al\\ \pt\in K(\al_1)\end{subarray}}\hspace{-.27in}
\big(\!-\!\fb_{\al_1}\!\!\fiber\!\fb_{\al_2}\big).
\end{split}\EE
Since the dimension of $\fbb_{\al'}$ is odd for every $\al'\!\in\!\cC_{\om;\al}(Y)$, 
\BE{ountequiv_e5}
-\big|\fbb_{\al_{\pt}^c}\!\!\fiber\!\fb_{\al_{\pt}}\big|^{\pm}
=\deg\,\fbb_{\al_{\pt}^c}
=\lr{L}_{\be;K(\al)-\{\pt\}}^{\om,\os}
\quad\forall\,\pt\!\in\!K(\al).\EE

\vspace{-.1in}

If $\eta\!\in\!\cD_\om(\al)$ and $i\!\in\![k_\bu(\eta)]$,
then
\begin{equation*}\begin{split}
&\M_{\eta;J}\!\fiber\!\!\big(\!(j,\fb_{\al_j(\eta)})_{j\in[k_\bu(\eta)]};
(j,\Ga_j)_{\Ga_j\in L_\bu(\eta)}\big)\\
&=\unbr{(-1)^{k_\bu(\eta)-i}}{\hbox{\tiny Lemma~\ref{fibprodflip_lmm}}}
\unbr{(-1)^{k_\bu(\eta)-1}}{\hbox{\tiny Lemma~\ref{fibprodisom_lmm1}}}\!
\Big(\!\M_{\eta;J}\!\!\fiber\!\!\big(\!(j,\fb_{\al_j(\eta)})_{j\in[k_\bu(\eta)]-\{i\}};
(j,\Ga_j)_{\Ga_j\in L_\bu(\eta)}\big)\!\!\Big){}_{\evb_i}\!\!\times_{\fb_{\al_i(\eta)}}\!\!
\big(\dom\,\fb_{\al_i(\eta)}\big)\\
&=(-1)^{i-1}\unbr{(-1)^{i-1}}{\hbox{\tiny\eref{countequiv_e1}}}
\Big(\!(-1)^{k_\bu(\eta\backslash i)}\fbb_{\eta\bsl i}\!\Big)\!\!\fiber\!\fb_{\al_i(\eta)}
=(-1)^{k_\bu(\eta)-1}\fbb_{\eta\bsl i}\!\!\fiber\!\fb_{\al_i(\eta)}.
\end{split}\end{equation*}
Taking $i\!=\!1$ above, we obtain 
\BE{ountequiv_e9}\begin{split}
&-\!\!\!\bigsqcup_{\eta\in\cD_\om(\al)}\!\!\!\!\!(-1)^{k_\bu(\eta)}
\M_{\eta;J}\!\fiber\!\!\big(\!(j,\fb_{\al_j(\eta)})_{j\in[k_\bu(\eta)]};
(j,\Ga_j)_{\Ga_j\in L_\bu(\eta)}\big)\\
&\hspace{1in}
=\bigsqcup_{\eta\in\cD_\om(\al)}\!\!\!\!\!\!\fbb_{\eta\bsl1}\!\!\fiber\!\fb_{\al_1(\eta)}
=\bigsqcup_{\begin{subarray}{c}\al_1,\al_2\in\cC_\om^*(Y)\\ 
\al_1+\al_2=\al\end{subarray}}\hspace{-.27in}\fbb_{\al_1}\!\!\fiber\!\fb_{\al_2}
\sqcup \bigsqcup_{\pt'\in K(\al)}\hspace{-.15in}\fbb_{\al_{\pt'}^c}\!\!\fiber\!\fb_{\al_{\pt'}}\,.
\end{split}\EE

\vspace{-.15in}

If in addition $\pt\!\in\!K(\al)$, then
\begin{equation*}\begin{split}
&-\!\!\!\bigsqcup_{\eta\in\cD_\om(\al)}\!\!\!\!\!\frac{(-1)^{k_\bu(\eta)}}{k_\bu(\eta)}
\M_{\eta;J}\!\fiber\!\!\big(\!(j,\fb_{\al_j(\eta)})_{j\in[k_\bu(\eta)]};
(j,\Ga_j)_{\Ga_j\in L_\bu(\eta)}\big)\\
&\hspace{.2in}=
\bigsqcup_{\begin{subarray}{c}\eta\in\cD_\om(\al)\\ i\in[k_{\bu}(\eta)]\\ 
\al_i(\eta)=\al_{\pt}\end{subarray}}\!\!\frac1{k_\bu(\eta)}
\fbb_{\eta\bsl i}\!\!\fiber\!\fb_{\al_{\pt}}
\sqcup\!\!\!\!\!
\bigsqcup_{\begin{subarray}{c}\eta\in\cD_\om(\al)\\ i\in[k_{\bu}(\eta)]\\
\pt\in K(\al_i(\eta)),\al_i(\eta)\neq\al_{\pt}\end{subarray}}\hspace{-.35in}\frac1{k_\bu(\eta)}
\fbb_{\eta\bsl i}\!\!\fiber\!\fb_{\al_i(\eta)}\\
&\hspace{.2in}=
\bigsqcup_{\begin{subarray}{c}\eta\in\cD_\om(\al)\\ \al_1(\eta)=\al_{\pt}\end{subarray}}
\!\!\!\!\!\!\!\!\fbb_{\eta\bsl1}\!\!\fiber\!\fb_{\al_{\pt}}
\sqcup\!\!
\bigsqcup_{\begin{subarray}{c}\eta\in\cD_\om(\al)\\
\pt\in K(\al_1(\eta)),\al_1(\eta)\neq\al_{\pt}\end{subarray}}
\hspace{-.54in}\fbb_{\eta\bsl1}\!\!\fiber\!\fb_{\al_1(\eta)}
=\fbb_{\al_{\pt}^c}\!\!\fiber\!\fb_{\al_{\pt}}
\sqcup\!\!\bigsqcup_{\begin{subarray}{c}\al_1,\al_2\in\cC_\om^*(Y)\\ 
\al_1+\al_2=\al\\ \pt\in K(\al_2)\end{subarray}}\hspace{-.27in}\fbb_{\al_1}\!\!\fiber\!\fb_{\al_2}
\,.
\end{split}\end{equation*}
If $\dim(\al)\!=\!0$,
this statement, \eref{ountequiv_e9}, and~\eref{ountequiv_e3} give
\begin{equation*}\begin{split}
&\bigsqcup_{\eta\in\cD_\om(\al)}\!\!\!\!\!(-1)^{k_\bu(\eta)}s^*(\eta)
\M_{\eta;J}\!\fiber\!\!\big(\!(j,\fb_{\al_j(\eta)})_{j\in[k_\bu(\eta)]};
(j,\Ga_j)_{\Ga_j\in L_\bu(\eta)}\big)\\
&\hspace{1in}=-\fbb_{\al_{\pt}^c}\!\!\fiber\!\fb_{\al_{\pt}}
\sqcup\frac12\!\!
\bigsqcup_{\pt'\in K(\al)}\hspace{-.15in}\fbb_{\al_{\pt'}^c}\!\!\fiber\!\fb_{\al_{\pt'}}
\sqcup\frac12\prt\hspace{-.2in} 
\bigsqcup_{\begin{subarray}{c}\al_1,\al_2\in\cC_\om^*(Y)\\ 
\al_1+\al_2=\al\\ \pt\in K(\al_1)\end{subarray}}\hspace{-.27in}
\big(\!-\!\fb_{\al_1}\!\!\fiber\!\fb_{\al_2}\big)\,.
\end{split}\end{equation*}
Along with~\eref{JSinvdfn_e2b} and~\eref{ountequiv_e5}, this implies~\eref{equivdfn_e0a}.
\end{proof}

\begin{proof}[{\bf{\emph{Proof of~\eref{equivdfn_e0c}}}}]
By the proof of~\eref{ountequiv_e9},
\begin{equation*}\begin{split}
&\bigsqcup_{\begin{subarray}{c}\eta\in\cD_\om(\al)\\
\Ga\not\in L_{\bu}(\eta)\end{subarray}}
\!\!\!\!(-1)^{k_\bu(\eta)}\!s^{\circ}(\eta)
\M_{\eta;J}\!\fiber\!\!\big(\!(j,\fb_{\al_j(\eta)})_{j\in[k_\bu(\eta)]};
(j,\Ga_j)_{\Ga_j\in L_\bu(\eta)}\big)\\
&\hspace{.4in}
=-\!\!\!\!\!
\bigsqcup_{\begin{subarray}{c}\eta\in\cD_\om(\al)\\i\in[k_{\bu}(\eta)]\\
\Ga\in L(\al_i(\eta)\!)\end{subarray}}\hspace{-.1in}\frac1{k_\bu(\eta)}
\fbb_{\eta\bsl i}\!\!\fiber\!\fb_{\al_i(\eta)}
=-\!\!\!\!\!\bigsqcup_{\begin{subarray}{c}\eta\in\cD_\om(\al)\\
\Ga\in L(\al_1(\eta)\!)\end{subarray}}\hspace{-.2in}
\fbb_{\eta\bsl 1}\!\!\fiber\!\fb_{\al_1(\eta)}
=-\!\!\!\!\!\bigsqcup_{\begin{subarray}{c}\al_1,\al_2\in\cC^*_\om(Y)\\
\al_1+\al_2=\al\\ \Ga\in L(\al_2)\end{subarray}}\hspace{-.27in}
\fbb_{\al_1}\!\!\fiber\!\fb_{\al_2}\,.
\end{split}\end{equation*}
Suppose in addition $\dim(\al)\!=\!0$.
By the proof of~\eref{ountequiv_e3},
$$\bigsqcup_{\begin{subarray}{c}\al_1,\al_2\in\cC_\om^*(Y)\\ 
\al_1+\al_2=\al\end{subarray}}\hspace{-.27in}\fbb_{\al_1}\!\!\fiber\!\fb_{\al_2}
=2\hspace{-.2in} 
\bigsqcup_{\begin{subarray}{c}\al_1,\al_2\in\cC_\om^*(Y)\\ 
\al_1+\al_2=\al\\ \Ga\in L(\al_2)\end{subarray}}\hspace{-.27in}\fbb_{\al_1}\!\!\fiber\!\fb_{\al_2}
\sqcup~
\prt\hspace{-.2in} 
\bigsqcup_{\begin{subarray}{c}\al_1,\al_2\in\cC_\om^*(Y)\\ 
\al_1+\al_2=\al\\ \Ga\in L(\al_1)\end{subarray}}\hspace{-.27in}
\big(\!-\!\fb_{\al_1}\!\!\fiber\!\fb_{\al_2}\big).$$
Combining the last two statements with \eref{ountequiv_e9}, 
\begin{equation*}\begin{split}
&\bigsqcup_{\eta\in\cD_\om(\al)}\!\!\!\!\!(-1)^{k_\bu(\eta)}s^*(\eta)
\M_{\eta;J}\!\fiber\!\!\big(\!(j,\fb_{\al_j(\eta)})_{j\in[k_\bu(\eta)]};
(j,\Ga_j)_{\Ga_j\in L_\bu(\eta)}\big)\\
&\hspace{1in}=\!\!\!
\bigsqcup_{\eta\in\cD_\om^{\Ga}(\al)}\!\!\!\!\!(-1)^{k_\bu(\eta)}s^\circ(\eta)
\M_{\eta;J}\!\fiber\!\!\big(\!(j,\fb_{\al_j(\eta)})_{j\in[k_\bu(\eta)]};
(j,\Ga_j)_{\Ga_j\in L_\bu(\eta)}\big)\\
&\hspace{2in}
\sqcup\frac12\!\!
\bigsqcup_{\pt'\in K(\al)}\hspace{-.11in}\fbb_{\al_{\pt'}^c}\!\!\fiber\!\fb_{\al_{\pt'}}
\sqcup\frac12\prt\hspace{-.2in} 
\bigsqcup_{\begin{subarray}{c}\al_1,\al_2\in\cC_\om^*(Y)\\ 
\al_1+\al_2=\al\\ \Ga\in L(\al_1)\end{subarray}}\hspace{-.27in}
\big(\!-\!\fb_{\al_1}\!\!\fiber\!\fb_{\al_2}\big)\,.
\end{split}\end{equation*}
Along with~\eref{JSinvdfn_e2b}, \eref{JSinvdfn_e2c}, and
\eref{ountequiv_e5}, this implies~\eref{equivdfn_e0c}.
\end{proof}

\section{Proof of Theorem~\ref{main_thm}}
\label{GWmainpf_sec}

We confirm the multilinearity of the open invariants~\eref{geomJSinvdfn_e},
the pair of properties determining these invariants for extreme values 
of the degree and constraints, \ref{deg0_it} and~\ref{ins1_it}, and 
the pair of properties involving geometrically special insertions,
\ref{div_it} and~\ref{sphere_it}, in Section~\ref{GWmainpf_subs1}.
The two remaining properties, both of which involve topological properties
of the Lagrangian~$Y$ in~$X$, are established in Section~\ref{LangOGW_subs}.

\subsection{Multilinearity and Properties \ref{deg0_it}-\ref{sphere_it}}
\label{GWmainpf_subs1}

Let $(X,\om,Y)$, $\os$, $k,l$, $\be$ be as in the statement of Theorem~\ref{main_thm}.
We fix $J\!\in\!\cJ_{\om}$ and $K\!\subset\!Y$ with $|K|\!=\!k$.
Given an element $\ga_i$ of $\wh{H}^{2*}(X,Y;R)$, 
we take a pseudocycle~$\Ga_i$ to~$X$ or $X\!-\!Y$ representing
the Poincare dual of~$\ga_i$. 
We assume that $J$,  $K$, and $L\!\equiv\{\Ga_1,\ldots,\Ga_l\}$ are chosen generically,
set $\al\!\equiv\!(\be,K,L)$, 
and take a bounding chain $(\fb_{\al'})_{\al'\in\cC_{\om;\al}(Y)}$ 
on $(\al,J)$.

\begin{proof}[{\bf{\emph{Proof of~multilinearity}}}]
By the symmetry of the open invariants~\eref{geomJSinvdfn_e} and~\eref{OGWdfn_e},
it is sufficient to establish the linearity of these invariants only in 
the first input.
Suppose $a',a''\!\in\!R$, $\Ga_1',\Ga_1''$ are generic pseudocycles to~$X$, 
and \hbox{$\Ga_1\!=\!a'\Ga_1'\!+\!a''\Ga_2''$}.
By the linearity of the intersection numbers,
\begin{equation*}\begin{split}
&\Big|\M_{\eta;J}\!\fiber\!\!\big(\!(i,\fb_{\al_i(\eta)})_{i\in[k_\bu(\eta)]};
(1,\Ga_1),(i,\Ga_i)_{i\in L_\bu(\eta)-\{\Ga_1\}}\big)\Big|^{\pm}\\
&\hspace{1.2in}=
a'\Big|\M_{\eta;J}\!\fiber\!\!\big(\!(i,\fb_{\al_i(\eta)})_{i\in[k_\bu(\eta)]};
(1,\Ga_1'),(i,\Ga_i)_{\Ga_i\in L_\bu(\eta)-\{\Ga_1\}}\big)\Big|^{\pm}\\
&\hspace{1.5in}+
a''\Big|\M_{\eta;J}\!\fiber\!\!\big(\!(i,\fb_{\al_i(\eta)})_{i\in[k_\bu(\eta)]};
(1,\Ga_1''),(i,\Ga_i)_{\Ga_i\in L_\bu(\eta)-\{\Ga_1\}}\big)\Big|^{\pm}
\end{split}\end{equation*}
for every $\eta\!\in\!\cD_{\om}^{\Ga_1}(\al)$.
This implies~that 
\BE{multilin_e1}
\blr{L\!-\!\{\Ga\}}_{\be;K}^{\Ga_1}
=a'\blr{L\!-\!\{\Ga\}}_{\be;K}^{\Ga_1'}\\+a''\blr{L\!-\!\{\Ga\}}_{\be;K}^{\Ga_1''}\,,
\EE
if all three counts  above arise from the bounding chain 
$(\fb_{\al'})_{\al'\in\cC_{\om;\al_{\Ga_1}^c}\!(Y)}$ on $(\al_{\Ga_1}^c,J)$.
This in particular establishes the multilinearity of the open invariants~\eref{geomJSinvdfn_e}
and~\eref{OGWdfn_e} under the independence assumptions of Theorem~\ref{main_thm}.
\end{proof}

The proofs of the two properties determining the open invariants~\eref{OGWdfn_e}
for extreme values of the degree and constraints,
\ref{deg0_it} and~\ref{ins1_it}, below are closely related.

\begin{proof}[{\bf{\emph{Proof of~\ref{deg0_it}}}}]
Let $\al\!=\!(0,K,L)$.
Since every degree~0 $J$-holomorphic map from $(\D^2,S^1)$ to~$(X,Y)$ is constant,
\BE{deg0M_e}\M_{\eta;J}\approx Y\!\times\!\cM_{k_{\bu}(\eta),L_{\bu}(\eta)}
\qquad\forall\,\eta\!\in\!\cD_{\om}(\al),\EE
where $\cM_{k_{\bu}(\eta),L_{\bu}(\eta)}$ is the moduli space of disks with 
$L_{\bu}(\eta)$ interior marked points and $k_{\bu}(\eta)$ boundary marked points ordered
by the position.
Both moduli spaces in~\eref{deg0M_e} are oriented in Section~\ref{Ms_subs}.
By the definition of the orientation on~$\M_{\eta;J}$ and
the CROrient~5a and~6a properties in \cite[Section~7.2]{SpinPin},
the isomorphism~\eref{deg0M_e} is orientation-preserving.
This implies that the degree of the~map
$$\evb_1\!: \M^\st_{1,1}(0;J)_{\evi_1}\!\!\times_{\id_X}\!\!X \lra Y$$
is~1 and establishes \ref{deg0_it} in the case $(k,l)\!=\!(1,1)$.

We now treat the case $(k,l)\!\neq\!(1,1)$ under the assumption
that \eref{dimcond_e} with $\be\!=\!0$ holds.
Since $\be\!=\!0$, $k\!\ge\!1$ by the condition~\eref{nosphbubb_e} and 
$\deg\ga_{i^*}\!=\!0$ for some $i^*\!\in\![l]$ by~\eref{dimcond_e}.
By the symmetry of the invariants~\eref{geomJSinvdfn_e}, we can assume that $i^*\!=\!1$.
If $k\!>\!1$, then $l\!>\!1$ by~\eref{dimcond_e}.
Thus, we can assume that $l\!\ge\!2$.

Suppose $\eta\!\in\!\cD_{\om}^{\Ga_1}(\al)$ and the fiber product in~\eref{JSinvdfn_e2c} 
corresponding to~$\eta$ is nonempty.
By~\eref{deg0M_e}, the fiber product in~\eref{JSinvdfn_e2c}
is the product of $\cM_{k_{\bu}(\eta),L_{\bu}(\eta)}$ with another space.
Since this fiber product has dimension~0, it then follows that
\BE{deg0_e3}\al_1\equiv\big(\be_{\bu}(\eta),k_{\bu}(\eta)\!-\!1,L_{\bu}(\eta)\!\big)
= \big(0,0,\{\Ga_1\}\!\big)
\qquad\hbox{and}\qquad 
\al_2\equiv\al_1(\eta)\in\cC_{\om}^*(Y).\EE
By the reasoning in the proof of the first equality in~\eref{ountequiv_e9}, 
\BE{deg0_e5}\big|\M_{\eta;J}\!\fiber\!\!\big(\!(1,\fb_{\al_2});(1,\Ga_1)\!\big)\big|^{\pm}
=\big|\fbb_{\al_1}\!\!\fiber\!\fb_{\al_2}\big|^{\pm}\,.\EE
Since $\dim(\al)\!=\!0$ and $\dim(\al_1)\!=\!n\!-\!1$, $\dim(\al_2)\!=\!-2$.
Thus, $\fb_{\al_2}\!=\!\eset$ by Definition~\ref{bndch_dfn}\ref{BC0_it}
and the number~\eref{deg0_e5} in fact vanishes.
This implies the vanishing of the number~\eref{JSinvdfn_e2c}.
In light of Theorem~\ref{countinv_thm}\ref{equivdfn_it}, 
the latter in turn implies the vanishing of 
the numbers~\eref{JSinvdfn_e2} and~\eref{JSinvdfn_e2b} under the same conditions.

The statement of~\ref{deg0_it} depends on the boundary insertions being 
of dimension~0.
The argument in the second case of~\ref{deg0_it} does not depend on~$n$ being odd.
\end{proof}

\begin{proof}[{\bf{\emph{Proof of~\ref{ins1_it}}}}]
By~\ref{deg0_it}, it remains to consider the case $\be\!\neq\!0$ under 
the assumption that \eref{dimcond_e} holds.
Let $\eta\!\in\!\cD_{\om}^{\Ga_1}(\al)$ and thus $\Ga_1\!\in\!L_{\bu}(\eta)$.
Suppose first that 
$$\big(\be_{\bu}(\eta),k_{\bu}(\eta),L_{\bu}(\eta)\!\big)\neq  
\big(0,1,\{\Ga_1\}\!\big),\big(0,2,\{\Ga_1\}\!\big).$$
Since $\ga_1\!=\!1\!\in\!H^2(X;R)$,
the forgetful morphism dropping the first interior marked point then induces a fibration
\begin{equation*}\begin{split}
&\M_{\eta;J}\!\fiber\!\!\big(\!(i,\fb_{\al_i(\eta)})_{i\in[k_\bu(\eta)]};
(i,\Ga_i)_{\Ga_i\in L_\bu(\eta)}\big)\\
&\hspace{.5in}\lra
\M^\st_{k_\bu(\eta),L_\bu(\eta)-\{\Ga_1\}}(\be_\bu(\eta);J)
\!\fiber\!\!\big(\!(i,\fb_{\al_i(\eta)})_{i\in[k_\bu(\eta)]};
(i,\Ga_i)_{\Ga_i\in L_\bu(\eta)-\{\Ga_1\}}\big)
\end{split}\end{equation*}
with two-dimensional fibers.
If $(\be_{\bu}(\eta),k_{\bu}(\eta),L_{\bu}(\eta)\!)\!=\!(0,2,\{\Ga_1\})$, 
the first fiber product above is the product of $\ov\cM_{2,1}$ with another space
by~\eref{deg0M_e}.
Since the first fiber product above is of dimension~0 by the assumption that \eref{dimcond_e} 
holds, neither of these two conclusions is possible if it is nonempty.

Suppose  $(\be_{\bu}(\eta),k_{\bu}(\eta),L_{\bu}(\eta))\!=\!(0,1,\{\Ga_1\})$.
Since $\be\!\neq\!0$, it follows that~\eref{deg0_e3} holds in this case as well.
Thus, \eref{deg0_e5} and the four sentences immediately after also apply in this case.
So do the considerations in the last paragraph of the proof of~\ref{deg0_it}.
\end{proof}

The proof of the divisor relation, \ref{div_it}, stays below
within the same definition, \eref{JSinvdfn_e2}, \eref{JSinvdfn_e2b}, or~\eref{JSinvdfn_e2c}, 
of the invariants~\eref{geomJSinvdfn_e} 
and inductively compares all closed pseudocycles~\eref{fbbdfn_e} 
associated with the two open invariants in~\ref{div_it}.
The proof of~\ref{sphere_it} makes use of Theorem~\ref{countinv_thm}\ref{equivdfn_it} 
identifying two of the definitions of the open invariants~\eref{OGWdfn_e}
and adapts the proof of Proposition~2.1 of~\cite{RealWDVV3},
establishing the same property in a related setting, to the present situation.

\begin{proof}[{\bf{\emph{Proof of~\ref{div_it}}}}]
Let $\Ga_0$ be a generic pseudocycle to $X\!-\!Y$ representing
the Poincare dual of~$\ga_0$.
Define
\BE{div_e0}L^{\dag}=\{\Ga_0\}\!\sqcup\!L, \qquad 
\al^{\dag}=\big(\be,K,L^{\dag}\big).\EE
An element $\al'\!\in\!\cC_{\om;\al^{\dag}}(Y)$ then lies in $\cC_{\om;\al}(Y)$
if and only if $\Ga_0\!\not\in\!L(\al')$.
For $\al'\!\in\!\cC_{\om;\al^{\dag}}(Y)$, let
\BE{div_e1}
\al'^c_{\Ga_0}\equiv\big(\be(\al'),K(\al'),L(\al')\!-\!\{\Ga_0\}\!\big),\quad
\fb_{\al'}'\equiv\begin{cases}
\fb_{\al'},&\hbox{if}~\Ga_0\!\not\in\!L(\al');\\
\lr{\ga_0,\be(\al')}\fb_{\al_{\Ga_0}'^c},&\hbox{if}~\Ga_0\!\in\!L(\al').
\end{cases}\EE
Let $\fbb_{\al'}'$ be the map determined by the collection 
$(\fb_{\al''}')_{\al''\in\cC_{\om;\al^{\dag}}(Y)}$ as in~\eref{fbbdfn_e}.
We show below~that
\BE{div_e3} \fbb_{\al'}'=\begin{cases}
\fbb_{\al'},&\hbox{if}~\Ga_0\!\not\in\!L(\al');\\
\lr{\ga_0,\be(\al')}\fbb_{\al_{\Ga_0}'^c},&\hbox{if}~\Ga_0\!\in\!L(\al').
\end{cases}\EE
It then follows that $(\fb_{\al'}')_{\al'\in\cC_{\om;\al^{\dag}}(Y)}$ 
is a bounding chain on $(\al^{\dag},J)$.
Furthermore, the number~$\lr{L^{\dag}}_{\be;K}^{\om,\os}$
as in~\eref{JSinvdfn_e2} determined 
by the bounding chain $(\fb_{\al'}')_{\al'\in\cC_{\om;\al^{\dag}}(Y)}$
is the number~$\lr{L}_{\be;K}^{\om,\os}$ in~\eref{JSinvdfn_e2} determined 
by the bounding chain $(\fb_{\al'})_{\al'\in\cC_{\om;\al}(Y)}$
times~$\lr{\ga_0,\be}$.

Let $\al'\!\in\!\cC_{\om;\al^{\dag}}(Y)$.
The first case in~\eref{div_e3} follows immediately from
the first case in~\eref{div_e1}.
We thus assume that $\Ga_0\!\in\!L(\al')$.
Let $\eta'\!\in\!\cD_{\om}(\al')$, 
$\fbb_{\eta'}'$ be the restriction of~$\fbb_{\al'}'$ to 
the subspace of its domain corresponding to~$\eta'$, and 
\BE{div_e4}\eta_{\Ga_0}'^c=
\big(\be_{\bu}(\eta'),k_{\bu}(\eta'),L_{\bu}(\eta')\!-\!\{\Ga_0\},
(\al_i(\eta')_{\Ga_0}^c)_{i\in[k_{\bu}(\eta')]}\big)\in\cD_{\om}(\al).\EE
By the assumption that $\Ga_0$ is disjoint from~$Y$ and~\eref{div_e1},
\BE{div_e5}\fbb_{\eta'}'=\begin{cases}\eset,&\hbox{if}~\be_{\bu}(\eta')\!=\!0,
\,\Ga_0\!\in\!L_{\bu}(\eta');\\
\lr{\ga_0,\be_i(\eta_{\Ga_0}'^c)}\fbb_{\eta_{\Ga_0}'^c},
&\hbox{if}~i\!\in\![k_{\bu}(\eta_{\Ga_0}'^c)],\,\Ga_0\!\in\!L_i(\eta'). 
\end{cases}\EE
Suppose $\be_{\bu}(\eta')\!\neq\!0$ and $\Ga_0\!\in\!L_{\bu}(\eta')$.
The forgetful morphism dropping the first interior marked point induces a map
\begin{equation*}\begin{split}
&\ff\!:\M^+_{\eta';J}\!\fiber\!\!
\Big(\!(i\!+\!1,\fb_{\al_{i}(\eta')})_{i\in[k_{\bu}(\eta')]};
(i\!+\!1,\Ga_i)_{\Ga_i\in L_{\bu}(\eta')}\!\Big)\\
&\hspace{1.5in}
\lra \M^+_{\eta_{\Ga_0}'^c;J}\!\fiber\!\!
\Big(\!(i\!+\!1,\fb_{\al_i(\eta_{\Ga_0}'^c)})_{i\in[k_{\bu}(\eta_{\Ga_0}'^c)]};
(i,\Ga_i)_{\Ga_i\in L_{\bu}(\eta_{\Ga_0}'^c)}\!\Big)
\end{split}\end{equation*}
intertwining the maps~$\evb_1$.
The map~$\ff$ restricts to a covering projection of degree $\lr{\ga_0,\be_{\bu}(\eta')}$
outside of a  codimension~2 subspace of its target.
Thus,
$$\fbb_{\eta'}'=\blr{\ga_0,\be_{\bu}(\eta_{\Ga_0}'^c)}\fbb_{\eta_{\Ga_0}'^c}$$
outside of codimension~1 subspaces of the domains of the two sides. 
Combining this with~\eref{div_e5}, we~obtain
$$\bigsqcup_{\begin{subarray}{c}\eta'\in\cD_{\om}(\al')\\ \eta_{\Ga_0}'^c=\eta\end{subarray}}
\hspace{-.18in}\fbb_{\eta'}'=\blr{\ga_0,\be(\al_{\Ga_0}'^c)}\fbb_{\eta}
\qquad\forall\,\eta\!\in\!\cD_{\om}(\al_{\Ga_0}'^c)\,.$$
This establishes the second case in~\eref{div_e3}.

By the same reasoning with $\M^+_{\eta';J}$ replaced by~$\M_{\eta';J}$,
\begin{equation*}\begin{split}
\sum_{\begin{subarray}{c}\eta'\in\cD_{\om}(\al')\\ 
\eta'^c_{\Ga_0}=\eta\end{subarray}}
&\!\!\!\!\!
\big|\M_{\eta';J}\!\fiber\!\!\big(\!(i,\fb_{\al_i(\eta')})_{i\in[k_\bu(\eta')]};
(i\!+\!1,\Ga_i)_{\Ga_i\in L_\bu(\eta')}\big)\!\big|^{\pm}\\
&\hspace{.8in}=\blr{\ga_0,\be(\al_{\Ga_0}'^c)}
\big|\M_{\eta;J}\!\fiber\!\!\big(\!(i,\fb_{\al_i(\eta)})_{i\in[k_\bu(\eta)]};
(i,\Ga_i)_{\Ga_i\in L_\bu(\eta)}\big)\!\big|^{\pm}
\end{split}\end{equation*}
for all $\eta\!\in\!\cD_{\om}(\al'^c_{\Ga_0})$.
Thus, the number $\lr{L^{\dag}}_{\be;K}^*$ as in~\eref{JSinvdfn_e2b} 
determined  by the bounding chain $(\fb_{\al'}')_{\al'\in\cC_{\om;\al^{\dag}}(Y)}$
is the number $\lr{L}_{\be;K}^*$ in~\eref{JSinvdfn_e2b} determined 
by the bounding chain $(\fb_{\al'})_{\al'\in\cC_{\om;\al}(Y)}$
times~$\lr{\ga_0,\be}$.
In light of~\eref{equivdfn_e0c}, 
the same statement holds for the analogous numbers~\eref{JSinvdfn_e2c}.
This in particular establishes the divisor property of 
the open invariants~\eref{geomJSinvdfn_e}
and~\eref{OGWdfn_e}
under the independence assumptions of Theorem~\ref{main_thm}.
\end{proof}

\begin{proof}[{\bf{\emph{Proof of~\ref{sphere_it}}}}]
Let $p_0\!\in\!Y$ be a generic point. 
With $\Ga_0\!:S(\cN_{p_0}Y)\!\lra\!X\!-\!Y$ denoting the inclusion of 
a small generic sphere in the fiber~$\cN_{p_0}Y$ of the normal bundle
of~$Y$ in~$X$, we define $\al^{\dag}\!\in\!\cC_{\om}(Y)$
by~\eref{div_e0}.
We show~that 
\BE{sphere_e1}
\blr{L}_{\be;K}^{\Ga_0}=\blr{L}_{\be;K}^{\om,\os}\EE
if both counts above arise from the bounding chain 
$(\fb_{\al'})_{\al'\in\cC_{\om;\al}(Y)}$ on~$(\al,J)$.
This in particular establishes the identity in~\ref{sphere_it}
under the independence assumptions of Theorem~\ref{main_thm}.

We can assume that \eref{dimcond_e} with $k$ replaced by $k\!+\!1$ holds.
This implies~that
\BE{sphere_e3}
\dim\,\M_{\eta;J}\!\fiber\!\!\big(\!(i,\fb_{\al_i(\eta)})_{i\in[k_\bu(\eta)]};
(i,\Ga_i)_{\Ga_i\in L_\bu(\eta)}\big)=n\!-\!1\in2\Z
\quad\forall\,\eta\!\in\!\cD_{\om}(\al).\EE
For $\eta\!\in\!\cD_{\om}(\al)$, let 
\hbox{$[\eta]\!\subset\!\cD_{\om}(\al)$} be the orbit of~$\eta$
under the action of the rotation as in~\eref{rhodfn_e},
$$\evb_{\eta}\!\!\equiv\!\evb_1\!:
\M_{\eta;J}^+\!\fiber\!\!\big(\!(i\!+\!1,\fb_{\al_i(\eta)})_{i\in[k_\bu(\eta)]};
(i,\Ga_i)_{\Ga_i\in L_\bu(\eta)}\big)\lra Y, \quad
\evb_{[\eta]}\equiv\bigsqcup_{\eta''\in[\eta]}\!\!\!\evb_{\eta''}\,.$$
If $p_0\!\in\!Y$ is generic, $\evb_{[\eta]}^{-1}(p_0)$ is a finite set of signed points;
the sign of a point~$\wt\u$ is plus if~$\nd_{\wt\u}\evb_{[\eta]}$ is orientation-preserving. 
For $\eta'\!\in\!\cD_{\om}^{\Ga_0}(\al^{\dag})$, let 
$\eta'^c_{\Ga_0}\!\in\!\cD_{\om}(\al)$ be as in~\eref{div_e4} and
$$\evi_{\eta'}\!=\!\evi_1\!: 
\M_{\eta';J}\!\fiber\!\!\big(\!(i,\fb_{\al_i(\eta')})_{i\in[k_\bu(\eta')]};
(i\!+\!1,\Ga_i)_{\Ga_i\in L_\bu(\eta')-\{\Ga_0\}}\big)\lra X.$$
If $p_0\!\in\!Y$ is generic, $\evi_{\eta'}^{-1}(S(\cN_{p_0}Y)\!)$ is a finite set of signed points;
the sign of a point~$\wt\u$ is plus if the composition of $\nd_{\wt\u}\evi_{\eta'}$
with the projection to the normal bundle of~$S(\cN_{p_0}Y)$ in~$X$
is orientation-preserving. 
Since the dimensions of~$X$ and~$\Ga_i$ are even,
\BE{sphere_e7}
\M_{\eta';J}\!\fiber\!\!\big(\!(i,\fb_{\al_i(\eta')})_{i\in[k_\bu(\eta')]};
(i\!+\!1,\Ga_i)_{\Ga_i\in L_\bu(\eta')}\big)
\approx \evi_{\eta'}^{-1}\big(S(\cN_{p_0}Y)\!\big)\,.\EE
If $\be_{\bu}(\eta')\!=\!0$, $\evi_{\eta'}^{-1}(S(\cN_{p_0}Y)\!)\!=\!\eset$
because $\Ga_0$ is disjoint from~$Y$.

Suppose $\be_{\bu}(\eta')\!\neq\!0$. 
The short exact sequence 
\begin{equation*}\begin{split}
0\lra T_{z_1}\D \lra T_{\wt{\u}}
\big(\M_{\eta';J}\!\fiber\!\!\big(\!(i,\fb_{\al_i(\eta')})_{i\in[k_\bu(\eta')]};
(i\!+\!1,\Ga_i)_{\Ga_i\in L_\bu(\eta')-\{\Ga_0\}}\big)\!\big)&\\ 
\lra T_{\u}\big(\M_{\eta'^c_{\Ga_0};J}\!\fiber\!\!
\big(\!(i,\fb_{\al_i(\eta'^c_{\Ga_0})})_{i\in[k_\bu(\eta'^c_{\Ga_0})]};
(i,\Ga_i)_{\Ga_i\in L_\bu(\eta'^c_{\Ga_0})}\big)\!\big)&\lra0
\end{split}\end{equation*}
of vector spaces induced by the forgetful morphism dropping the first interior marked
point is orientation-preserving
for every element $[\wt\u]\!=\![\u,z_1]$ of the first fiber product above.
The short exact sequence 
\begin{equation*}\begin{split}
0\lra T_{x_1}S^1 \lra T_{\wt{\u}}
\big(\M_{\eta;J}^+\!\fiber\!\!\big(\!(i\!+\!1,\fb_{\al_i(\eta)})_{i\in[k_\bu(\eta)]};
(i,\Ga_i)_{\Ga_i\in L_\bu(\eta)}\big)\!\big)&\\ 
\lra T_{\u}\big(\M_{\eta;J}\!\fiber\!\!\big(\!(i,\fb_{\al_i(\eta)})_{i\in[k_\bu(\eta)]};
(i,\Ga_i)_{\Ga_i\in L_\bu(\eta)}\big)\!\big)&\lra0
\end{split}\end{equation*}
of vector spaces induced by the forgetful morphism dropping the first boundary marked
point is also orientation-preserving
for every $\eta\!\in\![\eta'^c_{\Ga_0}]$ and
every element $[\wt\u]\!=\![\u,x_1]$ of the first fiber product above.
Along with~\eref{sphere_e3}, the last two statements imply that the homotopy classes
of isomorphisms
\begin{equation*}\begin{split}
&T_{\wt{\u}}\big(\M_{\eta';J}\!\fiber\!\!\big(\!(i,\fb_{\al_i(\eta')})_{i\in[k_\bu(\eta')]};
(i\!+\!1,\Ga_i)_{\Ga_i\in L_\bu(\eta')-\{\Ga_0\}}\big)\!\big)\\
&\hspace{1in}\approx 
T_{\u}\big(\M_{\eta'^c_{\Ga_0};J}\!\fiber\!\!
\big(\!(i,\fb_{\al_i(\eta'^c_{\Ga_0})})_{i\in[k_\bu(\eta'^c_{\Ga_0})]};
(i,\Ga_i)_{\Ga_i\in L_\bu(\eta'^c_{\Ga_0})}\big)\!\big)
\!\oplus\!T_{z_1}\D,\\
&T_{\wt{\u}}
\big(\M_{\eta;J}^+\!\fiber\!\!\big(\!(i\!+\!1,\fb_{\al_i(\eta)})_{i\in[k_\bu(\eta)]};
(i,\Ga_i)_{\Ga_i\in L_\bu(\eta)}\big)\!\big)\\
&\hspace{1in}\approx 
T_{\u}\big(\M_{\eta;J}\!\fiber\!\!\big(\!(i,\fb_{\al_i(\eta)})_{i\in[k_\bu(\eta)]};
(i,\Ga_i)_{\Ga_i\in L_\bu(\eta)}\big)\!\big)\!\oplus\!T_{x_1}S^1
\end{split}\end{equation*}
are orientation-preserving.
Combining this with the proof of the equality of the right-hand sides 
of the two equations in \cite[(6.20)]{RealWDVV3}, we obtain
\BE{sphere_e11}
\big|\evi_{\eta'}^{-1}\big(S(\cN_{p_0}Y)\!\big)\big|^{\pm}
= \big|\evb_{[\eta'^c_{\Ga_0}]}^{-1}(p_0)\big|^{\pm}\,.\EE

\vspace{-.1in}

By~\eref{JSinvdfn_e2c}, \eref{sphere_e7}, and \eref{sphere_e11}, 
$$\blr{L}_{\be;K}^{\Ga_0}
=\sum_{\begin{subarray}{c}\eta'\in\cD_{\om}^{\Ga_0}(\al^{\dag})\\
\be_{\bu}(\eta')\neq0\end{subarray}}\hspace{-0.2in}
(-1)^{k_{\bu}(\eta')}s^{\circ}(\eta')\big|\evi_{\eta'}^{-1}\big(S(\cN_{p_0}Y)\!\big)\big|^{\pm}
=\sum_{\begin{subarray}{c}\eta\in\cD_{\om}(\al)\\ \be_{\bu}(\eta)\neq0\end{subarray}}
\hspace{-0.15in}(-1)^{k_{\bu}(\eta)}\big|\evb_{\eta}^{-1}(p_0)\big|^{\pm}.$$
If $\be_{\bu}(\eta)\!=\!0$, $\evb_{\eta}^{-1}(p_0)\!=\!\eset$ by the reasoning
in the proof of~\ref{deg0_it}.
Thus, the last sum above equals to the right-hand side of~\eref{sphere_e1}.
\end{proof}

\subsection{Properties~\ref{lagl_it1} and~\ref{lagl_it2}}
\label{LangOGW_subs}

Throughout this section, we assume that 
$$\mu_Y^{\om}(\be)\!+\!(n\!-\!3)=(n\!-\!1)k\!+\!
\sum_{i=1}^l\big(\deg\,\ga_i\!-\!2) ;$$
otherwise, all invariants in~\ref{lagl_it1} and~\ref{lagl_it2} vanish
for dimensional reasons.
Both of these properties then follow immediately from Proposition~\ref{LangOGW_prp} below.
We state this proposition, which is analogous to Lemma~4.3 in~\cite{JS3},  
in a slightly greater generality than necessary for
the purposes of establishing~\ref{lagl_it1} and~\ref{lagl_it2} to make it readily
usable for a geometric translation of the proof of WDVV-type relations of~\cite{JS3}.
With $(X,\om)$ and $Y$ as in Theorem~\ref{main_thm}, let 
$\io_Y\!:Y\!\lra\!X$ be the inclusion.

\begin{prp}\label{LangOGW_prp}
Let $(X,\om,Y)$, $n$, $\os$, $\al\!\equiv\!(\be,K,L)$, $J$, and 
$(\fb_{\al'})_{\al'\in\cC_{\om;\al}(Y)}$ be as in Definition~\ref{bndch_dfn}.
If $\Ga_0$ is a generic dimension~$n$ pseudocycle to~$X$, possibly with boundary,
and \hbox{$\dim(\al)\!=\!n\!-\!1$}, then
\BE{LangOGW_e}\begin{split}
&\big|\io_Y\!\!\fiber\!\Ga_0\big|^{\pm}\blr{L}^{\om,\os}_{\be;K}
+\blr{L(\al)}^{\prt\Ga_0}_{\be;K}\\
&\hspace{.2in}
=\sum_{B\in q_Y^{-1}(\be)}\!\!\!\!\!\!
(-1)^{\lr{w_2(\os),B}}\big|\M^\C_{\{0,1\}\sqcup L}(B;J)\!\!\fiber\!\!
\big(\!(0,Y),(1,\Ga_0),(i\!+\!1,\Ga_i)_{\Ga_i\in L}\big)\big|^{\pm}\,.
\end{split}\EE  
\end{prp}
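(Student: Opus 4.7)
The plan is to run a one-dimensional cobordism argument in the spirit of Lemmas~\ref{BCpseudo_lmm} and~\ref{psisot_lmm}, with the essential new ingredient being sphere bubbling off the disk boundary. For each $\eta\in\cD_\om(\al)$ I will introduce
$$\cU_\eta \equiv \M^\st_{k_\bu(\eta),L_\bu(\eta)\sqcup\{0\}}(\be_\bu(\eta); J) \fiber\big((i,\fb_{\al_i(\eta)})_{i\in[k_\bu(\eta)]}; (0,\Ga_0), (i,\Ga_i)_{\Ga_i\in L_\bu(\eta)}\big),$$
obtained from the moduli space of disks underlying $\fbb_\eta$ by adjoining a single interior marked point indexed by~$0$ and constrained by~$\Ga_0$. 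A dimension count parallel to the one for $\fbb_\eta$ in the proof of Lemma~\ref{BCpseudo_lmm} yields $\dim \cU_\eta = \dim(\al)-n+2=1$ under the hypothesis $\dim(\al)=n-1$. With appropriate signs derived from the combinatorics of~$\cD_\om(\al)$, $\bigsqcup_\eta\cU_\eta$ is a one-dimensional oriented bordered manifold whose boundary I will match with the terms in~\eref{LangOGW_e}.

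The codimension-one boundary decomposes into three types. First, disk-node degenerations of the domain paired with the $\prt\fb_{\al_i(\eta)}=\fbb_{\al_i(\eta)}$ contributions from~\ref{BCprt_it} cancel in pairs via the combinatorial matching used for Lemma~\ref{BCpseudo_lmm}, so they contribute nothing. Second, the stratum on which $\evi_0$ lies in~$\prt\Ga_0$ reproduces $\lr{L(\al)}^{\prt\Ga_0}_{\be;K}$ with an appropriate sign from the boundary orientation of~$\Ga_0$. Third are sphere-bubble strata: in the stable-map compactification, as the interior marked point~$z_0$ approaches $\prt\D^2$, the limit is a disk of class $\be_\bu(\eta)-q_Y(B)$ joined at a boundary node on~$Y$ to a sphere of class $B\in H_2(X;\Z)$ carrying~$z_0$.

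I will then classify the sphere-bubble strata by the sphere class~$B$. Strata with $B=0$ are unstable as two-marked-point spheres and collapse in the compactification to disks with an extra boundary marked point on $Y\cap\Ga_0$; summing over~$\eta$ reconstructs $\fbb_\al\fiber(Y\cap\Ga_0)$, contributing the first LHS term $|\io_Y\fiber\Ga_0|^\pm\lr{L}^{\om,\os}_{\be;K}$. Strata with $B\ne 0$ and a constant disk component, so $q_Y(B)=\be_\bu(\eta)$, will be shown, after summation over all~$\eta$ with $\be_\bu(\eta)=\be$ and over all compatible $B\in q_Y^{-1}(\be)$, to assemble into $\sum_B (-1)^{\lr{w_2(\os),B}}|\M^\C_{\{0,1\}\sqcup L}(B;J)\fiber((0,Y),(1,\Ga_0),(i+1,\Ga_i)_{\Ga_i\in L})|^\pm$, which is the RHS of~\eref{LangOGW_e}. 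The sign $(-1)^{\lr{w_2(\os),B}}$ arises from comparing the orientation of $\M^\st$ induced by~$\os$ with the complex orientation of $\M^\C(B;J)$ at the boundary node on~$Y$, via the conventions of~\cite[Section~7.2]{SpinPin}. Sphere-bubble strata with both components non-trivial (i.e.\ $B\ne 0$ and residual disk class nonzero) will be paired and cancelled against analogous strata in other~$\cU_{\eta'}$ by a matching that treats the sphere as playing the combinatorial role of a bounding-chain summand: the sphere meets $Y$ once at the node, exactly as would a bounding-chain disk appearing in the corresponding $\eta'$.

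The main obstacle will be the analysis of this last cancellation and the attendant sign bookkeeping. Unlike the purely disk-disk matching of Lemma~\ref{BCpseudo_lmm}, the matching here involves comparing disk orientations induced by the relative OSpin-structure~$\os$ with complex orientations of closed spheres along a node on the Lagrangian, and it is precisely at this comparison that the Stiefel-Whitney class $w_2(\os)$ enters and produces the sign $(-1)^{\lr{w_2(\os),B}}$ on the RHS. Organizing this sign analysis, together with the combinatorics of distributing a sphere class~$B$ across the bounding-chain summands $\al_i(\eta)$ compatibly with all $\eta\in\cD_\om(\al)$, will be the technical heart of the proof.
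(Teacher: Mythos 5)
Your overall strategy—adjoin an interior marked point carrying the $\Ga_0$ constraint to the moduli spaces underlying $\fbb_\eta$, sum over $\eta$ with appropriate signs, and read off the identity~\eref{LangOGW_e} from the vanishing of the signed boundary of the resulting one-dimensional pseudocycle—is precisely the paper's strategy (the paper works with $\al^{\dag}\!=\!(\be,K,\{\Ga_0\}\sqcup L)$ and $\eta\!\in\!\cD_{\om}^{\Ga_0}(\al^{\dag})$, which reindexes the same moduli spaces).  However, your account of which boundary strata produce which terms of~\eref{LangOGW_e} is wrong in two places, and as a result the "technical heart" you anticipate is a non-issue while the actual nontrivial step is missing.

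First, the disk-bubbling/bounding-chain cancellations are \emph{not} complete, and the residue is exactly where the term $|\io_Y\!\fiber\!\Ga_0|^{\pm}\lr{L}^{\om,\os}_{\be;K}$ comes from.  When the interior marked point $z_0$ carrying $\Ga_0$ approaches $\prt\D^2$, a degree-zero ghost \emph{disk} (not a sphere) bubbles off carrying $z_0$ and a boundary node; this is a disk-bubbling stratum.  After pairing disk-bubbling terms against bounding-chain boundaries via the bijection analogous to~\eref{BCpseudo_e11}, the residual terms are exactly those in~\eref{bdry_e3} in which a ghost disk (an element of $\M^{\st}_{1,1}(0;J)$ constrained by $\Ga_0$) is attached to a fiber of $\fbb_{\al}$; since $\fbb_\al$ has full dimension $n$, there is no $\fb_{\al'}$ of Definition~\ref{bndch_dfn}\ref{BC0_it} to cancel it.  Evaluating this residue gives $-|\io_Y\!\fiber\!\Ga_0|^{\pm}\deg\fbb_{\al}$.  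Your claim that the disk-node degenerations "contribute nothing" drops this, and your attempt to recover the first LHS term from a "$B=0$ sphere bubble" does not correspond to any codimension-one boundary stratum: a degree-zero two-special-point sphere is unstable and does not appear in the compactification, and the degeneration when $z_0\to\prt\D^2$ is disk bubbling, not sphere bubbling at a boundary node.

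Second, your picture of sphere-bubble strata as "a disk of class $\be_\bu(\eta)-q_Y(B)$ joined at a boundary node on $Y$ to a sphere of class $B$ carrying $z_0$" is not a codimension-one boundary phenomenon in these moduli spaces.  The only codimension-one sphere-bubbling stratum $\cS^0_\eta\subset\prt\ov\M_{\eta;J}$ is the one where the \emph{entire} boundary circle $S^1$ contracts to a point of $Y$; it is nonempty only when $k_\bu(\eta)=0$ and $\be_\bu(\eta)\in\Im(q_Y)$, and the "disk component" is then just the node point, not a curve of nonzero class.  Consequently the cancellation you flag as the technical heart—matching sphere bubbles with nonzero residual disk class against bounding-chain summands across different $\cU_{\eta'}$—never arises.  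The genuine technical content, which your plan omits, is (i) correctly identifying the \emph{residual} uncancelled disk-bubbling terms after the Lemma~\ref{BCpseudo_lmm}-type matching and (ii) comparing the boundary orientation of $\cS^0_\eta$ to the complex orientation of $\M^{\C}_{\{0,1\}\sqcup L}(B;J)\fiber(\cdots)$ (Lemma~\ref{Scompare2_lmm} and Corollary~\ref{Scompare2_crl}), which is where the factor $(-1)^{\lr{w_2(\os),B}}$ enters.
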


\begin{proof}[{\bf{\emph{Proof of~\ref{lagl_it1}}}}]
If $\Ga_0$ is a closed pseudocycle to~$X$, 
$$\big|\io_Y\!\!\fiber\!\Ga_0\big|^{\pm}=\lr{\PD_X([\Ga_0]_X),[Y]_X}\in R.$$
The signed cardinality on the right-hand side of~\eref{LangOGW_e}
is then the degree~$B$ closed GW-invariant
$$\blr{\PD_X([Y]_X),\PD_X([\Ga_0]_X),
\big(\PD_X([\Ga_i]_X)\!\big)_{\Ga_i\in L}}_{\!B}^{\!\om}\in R$$
of $(X,\om)$.
The claim of~\ref{lagl_it1} is thus the $K\!=\!\eset$ case of
Proposition~\ref{LangOGW_prp} with $\Ga_0,\Ga_1,\ldots,\Ga_l$ being
generic pseudocycle representatives for the Poincare duals of $\ga_0,\ga_1,\ldots,\ga_l$.
 \end{proof}

\begin{proof}[{\bf{\emph{Proof of~\ref{lagl_it2}}}}]
The right-hand side of~\eref{LangOGW_e} vanishes if $K\!\neq\!\eset$ for dimensional reasons.
The claim of~\ref{lagl_it2} thus follows from Proposition~\ref{LangOGW_prp} with
$\Ga_0,\Ga_1,\ldots,\Ga_l$ being generic pseudocycles to~$X$ such that 
$|\io_Y\!\!\fiber\!\Ga_0|^{\pm}\!\neq\!0$ and 
the Poincare duals of $\Ga_1,\ldots,\Ga_l$ are $\ga_1,\ldots,\ga_l$.
\end{proof}

The remainder of this section establishes Proposition~\ref{LangOGW_prp}.
Let $\al$ and $\Ga_0$ be as in its statement 
and~$L^{\dag}$ and $\al^{\dag}$ be as in~\eref{div_e0}.
We~define 
\BEnum{$\bu$}

\item  $\cD_{\om}^{\Ga_0}(\al^{\dag})$ by~\eref{cDomdfn_e} 
with~$\Ga_0\!\in\!L_{\bu}\!\subset\!L^{\dag}$,


\item $\cD_{\om}^{2'}(\al^{\dag})$ by~\eref{cDonm2dfn_e}
with $\cD_{\om}(\al)$ replaced by $\cD_{\om}^{\Ga_0}(\al^{\dag})$,
$[k_{\bu}(\eta)\!+\!2]$ replaced by $\{0\}\!\sqcup\![k_{\bu}(\eta)\!+\!1]$,
and
$$\big(\be_{\bu}^2,k_2\!-\!1\!-\!k_1,L_{\bu}^2\big)\neq
(0,0,\eset),(0,1,\eset),
\big(\be_{\bu}(\eta),k_{\bu}(\eta)\!-\!1,L_{\bu}(\eta)\!\big),
\big(\be_{\bu}(\eta),k_{\bu}(\eta),L_{\bu}(\eta)\!\big),$$

\item  $\ov\cD_{\om}^{2*}(\al^{\dag})$ 
by~\eref{BCpseudo_e11} with $\cD_{\om}(\al)$ replaced by~$\cD_{\om}^{\Ga_0}(\al^{\dag})$
and $(\be_{\bu}(\eta),k_{\bu}(\eta),L_{\bu}(\eta)\!)\!\neq\!(0,2,\eset)$.

\EEnum
Let 
$$\cD_{\om}^{2*}(\al^{\dag})=\big\{\!\big(\eta;\be_{\bu}^2,k_1,k_2,L_{\bu}^2\big)
\!\in\!\cD_{\om}^{2'}(\al^{\dag})\!: k_1\!\ge\!1~\hbox{or}~k_{\bu}(\eta)\!=\!0\big\}.$$
The construction above~\eref{BCpseudo_e11} determines a bijection
\BE{cDom2prbi_e}
\big\{(\eta;\be_{\bu}^2,k_1,k_2,L_{\bu}^2)\!\in\!\cD_{\om}^{2'}(\al^{\dag})\!:
\Ga_0\!\not\in\!L_{\bu}^2\big\}\lra\ov\cD_\om^{2*}(\al^{\dag}),
~
\big(\eta;\be_{\bu}^2,k_1,k_2,L_{\bu}^2\big)\lra\big(\eta^1;k_1\!+\!1,\eta^2\big)\,.\EE
We define ``rotations"
\BE{rhocD4_e} \rho\!: \ov\cD_{\om}^{2*}(\al^{\dag})\lra \ov\cD_{\om}^{2*}(\al^{\dag})
\qquad\hbox{and}\qquad 
\rho\!: \cD_{\om}^{2*}(\al^{\dag})\lra\cD_{\om}^{2*}(\al^{\dag})\EE
by~\eref{rhoovcDdfn_e} and~\eref{rhocD2_e}.
Let
\begin{gather}\label{auxref_e}
R\!:\big\{(\eta;\be^2_\bu,k_1,k_2,L^2_\bu)\!\in\!\cD_{\om}^{2'}(\al^{\dag})\!:k_1\!=\!0\big\}
\lra \cD_{\om}^{2'}(\al^{\dag}),\\
\notag
R\big(\eta;\be^2_\bu,0,k_2,L^2_\bu\big)
=\big(\eta;\be_\bu(\eta)\!-\!\be^2_\bu,k_2\!-\!1,k_\bu(\eta)\!+\!1,L_\bu(\eta)\!-\!L^2_\bu\big).
\end{gather}
Below~\eref{bdry_e}, we use the equivalence relation on $\cD^{2'}_\om(\al)$ 
generated by the maps~$\rho$ in~\eref{rhocD4_e} and~$R$ in~\eref{auxref_e}.

Let $(\fb_{\al'})_{\al'\in\cC_{\om;\al}(Y)}$ be as in the statement
of Proposition~\ref{LangOGW_prp}.
For $\eta\!\in\!\cD_{\om}^{\Ga_0}(\al^{\dag})$, 
we define $\M_{\eta;J}$ and $\M_{\eta;J}^+$ as in~\eref{fMetaJdfn_e}
and $\fbb_{\eta}$ as in~\eref{fbbetadfn_e0}.
For $(\eta;\vt)\!\in\!\cD^{2'}_\om(\al^{\dag})$, define
$\cS_\eta(\vt)\!\subset\!\M_{\eta;J}$ as in~\eref{BCpseudo_e12}.
All boundary strata of~$\M_{\eta;J}$ are of this form;
two elements of~$\cD^{2'}_\om(\al^{\dag})$ describe the same boundary stratum 
if and only if one can be mapped to the other by compositions of 
the map~$R$ in~\eref{auxref_e}.
For $(\eta;\vt)\!\in\!\cD^{2'}_\om(\al^{\dag})$
and $(\eta;i,\eta')\!\in\!\ov\cD^{2*}_\om(\al^{\dag})$, let
\begin{equation*}\begin{split}
\fbb^*_\eta(\vt)&\equiv \cS_\eta(\vt)\!\fiber\!
\big(\!(i,\fb_{\al_i(\eta)})_{i\in[k_\bu(\eta)]};
(i,\Ga_i)_{\Ga_i\in L_\bu(\eta)}\big),\\
\fbb^*_\eta(i,\eta')&\equiv \M_{\eta;J}\!\fiber\!
\big(\!(j,\fb_{\al_j(\eta)})_{j\in[k_\bu(\eta)]-\{i\}},(i,\fbb_{\eta'});
(j,\Ga_j)_{\Ga_j\in L_\bu(\eta)}\big)\,.
\end{split}\end{equation*}
For $(\eta;\vt)\!\in\!\cD^{2*}_\om(\al^{\dag})$ and 
$(\eta;i,\eta')\!\in\!\ov\cD^{2*}_\om(\al^{\dag})$,
the ``backwards" cyclic permutations of the boundary marked points of the moduli space
and of the pseudocycles to~$Y$ induce diffeomorphisms
$$\rho_{\eta;\vt}\!:\fbb^*_{\eta}(\vt)\lra\fbb^*_{\rho(\eta)}(\vt'), \qquad
\rho_{\eta;i,\eta'}\!:\fbb^*_{\eta}(i,\eta')\lra
\begin{cases}\fbb^*_{\rho(\eta)}(i\!-\!1,\eta'),&\hbox{if}~i\!>\!1;\\
\fbb^*_{\rho(\eta)}(k_{\bu}(\eta),\eta'),&\hbox{if}~i\!=\!1.
\end{cases}$$

\begin{lmm}\label{rhoRsgn_lmm2}
The diffeomorphisms $\rho_{\eta;\th}$ with $(\eta;\vt)\!\in\!\cD_{\om}^{2*}(\al^{\dag})$
and $\rho_{\eta;i,\eta'}$ with  \hbox{$(\eta;i,\eta')\!\in\!\ov\cD_{\om}^{2*}(\al^{\dag})$}
above are orientation-preserving.
\end{lmm}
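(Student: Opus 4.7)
The plan is to follow the strategy of the first claim of Lemma~\ref{rhoRsgn_lmm}. For each of the two diffeomorphisms, one analyzes the commutative diagram in which the evaluation maps intertwine the cyclic permutation diffeomorphism $\rho_\M$ on the relevant stratum of $\M_{\eta;J}$, a cyclic permutation $\rho_Y$ on $Y^{k_\bu(\eta)}\!\times\!X^{|L_\bu(\eta)|}$ (the codomain of evaluation), and a cyclic permutation $\rho_\fb$ on the product of the domains of the pseudocycle insertions. Applying Lemma~\ref{fibprodflip_lmm} together with the construction of orientations on fiber products in Section~\ref{Fp_subs}, the induced sign on the fiber product is determined by the signs of these three diffeomorphisms.

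For $\rho_{\eta;\vt}$, the sign of $\rho_\M$ equals $(-1)^{k_\bu(\eta)-1}$ by the construction of the orientation on $\M_{k,L}^\st(\be;J)$ in Section~\ref{Ms_subs}, exactly as in the proof of Lemma~\ref{rhoRsgn_lmm}. The crucial difference from that setting is that the base factor here is $Y$ itself, not $[0,1]\!\times\!Y$, and $\dim Y\!=\!n$ is odd. The cyclic permutation of the $k_\bu(\eta)$ copies of $Y$ therefore contributes $(-1)^{n^2(k_\bu(\eta)-1)}\!=\!(-1)^{k_\bu(\eta)-1}$, while the $X$-factors indexed by $L_\bu(\eta)$ are not moved. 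Meanwhile, by the first statement of~\eref{bdimprp_e}, each $\fb_{\al_j(\eta)}$ is even-dimensional, so $\rho_\fb$ is orientation-preserving. The two factors of $(-1)^{k_\bu(\eta)-1}$ cancel, giving orientation-preservation of $\rho_{\eta;\vt}$.

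The argument for $\rho_{\eta;i,\eta'}$ is nearly identical, except that the $i$-th factor in the product of domains is $\fbb_{\eta'}$, which is odd-dimensional by the first statement of~\eref{bdimprp_e}. The sign of a one-step cyclic permutation of a tuple with exactly one odd-dimensional entry is $+1$: in the formula $(-1)^{d_1(d_2+\ldots+d_{k_\bu(\eta)})}$, either $d_1$ is even, so the exponent vanishes, or $d_1$ is odd and the remaining entries include no odd dimensions, so $d_2\!+\!\ldots\!+\!d_{k_\bu(\eta)}$ is even. Thus $\rho_\fb$ is again orientation-preserving, and the same cancellation between $\rho_\M$ and $\rho_Y$ yields the claim.

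The main bookkeeping subtlety is the special case $i\!=\!1$ of $\rho_{\eta;1,\eta'}$, where $\rho$ transports the odd-dimensional factor $\fbb_{\eta'}$ from position $1$ to position $k_\bu(\eta)$ in the product; since the parity computation above depends only on the multiset of factor dimensions, it applies uniformly and yields orientation-preservation in this case as well. Beyond this, one also needs to confirm the tacit fact that the induced diffeomorphism of the boundary stratum $\cS_\eta(\vt)\!\subset\!\prt\M_{\eta;J}$ inherits its sign from that of the corresponding cyclic permutation on $\M_{\eta;J}$; this follows from the description of $\cS_\eta(\vt)$ as a topological component of the codimension-one boundary with the induced orientation, which is preserved by any automorphism of $\M_{\eta;J}$ that permutes the relevant boundary marked points.
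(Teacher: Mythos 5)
Your argument is correct and follows the same route as the paper's proof: set up the commutative diagram intertwining $\rho_\M$, $\rho_Y$, $\rho_\fb$, compute each sign ($(-1)^{k_\bu(\eta)-1}$, $(-1)^{k_\bu(\eta)-1}$, $+1$ respectively — the only place where parities flip relative to Lemma~\ref{rhoRsgn_lmm}), and invoke Lemma~\ref{fibprodflip_lmm}. Your explicit parity check for the $\rho_{\eta;i,\eta'}$ case and the remark about boundary strata inheriting the sign of $\rho_\M$ fill in details the paper leaves tacit, but the substance of the argument is identical.
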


\begin{proof}
The proof is similar to that of Lemma~\ref{rhoRsgn_lmm}.
The cyclic permutations of the boundary marked points of the elements of $\M_{\eta;J}$
and of the pseudocycles to~$Y$ induce a commutative diagram
$$\xymatrix{\M_{\eta;J}\ar[rr]^>>>>>>>>>>{\ev}\ar[d]_{\rho_{\M}}&&
Y^{k_{\bu}(\eta)}\!\!\times\!\!X^{L_{\bu}(\eta)} \ar[d]_{\rho_Y}&&
\prod\limits_{j\in[k_{\bu}(\eta)]}\hspace{-.15in}(\dom\,\fb_{\al_{j}(\eta)})
\!\times\!\!
\prod\limits_{\Ga_j\in L_{\bu}(\eta)}\hspace{-.14in}(\dom\,\Ga_j) \ar[ll]\ar[d]^{\rho_{\fb}}\\ 
\M_{\eta;J}\ar[rr]^>>>>>>>>>{\ev}&&
Y^{k_{\bu}(\eta)}\!\!\times\!\!
X^{L_{\bu}(\eta)} &&
\prod\limits_{j\in[k_{\bu}(\eta)]}\hspace{-.15in}(\dom\,\fb_{\al_{j}(\eta)})
\!\times\!\!
\prod\limits_{\Ga_j\in L_{\bu}(\eta)}\hspace{-.14in}(\dom\,\Ga_j) \ar[ll]}$$
so that the vertical arrows are diffeomorphisms.
Since the dimensions of all $\fb_{\al_{j}(\eta)}$ are even,
the diffeomorphism~$\rho_{\fb}$ is orientation-preserving.
By the construction of the orientation on~$\M_{\eta;J}$ in Section~\ref{Ms_subs},
the sign of the diffeomorphism~$\rho_{\M}$ is~$(-1)^{k_{\bu}(\eta)-1}$.
Since the dimension of~$Y$ is odd, this is also the sign of the diffeomorphism~$\rho_Y$.
The first claim of the lemma now follows from Lemma~\ref{fibprodflip_lmm}.
The claim concerning~$\rho_{\eta;i,\eta'}$ is obtained in the same way
with~$\fb_{\al_i(\eta)}$ replaced by~$\fbb_{\al_i(\eta)}$.
\end{proof}

\begin{lmm}\label{bbetavt_lmm2}
If $(\eta^1;i,\eta^2)\!\in\!\ov\cD_{\om}^{2*}(\al^{\dag})$ is the image of 
\hbox{$(\eta;\vt)\!\in\!\ov\cD_{\om}^{2'}(\al^{\dag})$} 
under~\eref{cDom2prbi_e}, then
\BE{bbcompare_e}
(-1)^{k_\bu(\eta)}\fbb^*_\eta(\vt)
=-(-1)^{k_\bu(\eta^1)}\fbb^*_{\eta^1}(i,\eta^2). \EE
\end{lmm}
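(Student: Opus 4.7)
The plan is to mirror the derivations of~\eqref{BCpseudo_e10} and~\eqref{BCpseudo_e10c} in the setting $B\!=\!\{\pt\}$ with the $*$-version (non-$+$) of the fiber product on both sides. First I would apply Corollary~\ref{Scompare_crl} to the boundary stratum $\cS_\eta(\vt)\!\subset\!\prt\M_{\eta;J}$ to identify it, as an oriented manifold, with the fiber product
$$\M_{\eta^1;J}\,{}_{\evb_{k_1+1}}\!\!\times_{Y,\,\evb_1}\!\M^+_{\eta^2;J}$$
at the nodal boundary marked point, carrying the sign prescribed by that corollary for $B\!=\!\{\pt\}$, $k\!=\!k_\bu(\eta)$, and $|I|\!=\!k_\bu(\eta^2)\!=\!k_2\!-\!k_1\!-\!1$; this is the analogue for a point base of the Scompare signs appearing in the derivations of~\eqref{BCpseudo_e10} and~\eqref{BCpseudo_e10c}.

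Second I would incorporate the insertions $\fb_{\al_j(\eta)}$ and $\Ga_j$ by taking the fiber product with the identification of step~1. The insertions at boundary positions $j\!\in\!K\!=\!\{k_1\!+\!1,\ldots,k_2\!-\!1\}$ and at interior positions in $L_\bu^2$ pair with $\M^+_{\eta^2;J}$ and, by~\eqref{fbbetadfn_e0}, assemble into $\fbb_{\eta^2}$ (up to the orientation sign $(-1)^{k_\bu(\eta^2)}$ built into that definition). The remaining insertions, which include the odd-dimensional pseudocycle $\Ga_0$ (retained on the first component by the restriction $\Ga_0\!\notin\!L_\bu^2$ on the domain of~\eqref{cDom2prbi_e}), pair with $\M_{\eta^1;J}$ and, after placing $\fbb_{\eta^2}$ at the nodal position $k_1\!+\!1$ of $\eta^1$, produce $\fbb^*_{\eta^1}(k_1\!+\!1,\eta^2)$.

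Third I would reconcile the signs. The contributions are: (i) the Scompare sign from step~1; (ii) a factor $(-1)^{k_\bu(\eta^2)}$ from the definition~\eqref{fbbetadfn_e0} of $\fbb_{\eta^2}$ when rewriting the second-component fiber product; and (iii) signs from permuting insertions via Lemma~\ref{fibprodflip_lmm}, controlled by the parity statements in~\eqref{bdimprp_e} ($\fb_{\al'}$ even-dimensional, $\fbb_{\al'}$ odd-dimensional) together with the evenness of the $\Ga_i$ for $i\!>\!0$ and the oddness of $\Ga_0$. Using the identity $k_\bu(\eta^1)\!=\!k_\bu(\eta)\!-\!k_\bu(\eta^2)\!+\!1$, these contributions collapse to~\eqref{bbcompare_e}. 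The special case $k_\bu(\eta)\!=\!0$ accommodated by the second clause in the definition of $\cD_\om^{2*}(\al^{\dag})$ above is handled identically, as the Scompare identification and the sign contributions are unchanged.

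The main obstacle is the sign bookkeeping. The underlying geometric identification is straightforward and parallels the proofs of Lemmas~\ref{BCpseudo_lmm} and~\ref{psisot_lmm}; what requires care is reconciling the precise powers of~$-1$ coming from Corollary~\ref{Scompare_crl}, from the $\M$-versus-$\M^+$ orientation convention of Section~\ref{Ms_subs}, and from the odd-dimensional insertion~$\Ga_0$ passing across the boundary insertions.
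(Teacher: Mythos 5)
Your proposal is essentially the paper's approach in outline, but one key point is blurred in a way that would derail the sign bookkeeping if carried out as written. Corollary~\ref{Scompare_crl} is stated under the hypothesis that $\codim\,\Ga_i\in 2\Z$ for every interior insertion~$\Ga_i$; in the present setting $\Ga_0$ has codimension~$n$, which is odd, so the corollary's \emph{conclusion}~$\ep\!=\!k_1\!+\!k_2$ (for $B\!=\!\{\pt\}$) does not apply to $\cS_\eta(\vt)\!\fiber\!(\ldots)$. Your step~1 applies the corollary "carrying the sign prescribed by that corollary," and then your step~3 tries to compensate by adding "signs from permuting insertions via Lemma~\ref{fibprodflip_lmm}." But the $\Ga_0$-dependent corrections are not an overlay of extra permutation signs on top of the corollary's answer: they arise \emph{inside} the proof of Corollary~\ref{Scompare_crl}, at all three places ($\ep_1$, $\ep_2$, $\ep_3$) where the parities of the insertion codimensions enter — via Corollary~\ref{fibprodisom_crl} (the $\codim\,g_1$, $\codim\,e_2$, $\codim\,g$ terms) as well as via Lemma~\ref{fibprodflip_lmm}. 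Quoting the corollary's formula and then layering corrections double-counts or miscounts; the paper instead re-runs the proof of Corollary~\ref{Scompare_crl} for the $B\!=\!\{\pt\}$, $I\!=\![k_\bu(\eta)]$, $J\!=\!L_\bu(\eta)$ case and records the parity shifts $(\ep_1',\ep_2',\ep_3')\!=\!(k_2\!-\!k_1\!+\!1,\,k_2\!-\!k_1\!-\!1,\,1)$ caused by the single odd-codimension interior insertion, yielding $\ep\!=\!k_1\!+\!k_2\!+\!1$.

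Two smaller points. First, step~1 conflates Lemma~\ref{Scompare_lmm} (which identifies $\cS$ itself with $\M_1\!\times_Y\!\M_2$) with Corollary~\ref{Scompare_crl} (which identifies the fiber products with insertions already attached); the signs in the two statements are different, and the argument really runs through the corollary's proof, not its bare statement. Second, your identification "$|I|\!=\!k_\bu(\eta^2)$" is incorrect — the insertion index set is $I\!=\![k_\bu(\eta)]$ here, so $|I|\!=\!k_\bu(\eta)$ — although this is harmless for $B\!=\!\{\pt\}$ since the $|I|$-term in $\ep$ is multiplied by $\dim B$. The remaining bookkeeping you describe (the $(-1)^{k_\bu(\eta^2)}$ from the definition of $\fbb_{\eta^2}$, the parity statements in~\eref{bdimprp_e}, and the relation $k_\bu(\eta^1)\!=\!k_\bu(\eta)\!-\!(k_2\!-\!k_1\!-\!1)\!+\!1$) is exactly what is needed once the corrected $\ep$ is in hand.
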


\begin{proof}
Let $\vt\!=\!(\be^2_\bu,k_1,k_2,L^2_\bu)$.
By the definition of the map~\eref{cDom2prbi_e}, $\Ga_0\!\not\in\!L^2_{\bu}$.
The proof of the $B\!=\!\{\pt\}$, $I\!=\![k_{\bu}(\eta)]$, and $J\!=\!L_{\bu}(\eta)$ 
case of Corollary~\ref{Scompare_crl} applies with $\ep_r$ for $i\!=\!1,2,3$
replaced by $\ep_r\!+\!\ep_r'$, where
$$\big(\ep_1',\ep_2',\ep_3'\big)=
\big(k_2\!-\!k_1\!+\!1,k_2\!-\!k_1\!-\!1,1\big).$$
Thus, Corollary~\ref{Scompare_crl} yields the same conclusion with $\ep\!=\!k_1\!+\!k_2\!+\!1$
under the above assumptions.
Since
$${k_\bu}(\eta^1)=k_{\bu}(\eta)\!-\!(k_2\!-\!k_1\!-\!1)\!+\!1,$$
this establishes the claim.
\end{proof}

\begin{proof}[{\bf{\emph{Proof of~Proposition~\ref{LangOGW_prp}}}}]
For $\eta\!\in\!\cD_\om^{\Ga_0}(\al^{\dag})$, let $s^\circ(\eta)$ be as in 
Section~\ref{OpenGWs_subs} and 
$$s'(\eta)=\begin{cases}s^\circ(\eta),&\hbox{if}~k_\bu(\eta)\!\neq\!0,\\
\frac{1}{2}s^\circ(\eta),&\hbox{if}~k_\bu(\eta)\!=\!0.\end{cases}$$
We denote~by $\cS^0_\eta\!\subset\!\prt\ov\M_{\eta;J}$ 
the \sf{sphere-bubbling stratum} with the boundary orientation;
it consists of the maps from $(\D^2,S^1)$ to $(X,Y)$ with $S^1$ contracted to
a point in~$Y$. 
This stratum is empty unless
$$ k_{\bu}(\eta)=0 \qquad\hbox{and}\qquad 
\be_{\bu}(\eta)\in\Im\big(q_Y\!:H_2(X;\Z)\!\lra\!H_2(X,Y;\Z)\!\big).$$
By Lemma~\ref{fibersign_lmm} and Definition~\ref{bndch_dfn},
\BE{bdry_e}\begin{split}
&-\prt\bigg(\bigsqcup_{\eta\in\cD_\om^{\Ga_0}(\al^{\dag})}\hspace{-.2in}
s^\circ(\eta)\M_{\eta;J}\!\fiber\!\!\big(\!(i,\fb_{\al_i(\eta)})_{i\in[k_\bu(\eta)]};
(i\!+\!1,\Ga_i)_{\Ga_i\in L_\bu(\eta)}\big)\!\!\bigg)\\
&\quad=\!\!\!\bigsqcup_{\eta\in\cD_\om^{\Ga_0}(\al^{\dag})}\hspace{-.2in}
\cS^0_\eta\!\fiber\!\!\big(;(i\!+\!1,\Ga_i)_{\Ga_i\in L(\al^{\dag})}\big)\\
&\hspace{.4in}
\sqcup \!\!\!\!\!\!\!\!
\unbr{\bigsqcup_{(\eta;\vt)\in\cD^{2*}_\om(\al^{\dag})}\hspace{-.3in}
(-1)^{k_\bu(\eta)}\!s'\!(\eta)\fbb^*_\eta(\vt)}{\bf II}
\sqcup \!\!\!\!\!\!
\unbr{\bigsqcup_{\begin{subarray}{c}(\eta;i,\eta')\in\ov\cD^{2*}_\om(\al^{\dag})\\
\dim\,\fbb_{\eta'}<n\end{subarray}}\hspace{-.25in}
(-1)^{k_\bu(\eta)}\!s^{\circ}(\eta)\fbb^*_{\eta}(i,\eta')}{\bf III}\\
&\hspace{.4in}\sqcup\!\!\!\!\!
\bigsqcup_{\eta\in\cD_\om^{\Ga_0}(\al^{\dag})}\hspace{-.18in}
(-1)^{k_\bu(\eta)+1}s^\circ(\eta)
\M_{\eta;J}\!\fiber\!\!\big(\!(i,\fb_{\al_i(\eta)})_{i\in[k_\bu(\eta)]};
(1,\prt\Ga_0),(i\!+\!1,\Ga_i)_{\Ga_i\in L_\bu(\eta)-\{\Ga_0\}}\big).
\end{split}\EE

For $\vt\!=\!(\be^2_\bu,k_1,k_2,L^2_\bu)$, we define
$k_2(\vt)\!\equiv\!k_2$ and $L^2_\bu(\vt)\!=\!L^2_{\bu}$.
The orbit of $(\eta;\vt)\!\in\!\cD^{2*}_\om(\al^{\dag})$ under the action of
the second map~$\rho$ in~\eref{rhocD4_e} consists of $1/s'(\eta)$ elements
and is contained in the equivalence class of~$(\eta;\vt)$ in~$\cD^{2'}_\om(\al^{\dag})$.
This equivalence class 
has a unique element $(\eta^*;\vt^*)$ with 
$\Ga_0\!\not\in\!L^2_\bu(\vt^*)$ and $k_2(\vt^*)\!=\!k_\bu(\eta^*)\!+\!1$. 
The orbit of $(\eta;i,\eta')\!\in\!\ov\cD^{2*}_\om(\al^{\dag})$
under the action of the first map~$\rho$ in~\eref{rhocD4_e}
consists of $1/s^{\circ}(\eta)$ elements. 
Each such orbit has a unique element of the form $(\eta;k_\bu(\eta),\eta')$. 
Along with Lemma~\ref{rhoRsgn_lmm2}, this implies that 
$${\bf II}=\bigsqcup_{\begin{subarray}{c}(\eta;\vt)\in\cD^{2*}_\om(\al^{\dag})\\
\Ga_0\not\in L^2_\bu(\vt)\\ k_2(\vt)=k_\bu(\eta)+1\end{subarray}}\hspace{-.3in}
(-1)^{k_\bu(\eta)}\fbb^*_\eta(\vt)\qquad\hbox{and}\qquad
{\bf III}=
\bigsqcup_{\begin{subarray}{c}(\eta;k_\bu(\eta),\eta')\in\ov\cD^{2*}_\om(\al^{\dag})\\
\dim\,\fbb_{\eta'}<n\end{subarray}}\hspace{-.4in}
(-1)^{k_\bu(\eta)}\fbb^*_{\eta}(k_\bu(\eta),\eta')\,.$$
Since~\eref{cDom2prbi_e} restricts to a bijection 
$$\big\{(\eta;\vt)\!\in\!\cD^{2*}_\om(\al^{\dag})\!:
\Ga_0\!\not\in\!L^2_\bu(\vt),~k_2(\vt)\!=\!k_\bu(\eta)\!+\!1\big\}
\lra
\big\{(\eta;i,\eta')\!\in\!\ov\cD^{2*}_\om(\al^{\dag})\!:i\!=\!k_\bu(\eta)\big\}\,,$$
Lemma~\ref{bbetavt_lmm2} thus gives
\BE{bdry_e3}
{\bf II}+{\bf III}=
-\!\!\!\bigsqcup_{\begin{subarray}{c}(\eta;k_\bu(\eta),\eta')\in\ov\cD^{2*}_\om(\al^{\dag})\\
\dim\,\fbb_{\eta'}\ge n\end{subarray}}\hspace{-.42in}
(-1)^{k_\bu(\eta)}\fbb^*_{\eta}(k_\bu(\eta),\eta')\,.\EE

Since $\dim(\al^{\dag})\!=\!1$ and the dimension of each 
$\fbb^*_{\eta}(k_\bu(\eta),\eta')$ in~\eref{bdry_e3} is~0, 
\begin{equation*}\begin{split}
&\hbox{RHS of~\eref{bdry_e3}}
=\M_{1,1}^{\st}(0;J)\!\fiber\!\!\big(\!(1,\fbb_{\al});(1,\Ga_0)\!\big)
=-\big(\M_{1,1}^{\st}(0;J)\!\fiber\!\!(;(1,\Ga_0)\!)\!\big){}_{\evb_1}\!
\!\times_{\fbb_{\al}}\!\!\big(\dom\,\fbb_{\al}\big);
\end{split}\end{equation*}
the first equality above holds by the reasoning below~\eref{etadimsplit_e}
with the two disk components interchanged,
while the second by Lemmas~\ref{fibprodflip_lmm} and~\ref{fibprodisom_lmm1}.
Since the isomorphism in~\eref{deg0M_e} is orientation-preserving, 
$$\big|\M_{1,1}^{\st}(0;J)\!\fiber\!\!(;(1,\Ga_0)\!)\big|^{\pm}
=\big|\io_Y\!\!\fiber\!\Ga_0\big|^{\pm}.$$
Thus,
$$\hbox{RHS of~\eref{bdry_e3}}
=-\big|\io_Y\!\!\fiber\!\Ga_0\big|^{\pm}\big(\!\deg \fbb_{\al}\big).$$
Along with~\eref{bdry_e3} and~\eref{JSinvdfn_e2}, this gives
$${\bf II}+{\bf III}
=-\big|\io_Y\!\!\fiber\!\Ga_0\big|^{\pm}\blr{L}^{\om,\os}_{\be;K}\,.$$
Combining this statement with~\eref{bdry_e}, \eref{JSinvdfn_e2}, 
and Corollary~\ref{Scompare2_crl}, 
we obtain~\eref{LangOGW_e}.
\end{proof}

\section{Orientations}
\label{Ori_sec}

Section~\ref{Fp_subs} specifies our orientation conventions for fiber products and
establishes their properties that are used throughout the paper.
We describe the relevant moduli spaces of stable disk maps 
and specify their orientations in Section~\ref{Ms_subs}.
Sections~\ref{diskbubb_subs} and~\ref{sphbubb_subs} compare the induced
orientations on the two types of boundary strata of
these moduli spaces with natural intrinsic orientations of these spaces.

\subsection{Fiber products}
\label{Fp_subs}

We say a short exact sequence of oriented vector spaces
$$0\lra V'\lra V\lra V''\lra0$$
is \sf{orientation-compatible} if, for an oriented basis $(v'_1,\ldots,v'_m)$ of $V'$, an oriented basis $(v''_1,\ldots,v''_n)$ of $V''$, and a splitting $j:V''\to V$, $(v'_1,\ldots,v'_m,j(v''_1),\ldots,j(v''_n))$ is an oriented basis of $V$. 
We say it has sign $(-1)^\ep$ if it becomes orientation-compatible 
after twisting the orientation of $V$ by $(-1)^\ep$. 
We use the analogous terminology for
short exact sequences of Fredholm operators 
with respect to orientations of their determinants;
see \cite[Section~2]{detLB}.

Let $M$ be an oriented manifold with boundary $\prt M$. We orient the normal bundle $\cN$ to $\prt M$ by the outer normal direction and orient $\prt M$ so that the short exact sequence  
$$0\lra T_p\prt M\lra T_pM\lra \cN\lra0$$
is orientation-compatible at each point $p\in\prt M$. 
We refer to this orientation of $\prt M$ as the \sf{boundary orientation}. 

We orient $M\!\times\!M$ by the usual product orientation and 
the diagonal $\De_M\subset M\!\times\!M$ by the diffeomorphism 
$$M\lra\De_M, \qquad p\lra(p,p).$$
We orient the normal bundle $\cN\De_M$ of $\De_M$ so that the short exact sequence 
\BE{cNDeor_e}
0\lra T_{(p,p)}\De_M\lra T_{(p,p)}(M\!\times\!M)\lra \cN\De_M|_{(p,p)}\lra0\EE
is orientation-compatible for each point $p\!\in\!M$. 
Thus, the isomorphism
$$\cN\De_M|_{(p,p)} \lra T_pM, \qquad [v,w]\lra w\!-\!v,$$
respects the orientations.
This in turn implies that the isomorphism
\BE{cNsplit_e}\begin{split}
\cN\De_{M_1}\big|_{(p_1,p_1)}\!\oplus\!\cN\De_{M_2}\big|_{(p_2,p_2)}
&\lra\cN\De_{M_1\times M_2}\big|_{((p_1,p_2),(p_1,p_2))}, \\
\big([v_1,w_1],[v_2,w_2]\big)&\lra\big[(v_1,v_2),(w_1,w_2)\big],
\end{split}\EE
is orientation-preserving for all oriented manifolds $M_1,M_2$
and points $p_1\!\in\!M_1$ and $p_2\!\in\!M_2$.

For maps $f\!:\!M\lra X$ and $g\!:\!\Ga\lra X$, we denote by 
$$f\!\!\fiber\!g\equiv
M_f\!\times_g\!\Ga\equiv\{(p,q)\!\in\!M\!\times\!\Ga\!:\,f(p)=g(q)\}$$
their fiber product. 
If $M,\Ga$, and $X$ are oriented manifolds ($M,\Ga$ possibly with boundary) and 
$f,f|_{\prt M}$ are transverse to $g,g|_{\prt\Ga}$, 
we orient $M_f\!\times_g\!\Ga$ so that the short exact sequence
$$0\lra T_{(p,q)}(M_f\!\times_g\!\Ga)\lra T_{(p,q)}(M\!\times\!\Ga)
\xrightarrow{[d_p\!f,d_qg]} \cN\De_X|_{(f(p),g(q))}\lra0$$
is orientation-compatible for every $(p,q)\!\in\!M_f\!\times_g\!\Ga$. 
The exact sequence 
$$0\lra T_{(p,q)}(M_f\!\times_g\!\Ga)\lra 
T_{(p,q)}(M\!\times\!\Ga)\xrightarrow{d_qg-d_pf}T_{f(p)}X\lra0$$
is then orientation-compatible as well. 
We refer to this orientation of $M_f\!\times_g\!\Ga$ as 
the \sf{fiber product orientation}. The next two observations are straightforward. 

\begin{lmm}\label{fibprodemd_lmm}
If $f\!:M\!\lra\!X$ is an open embedding, so is the projection $M_f\!\!\times_g\!\Ga\!\lra\!\Ga$.
It has sign $(-1)^{(\dim\,X)(\dim\,\Ga+1)}$ with respect to the fiber product orientation
on the left-hand side.
\end{lmm}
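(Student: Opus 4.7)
My plan is to reduce the statement to pointwise linear algebra and then carry out one explicit determinant calculation.

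First, I would verify that the projection $\pi: M_f\!\fiber\!\Ga\lra\Ga$, $(p,q)\mapsto q$, is an open embedding. Since $f$ is injective, each $q$ in the image has a unique preimage $p\!=\!f^{-1}(g(q))$, so $\pi$ is injective. Its image equals $g^{-1}(f(M))$, which is open in $\Ga$, and on this image the inverse $q\mapsto(f^{-1}(g(q)),q)$ is smooth. Transversality is automatic since $d_p f$ is surjective.

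Next, I would compute the sign at an arbitrary point $(p,q)$. Set $n\!=\!\dim X\!=\!\dim M$ and $m\!=\!\dim\Ga$, and pick oriented bases $(v_1,\ldots,v_n)$ of $T_p M$ and $(w_1,\ldots,w_m)$ of $T_q\Ga$, with $e_i\!=\!d_pf(v_i)$ an oriented basis of $T_xX$ (implicitly $M$ has the orientation pulled back by $f$). Under the identification $\cN\De_X\!\cong\!T_xX$ given by $[a,b]\mapsto b\!-\!a$, an oriented basis of $\cN\De_X$ is $([0,e_1],\ldots,[0,e_n])$, and each $[0,e_i]$ lifts to $(-v_i,0)\in T_{(p,q)}(M\!\times\!\Ga)$ (because the map sends $(v,w)\mapsto d_qg(w)\!-\!d_pf(v)$). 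Since $d_pf$ is invertible, a natural basis of the kernel of $[d_pf,d_qg]$ is
$$\tilde w_j\equiv\big((d_pf)^{-1}d_qg(w_j),\,w_j\big),\qquad j\!\in\![m],$$
and these project to the oriented basis $(w_j)$ of $T_q\Ga$. The defining SES for the fiber-product orientation says that $(\tilde w_j)$ is a fiber-product-oriented basis of the kernel precisely when the concatenation $(\tilde w_1,\ldots,\tilde w_m,(-v_1,0),\ldots,(-v_n,0))$ is an oriented basis of the product.

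Third, I would compute the determinant of the change-of-basis matrix from that concatenation to the product-oriented basis $((v_1,0),\ldots,(v_n,0),(0,w_1),\ldots,(0,w_m))$. It has block form
$$\begin{pmatrix}A&-I_n\\ I_m&0\end{pmatrix},$$
and column operations using the right block reduce $A$ to $0$ without changing the determinant; a standard block swap then gives $(-1)^{n(m+1)}\!=\!(-1)^{(\dim X)(\dim\Ga+1)}$. Since $(\tilde w_j)$ projects to the oriented basis $(w_j)$, this is exactly the sign of $\pi$ with respect to the fiber-product orientation. The only ``obstacle'' is pure bookkeeping: tracking the minus signs in the lift $(-v_i,0)$ and the convention that the kernel precedes the quotient-lifts in the SES. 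There is no geometric content beyond compatible bases and one determinant.
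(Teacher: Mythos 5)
Your argument is correct. The paper offers no proof of this lemma (it and Lemma~\ref{fibersign_lmm} are together dismissed as ``straightforward'' just before the statement), so there is nothing to compare against; your pointwise linear-algebra computation is the natural way to verify the sign. Two small points worth keeping explicit: you correctly flag that the stated sign implicitly assumes $f$ is orientation-preserving (otherwise a factor of $\sgn f$ would appear, since the fiber-product orientation involves the orientation of~$M$); and your lift $(-v_i,0)$ of $[0,e_i]$ is indeed the one forced by taking the $\Gamma$-component of the splitting to vanish, consistent with the paper's convention $\cN\De_X|_{(p,p)}\cong T_pX$, $[v,w]\mapsto w\!-\!v$. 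The block-determinant step is right: column-reducing $A$ to $0$ leaves $\det\begin{pmatrix}0&-I_n\\I_m&0\end{pmatrix}=(-1)^{nm}\det(-I_n)=(-1)^{n(m+1)}$ with $n=\dim X$, $m=\dim\Ga$, and since the SES is orientation-compatible precisely when this determinant is $+1$, the fiber-product orientation agrees with the pullback orientation via $\pi$ up to exactly this sign.
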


\begin{lmm}\label{fibersign_lmm}
If $\Ga_1,\ldots,\Ga_m$ are oriented manifolds with boundary and  
$\Ga=\Ga_1\!\times\!\ldots\!\times\!\Ga_m$, then 
\begin{equation*}\begin{split}
\prt(M_f\!\times_g\!\Ga)=(-1)^{\dim\,X}\bigg(\!
&(-1)^{\dim\,\Ga}(\prt M)_f\!\times_g\!\Ga\\ &\sqcup\,
\bigsqcup_{i=1}^m(-1)^{\sum_{j=i+1}^m\dim\,\Ga_j}M_f\!\times_g\!
\big(\Ga_1\!\times\!\ldots\!\times\!\Ga_{i-1}\!\times\!\prt\Ga_i\!\times\!\Ga_{i+1}
\!\times\!\ldots\!\times\!\Ga_m\big)\!\!\bigg).
\end{split}\end{equation*}
\end{lmm}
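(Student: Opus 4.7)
The plan is to first verify the $m=1$ case and then reduce the general case to it by iterating on the $\Gamma$ factor.

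For $m=1$, the boundary of $M_f\!\fiber\!g$ decomposes into strata coming from $\partial M$ and from $\partial\Gamma$. At a boundary point $(p,q)\in\partial M\times\Gamma$, the outward normal $\nu_M$ of $\partial M$ in $M$ lifts (via the submersion $M_f\!\fiber\!g\to M$) to the outward normal $\tilde\nu$ of $\partial(M_f\!\fiber\!g)$ in $M_f\!\fiber\!g$. The paper's convention (the sequence $0\to T\partial M\to TM\to\cN\to 0$ orientation-compatible) gives $TM\cong T\partial M\oplus\R\nu_M$ as oriented, and the analogous decomposition holds for the fiber product. Combining these with the defining oriented isomorphism $T(M_f\!\fiber\!g)\oplus TX\cong TM\oplus T\Gamma$ of the fiber product orientation produces
\[T\partial(M_f\!\fiber\!g)\oplus\R\tilde\nu\oplus TX\;\cong\;T\partial M\oplus\R\nu_M\oplus T\Gamma.\]
Identifying $\tilde\nu$ with $\nu_M$ (as both are outward), commuting $\R\nu_M$ past $T\Gamma$ contributes $(-1)^{\dim\Gamma}$, and then commuting $TX$ past $\R\nu_M$ contributes another $(-1)^{\dim X}$; comparing with the fiber product orientation on $(\partial M)_f\!\fiber\!g$ yields the claimed sign $(-1)^{\dim X+\dim\Gamma}$. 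The case of a boundary point on $M\!\times\!\partial\Gamma$ is analogous but shorter: $\nu_\Gamma$ already sits at the end of $T\Gamma$, so only the commutation of $TX$ past the normal occurs, giving the sign $(-1)^{\dim X}$.

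For general $m$, we use the iterated product boundary formula
\[\partial\big(\Gamma_1\!\times\!\ldots\!\times\!\Gamma_m\big)
=\bigsqcup_{i=1}^m(-1)^{\sum_{j=i+1}^m\dim\,\Gamma_j}
\Gamma_1\!\times\!\ldots\!\times\!\partial\Gamma_i\!\times\!\ldots\!\times\!\Gamma_m,\]
which we prove by induction from the $m=2$ case. The $m=2$ case follows by the same ``commute the outward normal to the end'' argument as in the $m=1$ fiber product computation: at $\partial\Gamma_1\!\times\!\Gamma_2$, the outward normal $\nu_{\Gamma_1}$ of $\Gamma_1\!\times\!\Gamma_2$ must be commuted past $T\Gamma_2$ to reach the end of $T(\Gamma_1\!\times\!\Gamma_2)$, producing the sign $(-1)^{\dim\Gamma_2}$, while on $\Gamma_1\!\times\!\partial\Gamma_2$ the normal is already at the end. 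Substituting the product boundary formula into the $m=1$ case $\partial(M_f\!\fiber\!g)=(-1)^{\dim X+\dim\Gamma}(\partial M)_f\!\fiber\!g\sqcup(-1)^{\dim X}M_f\!\fiber\!\partial g$ gives the claim.

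The main obstacle is the sign bookkeeping in the $m=1$ step; once one commits to the single convention of ``outward normal last'' together with the paper's definition of the fiber product orientation and carefully justifies that $\tilde\nu$ may be taken to lift $\nu_M$ (which uses the transversality hypothesis at the boundary), the rest of the argument reduces to elementary commutations of summands in oriented direct sums and the iteration is routine.
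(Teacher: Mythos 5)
Your proof is correct, and since the paper states this lemma without proof (it declares Lemmas~\ref{fibprodemd_lmm} and~\ref{fibersign_lmm} ``straightforward'' and moves on), your argument fills in what is left to the reader. The structure—treating the two boundary strata of a single fiber product separately, then combining with the Leibniz rule for $\partial(\Gamma_1\!\times\!\ldots\!\times\!\Gamma_m)$—is the natural one, and the sign bookkeeping (``outward normal last'' per the paper's orientation-compatibility convention, then commute $\R\nu$ past $T\Gamma$ and $TX$ as needed) all checks out, including in the degenerate cases $m=0$ and $\dim\Gamma=0$.

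One small imprecision: you justify the identification of the outward normal $\tilde\nu$ of $\partial(M_f\!\fiber\!g)$ with $\nu_M$ by appealing to ``the submersion $M_f\!\fiber\!g\to M$.'' The projection to $M$ is not a submersion in general (e.g.\ when $\Gamma$ is a point it is just the inclusion of $f^{-1}(\mathrm{pt})$). What the argument actually requires, and what the transversality hypotheses do give, is that the differential of this projection restricts to an isomorphism of normal lines $\cN_{(\partial M)_f\!\fiber\!g}\to\cN_{\partial M}$ preserving the outward direction; this follows from $T_{(p,q)}((\partial M)_f\!\fiber\!g)=T_{(p,q)}(M_f\!\fiber\!g)\cap(T_p\partial M\oplus T_q\Gamma)$ together with a dimension count. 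Phrased that way the step is airtight, and the rest of the argument goes through unchanged.
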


For a diffeomorphism $\si\!:M\!\lra\!M$ between oriented manifolds,
we define $\sgn\,\si\!=\!1$ if $\si$ is everywhere orientation-preserving 
and $\sgn\,\si\!=\!-1$ if $\si$ is everywhere orientation-reversing;
this notion is also well-defined if $M$ is orientable and $\si$ preserves each connected
component of~$M$.

\begin{lmm}\label{fibprodflip_lmm}
Suppose $M,\Ga,X$ and $f,g$ are as above Lemma~\ref{fibprodemd_lmm}
and $\si_M,\si_{\Ga},\si_X$ are diffeomorphisms of $M,\Ga,X$, respectively,
with well-defined signs.
If the diagram 
\BE{fibprodflip_e0}\begin{split}
\xymatrix{M\ar[rr]^f \ar[d]_{\si_M}&& X \ar[d]_{\si_X} && \Ga\ar[ll]_g\ar[d]^{\si_{\Ga}}\\
M\ar[rr]^f&& X  && \Ga\ar[ll]_g}
\end{split}\EE
commutes, then the sign of the diffeomorphism
$$M_f\!\!\times_g\!\Ga\lra M_f\!\!\times_g\!\Ga, \qquad
(p,q)\lra \big(\si_M(p),\si_{\Ga}(q)\!\big),$$
is $(\sgn\,\si_M)(\sgn\,\si_{\Ga})(\sgn\,\si_X)$.
\end{lmm}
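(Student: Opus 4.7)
The plan is to exploit the defining short exact sequence
\[
0\lra T_{(p,q)}\big(M_f\!\!\times_g\!\Ga\big)\lra T_pM\!\oplus\!T_q\Ga
\xrightarrow{\,d_qg-d_pf\,} T_{f(p)}X\lra0,
\]
which by construction is orientation-compatible with the fiber product orientation. Let $\wt\si\!:\!M_f\!\!\times_g\!\Ga\!\to\!M_f\!\!\times_g\!\Ga$ denote the diffeomorphism in question; differentiating the two commutative squares making up~\eqref{fibprodflip_e0} shows that $\wt\si$ fits into a commutative ladder of short exact sequences of the above type, with vertical arrows given by $d\wt\si$ on the left, $d\si_M\!\oplus\!d\si_\Ga$ in the middle, and $d\si_X$ on the right. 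Since the diffeomorphisms $\si_M,\si_\Ga,\si_X$ have well-defined signs by assumption, all three vertical maps have well-defined signs, equal to $\sgn\,\wt\si$, $(\sgn\,\si_M)(\sgn\,\si_\Ga)$, and $\sgn\,\si_X$, respectively.

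The main step is then a purely linear-algebraic fact: for a commutative ladder of two orientation-compatible short exact sequences of oriented finite-dimensional vector spaces, the signs of the three vertical isomorphisms satisfy $\sgn(\alpha)=\sgn(\alpha')\sgn(\alpha'')$. This follows by choosing an oriented basis of $V'$ and lifting an oriented basis of $V''$ to a splitting of $V$; the matrix of $\alpha$ in the concatenated oriented basis of $V$ is block upper-triangular with diagonal blocks representing~$\alpha'$ and~$\alpha''$, so its determinant (and hence its sign) is the product. Applying this identity to our ladder at each point and solving for $\sgn\,d\wt\si$ yields
\[
\sgn\,\wt\si=(\sgn\,\si_M)(\sgn\,\si_\Ga)(\sgn\,\si_X),
\]
using that each sign is $\pm1$.

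There is no real obstacle here; the only mild subtlety is verifying that the right square of the ladder commutes, which reduces to differentiating the two identities $f\!\circ\!\si_M\!=\!\si_X\!\circ\!f$ and $g\!\circ\!\si_\Ga\!=\!\si_X\!\circ\!g$ encoded in~\eqref{fibprodflip_e0} and noting that $d\si_X\!\circ\!(d_qg-d_pf)=(d_{\si_\Ga(q)}g-d_{\si_M(p)}f)\!\circ\!(d\si_M\!\oplus\!d\si_\Ga)$. Since the conclusion is local (pointwise on each connected component of $M_f\!\!\times_g\!\Ga$) and $\wt\si$ preserves the component structure because $\si_M$ and $\si_\Ga$ do, the pointwise sign computation determines $\sgn\,\wt\si$ globally.
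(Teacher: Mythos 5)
Your proof is correct and follows essentially the same route as the paper's: both write down the commutative ladder of short exact sequences obtained from the defining fiber-product exact sequence, identify the signs of the middle and right vertical maps as $(\sgn\,\si_M)(\sgn\,\si_\Ga)$ and $\sgn\,\si_X$, and conclude by the multiplicativity of signs across a ladder. You merely make explicit the linear-algebra step that the paper cites implicitly.
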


\begin{proof} Let $(p,q)\!\in\!M_f\!\!\times_g\!\Ga$.
The commutative diagram~\eref{fibprodflip_e0} induces an isomorphism
$$\xymatrix{0\ar[r]& T_{(p,q)}\!
\big(M_f\!\!\times_g\!\Ga\big) \ar[r]\ar[d]&
T_{(p,q)}\!\big(M\!\times\!\Ga\big) \ar[r]\ar[d]& T_{f(p)}X\ar[r]\ar[d]& 0\\
0\ar[r]& T_{(\si_M(p),\si_{\Ga}(q)\!)}\!\big(M_f\!\times_g\!\Ga\big) \ar[r]&
T_{(\si_M(p),\si_{\Ga}(q)\!)}\!\big(M\!\times\!\Ga\big) \ar[r]&
T_{\si_X(f(p)\!)}X\ar[r]& 0}$$
of exact sequences.
The signs of the middle and right isomorphisms are $(\sgn\,\si_M)(\sgn\,\si_{\Ga})$
and $\sgn\,\si_X$, respectively.
This establishes the claim.
\end{proof}

Let $M,\Ga,X$ and $f,g$ be as above Lemma~\ref{fibprodemd_lmm}.
Suppose in addition that $e\!:M\!\lra\!Y$ and $h\!:C\!\lra\!Y$.
Let \hbox{$e'\!:M_f\!\times_g\!\Ga\!\lra\!Y$} be the map induced by~$e$;
see the top diagram in Figure~\ref{fibprodisom_fig1}.
There is then a natural bijection
\BE{fibprodisom_e1} \big(M_f\!\times_g\!\Ga\big)\!\,_{e'}\!\!\times_h\!C \approx  
M_{(f,e)}\!\!\times_{g\times h}\!(\Ga\!\times\!C)\,.\EE
If $C,Y$ are oriented manifolds and all relevant maps are transverse,
then both sides of this bijection inherit fiber product orientations.
They are compared in Lemma~\ref{fibprodisom_lmm1} below.

For any map $h\!:M\!\lra\!Z$ between manifolds, let
$$\codim\,h=\dim\,Z-\dim\,M\,.$$

\begin{lmm}\label{fibprodisom_lmm1}
The diffeomorphism~\eref{fibprodisom_e1} has sign $(-1)^{(\dim\,X)(\codim\,h)}$ with respect 
to the fiber product orientations on the two sides.
\end{lmm}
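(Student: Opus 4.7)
The plan is to verify the sign at the tangent level at a point \((p,q,r)\) in the common underlying set of the two sides of~\eref{fibprodisom_e1}, with \(p\!\in\!M\), \(q\!\in\!\Ga\), and \(r\!\in\!C\). Write $x\!=\!\dim X$, $y\!=\!\dim Y$, $c\!=\!\dim C$. The LHS fiber product orientation is specified by the two orientation-compatible exact sequences
\begin{gather*}
0\lra T(M_f\!\times_g\!\Ga)\lra T_pM\!\oplus\!T_q\Ga\lra \cN\De_X|_{(f(p),g(q))}\lra 0,\\
0\lra T\big((M_f\!\times_g\!\Ga)_{e'}\!\!\times_h\!C\big)\lra T(M_f\!\times_g\!\Ga)\!\oplus\!T_rC\lra \cN\De_Y|_{(e(p),h(r))}\lra 0,
\end{gather*}
while the RHS orientation is specified by
$$0\lra T\big(M_{(f,e)}\!\!\times_{g\times h}\!(\Ga\!\times\!C)\big)\lra T_pM\!\oplus\!T_q\Ga\!\oplus\!T_rC\lra \cN\De_{X\times Y}|_{((f(p),e(p)),(g(q),h(r)))}\lra 0.$$
By~\eref{cNsplit_e}, the splitting $\cN\De_{X\times Y}\!\approx\!\cN\De_X\!\oplus\!\cN\De_Y$ is orientation-preserving, so the RHS sequence may be rewritten with $\cN\De_X\!\oplus\!\cN\De_Y$ in the product order on the right.

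Next I would translate each of the three sequences into an equality of wedge products of orientation lines. The inner LHS sequence gives $[T(M_f\!\times_g\!\Ga)]\wedge[\cN\De_X]=[T_pM]\wedge[T_q\Ga]$; substituting into the outer LHS sequence and then wedging both sides on the right with $[\cN\De_X]$ yields
$$[T(\tn{LHS})]\wedge[\cN\De_Y]\wedge[\cN\De_X]=[T_pM]\wedge[T_q\Ga]\wedge[\cN\De_X]^{-1}\wedge[T_rC]\wedge[\cN\De_X].$$
The RHS sequence with the splitting inserted reads
$$[T(\tn{RHS})]\wedge[\cN\De_X]\wedge[\cN\De_Y]=[T_pM]\wedge[T_q\Ga]\wedge[T_rC].$$
The comparison is therefore a pure Koszul sign computation: on the LHS one transposes $[\cN\De_X]$ past $[T_rC]$, contributing $(-1)^{xc}$, and transposes $[\cN\De_Y]$ past $[\cN\De_X]$, contributing $(-1)^{xy}$. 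The remaining factors of $[\cN\De_X]^{-1}\wedge[\cN\De_X]$ cancel, yielding
$$[T(\tn{LHS})]=(-1)^{x(c+y)}[T(\tn{RHS})].$$
Since $x(c+y)\equiv x(y-c)=x\cdot\codim h\pmod 2$, this is exactly the asserted sign.

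There is no real obstacle: the argument is a two-step Koszul bookkeeping. The only point requiring care is that the identification $\cN\De_{X\times Y}\!\approx\!\cN\De_X\!\oplus\!\cN\De_Y$ used in the product on the right of the RHS sequence must respect orientations, which is the content of~\eref{cNsplit_e} recorded in Section~\ref{Fp_subs}.
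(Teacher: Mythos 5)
Your computation is correct, and the answer matches the paper's. The ingredients are the same as in the paper's proof: the three orientation-defining exact sequences for the fiber products, together with the orientation-preserving splitting $\cN\De_{X\times Y}\approx\cN\De_X\oplus\cN\De_Y$ of~\eref{cNsplit_e}. The organization differs slightly: the paper assembles the three sequences into the $3\times3$ commutative square of Figure~\ref{fibprodisom_fig1}, determines the sign of each row and column separately (the right column contributes $(\dim X)(\dim Y)$, the middle column contributes $(\dim X)(\dim C)$), and then invokes Lemma~6.3 of~\cite{RealWDVV} --- a ``nine lemma'' for signs in such diagrams --- to combine them; you instead work directly with determinant lines, substituting one defining relation into another and moving the $[\cN\De_X]$ and $[\cN\De_Y]$ factors past each other by hand. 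Your route is self-contained and avoids the appeal to the external lemma, while the paper's approach is more systematic and is reused verbatim for the harder sign computations of Lemmas~\ref{fibprodisom_lmm2}, \ref{Bdrop_lmm}, \ref{Scompare_lmm}, and~\ref{Scompare2_lmm}; either is perfectly adequate for this particular statement.
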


\begin{figure}
$$\xymatrix{& \Ga\ar[d]^g& \Ga\!\times\!C \ar[d]\ar[l]\ar[r]& C\ar[d]_h& \\
M_f\!\!\times_g\!\Ga \ar[ur]\ar[dr] \ar@/_4.5pc/[rrr]^{\!\!\!\!e'}& 
X&  X\!\times\!Y \ar[l]\ar[r] & Y\\
& M\ar[u]_{\!f}\ar[ur]\ar[urr]^{\!e}}$$
\begin{small}
$$\xymatrix{& 0\ar[d]& 0\ar[d]& 0\ar[d]&\\
0\ar[r]& T_{(\!(p,q),c)}\!
\big(\!(M_f\!\times_g\!\Ga)\,\!_{e'}\!\!\times_h\!C\big)\ar[d]^{\eref{fibprodisom_e1}}\ar[r]& 
T_{(p,q)}(M_f\!\times_g\!\Ga)\!\oplus\!T_cC\ar[d]\ar[r]&  
\cN\De_Y\big|_{e(p)}\ar[r]\ar[d]& 0\\
0\ar[r]& T_{(p,(q,c)\!)}\!\big(\!M_{(f,e)}\!\!\times_{g\times h}\!(\Ga\!\times\!C)\!\big)\ar[d]\ar[r]&
T_pM\!\oplus\!T_q\Ga\!\oplus\!T_cC \ar[d]\ar[r]& 
\cN\De_{X\times Y}\big|_{(f(p),e(p)\!)} \ar[r]\ar[d]& 0\\
& 0\ar[r]& 
\cN\De_X\big|_{f(p)}\ar[d]\ar[r]^{\Id}&  \cN\De_X\big|_{f(p)}\ar[d]\ar[r]& 0\\
&& 0&0&}$$
\end{small}
\caption{The maps of Lemma~\ref{fibprodisom_lmm1} and 
a commutative square of exact sequences for its proof.}
\label{fibprodisom_fig1}
\end{figure}

\begin{proof}
Suppose $(\!(p,q),c)\!\in\!(M_f\!\times_g\!\Ga)\!\,_{e'}\!\!\times_h\!C$.
We use the commutative square of exact sequences in Figure~\ref{fibprodisom_fig1}. 
The right column is induced by the isomorphism~\eref{cNsplit_e};
it is compatible with the canonical orientations on the normal bundles
if and only if $(\dim\,X)(\dim\,Y)$ is even.
The top and middle rows are orientation-compatible with respect to 
the fiber-product orientations on the left-hand and right-hand sides of~\eref{fibprodisom_e1},
respectively.
The middle column is orientation-compatible with respect to the fiber-product orientation
on $M_f\!\times_g\!\Ga$ if and only~if $(\dim\,X)(\dim\,C)$ is even.
Thus, the diffeomorphism~\eref{fibprodisom_e1} is orientation-preserving at $(\!(p,q),c)$ 
if and only~if
$$(\dim\,X)(\dim\,Y)\!+\!(\dim\,X)(\dim\,C)\in2\Z;$$
see Lemma~6.3 in~\cite{RealWDVV}.
\end{proof}

Let $M,\Ga,X$ and $f,g$ be as above with 
$$ g\!=\!g_1\!\times\!g_2\!: \Ga\!=\!\Ga_1\!\times\!\Ga_2\lra X\!=\!X_1\!\times\!X_2.$$
Suppose in addition that 
$$e_1\!:M_1\lra Y, \quad e_2\!:M_2\lra Y, \quad
f_1\!:M_1\lra X_1, \quad\hbox{and}\quad f_2\!:M_2\lra X_2$$
are maps such that
$$M=(M_1)_{e_1}\!\times_{e_2}\!\!M_2 \quad\hbox{and}\quad
f\!=\!f_1\!\times\!f_2\big|_M\,.$$
Let $e_1'\!:(M_1)_{f_1}\!\times_{g_1}\!\Ga_1\!\lra\!Y$ and 
$e_2'\!:(M_2)_{f_2}\!\times_{g_2}\!\Ga_2\!\lra\!Y$ be the maps induced
by~$e_1$ and~$e_2$, respectively;
see the top diagram in Figure~\ref{fibprodisom_fig2}.
There are natural bijections
\BE{fibprodisom_e2} M_f\!\times_g\!\Ga \approx  
\big(\!(M_1)_{f_1}\!\!\times_{g_1}\!\!\Ga_1\big)\!\,_{e_1'}\!\!\times_{e_2'}
\!\!\big(\!(M_2)_{f_2}\!\!\times_{g_2}\!\!\Ga_2\big)
\approx (M_1)_{(f_1,e_1)}\!\times_{g_1\times e_2'}\!\!
\big(\Ga_1\!\times\!\!(\!(M_2)_{f_2}\!\!\times_{g_2}\!\!\Ga_2)\!\big)\,.\EE
If $M_1,M_2,Y$ are oriented manifolds and all relevant maps are transverse,
then the middle and right spaces above inherit orientations as fiber products of 
fiber products.

\begin{figure}
\begin{small}
$$\xymatrix{& \Ga_1\ar[d]^{g_1}& \Ga_1\!\times\!\Ga_2 \ar[d]_g\ar[l]\ar[r]& \Ga_2\ar[d]_{g_2}& \\
(M_1)_{f_1}\!\times_{g_1}\!\!\Ga_1 \ar[ur]\ar[dr] \ar@/_2pc/[ddrr]^{\!\!\!\!e_1'}& 
X_1&  X_1\!\times\!X_2 \ar[l]\ar[r] & X_2 & 
(M_2)_{f_2}\!\times_{g_2}\!\!\Ga_2 \ar[ul]\ar[dl]\ar@/^2pc/[ddll]_{e_2'\!\!}\\
& M_1\ar[u]_{f_1}\ar[dr]^{\!\!e_1}& 
(M_1)_{e_1}\!\times_{e_2}\!\!M_2\ar[u]^f\ar[l]\ar[r]& M_2\ar[u]^{f_2}\ar[dl]_{e_2}& \\
&&Y}$$
$$\xymatrix{& 0\ar[d]& 0\ar[d]& 0\ar[d]&\\
0\ar[r]& T_{(p,q)}(M_f\!\times_g\!\Ga)\ar[d]\ar[r]& 
T_{(p_1,q_1)}\big(\!(M_1)_{f_1}\!\!\times_{g_1}\!\!\Ga_1\big)\!\oplus\!
T_{(p_2,q_2)}\big(\!(M_2)_{f_2}\!\!\times_{g_2}\!\!\Ga_2\big)\ar[d]\ar[r]&  
\cN\De_Y\big|_{e_1(p_1)}\ar[r]\ar[d]_{\Id}& 0\\
0\ar[r]& T_pM\!\oplus\!T_q\Ga\ar[d]\ar[r]&
T_{(p_1,q_1)}(M_1\!\times\!\Ga_1)\!\oplus\!T_{(p_2,q_2)}(M_2\!\times\!\Ga_2) \ar[d]\ar[r]& 
\cN\De_Y\big|_{e_1(p_1)} \ar[r]\ar[d]& 0\\
0\ar[r]& \cN\De_X\big|_{f(p)}\ar[d]\ar[r]& 
\cN\De_{X_1}\big|_{f_1(p_1)}\!\oplus\!\cN\De_{X_2}\big|_{f_2(p_2)} \ar[d]\ar[r]&  0\\
& 0& 0&}$$
\end{small}
\caption{The maps of Lemma~\ref{fibprodisom_lmm2} and 
a commutative square of exact sequences for its proof.}
\label{fibprodisom_fig2}
\end{figure}

\begin{lmm}\label{fibprodisom_lmm2}
The first diffeomorphism in~\eref{fibprodisom_e2} has sign $(-1)^{\ep}$ with respect 
to the fiber product orientations on the two sides, where
$$\ep=(\dim\,M_2)(\codim\,g_1)\!+\!(\dim\,X_1)(\codim\,g_2)\!+\!(\dim\,Y)(\codim\,g).$$
\end{lmm}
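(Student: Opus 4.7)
The proof parallels that of Lemma~\ref{fibprodisom_lmm1} and proceeds by analyzing the commutative square of short exact sequences of oriented vector spaces displayed in Figure~\ref{fibprodisom_fig2}. Following the approach of the proof of Lemma~\ref{fibprodisom_lmm1} (which appeals to Lemma~6.3 of~\cite{RealWDVV}), the sign of the diffeomorphism~\eref{fibprodisom_e2} at a point $(p,q)$ is computed as the parity sum of the signs of the three rows and three columns of the diagram, each regarded as a short exact sequence of oriented vector spaces.

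Four of these sequences are orientation-compatible by construction: the top row is the defining exact sequence for the fiber product orientation on the right-hand side of~\eref{fibprodisom_e2}; the left column is the defining sequence for the fiber product orientation on $M_f\!\fiber\!\Gamma$; the middle column splits as the direct sum of the defining sequences for the fiber product orientations on $(M_i)_{f_i}\!\!\times_{g_i}\!\Gamma_i$ for $i\!=\!1,2$; and the right column is the identity on $\cN\De_Y|_{e_1(p_1)}$. The bottom row is orientation-compatible by the orientation-preserving isomorphism~\eref{cNsplit_e} applied to $\cN\De_X\!\cong\!\cN\De_{X_1}\!\oplus\!\cN\De_{X_2}$.

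The nontrivial sign is therefore concentrated in the middle row, which realizes $\cN\De_Y|_{e_1(p_1)}$ as the quotient of $T_{(p_1,q_1)}(M_1\!\times\!\Gamma_1)\!\oplus\!T_{(p_2,q_2)}(M_2\!\times\!\Gamma_2)$ by $T_pM\!\oplus\!T_q\Gamma$. I would decompose this row as the defining short exact sequence
$$0\lra T_pM\lra T_{p_1}M_1\!\oplus\!T_{p_2}M_2\lra \cN\De_Y|_{e_1(p_1)}\lra 0$$
padded by $T_{q_1}\Gamma_1\!\oplus\!T_{q_2}\Gamma_2$, followed by the shuffle interchanging the middle factors $T_{p_2}M_2$ and $T_{q_1}\Gamma_1$; this shuffle contributes $(-1)^{(\dim M_2)(\dim\Gamma_1)}$ to the middle row. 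This contribution must then be combined with the parities introduced when each defining exact sequence is written in the $d_{q_i}g_i\!-\!d_{p_i}f_i$ form (which interact nontrivially with $\dim X_i$ and $\dim\Gamma_j$) and with the parity from identifying $\cN\De_X\!\cong\!\cN\De_{X_1}\!\oplus\!\cN\De_{X_2}$ against $\cN\De_Y$ along the diagonal of the grid.

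The main obstacle is the parity bookkeeping: verifying that the contributions collected above collapse modulo~$2$ to
$$\epsilon=(\dim M_2)(\codim g_1)+(\dim X_1)(\codim g_2)+(\dim Y)(\codim g),$$
using the identities $\codim g_i\!=\!\dim X_i\!-\!\dim\Gamma_i$ and $\codim g\!=\!\codim g_1\!+\!\codim g_2$. Each individual parity is routine, but orchestrating the full sum into the three ``codimensional'' products in~$\epsilon$ is the technical heart of the lemma; the expression of $\epsilon$ in this form is what ultimately encodes the rearrangement of the $Y$-fiber data against the $X_1$- and $X_2$-fiber data in the iterated fiber product.
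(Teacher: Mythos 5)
Your overall scaffolding---reading the sign off the commutative $3\times 3$ grid in Figure~\ref{fibprodisom_fig2} via Lemma~6.3 of~\cite{RealWDVV}---is the paper's approach, and your identification of the top row, left column, right column, and bottom row as orientation-compatible is correct. However, there is a genuine error in the treatment of the middle column. You assert that because it ``splits as the direct sum of the defining sequences'' for the fiber-product orientations on $(M_i)_{f_i}\!\times_{g_i}\!\Gamma_i$, it is orientation-compatible by construction. This is false: the direct sum of two orientation-compatible short exact sequences
$$0\lra A_i\lra B_i\lra C_i\lra 0,\qquad i=1,2,$$
gives a sequence $0\to A_1\!\oplus\!A_2\to B_1\!\oplus\!B_2\to C_1\!\oplus\!C_2\to0$ whose sign is the Koszul shuffle $(-1)^{(\dim C_1)(\dim A_2)}$, because $B_1\!\oplus\!B_2\approx A_1\!\oplus\!C_1\!\oplus\!A_2\!\oplus\!C_2$ must be reordered to $A_1\!\oplus\!A_2\!\oplus\!C_1\!\oplus\!C_2$. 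Here $C_1=\cN\De_{X_1}$ and $A_2=T_{(p_2,q_2)}\big((M_2)_{f_2}\!\times_{g_2}\!\Gamma_2\big)$, so the middle column contributes $(-1)^{(\dim X_1)\dim((M_2)_{f_2}\times_{g_2}\Gamma_2)}$, which, rewritten with $\dim((M_2)_{f_2}\!\times_{g_2}\!\Gamma_2)=\dim M_2-\codim g_2$, accounts for the entire $(\dim X_1)(\codim g_2)$ term and part of the $(\dim M_2)(\codim g_1)$ term in $\epsilon$. Your computation cannot recover the stated $\epsilon$ without it.

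A secondary issue is that the $(\dim X)(\dim Y)$ contribution, which the paper attributes to the grid itself via Lemma~6.3 of~\cite{RealWDVV} (the total parity of all six sequences in a $3\times3$ grid equals $(\dim C_1)(\dim A_3)$, here $(\dim Y)(\dim X)$), is only vaguely gestured at in your phrase about ``identifying $\cN\De_X\!\cong\!\cN\De_{X_1}\!\oplus\!\cN\De_{X_2}$ against $\cN\De_Y$ along the diagonal''; without stating it precisely you would not be able to combine the middle-row parity $(\dim\Ga_1)(\dim M_2)+(\dim\Ga)(\dim Y)$ (of which your proposal identifies only the first summand explicitly) with the middle-column and grid signs to land on $\epsilon$. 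The core conceptual gap is the middle column; the rest is incomplete but repairable bookkeeping.
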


\begin{proof}
Suppose $(p,q)\!\in\!M_f\!\times_g\!\Ga$ with $p\!\equiv\!(p_1,p_2)\!\in\!M_1\!\times\!M_2$ 
and $q\!\equiv\!(q_1,q_2)\!\in\!\Ga_1\!\times\!\Ga_2$.
We use the commutative square of exact sequences in Figure~\ref{fibprodisom_fig2}. 
The nonzero isomorphism in the bottom is the inverse of~\eref{cNsplit_e};
it respects the canonical orientations on the normal bundles.
The left column and the top row are orientation-compatible with respect to 
the fiber-product orientations on the left and middle spaces in~\eref{fibprodisom_e2},
respectively.
The middle row is orientation-compatible with respect to the fiber-product orientation on~$M$
if and only~if
$$(\dim\,\Ga_1)(\dim\,M_2)+(\dim\,\Ga)(\dim\,Y)\in2\Z.$$
The middle column is orientation-compatible with respect to the fiber-product orientations 
on $(M_1)_{f_1}\!\!\times_{g_1}\!\!\Ga_1$ and $(M_2)_{f_2}\!\!\times_{g_2}\!\!\Ga_2$
if and only~if $(\dim\,X_1)(\dim(\!(M_2)_{f_2}\!\!\times_{g_2}\!\!\Ga_2)\!)$ is even.
Thus, the first diffeomorphism in~\eref{fibprodisom_e2} is orientation-preserving at $(p,q)$ if
and only~if
$$(\dim\,\Ga_1)(\dim\,M_2)+(\dim\,\Ga)(\dim\,Y)
+(\dim\,X_1)(\dim(M_2)_{f_2}\!\!\times_{g_2}\!\!\Ga_2)
+(\dim\,X)(\dim\,Y) \in2\Z;$$
see Lemma~6.3 in~\cite{RealWDVV}.
\end{proof}

\begin{crl}\label{fibprodisom_crl}
The composition of the two diffeomorphisms in~\eref{fibprodisom_e2} 
has sign $(-1)^{\ep}$ with respect 
to the fiber product orientations on the two sides, where
$$\ep=(\dim\,M_2)(\codim\,g_1)\!+\!(\dim\,X_1)(\codim\,e_2)\!+\!(\dim\,Y)(\codim\,g).$$
\end{crl}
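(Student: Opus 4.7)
The plan is simply to stack the sign computations of the previous two lemmas, since the composition in~\eref{fibprodisom_e2} factors through the intermediate space as the first diffeomorphism followed by the second. First I would apply Lemma~\ref{fibprodisom_lmm2} to the first bijection in~\eref{fibprodisom_e2}, which directly yields the sign
$$\ep_1=(\dim\,M_2)(\codim\,g_1)+(\dim\,X_1)(\codim\,g_2)+(\dim\,Y)(\codim\,g).$$
Next, I would match the second bijection in~\eref{fibprodisom_e2} with the bijection of Lemma~\ref{fibprodisom_lmm1} by taking its $(M,f,g,\Ga,X,e,h,C,Y)$ to be our $(M_1,f_1,g_1,\Ga_1,X_1,e_1,e_2',(M_2)_{f_2}\!\!\times_{g_2}\!\!\Ga_2,Y)$. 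Lemma~\ref{fibprodisom_lmm1} then gives the sign $\ep_2=(\dim\,X_1)(\codim\,e_2')$.

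The only bookkeeping left is to convert $\codim\,e_2'$ into the original data. Because $(M_2)_{f_2}\!\!\times_{g_2}\!\!\Ga_2$ has dimension $\dim\,M_2+\dim\,\Ga_2-\dim\,X_2$,
$$\codim\,e_2'=\dim\,Y-\dim\,M_2-\dim\,\Ga_2+\dim\,X_2=\codim\,e_2+\codim\,g_2,$$
so $\ep_2\equiv(\dim\,X_1)(\codim\,e_2)+(\dim\,X_1)(\codim\,g_2)\pmod 2$. Adding $\ep_1+\ep_2$ modulo~$2$, the two copies of $(\dim\,X_1)(\codim\,g_2)$ cancel and the remaining three terms reassemble into exactly the exponent $\ep$ in the statement of the corollary.

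There is no real obstacle in this argument; it is a one-line composition once the variables are identified correctly. The only point that demands any attention is checking that the transversality hypotheses needed by Lemmas~\ref{fibprodisom_lmm1} and~\ref{fibprodisom_lmm2} are inherited by the intermediate fiber products, so that each of them carries a well-defined fiber-product orientation. This is automatic under the standing assumption that all maps in~\eref{fibprodisom_e2} are mutually transverse (which is the context in which the corollary is invoked later in the paper), so no further genericity argument is required.
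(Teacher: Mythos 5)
Your argument is correct and is exactly the paper's proof: apply Lemma~\ref{fibprodisom_lmm2} to the first bijection, Lemma~\ref{fibprodisom_lmm1} to the second with the same substitution of variables, convert $\codim\,e_2'$ to $\codim\,e_2+\codim\,g_2$, and add mod~$2$.
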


\begin{proof}
By Lemma~\ref{fibprodisom_lmm1} with $M,\Ga,X,C,f,g,e,h$ replaced by 
$$M_1,\quad \Ga_1, \quad X_1, \quad (M_2)_{f_2}\!\!\times_{g_2}\!\!\Ga_2, 
\quad f_1, \quad f_2, \quad e_1, \quad e_2',$$
respectively, the second diffeomorphism in~\eref{fibprodisom_e2} has sign $(-1)^{\ep_2}$ with
$$\ep_2=(\dim\,X_1)(\codim\,e_2')=(\dim\,X_1)\big(\codim\,g_2\!+\!\codim\,e_2\!\big)\,.$$
Combining this with Lemma~\ref{fibprodisom_lmm2}, we obtain the claim.
\end{proof}

Let $M,\Ga,X,\Ga_1,\ldots,\Ga_m$ and $f,g$ be as in Lemma~\ref{fibersign_lmm} with 
$$f\!=\!(f_1,\ldots,f_m)\!:M\lra X\!=\!X_1\!\times\!\ldots\!\times\!X_m
\quad\hbox{and}\quad 
g\!=\!g_1\!\times\!\ldots\!\times\!g_m\!:
\Ga\!=\!\Ga_1\!\times\!\ldots\!\times\!\Ga_m \lra X.$$
Suppose in addition that $B$ is another manifold and
$e\!:M\!\lra\!B$ and $e_i\!:\Ga_i\!\lra\!B$ with $i\!\in\![m]$ are maps.
Define
\begin{equation*}\begin{split}
\wt{f}=\!\big(\!(e,f_1),\ldots,(e,f_m)\!\big)\!:
M&\lra \wt{X}\!\equiv\!(B\!\times\!X_1)\!\times\!\ldots\!\times\!(B\!\times\!X_m),\\
\wt{g}=\!(e_1,g_1)\!\times\!\ldots\!\times\!(e_m,g_m)\!:
\Ga&\lra \wt{X}\!\equiv\!(B\!\times\!X_1)\!\times\!\ldots\!\times\!(B\!\times\!X_m).
\end{split}\end{equation*}
Let $e'\!:M_{\wt{f}}\!\times_{\wt{g}}\!\Ga\!\lra\!B$ be the map induced by~$e$.
For each $b\!\in\!B$, define
$$M_b=e^{-1}(b), \quad f_b\!=\!f|_{M_b}\!\!:M_b\lra X, \quad
\Ga_b\!=\!e_1^{-1}(b)\!\times\!\ldots\!\times\!e_m^{-1}(b), \quad
 g_b\!=\!g|_{\Ga_b}\!\!:\Ga_b\lra X;$$
see the top diagram in Figure~\ref{Bdrop_fig}.
Let $\io_b\!:\{b\}\lra\!B$ be the inclusion map.
The natural~map
\BE{Bdrop_e} \{b\}_{\io_b}\!\!\times_{e'}\!\!\big(M_{\wt{f}}\!\!\times_{\wt{g}}\!\Ga\big)
\lra (M_b)_{f_b}\!\!\times_{g_b}\!\!\Ga_b \subset M_{\wt{f}}\!\!\times_{\wt{g}}\!\Ga\EE
dropping the $b$ component is then a bijection.

Suppose also that the maps $e$ and $e_i$ are smooth,
the maps $\wt{f},\wt{f}|_{\prt M}$ are transverse to $\wt{g},\wt{g}|_{\prt\Ga}$, 
and~$b$ is a regular value of $e$, $e_i$ with $i\!\in\![m]$, and~$e'$.
This implies that the spaces~$M_b$ and $\Ga_b$ are smooth manifolds, 
the sequences 
\begin{gather}\label{Mprojses_e}
0\lra T_pM_b\lra T_pM  \xlra{\nd_pe} T_bB \lra 0 \qquad\hbox{and}\\
\label{Gaprojses_e}
0\lra T_{q_i}\!\big(e_i^{-1}(b)\!\big) \lra T_{q_i}\Ga_i \xlra{\nd_{q_i}e_i}  T_bB\lra 0,
\end{gather}
are exact for all $p\!\in\!M_b$, $q\!\equiv\!(q_1,\ldots,q_m)\!\in\!\Ga_b$, and $i\!\in\![m]$,
and the bijection~\eref{Bdrop_e} is a diffeomorphism between smooth manifolds.

\begin{figure}
\begin{small}
$$\xymatrix{\Ga_b \ar[rr]\ar[d]_{g_b}&& \Ga\ar[d]_{\wt{g}}\ar[dr]\ar[dll]_g\\
X \ar[rr]&& \wt{X}\ar[r]& B^m\ni b^m& M_{\wt{f}}\!\!\times_{\wt{g}}\!\Ga\ar[ull]\ar[dll]\ar[d]^{e'}\\
M_b\ar[rr]\ar[u]^{f_b}&& M\ar[u]^{\wt{f}}\ar[rr]^e\ar[ull]^f&& B\ni b
}$$
$$\xymatrix{& 0\ar[d]& 0\ar[d]& 0\ar[d]&\\
0\ar[r]& T_{(p,q)}(\!(M_b)_{f_b}\!\!\times_{g_b}\!\!\Ga_b)\ar[d]\ar[r]& 
T_{(p,q)}(M_{\wt{f}}\!\!\times_{\wt{g}}\!\Ga)\ar[d]\ar[r]^>>>>>>>>{\nd_{(p,q)}e'}&  
T_bB\ar[r]\ar[d]& 0\\
0\ar[r]& T_pM_b\!\oplus\!T_q\Ga_b\ar[d]_{\nd_qg_b-\nd_pf_b}\ar[r]&
T_pM\!\oplus\!T_q\Ga  \ar[d]_{\nd_q\wt{g}-\nd_p\wt{f}}\ar[r]& 
T_bB\!\oplus\!m T_bB \ar[r]\ar[d]\ar[r]& 0\\
0\ar[r]& T_{f(p)}X \ar[d]\ar[r]& T_{\wt{f}(p)}\wt{X} \ar[d]\ar[r]& m T_bB\ar[r]\ar[d]&  0\\
& 0& 0&0}$$
\end{small}
\caption{The maps of Lemma~\ref{Bdrop_lmm} and 
a commutative square of exact sequences for its proof.}
\label{Bdrop_fig}
\end{figure}

\begin{lmm}\label{Bdrop_lmm}
Suppose the manifolds $M$, $\Ga$, $B$, $M_b$, and $\Ga_b$ are oriented so that 
the exact sequences~\eref{Mprojses_e} and~\eref{Gaprojses_e} have signs
$$(-1)^{(\dim\,M_b)(\dim\,B)} \qquad\hbox{and}\qquad (-1)^{(\dim\,e_i^{-1}(b)\!)(\dim\,B)},$$
respectively, for all $p\!\in\!M_b$, $q\!\equiv\!(q_1,\ldots,q_m)\!\in\!\Ga_b$, and $i\!\in\![m]$.
The diffeomorphism~\eref{Bdrop_e} then has sign~$(-1)^{\ep}$ with respect to the fiber product 
orientations on the two sides, where
$$\ep= (\dim\,B)\bigg(\!\dim\,M_b\!+\!
\sum_{i=1}^m (i\!+\!1)\codim\,g_i\big|_{e_i^{-1}(b)}\!\bigg)\,.$$
\end{lmm}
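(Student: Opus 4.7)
The plan is to trace orientations through the $3\!\times\!3$ commutative square of short exact sequences displayed in Figure~\ref{Bdrop_fig}. The two sides of~\eref{Bdrop_e} orient the top-left corner $T_{(p,q)}(\!(M_b)_{f_b}\!\times_{g_b}\!\Ga_b)$ in two different ways: its orientation as the RHS of~\eref{Bdrop_e} is the one making the left column orientation-compatible, while its orientation as the LHS of~\eref{Bdrop_e} is the one making the top row orientation-compatible (after first orienting the top-middle corner via compatibility of the middle column, which is the defining condition for the fiber product orientation on $M_{\wt f}\!\times_{\wt g}\!\Ga$). The sign of~\eref{Bdrop_e} is the discrepancy between these two orientations, and it can be extracted from the signs of the remaining three sequences (middle row, bottom row, right column) via the standard sign identity for $3\!\times\!3$ commutative squares of exact sequences (Lemma~6.3 in~\cite{RealWDVV}).

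I would first compute the sign of the middle row, which is the direct sum of the $m\!+\!1$ projection sequences $0\!\to\!T_pM_b\!\to\!T_pM\!\to\!T_bB\!\to\!0$ and $0\!\to\!T_{q_i}(e_i^{-1}(b))\!\to\!T_{q_i}\Ga_i\!\to\!T_bB\!\to\!0$ for $i=1,\ldots,m$; by hypothesis these carry signs $(-1)^{(\dim M_b)(\dim B)}$ and $(-1)^{(\dim e_i^{-1}(b))(\dim B)}$. Presenting this direct sum as a single exact sequence with codomain $T_bB\!\oplus\!mT_bB$ ordered as in the figure requires interleaving the kernel and cokernel pieces of the individual sequences, contributing an extra $(\dim B)\sum_{i=1}^m i\dim e_i^{-1}(b)$; the total middle-row sign becomes $(\dim B)\big(\dim M_b+\sum_{i=1}^m(i+1)\dim e_i^{-1}(b)\big)$. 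For the bottom row, pulling all $B$-factors of $\wt X=(B\!\times\!X_1)\!\times\!\cdots\!\times\!(B\!\times\!X_m)$ to the left identifies $\wt X$ with $B^m\!\times\!X$, relative to which the bottom sequence becomes the tautological splitting $0\!\to\!T_{f(p)}X\!\to\!T_{f(p)}X\!\oplus\!mT_bB\!\to\!mT_bB\!\to\!0$; the only sign contribution comes from the reordering of $B$-factors past $X$-factors, giving $(\dim B)\sum_{i=1}^m i\dim X_i$. The right column $0\!\to\!T_bB\!\to\!T_bB\!\oplus\!mT_bB\!\to\!mT_bB\!\to\!0$, with first map the diagonal inclusion and second map $(t_0,t_1,\ldots,t_m)\mapsto(t_i-t_0)_{i\in[m]}$, is orientation-compatible by a direct determinant computation.

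Finally I would combine these three signs using Lemma~6.3 of~\cite{RealWDVV} and simplify the output via the dimensional identities $\dim e_i^{-1}(b)=\dim\Ga_i-\dim B$ and $\codim g_i|_{e_i^{-1}(b)}=\dim X_i-\dim e_i^{-1}(b)$ to obtain the claimed exponent $(\dim B)\big(\dim M_b+\sum_{i=1}^m(i+1)\codim g_i|_{e_i^{-1}(b)}\big)$. The main obstacle will be the sign bookkeeping in this last step: the middle-row and bottom-row contributions are most naturally expressed in the individual dimensions $\dim e_i^{-1}(b)$ and $\dim X_i$, and the dimensional correction produced by the $3\!\times\!3$ identity must combine with the bottom-row contribution to promote $\sum i\dim X_i$ into $\sum(i+1)\dim X_i$, at which point it pairs with the $\sum(i+1)\dim e_i^{-1}(b)$ from the middle row to collapse modulo~$2$ into the stated codimension-based expression.
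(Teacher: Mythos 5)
Your proposal is correct and follows the same route as the paper: you read all three orientations off the $3\!\times\!3$ commutative square in Figure~\ref{Bdrop_fig}, compute the middle-row sign $(\dim B)\big(\dim M_b+\sum_{i=1}^m(i+1)\dim e_i^{-1}(b)\big)$ by combining the hypothesized signs of the individual projection sequences with the interleaving permutation, obtain the bottom-row sign $(\dim B)\sum_{i=1}^m i\dim X_i$ from the block reordering of $\wt X$, note the right column is orientation-compatible, and feed these into Lemma~6.3 of~\cite{RealWDVV}; the $(\dim X)(\dim B)$ correction from that lemma promotes $\sum i\dim X_i$ to $\sum(i+1)\dim X_i$, which then collapses mod~2 against the middle-row term to give the stated codimension-based formula, exactly as in the paper.
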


\begin{proof} Let $(p,q)\!\in\!(M_b)_{f_b}\!\!\times_{g_b}\!\!\Ga_b$ with $q\!\equiv\!(q_1,\ldots,q_m)$.
We use the commutative square of exact sequences in Figure~\ref{Bdrop_fig}.
The two maps in the right column are given by 
$$v\lra \big(v,(v,\ldots,v)\!\big) \qquad\hbox{and}\qquad 
(v,(w_1,\ldots,w_m)\!\big)\lra \big(w_1\!-\!v,\ldots,w_m\!-\!v\big);$$
this column is thus compatible with the direct sum orientations.
The bottom row is compatible with the product orientations if and only~if
$$(\dim\,B)\sum_{i=1}^m i\,\dim\,X_i\in 2\Z.$$
The middle row is orientation-compatible if and only~if
$$(\dim\,B)\bigg(\!\dim\,M_b\!+\!\sum_{i=1}^m (i\!+\!1)\dim\,e_i^{-1}(b)\!\bigg)\in2\Z.$$
The left and middle columns are compatible with the fiber product orientations on
the top spaces.
The top row is compatible with the orientation on the left-hand side of~\eref{Bdrop_e}
and the fiber product orientation on $T_{(p,q)}(M_{\wt{f}}\!\!\times_{\wt{g}}\!\Ga)$.
Along with Lemma~6.3 in~\cite{RealWDVV}, this implies that the diffeomorphism~\eref{Bdrop_e}
is orientation-preserving at $(b,p,q)$ if and only~if
$$(\dim\,B)\sum_{i=1}^m i\,\dim\,X_i+
(\dim\,B)\bigg(\!\dim\,M_b\!+\!\sum_{i=1}^m (i\!+\!1)\dim\,e_i^{-1}(b)\!\bigg)
+(\dim\,X)(\dim\,B)\in2\Z\,.$$
This establishes the claim.
\end{proof}

\subsection{Moduli spaces}
\label{Ms_subs}

Suppose $k,l\!\in\!\Z^{\ge0}$ and $k\!+\!2l\!\ge\!3$. 
Let $\cM_{k,l}$ be the moduli space of $k$ distinct boundary marked points $x_1,\ldots,x_k$ placed {\it in counter-clockwise order} and $l$ distinct interior marked points $z_1,\ldots,z_l$ on the unit disk $\D$. We orient $\cM_{k,l}$ as follows. Let $\cM_{1,1}$ and $\cM_{3,0}$ be plus points. We identify $\cM_{0,2}$ with the interval $(0,1)$ by taking $z_1\!=\!0$ and $z_2\!\in\!(0,1)$ and orient it by the negative orientation of $(0,1)$. We then orient other $\cM_{k,l}$ inductively. If $k\!\ge\!1$, we orient $\cM_{k,l}$ so that the short exact sequence
\BE{cml_e}0\lra T_{x_k}S^1\lra T\cM_{k,l}\xrightarrow{\nd\ff^\R_k} T\cM_{k-1,l}\lra0\EE
induced by the forgetful morphism $\ff^\R_k$ dropping $x_k$ has sign $(-1)^k$ with respect to the counter-clockwise orientation of $S^1$. Thus, 
$$T\cM_{k,l}\approx T\cM_{k-1,l}\oplus T_{x_k}S^1.$$
If $l\!\ge\!1$, we orient $\cM_{k,l}$ so that the short exact sequence
\BE{cmk_e}0\lra T_{z_l}\D\lra T\cM_{k,l}\xrightarrow{\nd\ff^\C_l} T\cM_{k,l-1}\lra0\EE
induced by the forgetful morphism $\ff^\C_l$ dropping $z_l$ is orientation-compatible with respect to the complex orientation of $\D$. 
By a direct check, the orientations of~$\cM_{1,2}$ induced from~$\cM_{0,2}$ via~\eref{cml_e} 
and from~$\cM_{1,1}$ via~\eref{cmk_e} are the same, and 
the orientations of~$\cM_{3,1}$ induced from $\cM_{1,1}$ via~\eref{cml_e} and 
from~$\cM_{3,0}$ via~\eref{cmk_e} are also the same. 
Since the fibers of $\ff^{\C}_l$ are even-dimensional, 
it follows that the orientation on~$\cM_{k,l}$ above is well-defined. 
This orientation extends to the Deligne-Mumford compactification $\ov\cM_{k,l}$ of $\cM_{k,l}$. 

Fix a symplectic manifold $(X,\om)$ of dimension~$2n$, 
a Lagrangian submanifold~$Y$, 
a relative OSpin-structure $\os$ on $Y$, and $\be\!\in\!H^\om_2(X,Y)$.
Let $J\!\in\!\cJ_\om$.
For 
\BE{udfn_e} [\u]\equiv\big[u\!:(\D,S^1)\!\lra\!(X,Y),(x_i)_{i\in[k]},(z_i)_{i\in[l]}\big]
\in \M_{k,l}(\be;J)\,,\EE
let
$$D_{J;\u}\!:\Ga\big(u^*TX,u|_{S^1}^*TY\big)\lra
\Ga\big(T^*\D^{0,1}\!\otimes_{\C}\!u^*(TX,J)\big)$$
be the linearization of the $\{\dbar_J\}$-operator on the space of maps from~$(\D,S^1)$ to $(X,Y)$. 
By Proposition~8.1.1 in~\cite{FOOO}, 
the OSpin-structure $\os$ determines an orientation on $\det(D_{J;\u})$. 

Suppose $B$ is a smooth manifold (possibly with boundary) and
$\wt J\equiv(J_t)_{t\in B}$ is a smooth generic family in~$\cJ_\om$.
We define the moduli spaces
$$\M_{k,l}(\be;\wt J)\subset \M_{k,l}^{\st}(\be;\wt J)$$
of $\wt{J}$-holomorphic degree~$\be$ maps, evaluation maps
$$\evb_i\!:\M_{k,l}^{\st}(\be;\wt{J})\lra B\!\times\!Y,~i\!\in\![k], 
\quad\hbox{and}\quad
\evi_i\!:\M_{k,l}^{\st}(\be;\wt{J})\lra B\!\times\!X,~i\!\in\![l],$$
and the fiber products involving these spaces and maps as at the end of 
Section~\ref{Notation_subs}.
If $Y$ admits a relative OSpin-structure and thus is orientable, 
the Maslov index~\eref{Maslovdfn_e} is~even.
If in addition $n$ is odd, then
\BE{fMddimpar_e} \dim\,\M_{k,l}^{\st}(\be;\wt{J})\equiv k\!+\!\dim\,B \qquad
\hbox{mod}~2.\EE

For each $t\!\in\!B$ and $[\u]\!\in\!\M_{k,l}(\be;J_t)$ as above, define
\begin{gather*}
\wt{D}_{t;\u}\!:T_tB\!\oplus\!\Ga\big(u^*TX,u|_{S^1}^*TY\big)\lra
\Ga\big(T^*\D^{0,1}\!\otimes_{\C}\!u^*(TX,J)\big), \\
\wt{D}_{t;\u}(v,\xi)=\frac12 v(\wt{J})\!\circ\!\nd u\!\circ\!\fj+
D_{J_t;\u}\xi,
\end{gather*}
where $\fj$ is the complex structure on~$\D$.
We orient the determinant of $\wt{D}_{t;\u}$ so that the short exact sequence 
$$0\lra D_{t;\u}\lra \wt{D}_{t;\u}\lra (T_tB\!\lra\!0)\lra0$$
of operators has sign $(-1)^{(\dim\,B)(\dim\,Y)}$.
Thus,
$$\ker\wt{D}_{t;\u}\approx T_tB\!\oplus\!\big(\ker D_{t;\u}\big)$$
if the operator $D_{t;\u}$ is surjective.
We orient $\M^\st_{k,l}(\be;\wt J)$ by requiring the short exact sequence 
\BE{fMorientdfn_e}0\lra \ker \wt D_{t;\u}\lra T_{(t,\u)}\M_{k,l}(\be;\wt J)
\xrightarrow{d\ff} T_{\ff(\u)}\ov\cM_{k,l}\lra0\EE
to be orientation-compatible, where $\ff$ is the forgetful morphism dropping the map part of~$\u$.

\begin{rmk}\label{Morient_rmk}
The above paragraph endows $\M^\st_{k,l}(\be;\wt J)$ with an orientation under
the assumption that $k\!+\!2l\!\ge\!3$.
If $k\!+\!2l\!<\!3$, one first stabilizes the domain of~$\u$ by adding one or 
two interior marked points, then orients 
the tangent space of the resulting map as above, and 
finally drops the added marked points using the canonical complex orientation of~$\D$;
see the proof of Corollary~1.8 in~\cite{Penka1}.
\end{rmk}

\vspace{-.1in}

If $L$ is a finite set,
we orient $\M_{k,L}^{\st}\!(\be;\wt{J})$ from $\M_{k,|L|}^{\st}(\be;\wt{J})$
by identifying~$L$ with~$[|L|]$ as~sets.
The resulting orientation does not depend on the choice of 
the identification.

\subsection{Disk bubbling strata}
\label{diskbubb_subs}

In this section,
we compare two natural orientations on the disk bubbling strata 
of $\M_{k,l}^{\st}(\be;J)$ and on associated fiber products.
Corollary~\ref{Scompare_crl} at the end of this section ensures
pairwise cancellations of boundary components
of fiber product spaces
in the proofs of Lemmas~\ref{BCpseudo_lmm} and~\ref{psisot_lmm} in 
Section~\ref{bcisot_subs}, of~\eref{cob_e} in Section~\ref{countinv_subs},
and of~\ref{lagl_it1} and~\ref{lagl_it2} in Section~\ref{LangOGW_subs}.

We continue with the setup of Section~\ref{Ms_subs}.
Suppose in addition $\be_1,\be_2\in H^\om_2(X,Y)$ with $\be_1\!+\!\be_2\!=\!\be$,
$[l]\!=\!L_1\!\sqcup\!L_2$, and $k_1,k_2\!\in\!\{0\}\!\sqcup\![k\!+\!1]$ with $k_1\!<\!k_2$. 
Let $\cS\subset\prt\M^\st_{k,l}(\be;\wt J)$ consist of $J$-holomorphic maps 
from $(\D^2\!\v\!\D^2,S^1\!\v\!S^1)$
to $(X,Y)$ of degrees~$\be_1$ and~$\be_2$ on the two components,
with the second component carrying the boundary marked points indexed by~${k_1\!+\!1,\ldots,k_2\!-\!1}$ 
and the interior marked points indexed by~$L_2$. 
We re-index the boundary marked points on the second disk in the counterclockwise order starting
with the node and on the first disk starting with~$x_1$ if $k_1\!\ge\!1$ and
with the node otherwise.
Define 
$$\M_1\equiv\M_{k-(k_2-k_1)+2,L_1}(\be_1;\wt J),\quad\M_2\equiv\M_{k_2-k_1,L_2}(\be_2;\wt J).$$
As a space, $\cS$ is the fiber product 
$$\cS={\M_1}_{\,\,\evb_{k_1\!+\!1}}\!\!\!\times_{\evb_1}\!\M_2.$$

\begin{lmm}\label{Scompare_lmm}
If $n\!\equiv\!\dim\,Y$ is odd, the orientation of $\cS$ as a boundary of $\M^\st_{k,l}(\be;\wt J)$ differs from the orientation of $\cS$ as the above fiber product by $(-1)^{\ep}$, where
$$\ep=k_1k_2\!+\!kk_1\!+\!kk_2\!+\!k_1\!+\!1+(\dim\,B)\big(k_1\!+\!k_2\!+\!1\big).$$ 
\end{lmm}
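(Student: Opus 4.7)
The plan is to compare the two orientations on $\cS$ by reducing to the analogous comparison on the Deligne--Mumford moduli spaces $\ov\cM_{k,l}$, then adding back the contribution of the Cauchy--Riemann operator and the base direction in a separate step. More precisely, using \eref{fMorientdfn_e}, the orientation of $\M^\st_{k,l}(\be;\wt J)$ is determined by the orientation of $\ker\wt D_{t;\u}$ together with the orientation of $\ov\cM_{k,l}$. Along the stratum $\cS$, both pieces degenerate: the kernel splits (up to a determinant sign) as a direct sum of the kernels of $\wt D_{t;\u_1}$ on the first component (carrying the base direction) and $D_{t;\u_2}$ on the second, while $\ov\cM_{k,l}$ has a boundary face canonically isomorphic to $\cM_{k-(k_2-k_1)+2,L_1}\!\times\!\cM_{k_2-k_1,L_2}$. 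I would therefore first establish the \emph{topological} comparison on the DM moduli spaces and then propagate it through the exact sequence defining the orientation of the moduli of maps.

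First I would compute the sign between the boundary orientation of the codimension-one stratum of $\ov\cM_{k,l}$ corresponding to the splitting $(k_1,k_2,L_1,L_2)$ and the product orientation of $\cM_{k-(k_2-k_1)+2,L_1}\!\times\!\cM_{k_2-k_1,L_2}$. This is done inductively on $k+2|L_1|+2|L_2|$, using the forgetful-morphism exact sequences~\eref{cml_e} and~\eref{cmk_e}. The two features that generate signs in this calculation are: (i)~the re-indexing of the second disk's marked points, where the node becomes marked point~$1$ rather than the original position $k_1\!+\!1$ (producing the $k_1k_2$-type term via $k_2\!-\!k_1\!-\!1$ cyclic shifts, each contributing according to~\eref{cml_e}); and (ii)~the interior marked points on the second disk, which must be permuted past the boundary marked points of the first disk to conform to the product ordering. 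The base-point $\cM_{0,2}$ convention (oriented by the \emph{negative} orientation of $(0,1)$) contributes the extra $+1$ to $\ep$ and must be carefully handled as the base case.

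Next I would account for the Fredholm/base contribution. Along $\cS$, the short exact sequence
\begin{equation*}
0\lra \ker\wt D_{t;\u_1}\oplus\ker D_{t;\u_2}\lra \ker\wt D_{t;\u}\lra T_{\node}Y\lra 0
\end{equation*}
(coming from the gluing model at the node) together with the exact sequence for $D_{t;\u_1}\!\oplus\!D_{t;\u_2}$ versus $\wt D_{t;\u_1}\!\oplus\!D_{t;\u_2}$ identifies the orientation on $\ker\wt D$ with that of $\ker\wt D_1\!\oplus\!\ker D_2$ modulo a sign involving $\dim B$ and the parities $\dim Y$, $\dim \M_1$, $\dim\M_2$, which are controlled by~\eref{fMddimpar_e} and the parity of~$\mu_Y^\om$. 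Combining this with the first step and with Lemma~\ref{fibersign_lmm} (to pull the boundary outside the fiber product $\M_1\!\times_Y\!\M_2$), and using that the outward normal at $\cS$ corresponds under gluing to the inward-pointing modular parameter, I can assemble the total sign.

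The hard part will be the bookkeeping in the reindexing step: the re-indexing of the boundary marked points on the second bubble is responsible for the mixed $k_1k_2$ contribution, and since the orientation convention on $\cM_{k,l}$ assigns \emph{different} signs $(-1)^i$ to the $i$-th boundary insertion via~\eref{cml_e}, the final formula is sensitive to every permutation performed. I expect to verify the result by reducing, via repeated use of~\eref{cml_e} and~\eref{cmk_e} (which handle interior points with even-dimensional fibers and hence no sign), to the low-dimensional base cases $\cM_{1,1}$, $\cM_{3,0}$ and $\cM_{0,2}$, where the two orientations can be compared by direct inspection; the $\dim B$ contribution then follows by tracking a single additional factor through the same induction.
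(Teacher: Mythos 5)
Your approach is viable but genuinely different from the paper's, and it contains one concrete error plus one under-specified step. The paper does not re-derive the boundary-vs-fiber-product comparison from scratch: for $B$ a point it simply cites Lemma~6.4 of \cite{RealWDVV3} (noting only the small sign change coming from placing the first disk's node at its cyclic-order position rather than at the end), and then extends to general $B$ via a $3\times3$ commutative square of exact sequences relating $T_\u\cS_t$, $T_\u\cS$, $T_tB$ and the corresponding component data. The $\dim B$ term $(\dim B)(k_1+k_2+1)$ falls out cleanly from comparing the row and column signs in that square, using \eref{fMddimpar_e}. You instead propose to re-establish the $B=\pt$ comparison by induction on the Deligne--Mumford spaces via \eref{cml_e} and \eref{cmk_e} and then propagate through the linearized operator; this amounts to re-proving the cited lemma, which is far more laborious, though the phenomena you list (cyclic re-indexing of the second bubble's boundary points generating the quadratic term, the negatively-oriented $\cM_{0,2}$ base case, interior points contributing no sign) are the right ones.

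Two specific issues. First, your gluing exact sequence has the arrows backwards: the kernel of $\wt D_{t;\u}$ at the nodal map is the \emph{subspace} of $\ker\wt D_{t;\u_1}\!\oplus\!\ker D_{t;\u_2}$ cut out by the nodal matching, so the correct sequence is
\begin{equation*}
0\lra \ker\wt D_{t;\u}\lra \ker\wt D_{t;\u_1}\!\oplus\!\ker D_{t;\u_2}\xlra{\ev_{\node}^1-\ev_{\node}^2} T_{\node}Y\lra 0,
\end{equation*}
not the sequence you wrote with $\ker\wt D_{t;\u}$ in the middle; as written your sign bookkeeping would fail. Second, your claim that the $\dim B$ contribution follows by ``tracking a single additional factor through the same induction'' is not a step one can carry out as stated: the $\dim B$ dependence enters through the orientation of $\wt D_{t;\u}$ itself (the operator is twisted by $(-1)^{(\dim B)(\dim Y)}$) and through the normal bundle $\cN\De_{B\times Y}$ in the fiber-product construction, and these do not ride along passively in an induction on $\ov\cM_{k,l}$. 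The exact-sequence comparison of $\cS_t\!\subset\!\M^\st_{k,l}(\be;J_t)$ with $\cS\!\subset\!\M^\st_{k,l}(\be;\wt J)$ is what makes the $\dim B$ bookkeeping tractable, and you should replace the final sentence of your proposal with that argument.
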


\begin{proof}
Suppose first that $B$ is a point.
The conclusion of Lemma~6.4 in~\cite{RealWDVV3} and its proof apply to any $n\notin2\Z$ 
and imply the claim in this special case.
The difference in the sign is due to the placement of the node of the first disk according to its cyclic order position here instead of the last position in~\cite{RealWDVV3}. 
Below we deduce the general case. 

\begin{figure}
\begin{small}
$$\xymatrix{ & 0\ar[d] & 0\ar[d] & 0\ar[d] & \\
0\ar[r] & T_\u\cS_t \ar[r]\ar[d] & T_\u\cS \ar[r]\ar[d] & T_tB\ar[r]\ar[d] & 0 \\
0\ar[r] & T_{\u_1}\M_{1;t}\!\oplus\!T_{\u_2}\M_{2;t} 
\ar[r]\ar[d] & T_{\u_1}\M_1\!\oplus\!T_{\u_2}\M_2 \ar[r]\ar[d] 
& T_tB\!\oplus\!T_tB \ar[r]\ar[d] & 0\\
0\ar[r] & \cN\De_Y\big|_{(\evb_1(\u_2),\evb_1(\u_2)\!)} 
\ar[r]\ar[d] & \cN\De_{B\times Y}\big|_{(\!(t,\evb_1(\u_2)),(t,\evb_1(\u_2)\!)\!)} 
\ar[r]\ar[d] & \cN\De_B\big|_{(t,t)} \ar[r]\ar[d] & 0 \\ 
& 0 & 0 & 0 }$$
\end{small}
\caption{A commutative square of exact sequences for the proof of Lemma~\ref{Scompare_lmm}.}
\label{Sbdryvsfp_fig}
\end{figure}

For $t\!\in\!B$, denote 
$$\M_{1;t}\equiv\M_{k-(k_2-k_1)+2,L_1}(\be_1;J_t),\quad
\M_{2;t}\equiv\M_{k_2-k_1,L_2}(\be_2;J_t),\quad
\cS_t=\cS\cap\M^\st_{k,l}(\be;J_t)\,.$$ 
For $\u\!\in\!\cS_t$,  let $\u_1\!\in\!\M_{1;t}$ and $\u_2\!\in\!\M_{2;t}$ 
be the corresponding component maps.
For the simplicity of terminology, we assume that $D_{J_t;\u}$ is onto.
We use the commutative square of exact sequences in Figure~\ref{Sbdryvsfp_fig}.
The last column is as in~\eref{cNDeor_e}; it is thus orientation-compatible.
The bottom row is induced by the isomorphism~\eref{cNsplit_e};
it is compatible with the canonical orientations on the normal bundles
if and only~if $(\dim\,Y)(\dim\,B)$ is even.
The middle row is orientation-compatible if and only~if
$$(\dim\,\M_{1;t})(\dim\,B)\!+\!(\dim\,\M_{2;t})(\dim\,B)
\!+\!(\dim\,\M_{2;t})(\dim\,B) \equiv 
(\dim\,B)\big(k\!+\!k_1\!+\!k_2\big) ~~\hbox{mod}~2$$
is even; the congruence above follows from~\eref{fMddimpar_e}.
The top row is compatible with the boundary orientations of~$\cS_t$ and~$\cS$
if and only~if
$$(\dim\,\cS_t)(\dim\,B)\equiv(\dim\,B)(k\!-\!1) ~~\hbox{mod}~2$$
is even.
The left and middle columns respect the fiber-product orientations on~$\cS_t$ and~$\cS$,
respectively.
Along with the $B\!=\!\pt$ case above and Lemma~6.3 in~\cite{RealWDVV},
this implies that the boundary and fiber-product orientations on~$T_{\u}\cS$ are the same
if and only~if
\begin{equation*}\begin{split}
(\dim\,Y)(\dim\,B)&+(\dim\,B)\big(k\!+\!k_1\!+\!k_2\big)+(\dim\,B)(k\!-\!1)\\
&+\big(k_1k_2\!+\!kk_1\!+\!kk_2\!+\!k_1\!+\!1\big)
\!+\!(\dim\,Y)(\dim\,B)\equiv \ep ~~\hbox{mod}~2
\end{split}\end{equation*}
is even.
\end{proof}

With the setup as above Lemma~\ref{Scompare_lmm}, denote 
$$K_2\equiv\big\{k_1\!+\!1,\ldots,k_2\!-\!1\big\},
~~K^1_1\equiv\big\{1,\ldots,k_1\big\}, ~~K^2_1\equiv\{k_2,\ldots,k\},
 ~~K_1\equiv K^1_1\!\cup\!K^2_1.$$ 
Suppose in addition $I\!\subset\![k]$ and $J\!\subset\![l]$ are such that 
$K_2\!\cup\!K^2_1\!\subset\!I$.
Let
$$\big\{\fb_i\!:\!Z_{\fb_i}\lra B\!\times\!Y\big\}_{i\in I} \quad\hbox{and}\quad
\big\{\Ga_i\!:\!Z_{\Ga_i}\lra B\!\times\!X\big\}_{i\in J}$$ 
be smooth maps from oriented manifolds in general positions so~that 
$$\codim\,\fb_i\not\in2\Z~~\forall\,i\!\in\!I  \quad\hbox{and}\quad
\codim\,\Ga_i\in 2\Z~~\forall\,i\!\in\!J\,.$$
Define 
$$\fbb\equiv\evb_1:
\M_2\!\!\fiber\!\!\big(\!(i\!-\!k_1\!+\!1,\fb_i)_{i\in K_2};(i,\Ga_i)_{i\in J\cap L_2}\big)
\lra B\!\times\!Y.$$
Under the assumptions above, this is a smooth map of even codimension.

\begin{crl}\label{Scompare_crl}
The natural isomorphism
\BE{ScompareCrl_e}\begin{split}
\cS\fiber&\!\big(\!(i,\fb_i)_{i\in I};(i,\Ga_i)_{i\in J}\big)\\
&\approx\M_1\!\!\fiber\!\!
\big(\!(i,\fb_i)_{i\in I\cap K^1_1},(k_1\!+\!1,\fbb),(i\!-\!k_2\!+\!k_1\!+\!2,\fb_i)_{i\in K^2_1};
(i,\Ga_i)_{i\in J\cap L_1}\big)
\end{split}\EE
has sign  $(-1)^{\ep}$ with respect to the boundary orientation of~$\cS$, where
$$\ep=k_1\!+\!k_2\!+\!
(\dim\,B)\big(k\!+\!k_1\!+\!(k_1\!+\!k_2\!+\!1)|I|\big).$$
\end{crl}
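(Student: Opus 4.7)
The plan is to combine Lemma~\ref{Scompare_lmm}, which already compares the boundary orientation on $\cS$ with the fiber-product orientation on ${\M_1}_{\,\evb_{k_1+1}}\!\!\times_{\evb_1}\!\M_2$, with the associativity and commutativity results for fiber products collected in Section~\ref{Fp_subs}. Concretely, I would first replace the boundary orientation on $\cS$ by the fiber-product orientation, introducing the sign $(-1)^{\ep_1}$ with
$$\ep_1=k_1k_2\!+\!kk_1\!+\!kk_2\!+\!k_1\!+\!1+(\dim\,B)(k_1\!+\!k_2\!+\!1),$$
so that the left-hand side of~\eref{ScompareCrl_e} becomes (up to $(-1)^{\ep_1}$) the iterated fiber product
$$\Big(\M_1{}_{\evb_{k_1+1}}\!\!\!\times_{\evb_1}\!\M_2\Big)\fiber\!\big(\!(i,\fb_i)_{i\in I};(i,\Ga_i)_{i\in J}\big).$$

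Second, I would split the insertion tuple according to which disk component its marked point lies on: the insertions indexed by $I\cap K_1^1$, $I\cap K_1^2$, $J\cap L_1$ are attached to $\M_1$, while those indexed by $K_2$ and $J\cap L_2$ are attached to $\M_2$. After reordering the insertions so that the $\M_1$-insertions come first and the $\M_2$-insertions come last --- which by Lemma~\ref{fibprodflip_lmm}, together with the parity assumptions $\codim\fb_i\not\in2\Z$ and $\codim\Ga_i\in2\Z$, contributes a sign depending only on the number of odd-codimensional boundary insertions that are moved past each other --- I would apply Corollary~\ref{fibprodisom_crl} (iteratively, or in its combined form) to rewrite the $\M_2$-part of the fiber product as $\fbb$ and then fiber-product $\M_1$ against $\fbb$ together with the remaining $\M_1$-insertions. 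This yields exactly the right-hand side of~\eref{ScompareCrl_e}, with the $\fbb$-slot sitting at position $k_1\!+\!1$ on~$\M_1$ as prescribed.

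The final step is to sum the three sign contributions --- the one from Lemma~\ref{Scompare_lmm}, the commutation sign from reordering the insertions past each other, and the associativity sign from Corollary~\ref{fibprodisom_crl} (which involves $\dim B$, $\dim Y$, $\dim X_i$'s and codimensions of the rearranged groups) --- and to check that they add up mod~2 to the stated~$\ep$. The main obstacle here is purely book-keeping: keeping careful track of the parities of the various codimensions, of the $(\dim B)$-contributions arising from working in the families $\M_i=\M_{\ldots}(\be_i;\wt{J})$ rather than their fibers, and of the asymmetry between the cyclic orderings of the boundary marked points on the two components around the node. The observation that makes this tractable is that, by the assumption on $\codim\fb_i$ and $\codim\Ga_i$, the only sign contributions that survive modulo~2 are those proportional to $k_1$, $k_2$, $|I|$, and $\dim B$, so each of the intermediate signs simplifies considerably before being combined into the final formula for~$\ep$.
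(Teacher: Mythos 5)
Your proposal is correct and takes essentially the same route as the paper's proof: convert the boundary orientation on $\cS$ to the fiber-product orientation via Lemma~\ref{Scompare_lmm}, reorder the constraint insertions via Lemma~\ref{fibprodflip_lmm}, and collapse the $\M_2$-part into $\fbb$ via Corollary~\ref{fibprodisom_crl}. The only difference is that you apply Lemma~\ref{Scompare_lmm} at the start rather than at the end of the computation, which is immaterial since the three sign contributions commute modulo~2.
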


\begin{proof} For $r\!=\!1,2$, let 
$$g_r\!\equiv\!\prod_{i\in I\cap K_r}\!\!\!\!\!\fb_i\times\!
\prod_{i\in J\cap L_r}\!\!\!\!\!\Ga_i\!:
G_r\!\equiv\!\prod_{i\in I\cap K_r}\!\!\!\!\!Z_{\fb_i}\!\times\!
\prod_{i\in J\cap L_r}\!\!\!\!\!Z_{\Ga_i}\lra 
X_r\!\equiv\!(B\!\times\!Y)^{I\cap K_r}\!\!\times\!(B\!\times\!X)^{J\cap L_r}\,.$$
We orient $G_r$ and $X_r$ based on the orderings of the elements of $I\!\cap\!K_r$ and $J\!\cap\!L_r$.
By Corollary~\ref{fibprodisom_crl} with 
\begin{alignat*}{2}
f_1\!\equiv\!\big(\!(\evb_i)_{i\in I\cap K_1^1},(\evb_{i-k_2+k_1+2})_{i\in K_1^2},
(\evi_i)_{i\in J\cap L_1}\big)\!:\M_1&\lra X_1,
&~~ e_1\!\equiv\!\evb_{k_1+1}\!:\M_1&\lra B\!\times\!Y,\\
f_2\!\equiv\!\big(\!(\evb_{i-k_1+1})_{i\in K_2},(\evi_i)_{i\in J\cap L_2}\big)\!:\M_2&\lra X_2,
&~~ e_2\!\equiv\!\evb_1\!:\M_2&\lra B\!\times\!Y,
\end{alignat*}
and~\eref{fMddimpar_e}, 
the sign of the diffeomorphism~\eref{ScompareCrl_e} 
with respect to the fiber product orientation on~$\cS$, 
the orientations $G_1\!\times\!G_2$ and $X_1\!\times\!X_2$ on
the domain and target of $g_1\!\times\!g_2$, and 
the orientations
$$G_1\!\times\!
\Big(\M_2\!\!\fiber\!\!\big(\!(i\!-\!k_1\!+\!1,\fb_i)_{i\in K_2};(i,\Ga_i)_{i\in J\cap L_2}
\big)\!\!\Big) \quad\hbox{and}\quad 
X_1\!\times\!(B\!\times\!Y)$$ 
on the domain and target of $g_1\!\times\!e_2'$
is $(-1)^{\ep_1}$, where 
\begin{equation*}\begin{split}
\ep_1=(\dim\,B\!+\!k_2\!-\!k_1)|I\!\cap\!K_1|
&+\big((\dim\,B)(|I\!\cap\!K_1|\!+\!|J\!\cap\!L_1|)\!+\!|I\!\cap\!K_1|\big)(k_2\!-\!k_1\!+\!1)\\
&+(\dim\,B\!+\!1)|I|.
\end{split}\end{equation*}

By Lemma~\ref{fibprodflip_lmm},
the above orientations on the domain and target of $g_1\!\times\!g_2$ twist the resulting orientation
on the left-hand side of~\eref{ScompareCrl_e} by $(-1)^{\ep_2}$, where
$$\ep_2=(k\!-\!k_2\!+\!1)(k_2\!-\!k_1\!-\!1)+(k_2\!-\!k_1\!-\!1)|J\!\cap\!L_1|(\dim\,B).$$
The above orientations on the domain and target of $g_1\!\times\!e_2'$  twist the resulting orientation
on the right-hand side of~\eref{ScompareCrl_e} by $(-1)^{\ep_3}$, where
$$\ep_3=(k\!-\!k_2\!+\!1)(\dim\,B\!+\!1).$$
Along with Lemma~\ref{Scompare_lmm}, this implies that the claim of the corollary holds
with
$$\ep=k_1k_2\!+\!kk_1\!+\!kk_2\!+\!k_1\!+\!1+(\dim\,B)\big(k_1\!+\!k_2\!+\!1\big)
\!+\!\ep_1\!+\!\ep_2\!+\!\ep_3.$$
This completes the proof.
\end{proof}

\subsection{Sphere bubbling strata}
\label{sphbubb_subs}

We next establish an analogue of Corollary~\ref{Scompare_crl} for 
the sphere bubbling stratum 
$$\cS_{\be,L}^0\subset\prt\ov\M_{0,L}(\be;J)$$
of the stable map compactification $\ov\M_{0,L}(\be;J)$
of $\M_{0,L}^{\st}(\be;J)$.
Corollary~\ref{Scompare2_crl} at the end of this section is used to prove
Proposition~\ref{LangOGW_prp} in Section~\ref{LangOGW_subs}.
As before, we assume that $(X,\om)$ is a symplectic manifold, 
$Y\!\subset\!X$ is a Lagrangian submanifold, 
$\os$ is a relative OSpin-structure on~$Y$, 
$\be\!\in\!H_2^{\om}(X,Y)$, and $L$ is a finite set.
However, the dimension of~$Y$ need not be odd for the purposes of 
the present section.

Let $B\!\in\!H_2(X;\Z)$ and
$$\cS\subset \cS_{q_X(B),L}^0 \subset\prt\ov\M_{0,L}\big(q_X(B);J\big)$$
be the open subspace of the sphere bubbling stratum consisting of 
the maps from~$\D^2$ that descend to degree~$B$ maps from~$\P^1$.
This codimension~1 stratum inherits a boundary orientation from the orientation
of $\M_{0,L}(q_X(B);J)$ induced by the relative OSpin-structure~$\os$ on~$Y$.
This induced orientation depends on the orientation~$\fo$ on~$Y$ determined by~$\os$
and on $\lr{w_2(\os),B}$ only.
As a space, $\cS$ is the fiber product 
\BE{cSsphspl_e}\cS=  \M^\C_{\{0\}\sqcup L}(B;J)_{\ev_0}\!\!\times_{\io_Y}\!\!Y,\EE
where $\io_Y\!:Y\!\lra\!X$ is the inclusion.

\begin{lmm}\label{Scompare2_lmm}
The orientation of $\cS$ as a boundary of $\ov\M_{0,L}(q_Y(B);J)$ 
differs from the orientation of $\cS$ as the above fiber product 
by~$(-1)^{\lr{w_2(\os),B}}$.
\end{lmm}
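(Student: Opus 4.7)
The plan is to linearize the problem at a point of $\cS$ and compare the two orientations on the determinant line of the Cauchy–Riemann operator. Let $\u\!\equiv\![u_{\D},u_{\P^1}]\!\in\!\cS$, where $u_{\D}\!:(\D^2,S^1)\!\lra\!(X,Y)$ is the constant map at some $y\!\equiv\!u_{\D}(S^1)\!\in\!Y$ with a node at a point $\nod\!\in\!S^1$ and $u_{\P^1}\!:\P^1\!\lra\!X$ is a degree-$B$ $J$-holomorphic map meeting $u_{\D}$ at $\nod\!\sim\!\infty\!\in\!\P^1$. At such a map, the linearized $\dbar_J$-operator $D_{J;\u}$ splits as a direct sum of the linearization $D_{J;u_{\D}}$ on the constant disk and the complex Cauchy–Riemann operator $D_{J;u_{\P^1}}$ on the sphere, modulo the incidence condition that the two maps evaluate to the same point in $X$ at the node.

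First, I would reduce the comparison to the determinant level. On one side, the boundary orientation on $\cS$ is induced by the orientation of $\det D_{J;\u}$ coming from the relative OSpin-structure $\os$ together with the outer-normal convention applied to the node-smoothing (gluing) parameter, which is intrinsically a positive real half-line (see the construction reviewed in Section~\ref{Ms_subs} and the proof of~Lemma~\ref{Scompare_lmm}). On the other side, the fiber product orientation~\eref{cSsphspl_e} uses the canonical complex orientation of $\det D_{J;u_{\P^1}}$, the orientation $\fo$ of $Y$ determined by $\os$, and the fiber product conventions of Section~\ref{Fp_subs}.

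Second, I would match the two in a local model. The constant disk contributes $\ker D_{J;u_{\D}}\!=\!T_yY\!\oplus\!\R\langle\nod\rangle$ (translations into $Y$ plus the marked boundary node), carrying the orientation of $\fo$. Combining this with the complex orientation of $\det D_{J;u_{\P^1}}$ and trivializing $\os$ in a neighborhood of~$y$ gives a preferred orientation of $\det D_{J;\u}$. The comparison of this preferred orientation with the one coming from the relative OSpin-structure on the disk of degree $\be\!=\!q_Y(B)$ is exactly the content of the CROrient~6 properties of \cite[Section~7.2]{SpinPin}: trivializing the Spin-structure along the null-homotopic loop $u|_{S^1}\!=\!y$ against the bounding disk requires extending a Spin-structure over the sphere class $B$, and the obstruction to doing so compatibly is $\lr{w_2(\os),B}\!\in\!\Z_2$. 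This is the sole source of the sign $(-1)^{\lr{w_2(\os),B}}$.

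Finally, I would check that all other sign contributions cancel. The outer-normal direction at $\cS$ is the positive gluing parameter, which matches the normal direction of the inclusion $\io_Y\!:Y\!\lra\!X$ used in the fiber product (up to rotation in $T_yX$, which is complex and hence orientation-neutral); and the complex factor $T_\nod\P^1$ that one removes from $\det D_{J;u_{\P^1}}$ when passing from a $1$-pointed sphere to the unpointed sphere-plus-disk configuration is also complex and does not affect signs. The main obstacle, and the only delicate point, is the explicit bookkeeping in the third step: one must verify that the trivialization of the Spin-structure near $y$ is compatible on both sides of the comparison, since the relative OSpin-structure is encoded by an actual Spin-structure on $TY\!\oplus\!V$ for an auxiliary bundle $V$ extending over $X$, and sphere classes enter precisely through $\lr{w_2(TY\!\oplus\!V),B}\!=\!\lr{w_2(\os),B}$. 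Once this is carried out via the CROrient~6 framework of \cite{SpinPin}, the identification of the two orientations on $\cS$ differs by exactly $(-1)^{\lr{w_2(\os),B}}$.
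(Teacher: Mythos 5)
Your high-level plan coincides with the paper's: linearize at a point of~$\cS$, split the linearized operator into a constant-disk piece and the complex sphere operator, and reduce the key sign to the Spin/OSpin orientation-comparison result (the paper invokes the proof of Proposition~8.1.1 in~\cite{FOOO}, which is the same content as the $\lr{w_2(\os),B}$ obstruction you describe). This is exactly the third commutative square in Figure~\ref{Scompare2_fig} of the paper.

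However, your last paragraph asserts that ``all other sign contributions cancel,'' and this is precisely where the work is. The paper spends two of its three exact-sequence diagrams verifying it: the first diagram transports the boundary orientation on $\cS_{0,L}\subset\ov\cM_{0,L}$ through a forgetful morphism down to the explicitly computed $\ov\cM_{0,2}$ case (where the boundary point $\wt\cC_2$ must be checked to be a plus point for the orientation conventions of Section~\ref{Ms_subs}), and the second diagram transports the boundary orientation on~$\cS$ through the fibration $\ff\!:\M_{0,L}\!\lra\!\ov\cM_{0,L}$. Neither of these is ``orientation-neutral'' a~priori; they happen to come out trivial after the bookkeeping, but that needs to be shown, not assumed. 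In particular, the paper first reduces to $|L|\ge2$ via Remark~\ref{Morient_rmk} so that the $\ov\cM_{0,L}$-level comparison even makes sense, a reduction you omit.

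There is also a concrete confusion in your normal-direction argument. You claim the outer normal to~$\cS$ (the gluing parameter, which is $1$-dimensional) ``matches the normal direction of the inclusion $\io_Y\!:Y\!\lra\!X$'' up to a complex rotation. But the fiber-product orientation of $\M^\C_{\{0\}\sqcup L}(B;J)\,_{\ev_0}\!\!\times_{\io_Y}\!Y$ defined in Section~\ref{Fp_subs} is encoded via the exact sequence with $\cN\De_X$ (of rank~$2n$) as the cokernel; it is not a matching of a $1$-dimensional gluing normal against $\cN_YX$ (of rank~$n$), and the objects are of different ranks. This step, as stated, is not a valid comparison; the paper avoids it entirely by using the three-diagram decomposition and never directly identifying the two normal bundles.
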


\begin{proof}
In light of Remark~\ref{Morient_rmk},
it is sufficient to establish the claim under the assumption that $|L|\!\ge\!2$.
We denote by $\cM_{\{0\}\sqcup L}^{\C}$ the moduli space of 
distinct points on~$\P^1$ labeled by the set $\{0\}\!\sqcup\!L$.
Let $\cS_{0,L}\!\subset\!\ov\cM_{0,L}$ be the sphere bubbling stratum,
$\cN_{0,L}$ be its oriented normal bundle, and 
$\cN$ be the oriented normal bundle of~$\cS$ in $\ov\M_{0,L}(q_Y(B);J)$.
Let $\wt\u\!\in\!\cS$, $(\u,y)$ be the corresponding element of the fiber product in~\eref{cSsphspl_e},
and~$\wt\cC$ and~$\cC$ be the marked domains of~$\wt\u$ and~$\u$, respectively. 
We denote~by
$$D_{J;\u}^{\C}\!:\Ga\big(u^*TX\big)\lra
\Ga\big((T^*\P^1)^{0,1}\!\otimes_{\C}\!u^*(TX,J)\big)$$
the linearization of the $\{\dbar_J\}$-operator on the space of maps from~$\P^1$ to~$X$
at~$\u$.
The determinant of this Fredholm operator has a canonical complex orientation.

A forgetful morphism $\ff\!:\ov\cM_{0,L}\!\lra\!\ov\cM_{0,2}$ dropping all but two
of the marked points induces the short exact sequences given by 
the columns in the first diagram of Figure~\ref{Scompare2_fig}.
The middle and right columns and the top row in this diagram
respect the orientations;
the middle row is orientation-compatible with respect to the boundary orientation
on~$T_{\wt\cC}\cS_{0,L}$.
By the definition of the orientation on~$\cM_{0,2}$,
the single element $\wt\cC_2\!\in\!\cS_{0,2}$ is a plus point with respect 
to the boundary orientation.
Thus, the isomorphism in the bottom row is orientation-preserving.
Along with Lemma~6.3 in~\cite{RealWDVV}, this implies that the isomorphism
in the left column is also orientation-preserving with respect to the complex orientation
on its domain and the boundary orientation on its target.

\begin{figure}
\begin{small}
$$\xymatrix{ & 0\ar[d] & 0\ar[d] \\
0\ar[r] & T_\cC\cM_{\{0\}\sqcup L}^{\C} \ar[r]^{\Id}\ar[d]^{\approx} & 
T_\cC\cM_{\{0\}\sqcup L}^{\C} \ar[r]\ar[d] & 0\ar[d] \\
0\ar[r] & T_{\wt\cC}\cS_{0,L}  \ar[r]\ar[d]^{\nd_{\wt\cC}\ff} 
& T_{\wt\cC}\cM_{0,L} \ar[r]\ar[d]^{\nd_{\wt\cC}\ff}  
& \cN_{0,L}|_{\wt\cC} \ar[r]\ar[d]^{\nd_{\wt\cC}\ff}  & 0\\
0\ar[r] & T_{\wt\cC_2}\cS_{0,2}
\ar[r]\ar[d] & T_{\wt\cC_2}\cM_{0,2}
\ar[r]^{\approx}\ar[d] & \cN_{0,2}|_{\wt\cC_2} \ar[r]\ar[d] & 0 \\ 
& 0 & 0 & 0 }$$
$$\xymatrix{ & 0\ar[d] & 0\ar[d] \\
0\ar[r] & \ker D_{J;\wt\u} \ar[r]^{\Id}\ar[d]& 
\ker D_{J;\wt\u} \ar[r]\ar[d] & 0\ar[d] \\
0\ar[r] & T_{\wt\u}\cS  \ar[r]\ar[d]^{\nd_{\wt\u}\ff} 
& T_{\wt\u}\M_{0,L}(q_Y(B);J) \ar[r]\ar[d]^{\nd_{\wt\u}\ff}  
& \cN|_{\wt\u} \ar[r]\ar[d]^{\nd_{\wt\u}\ff}  & 0\\
0\ar[r] & T_{\wt\cC}\cS_{0,L}  \ar[r]\ar[d] 
& T_{\wt\cC}\cM_{0,L} \ar[r]\ar[d] 
& \cN_{0,L}|_{\wt\cC} \ar[r]\ar[d]  & 0\\
& 0 & 0 & 0 }$$
$$\xymatrix{ & 0\ar[d] & 0\ar[d] & 0\ar[d]\\
0\ar[r] & \ker D_{J;\wt\u} \ar[r]\ar[d] & 
\ker D_{J;\u}^{\C}\!\oplus\!T_yY \ar[r]\ar[d] &  T_yX \ar[r]\ar[d]^{\Id} & 0 \\
0\ar[r] & T_{\wt\u}\cS  \ar[r]\ar[d]^{\nd_{\wt\u}\ff} 
& T_{\u}\M_{\{0\}\sqcup L}^{\C}(B;J)\!\oplus\!T_yY \ar[r]\ar[d]_{\nd_{\u}\ff}  
& T_yX \ar[r]\ar[d]  & 0\\
0\ar[r] & T_{\wt\cC}\cS_{0,L}  \ar[r]^>>>>>>>{\approx}\ar[d] 
& T_{\cC}\cM_{\{0\}\sqcup L}^{\C} \ar[r]\ar[d] & 0\\
& 0 & 0}$$
\end{small}
\caption{Commutative squares of exact sequences for the proof of Lemma~\ref{Scompare2_lmm}.}
\label{Scompare2_fig}
\end{figure}  

The forgetful morphism $\ff\!:\M_{0,L}(q_Y(B);J)\!\lra\!\ov\cM_{0,L}$ dropping 
the map component induces the short exact sequences given by 
the columns in the second diagram of Figure~\ref{Scompare2_fig}.
The middle and right columns and the top row in this diagram
respect the orientations;
the middle and bottom rows are orientation-compatible with respect to 
the boundary orientations on~$T_{\wt\u}\cS$ and~$T_{\wt\cC}\cS_{0,L}$, respectively.
Along with Lemma~6.3 in~\cite{RealWDVV}, this implies that the isomorphism
in the left column is also orientation-compatible with respect to 
the boundary orientations on~$T_{\wt\u}\cS$ and~$T_{\wt\cC}\cS_{0,L}$.

The forgetful morphism $\ff$ of the previous paragraph and its complex analogue
induce the short exact sequences given by 
the columns in the third diagram of Figure~\ref{Scompare2_fig}.
The middle column respects the complex orientations on $\ker D_{J;\u}^{\C}$,
$T_{\u}\M_{\{0\}\sqcup L}^{\C}(B;J)$, and $T_{\cC}\cM_{\{0\}\sqcup L}^{\C}$.
By the proof of Proposition~8.1.1 in~\cite{FOOO}, 
the sign of the top row in this diagram with respect to 
the orientation on $\ker D_{J;\wt\u}$ induced by the relative OSpin-structure~$\os$
and the complex orientation on~$\ker D_{J;\u}^{\C}$ is~$(-1)^{\lr{w_2(\os),B}}$.
By the conclusion concerning the first diagram,
the isomorphism in the bottom row is orientation-preserving with respect 
to the boundary orientation on~$T_{\wt\cC}\cS_{0,L}$ and
the complex orientation on~$T_{\cC}\cM_{\{0\}\sqcup L}^{\C}$.
By the conclusion concerning the second diagram,
the left column in the third diagram respects the boundary orientations
on~$T_{\wt\u}\cS$ and~$T_{\wt\cC}\cS_{0,L}$. 
By definition, the middle row respects the fiber product orientation on~$T_{\wt\u}\cS$.
Along with Lemma~6.3 in~\cite{RealWDVV}, this implies that 
the boundary and fiber product orientations on~$T_{\wt\u}\cS$ differ
by~$(-1)^{\lr{w_2(\os),B}}$.
\end{proof}

From Lemmas~\ref{Scompare2_lmm} and~\ref{fibprodisom_lmm1}, 
we immediately obtain the following statement.

\begin{crl}\label{Scompare2_crl}
Let $(X,\om)$, $Y$, $\os$, and $\be$ be as above Lemma~\ref{Scompare2_lmm} and
\hbox{$L\!\equiv\!(\Ga_1,\ldots,\Ga_l)$} 
be a tuple of smooth maps from oriented manifolds
to~$X$ in general position.
The natural isomorphism
$$\cS_{\be,L}^0\!\fiber\!\big(;(i,\Ga_i)_{\Ga_i\in L}\big)
\approx\bigsqcup_{B\in q_Y^{-1}(\be)}\!\!\!\!\!\!
(-1)^{\lr{w_2(\os),B}}\M^\C_{\{0\}\sqcup L}(B;J)\!\!\fiber\!\!
\big(\!(1,Y),(i\!+\!1,\Ga_i)_{\Ga_i\in L})$$
is then orientation-preserving with respect to the boundary orientation of~$\cS_{\be,L}^0$
and the fiber product orientation on the right-hand side.
\end{crl}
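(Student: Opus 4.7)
The plan is to decompose $\cS^0_{\be,L}$ by the homology class of the sphere bubble and then chain together the two cited lemmas; the remaining combinatorial sign turns out to vanish because $\dim X$ is even.

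First, I would write $\cS^0_{\be,L}=\bigsqcup_{B\in q_Y^{-1}(\be)}\cS_B$, where $\cS_B$ is the open subspace consisting of the sphere-bubbled maps whose bubble represents the class $B$. Each $\cS_B$ is exactly the space $\cS$ of Lemma~\ref{Scompare2_lmm} and inherits the boundary orientation from $\prt\ov\M_{0,L}(\be;J)$. By that lemma, the boundary orientation on $\cS_B$ differs from the fiber product orientation on $\M^\C_{\{0\}\sqcup L}(B;J)_{\ev_0}\!\!\times_{\io_Y}\!\!Y$ by precisely $(-1)^{\lr{w_2(\os),B}}$. Under this set-theoretic identification, the evaluation maps $\evi_i$ on $\cS_B$ correspond to $\ev_i$ on $\M^\C_{\{0\}\sqcup L}(B;J)$ precomposed with the projection from the fiber product, so the further fiber product against the tuple $(\Ga_i)_{i\in L}$ is compatible on both sides.

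Next, I would apply Lemma~\ref{fibprodisom_lmm1} to rearrange the iterated fiber product
$$\big(\M^\C_{\{0\}\sqcup L}(B;J)_{\ev_0}\!\!\times_{\io_Y}\!\!Y\big)\!\fiber\!\big((i,\Ga_i)_{\Ga_i\in L}\big)$$
into the single simultaneous fiber product
$$\M^\C_{\{0\}\sqcup L}(B;J)\!\fiber\!\big((1,Y),(i\!+\!1,\Ga_i)_{\Ga_i\in L}\big).$$
The roles of $f,g,e,h$ in that lemma are played by $\ev_0$, $\io_Y$, $(\ev_i)_{i\in L}\!:\M^\C_{\{0\}\sqcup L}(B;J)\!\to\!X^{|L|}$, and $\Ga_1\!\times\!\cdots\!\times\!\Ga_l\!:\prod_i\! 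Z_{\Ga_i}\!\to\!X^{|L|}$, respectively. The sign supplied by Lemma~\ref{fibprodisom_lmm1} is $(-1)^{(\dim X)(\codim h)}$, and since $\dim X\!=\!2n$ is even this sign is $+1$, regardless of the individual codimensions of the~$\Ga_i$.

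Combining these two steps, the natural identification sends the boundary orientation on the piece of $\cS^0_{\be,L}\!\fiber\!((i,\Ga_i)_{\Ga_i\in L})$ lying over $\cS_B$ to $(-1)^{\lr{w_2(\os),B}}$ times the fiber product orientation on $\M^\C_{\{0\}\sqcup L}(B;J)\!\fiber\!((1,Y),(i\!+\!1,\Ga_i)_{\Ga_i\in L})$. Taking the disjoint union over $B\in q_Y^{-1}(\be)$ and absorbing these signs into the explicit coefficient $(-1)^{\lr{w_2(\os),B}}$ appearing on the right-hand side of the claimed isomorphism yields an orientation-preserving identification, which is exactly the assertion of the corollary. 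There is no substantive obstacle: once the two lemmas are available, the only bookkeeping is the iterated fiber product sign in Lemma~\ref{fibprodisom_lmm1}, which collapses by parity alone.
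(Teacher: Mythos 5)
Your proposal is correct and takes essentially the same route as the paper, which dispenses with the corollary in one sentence as an immediate consequence of Lemmas~\ref{Scompare2_lmm} and~\ref{fibprodisom_lmm1}; you simply spell out that implicit argument — decompose $\cS^0_{\be,L}$ by the class $B$ of the sphere bubble, apply Lemma~\ref{Scompare2_lmm} to each piece, and then use Lemma~\ref{fibprodisom_lmm1} with the observation that $\dim X = 2n$ is even to kill the iterated-fiber-product sign.
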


\appendix

\section{General symplectic manifolds}
\label{vfc_app}

We now sketch an adaptation of the geometric construction described in
this paper to general symplectic manifolds, 
dropping the positivity assumptions~\eref{strongpos_e1} and~\eref{strongpos_e2}, 
in a way compatible with standard virtual class approaches,
such as in~\cite{LT,FO, HWZ,Pardon}.
As we only need evaluation maps from the disk moduli spaces to be pseudocycles,
a full virtual class construction and gluing across all strata of 
these spaces are not necessary.
Throughout this appendix, $R$ is a commutative ring containing~$\Q$,
$(X,\om)$ is a compact symplectic manifold, $Y\!\subset\!X$ is a compact
Lagrangian submanifold, and $\os$ is a relative OSpin-structure on~$Y$.

We denote $H_2(X,Y;\Z)$ modulo torsion by $H_2(X,Y)$. 
Let $\cB$ be a basis for $H_2(X,Y)$ so that all elements of $H_2^{\om}(X,Y)$
are linear combinations of the elements of~$\cB$ with nonnegative coefficients
and $\wt{H}_2(X,Y)$ be the collection of finite subsets of $\cB\!\times\!\Z$.
We write an element~$\wt\be$ of~$\wt{H}_2(X,Y)$ as $(A_b)_{b\in\cB}$,
with each $A_b\!\subset\!\Z$ being a finite subset (possibly empty).
For such an element~$\wt\be$, define
$$A_b(\wt\be)=A_b\subset\Z~~\forall\,b\!\in\!\cB, \quad
A(\wt\be)!=\prod_{b\in\cB}\!\big(|A_b(\wt\be)|!\big)\in\Z^+, \quad
|\wt\be|=\sum_{b\in\cB}\!\big|A_b(\wt\be)\big|b\in H_2(X,Y)\,.$$ 
For $\wt\be,\wt\be'\!\in\!\wt H_2(X,Y)$ such that $A_b(\wt\be)\!\cap\!A_b(\wt\be')\!=\!\eset$ 
for every $b\!\in\!\cB$, define
$$\wt\be\!+\!\wt\be'=\big(A_b(\wt\be)\!\sqcup\!A_b(\wt\be')\!\big)_{b\in\cB}\,.$$
The subset 
$$\wt{H}_2^{\om}(X,Y)\equiv\big\{\wt\be\!\in\!\wt{H}_2(X,Y)\!:
|\wt\be|\!\in\!\wt{H}_2^{\om}(X,Y)\big\}$$
of $\wt{H}_2(X,Y)$ has a natural partial order.
We define $\cC_{\om}(Y)$, $\cC_{\om;\al}(Y)$, $\cD_{\om}(\al)$, 
$\wt\cC_{\om}(Y)$, $\wt\cC_{\om;\wt\al}(Y)$, and $\cD_{\om}(\wt\al)$
as the collections $\cC_{\om}(Y)$, $\cC_{\om;\al}(Y)$, $\cD_{\om}(\al)$, 
$\wt\cC_{\om}(Y)$, $\wt\cC_{\om;\wt\al}(Y)$, and $\cD_{\om}(\wt\al)$
in Section~\ref{Notation_subs} with $H_2^{\om}(X,Y)$ replaced 
by~$\wt{H}_2^{\om}(X,Y)$.

Let $J\!\in\!\cJ_\om$.
For $k\!\in\!\Z^{\ge0}$, a finite set~$L$, and $\wt\be\!\in\!\wt{H}_2^{\om}(X,Y)$, let
$$\M^\st_{k,L}\big(\wt\be;J\big)=A(\wt\be)!\,\M^\st_{k,L}\big(|\wt\be|;J\big)\,.$$
The natural immersion 
$$\io_{\prt\wt\be}\!: 
\bigsqcup_{\begin{subarray}{c}\wt\be_1,\wt\be_2\in \wt{H}_2^{\om}(X,Y)\\
\wt\be_1+\wt\be_2=\wt\be\end{subarray}} \hspace{-.32in}
\M_{1,\eset}(\wt\be_1;J)_{\evb_1}\!\!\times\!\!_{\evb_1}
\M_{1,\eset}(\wt\be_2;J)\lra \M^\st_{0,0}\big(\wt\be;J\big)$$
is then a degree~2 covering map.
This statement extends to the disk moduli spaces with marked points.

We take a generic collection $\nu\!\equiv\!(\nu_{\wt\be})_{\wt\be\in\wt{H}_2^{\om}(X,Y)}$
of multi-valued inhomogeneous perturbations for the moduli spaces 
$$\ov\M_{0,\eset}\big(\wt\be;J\big)\equiv A(\wt\be)!\,\ov\M_{0,\eset}\big(|\wt\be|;J\big)$$
compatible with the immersions~$\io_{\prt\wt\be}$, with $\nu_{\eset}\!\equiv\!0$.
They lift to multi-valued inhomogeneous perturbations for the moduli spaces
$$\ov\M_{k,L}\big(\wt\be;J\big)\equiv A(\wt\be)!\,\ov\M_{k,L}\big(|\wt\be|;J\big)$$
via the forgetful morphisms; we denote the lifted perturbations also by~$\nu_{\wt\be}$.
These perturbations are compatible with the analogues of the immersions~$\io_{\prt\wt\be}$
for the disk moduli spaces with marked points and vanish on the moduli spaces 
$\ov\M_{k,L}(\eset;J)$ of degree~0 disks.
All combinations of the evaluation maps~$\evb_i$ and~$\evi_i$ from 
main strata of the distinct moduli spaces
$$\M_{k,L}\big(\wt\be;J,\nu_{\wt\be}\big)\equiv 
A(\wt\be)!\,\M_{k,L}\big(|\wt\be|;J,\nu_{\wt\be}\big)$$
of $(J,\nu)$-disks are transverse.

We denote~by
$$\M_{k,L}^{\st}\big(\wt\be;J,\nu_{\wt\be}\big)\subset\ov\M_{k,L}\big(\wt\be;J,\nu_{\wt\be}\big)$$
the subspace of maps from $(\D^2,S^1)$ and $(\D^2\!\vee\!\D^2,S^1\!\vee\!S^1)$ to~$(X,Y)$
and extend the use of the notation~$\fiber$ defined at the end of Section~\ref{Notation_subs}
to the moduli spaces of $(J,\nu)$-disks.
For $\al\!\in\!\cC_{\om}(Y)$ and $\eta\!\in\!\cD(\al)$, we replace~\eref{fMetaJdfn_e} with
$$\M_{\eta;J}\equiv\M^\st_{k_\bu(\eta),L_\bu(\eta)}
\big(\wt\be_\bu(\eta);J,\nu_{\wt\be_\bu(\eta)}\big),\qquad
\M^+_{\eta;J}\equiv
\M^\st_{k_\bu(\eta)+1,L_\bu(\eta)}\big(\wt\be_\bu(\eta);J,\nu_{\wt\be_\bu(\eta)}\big).$$
We then define a \sf{bounding chain} on $(\al,J,\nu)$ as in Definition~\ref{bndch_dfn} and
and~$\fbb_{\al'}$ in~\eref{fbbdfn_e}.
For the sake of proper scaling, we divide 
the right-hand sides of~\eref{JSinvdfn_e2}, \eref{JSinvdfn_e2b}, and~\eref{JSinvdfn_e2c}
by~$A(\wt\be)!$ when defining the associated disk counts
with $\be\!\in\!H_2^{\om}(X,Y)$ replaced by $\wt\be\!\in\!\wt{H}_2^{\om}(X,Y)$.

For a path $\wt J\!\equiv\!(J_t)_{t\in[0,1]}$ in~$\cJ_{\om}$ and a compatible path 
$\wt\nu\!\equiv\!(\nu_t)_{t\in[0,1]}$ of inhomogeneous perturbations, 
we modify the definitions of $\M_{\wt\eta;\wt{J}}$, $\M^+_{\wt\eta;\wt{J}}$,
pseudo-isotopy, and~$\fbb_{\wt\al'}$ in Section~\ref{Bc_subs} analogously.
The statements and proofs in \hbox{Sections~\ref{main_sec}-\ref{GWmainpf_sec}}
then readily adapt. 
In particular, pseudo-isotopic bounding chains still determine the same disk 
counts~\eref{OGWdfn_e}.
The proofs of Propositions~\ref{bndch_prp} and~\ref{psisot_prp} now 
ensure the existence of a bounding chain for every $(\al,J,\nu)$ 
and its uniqueness up to pseudo-isotopy, without 
the positivity assumptions~\eref{strongpos_e1} and~\eref{strongpos_e2}.
If $Y$ is an $R$-homology sphere, we then obtain disk counts~\eref{OGWdfn_e}
that depend only on~$|\wt\be|$, rather than~$\wt\be$, and thus well-defined
open GW-invariants~\eref{geomJSinvdfn_e}.

\section{Real Gromov-Witten invariants}
\label{RealGWs_sec}

We next interpret Solomon-Tukachinsky's adaptation of 
the main construction in~\cite{JS2} to the ``real setting" geometrically.
Throughout this appendix, 
$(X,\om,\phi)$ is a compact \sf{real symplectic manifold},  i.e.~$\phi$ 
is an involution on~$X$ so that \hbox{$\phi^*\om\!=\!-\om$}, 
$Y\!\subset\!X^{\phi}$ is a topological component of the fixed locus
(which is a Lagrangian submanifold of~$X$), 
and $\os$ is an OSpin-structure on~$Y$.
We assume that the dimension of~$X$ is~$2n$ with $n$ odd.
Let
$$\cJ_{\om}^{\phi}=\big\{J\!\in\!\cJ_{\om}^{\phi}\!:\phi_*J\!=\!-J\big\},\quad
\wh{H}^{2*}_{\phi}(X,Y;R)
=\bigoplus_{p=0}^n\big\{\ga\!\in\!\wh{H}^{2p}(X,Y;R)\!:\phi^*\ga\!=\!(-1)^p\ga\big\}.$$
There is a natural \sf{doubling map}
\BE{fdwchXdfn_e}\fd_Y\!:H_2\big(X,Y;\Z\big)  \lra H_2(X;\Z),\EE
which glues each map $f\!:(\Si,\prt\Si)\!\lra\!(X,Y)$ from an oriented bordered surface
with the map $\phi\!\circ\!f$ from $(\Si,\prt\Si)$ with the opposite orientation; 
see \cite[Sec~1.1]{RealWDVV3}.
This homomorphism vanishes on the image of the homomorphism $\Id\!+\!\phi_*$ 
on $H_2(X,Y;\Z)$ and thus descends to a homomorphism
$$\fd_Y\!:H_2\big(X,Y;\Z\big)\big/\Im\big\{\Id\!+\!\phi_*\big\}\lra H_2(X;\Z).$$

\vspace{-.1in}

We call a bordered pseudocycle 
$$\Ga\!: Z\lra X \qquad \big(\hbox{resp}.~~\Ga\!: Z\lra[0,1]\!\times\!X)$$
\sf{$\phi$-invariant} if there exists an involution~$\phi_Z$ on~$Z$, 
which is either orientation-preserving or reversing, such~that 
\BE{phiZdfn_e}\phi\!\circ\!\Ga=\Ga\!\circ\!\phi_Z
\qquad \big(\hbox{resp.~~}
\big\{\id_{[0,1]}\!\times\!\phi\big\}\!\circ\!\Ga=\Ga\!\circ\!\phi_Z\big)\,.\EE
We define $\sgn_{\phi}\Ga$ to be $+1$ if $\phi_Z$ above is orientation-preserving 
and $-1$ if $\phi_Z$ is orientation-reversing. 
We call a $\phi$-invariant bordered pseudocycle~$\Ga$ \sf{real}
as above
if the codimension of~$\Ga$ is~even and 
\BE{sgncond_e}\sgn_{\phi}\Ga=\begin{cases}+1,&\hbox{if}~\dim\,\Ga\!\equiv\!0,1~\hbox{mod}~4;\\
-1,&\hbox{if}~\dim\,\Ga\!\equiv\!2,3~\hbox{mod}~4.
\end{cases}\EE
If $2\!\in\!R$ is a unit, the Poincare dual of every element of $\wh{H}^{2*}_{\phi}(X,Y;R)$ 
can be represented by a pseudocycle with~$R$ coefficients uniquely up 
to pseudocycle equivalence.

\subsection{Bounding chains and curve counts}
\label{RealGWsStat_subs}

We denote by $\PC_{\phi}(X)$ and $\wt\PC_{\phi}(X)$
the collections of real pseudocycles to~$X$ and
of real bordered pseudocycles to~$[0,1]\!\times\!X$, respectively,
with coefficients in~$R$.
Let $\FPC_{\phi}(X)$ and $\wt\FPC_{\phi}(X)$ be the collections of finite subsets
of $\PC_{\phi}(X)$ and $\wt\PC_{\phi}(X)$, respectively.
The action of~$-\phi_*$ on $H_2(X,Y;\Z)$ restricts to an action 
of the cone $H_2^{\om}(X,Y)$ defined in~\eref{H2omXYdfn_e}.
Let
\BE{H2phiomdfn_e}H_{2;\phi}^{\om}(X,Y)\equiv\big\{\be\!\in\!H_2^{\om}(X,Y)\!\big\}
\!\big/\Im\big\{\Id\!+\!\phi_*\big\} \subset 
H_2\big(X,Y;\Z\big)\big/\Im\big\{\Id\!+\!\phi_*\big\}.\EE
Since $\om$ vanishes on $\Im\{\Id\!+\!\phi_*\}$,
the natural partial order on $H_2^{\om}(X,Y)$ descends to a partial order 
on~$H_{2;\phi}^{\om}(X,Y)$.

For $B\!\in\!H_{2;\phi}^{\om}(X,Y)$, $k\!\in\!\Z^{\ge0}$, a finite set~$L$,
and $J\!\in\!\cJ_{\om}^{\phi}$, let 
$$\M_{k,L}^{\st}(B;J)= \bigsqcup_{\begin{subarray}{c}\be\in H_2^{\om}(X,Y)\\
[\be]=B\end{subarray}}\hspace{-.22in}\M_{k,L}^{\st}(\be;J)\,.$$
For a path $\wt{J}$ in $\cJ_{\om}^{\phi}$,
we define the moduli space $\M_{k,L}^{\st}(B;\wt{J})$ analogously.
We apply the notation for evaluation maps and fiber products from the disk moduli 
spaces introduced in Section~\ref{Notation_subs} to 
$\M_{k,L}^{\st}(B;J)$ and $\M_{k,L}^{\st}(B;\wt{J})$ as well.

We define $\cC_{\om}^{\phi}(Y)$, $\cC_{\om;\al}^{\phi}(Y)$, $\cD_{\om}^{\phi}(\al)$, 
$\wt\cC_{\om}^{\phi}(Y)$, $\wt\cC_{\om;\wt\al}^{\phi}(Y)$, and $\cD_{\om}^{\phi}(\wt\al)$
as the collections $\cC_{\om}(Y)$, $\cC_{\om;\al}(Y)$, $\cD_{\om}(\al)$, 
$\wt\cC_{\om}(Y)$, $\wt\cC_{\om;\wt\al}(Y)$, and $\cD_{\om}(\wt\al)$
 in Section~\ref{Notation_subs}
with $H_2^{\om}(X,Y)$, $\FPC(X)$, and $\wt\FPC(X)$
replaced by $H_{2;\phi}^{\om}(X,Y)$, $\FPC_{\phi}(X)$, and $\wt\FPC_{\phi}(X)$, respectively.
For $\al\!\in\!\cC_{\om}^{\phi}(Y)$ and $J\!\in\!\cJ_{\om}^{\phi}$,
we call a collection $(\fb_{\al'})_{\al'\in\cC_{\om;\al}^{\phi}(Y)}$ 
of bordered pseudocycles to~$Y$ a \sf{real bounding chain} on~$(\al,J)$ 
if it satisfies all conditions of Definition~\ref{bndch_dfn}
with $\cC_{\om;\al}(Y)$ replaced by~$\cC_{\om;\al}^{\phi}(Y)$ and 
\BE{Rfbcond_e}\fb_{\al'}=\eset \qquad\forall~\al'\!\in\!\cC_{\om;\al}^{\phi}(Y)
~\hbox{s.t.}~\dim(\al')\equiv0~\hbox{mod}~4.\EE
Suppose $\al_0,\al_1\!\in\!\cC_{\om}^{\phi}(Y)$ and $\wt\al\!\in\!\wt\cC_{\om}^{\phi}(Y)$
are generic and satisfy~\eref{prtwtalcond_e}, 
$J_0,J_1\!\in\!\cJ_{\om}^{\phi}$ are generic, 
$\wt{J}$ is a generic path in~$\cJ_{\om}^{\phi}$ between~$J_0$ and~$J_1$, and
$(\fb_{0;\al'})_{\al'\in\cC_{\om;\al_0}^{\phi}(Y)}$ and 
$(\fb_{1;\al'})_{\al'\in\cC_{\om;\al_1}^{\phi}(Y)}$ are real
bounding chains on~$(\al_0,J_0)$ and~$(\al_1,J_1)$, respectively.
We call a collection $(\fb_{\wt\al'})_{\wt\al'\in\wt\cC_{\om;\wt\al}(Y)}$ 
of bordered pseudocycles to~$[0,1]\!\times\!Y$ 
a \sf{real pseudo-isotopy} on $(\wt\al,\wt{J})$ 
between $(\fb_{0;\al'})_{\al'\in\cC_{\om;\al_0}^{\phi}(Y)}$ and 
$(\fb_{1;\al'})_{\al'\in\cC_{\om;\al_1}^{\phi}(Y)}$ 
if it satisfies all conditions of Definition~\ref{psisot_dfn}
with $\wt\cC_{\om;\wt\al}(Y)$ replaced by~$\wt\cC_{\om;\wt\al}^{\phi}(Y)$ and 
\BE{Rfbbcond_e}\fb_{\wt\al'}=\eset \qquad\forall~\wt\al'\!\in\!\cC_{\om;\wt\al}^{\phi}(Y)
~\hbox{s.t.}~\dim(\wt\al')\equiv0~\hbox{mod}~4.\EE
Propositions~\ref{bndch_prp2} and~\ref{psisot_prp2} below are the analogues 
of Propositions~\ref{bndch_prp} and~\ref{psisot_prp} for the real setting.
They are also geometric analogues of
the surjectivity and injectivity statements of 
(a suitably modified version of)~\cite[Thm~3]{JS2}.

\begin{prp}\label{bndch_prp2}
Let $\al$ and $J$ be as above.
If $2\!\in\!R$ is a unit, $n\!\equiv\!3$ mod~4, and 
\BE{bndch2_e} H^p(Y;R)\approx H^p(S^n;R) \qquad\forall~p\!\equiv\!0,3~\tn{mod}~4,\EE 
then
there exists a real bounding chain 
$(\fb_{\al'})_{\al'\in\cC_{\om;\al}^{\phi}(Y)}$ on~$(\al,J)$.
\end{prp}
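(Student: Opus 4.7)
The plan is to induct on the partial order $\prec$ on~$\cC_\om^\phi(Y)$, following the scheme of Proposition~\ref{bndch_prp} but refined to respect the real structure. At each step one aims to construct a bordered pseudocycle $\fb_{\al'}$ for each $\al'\!\in\!\cC_{\om;\al}^\phi(Y)$ compatible both with the boundary formula~\ref{BCprt_it} and with the vanishing condition~\eref{Rfbcond_e}. A routine $\phi$-equivariant adaptation of Lemma~\ref{BCpseudo_lmm}, using only~\ref{BCprt_it} for the already-constructed $\fb_{\al''}$ with $\al''\!\prec\!\al'$, would first show that the recursively defined $\fbb_{\al'}$ is a closed pseudocycle to~$Y$ of dimension $\dim(\al')\!+\!1$.

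The inductive step then splits into two cases by $\dim(\al')\bmod 4$. When $\dim(\al')\!\equiv\!2\pmod 4$, one has $\dim\fbb_{\al'}\!\equiv\!3\pmod 4$ and, since $n\!\equiv\!3\pmod 4$, the codimension $n\!-\!\dim\fbb_{\al'}$ in~$Y$ is $\equiv\!0\pmod 4$. Setting aside the edge cases $\dim\fbb_{\al'}\!\in\!\{0,n\}$ (the first ruled out by~\eref{dimeven_e} and the second corresponding to the terminal step where no $\fb_{\al'}$ is needed by~\ref{BC0_it} and one instead reads off~\eref{JSinvdfn_e2}), Poincar\'e duality on~$Y$ together with~\eref{bndch2_e} forces $H_{\dim\fbb_{\al'}}(Y;R)\!=\!0$, and Theorem~1.1 in~\cite{pseudo} yields the required $\fb_{\al'}$.

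The case $\dim(\al')\!\equiv\!0\pmod 4$ is the main point, since~\eref{Rfbcond_e} demands $\fb_{\al'}\!=\!\eset$ and this in turn requires $\fbb_{\al'}$ to cancel. I plan to exploit the anti-holomorphic involution
$$\iota\!:\M^\st_{k,L}(\be;J)\lra \M^\st_{k,L}(\be;J), \qquad
[u,(x_i),(z_i)]\lra [\phi\!\circ\!u\!\circ\!c,(\ov{x_{\si(i)}}),(\ov{z_i})]$$
on each disk moduli space (with $c$ complex conjugation on the domain, $\si$ the order reversal restoring counter-clockwise ordering, and $\be$ identified with $-\phi_*\be$ in $H_{2;\phi}^\om(X,Y)$), together with the involutions $\phi_{Z_i}$ on the real pseudocycles $\Ga_i\!\in\!L_\bu(\eta)$ and the trivial action on the $\fb_{\al_i(\eta)}$ (as $\phi|_Y\!=\!\id_Y$). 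These induce an involution $\iota_{\al'}$ on each fiber-product summand $\fbb_\eta$ in~\eref{fbbdfn_e} commuting with the evaluation to~$Y$. A sign computation, using the orientation conventions of Section~\ref{Ms_subs}, Lemmas~\ref{fibprodflip_lmm} and~\ref{Scompare_lmm}, and the real pseudocycle condition~\eref{sgncond_e}, should show $\iota_{\al'}$ is orientation-reversing precisely when $\dim(\al')\!\equiv\!0\pmod 4$. With inductive genericity of the prior $\fb_{\al''}$ ensuring that $\iota_{\al'}$ acts freely on $\dom\fbb_{\al'}$, the domain splits as $Z\!\sqcup\!(-Z)$ mapping to the same image in~$Y$, so $\fbb_{\al'}$ is trivial as a pseudocycle and taking $\fb_{\al'}\!=\!\eset$ satisfies~\ref{BCprt_it}.

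The main obstacle will be that sign computation - identifying the parity of $\iota_{\al'}$ with $\dim(\al')\bmod 4$ by combining the orientation effect of $\iota$ on $\M^\st_{k,L}(\be;J)$ (reversal of the $k_\bu(\eta)$ boundary marked points and complex conjugation of the $|L_\bu(\eta)|$ interior ones), the factors $\sgn_\phi\Ga_i$ dictated by~\eref{sgncond_e} on the real constraints, the Maslov parity contribution via the doubling map~\eref{fdwchXdfn_e}, and the effect on the orientation of the moduli space induced by the relative OSpin-structure~$\os$. The hypothesis $n\!\equiv\!3\pmod 4$ is precisely what makes these contributions align; a secondary technical point is the inductive genericity required so that $\iota_{\al'}$ acts without fixed points on $\dom\fbb_{\al'}$ (achievable since $2\!\in\!R$ is a unit and the $\iota$-fixed locus has strictly smaller dimension).
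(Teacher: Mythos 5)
Your proposal captures the paper's strategy exactly: use the real analogue of Lemma~\ref{BCpseudo_lmm} to show $\fbb_{\al'}$ is a closed pseudocycle of dimension $\dim(\al')\!+\!1$, then split into the two parity cases. For $\dim(\al')\!\equiv\!2\pmod 4$, the hypothesis~\eref{bndch2_e} with $n\!\equiv\!3\pmod 4$ kills $H_{\dim(\al')+1}(Y;R)$ via Poincar\'e duality; for $\dim(\al')\!\equiv\!0\pmod 4$, the anti-holomorphic involution forces $\fbb_{\al'}$ to vanish, so $\fb_{\al'}\!=\!\eset$ satisfies both~\ref{BCprt_it} and~\eref{Rfbcond_e}. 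This is precisely how the paper proceeds (via~\eref{psisot2_e2}), with Lemma~\ref{flipsign_lmm} carrying out the sign computation you defer: it shows that the diffeomorphism $\phi_\eta^+$ induced by composing the map component with $\phi$ and reversing the boundary marked points has sign $(-1)^{\dim(\al')/2+1}$, using~\cite{Jake}, the real pseudocycle condition~\eref{sgncond_e}, the congruence $n\!\equiv\!3\pmod 4$, and the inductive vanishing~\eref{Rfbcond_e} on the $\al_i(\eta)$.

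The one genuine flaw is your reliance on $\iota_{\al'}$ acting \emph{freely} on $\dom\fbb_{\al'}$ so that the domain splits as $Z\!\sqcup\!(-Z)$. Freeness is neither needed nor generically achievable: the fixed locus of the involution on $\M^\st_{k,L}(\be;J)$ consists of the real $J$-holomorphic disks (those with $u\!=\!\phi\!\circ\!u\!\circ\!c$), which is typically nonempty --- this is exactly the locus that the real GW-invariants of Theorems~\ref{WelReal_thm} and~\ref{PenkaReal_thm} are built from --- and generic choices of the auxiliary chains $\fb_{\al''}$ and constraints $\Ga_i$ do not push the induced fiber-product locus off this fixed set. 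Moreover, ``$2\!\in\!R$ is a unit'' does nothing to produce freeness; it is used for an entirely different purpose. Since $\phi_\eta^+$ is an orientation-reversing diffeomorphism from $\dom\fbb_\eta$ to $\dom\fbb_{\phi_*\eta}$ intertwining $\evb_1$, and $\phi_*$ is an involution on $\cD_\om^\phi(\al')$, one gets $\fbb_{\al'}\!=\!-\fbb_{\al'}$ at the level of (oriented) pseudocycle classes, hence $2\fbb_{\al'}\!=\!0$, and $\fbb_{\al'}\!=\!0$ because $2$ is a unit in~$R$. This torsion argument replaces your free-action decomposition, requires no fixed-point analysis, and is the argument the paper actually uses in the two sentences following Lemma~\ref{flipsign_lmm}.
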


\begin{prp}\label{psisot_prp2}
Let $\al_0,\al_1,\wt\al$, $J_0,J_1,\wt{J}$, and 
$(\fb_{0;\al'})_{\al'\in\cC_{\om;\al_0}^{\phi}(Y)}$ and 
$(\fb_{1;\al'})_{\al'\in\cC_{\om;\al_1}^{\phi}(Y)}$
be as above Proposition~\ref{bndch_prp2} so that $\wt\Ga\!\cap\!Y\!=\!\eset$ 
for every $\wt\Ga\!\in\!\wt{L}(\wt\al)$ with \hbox{$\dim\,\wt\Ga\!=\!n$}.
If $R$, $n$, and~$Y$ satisfy the conditions in Proposition~\ref{bndch_prp2},
then there exists a real pseudo-isotopy 
$(\fb_{\wt\al'})_{\wt\al'\in\wt\cC_{\om;\wt\al}(Y)}$
on $(\wt\al,\wt{J})$ 
between $(\fb_{0;\al'})_{\al'\in\cC_{\om;\al_0}^{\phi}(Y)}$ and 
$(\fb_{1;\al'})_{\al'\in\cC_{\om;\al_1}^{\phi}(Y)}$. 
\end{prp}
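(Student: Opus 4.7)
The plan is to follow the inductive strategy used for Proposition~\ref{psisot_prp} in Section~\ref{bcisot_subs}, adding the reality constraint at every step. A key simplification is that the parity hypothesis $n\!\equiv\!3\bmod 4$, combined with $\mu_Y^\om\!\in\!2\Z$ and $\codim\,\wt\Ga\!\in\!2\Z$ for every real input $\wt\Ga\!\in\!\wt L(\wt\al)$, forces $\dim(\wt\al')$ to be even for every $\wt\al'\!\in\!\wt\cC_\om^\phi(Y)$, so only the cases $\dim(\wt\al')\!\equiv\!0,2\bmod 4$ occur in the induction.

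I would induct on the partial order $\prec$ on $\wt\cC_{\om;\wt\al}^\phi(Y)$, assuming at stage $\wt\al'$ that real bordered pseudocycles $(\fb_{\wt\al''})_{\wt\al''\prec\wt\al'}$ satisfying \ref{isodim_it}--\ref{isoprt_it} of Definition~\ref{psisot_dfn} and the reality condition~\eref{Rfbbcond_e} have already been constructed. The first substantive step is a real analogue of Lemma~\ref{psisot_lmm}: the right-hand side of the boundary equation in~\ref{isoprt_it},
\[
\fbb_{\wt\al'}\!+\!\{1\}\!\times\!\fb_{1;\al_1'}\!-\!\{0\}\!\times\!\fb_{0;\al_0'},
\]
is a $\phi$-invariant pseudocycle of dimension $\dim(\wt\al')\!+\!2$ to $[0,1]\!\times\!Y$, and its natural involution has the sign prescribed by~\eref{sgncond_e}. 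I would prove this by combining the orientation-sign computations of Section~\ref{Ori_sec} with the sign of the $\phi$-action on the real moduli spaces $\M_{k,L}^\st(\be;\wt J)$ for generic paths $\wt J$ in $\cJ_\om^\phi$ (analogous to the computations underlying Theorem~\ref{PenkaReal_thm}) and the reality of each input $\fb_{\wt\al_i(\wt\eta)}$ and~$\wt\Ga_i$.

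Granting this, the construction of~$\fb_{\wt\al'}$ splits into two cases. If $\dim(\wt\al')\!\equiv\!0\bmod 4$, then $\dim(\al_0')\!=\!\dim(\al_1')\!=\!\dim(\wt\al')$, so the reality condition~\eref{Rfbcond_e} forces $\fb_{0;\al_0'}\!=\!\fb_{1;\al_1'}\!=\!\eset$; the pseudocycle above reduces to~$\fbb_{\wt\al'}$, which has $\sgn_\phi\!=\!-1$ by~\eref{sgncond_e}. Since $\phi$ restricts to the identity on $[0,1]\!\times\!Y$, this gives $[\fbb_{\wt\al'}]\!=\!-[\fbb_{\wt\al'}]$ in $H_*([0,1]\!\times\!Y;R)$, forcing $[\fbb_{\wt\al'}]\!=\!0$ since $2\!\in\!R$ is a unit, and we may take $\fb_{\wt\al'}\!=\!\eset$. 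If $\dim(\wt\al')\!\equiv\!2\bmod 4$, the pseudocycle above has dimension $\equiv\!0\bmod 4$ with $0\!<\!\dim\!<\!n$, so Poincar\'e duality and~\eref{bndch2_e} give vanishing of its homology class in $H_*([0,1]\!\times\!Y;R)$; Theorem~1.1 of~\cite{pseudo} then yields a bounding pseudocycle~$\fb^\circ$, and a standard symmetrization (taking two copies of the domain with the swap involution and rescaling by~$1/2$, using that $2$ is a unit in~$R$) produces a real bordered pseudocycle~$\fb_{\wt\al'}$ with the prescribed sign $\sgn_\phi\!=\!+1$ consistent with $\dim\,\fb_{\wt\al'}\!\equiv\!1\bmod 4$.

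The main obstacle will be the real version of Lemma~\ref{psisot_lmm}, specifically the sign computation verifying that the natural $\phi$-involution on the fiber product defining~$\fbb_{\wt\al'}$ has the sign prescribed by~\eref{sgncond_e}. This requires extending the orientation analysis of Section~\ref{Ori_sec} to track the $\phi$-induced involution through the stratum identifications of Lemmas~\ref{BCpseudo_lmm} and~\ref{psisot_lmm} and through the fiber-product constructions of Section~\ref{Fp_subs}, carefully combining the sign contributions from the moduli space with those from each real bordered pseudocycle $\fb_{\wt\al_i(\wt\eta)}$ and each real interior insertion~$\wt\Ga_i$; the hypothesis $n\!\equiv\!3\bmod 4$ enters precisely to make these contributions consistent with~\eref{sgncond_e}.
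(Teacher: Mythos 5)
Your overall plan is correct and follows the paper's own route: the inductive construction of Proposition~\ref{psisot_prp} carries over, with the two new ingredients being (i) that the closed pseudocycle $\fbb_{\wt\al'}$ vanishes when $\dim(\wt\al')\equiv 0\bmod 4$ (this is~\eref{psisot2_e2}, established via Lemma~\ref{flipsign_lmm} and the fact that $\phi_*$ is an involution on $\cD_\om^\phi(\wt\al')$), and (ii) that for $\dim(\wt\al')\equiv 2\bmod 4$ the closed pseudocycle to be bounded has dimension $\equiv 0\bmod 4$ and lies strictly between $0$ and $n$, whence the partial homology-sphere hypothesis~\eref{bndch2_e} together with Poincar\'e duality on $Y$ supplies the needed vanishing in homology. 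Your identification of the sign $(-1)^{\dim(\wt\al')/2+1}$ and of the inputs to that computation is on target, and Lemma~\ref{psisot_lmm} itself goes through verbatim, as the paper notes.

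There is, however, a misreading you should correct. The $\fb_{\wt\al'}$ are bordered pseudocycles to $[0,1]\times Y$, and since $Y\subset X^\phi$, the involution $\id_{[0,1]}\times\phi$ restricts to the identity there; so every such pseudocycle is trivially $\phi$-invariant with $\phi_Z=\id$ and $\sgn_\phi=+1$, and the notion of ``real'' in the sense of~\eref{sgncond_e} imposes no constraint on them. The only extra condition on $\fb_{\wt\al'}$ beyond Definition~\ref{psisot_dfn} is the dimension-dependent emptiness~\eref{Rfbbcond_e}. This is precisely what feeds into the sign computation of Lemma~\ref{flipsign_lmm}: it ensures every nonempty input $\fb_{\wt\al_i(\wt\eta)}$ has $\dim(\wt\al_i(\wt\eta))\equiv 2\bmod 4$, so that the terms $\frac12(\dim\wt\al_i(\wt\eta)+3)$ in~\eref{BCpseudo_e3b} and the permutations $\si_\fb$ behave uniformly. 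Accordingly, the ``symmetrization'' step in your case $\dim(\wt\al')\equiv 2\bmod 4$ is superfluous and should be dropped; any bordered pseudocycle of the correct dimension bounding $\fbb_{\wt\al'}+\{1\}\times\fb_{1;\al_1'}-\{0\}\times\fb_{0;\al_0'}$ works. The place where~\eref{sgncond_e} genuinely enters the argument is via the interior insertions $\wt\Ga_i\in\wt\PC_\phi(X)$, whose $\sgn_\phi$ contributes the factor $\ep_\Ga(\wt\eta)$ in the analogue of~\eref{fMsign_e3}, and via the hypothesis $n\equiv 3\bmod 4$ making $(n+1)/2$ even.
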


\vspace{.1in}

The proofs of Lemmas~\ref{BCpseudo_lmm} and~\ref{psisot_lmm} 
apply verbatim in the real setting of Propositions~\ref{bndch_prp2} and~\ref{psisot_prp2}.
The key new feature in this setting 
vs.~the open setting of Propositions~\ref{bndch_prp} and~\ref{psisot_prp} is that 
\BE{psisot2_e2}
\fbb_{\al'},\fbb_{\wt\al'}=0 
\qquad\forall~
\al'\!\in\!\cC_{\om;\al}^{\phi}(Y),\,
\wt\al'\!\in\!\wt\cC_{\om;\wt\al}^{\phi}(Y)
~\hbox{s.t.}~
\dim(\al),\dim(\wt\al)\equiv0~\tn{mod}~4.\EE
As shown in Section~\ref{RealGWsPf_subs}, 
this is a consequence of the computation of the sign of the involution on the moduli space
of $J$-holomorphic disks obtained in~\cite{Jake}.
The bordered pseudocycles $\fb_{\al'},\fb_{\wt\al'}\!=\!\eset$ 
thus satisfy the conditions of Definitions~\ref{bndch_dfn}\ref{BCprt_it} 
and~\ref{psisot_dfn}\ref{isoprt_it} whenever $\dim(\al')$ and $\dim(\wt\al')$ 
are divisible by~4.
Analogously to Propositions~\ref{bndch_prp} and~\ref{psisot_prp}, 
Propositions~\ref{bndch_prp2} and~\ref{psisot_prp2} give rise to 
open GW-invariants
\BE{tauinvdfn_e}
\blr{\cdot,\ldots,\cdot}_{\be,k}^{\phi,\os}\!: 
\bigoplus_{l=0}^{\i} \wh{H}^{2*}_{\phi}(X,Y;R)^{\oplus l}\lra R, \qquad 
\be\!\in\!H_{2;\phi}^{\om}(X,Y),~k\!\in\!\Z^{\ge0},\EE
enumerating $J$-holomorphic disks of degree~$\be$.

Theorems~\ref{WelReal_thm} and~\ref{PenkaReal_thm} below are analogues
of Theorems~5 and~6 in~\cite{JS2}.
They relate the open GW-invariants~\eref{geomJSinvdfn_e} and~\eref{tauinvdfn_e}
constructed from bounding chains $(\fb_{\al'})_{\al'\in\cC_{\om;\al}(Y)}$
and $(\fb_{\al'})_{\al'\in\cC_{\om;\al}^{\phi}(Y)}$ to 
some of the previously constructed invariants enumerating real rational curves
in real symplectic $2n$-dimensional manifolds $(X,\om,\phi)$  
with $n$~odd and orientable fixed locus $Y\!=\!X^{\phi}$.
In the settings when both types of invariants are defined, 
the bounding chains~$\fb_{\al'}$ can be taken empty for $\al'\!\neq\!(0,\{\pt\},\eset)$
for any $\pt\!\in\!K(\al)$.
The open GW-invariants~\eref{geomJSinvdfn_e} and~\eref{tauinvdfn_e} then arise
only from the elements 
\BE{etaptsdfn_e}\begin{split}
&\eta=\big(\be(\al),|K(\al)|,L(\al),(\al_i)_{i\in[|K(\al)|]}\big)  
\in \cD_{\om}(\al),\cD_{\om}^{\phi}(\al) \qquad\hbox{with}\\
&\hspace{1in} \al_i=(0,\{\pt\},\eset)~~\hbox{for some}~\pt\!\in\!Y.
\end{split}\EE
The resulting disk counts~\eref{JSinvdfn_e2} and~\eref{JSinvdfn_e2b} then 
match previous definitions of counts of real rational curves up to
uniform signs and scaling.

Invariant signed counts of real $J$-holomorphic degree~$B$ spheres in compact real symplectic sixfolds
$(X,\om,\phi)$ passing through $l$~general points in~$X\!-\!X^{\phi}$ and 
$$k\equiv \frac12\blr{c_1(X,\om)}-2l$$
general points in a topological component~$Y$ of~$X^{\phi}$ 
were first defined in~\cite{Wel6,Wel6b} 
under some restrictions on~$(B,k)$. 
These counts depend on the choice of an OSpin-structure~$\os$ on~$Y$;
we denote them by~$N_{B,l}^{\phi,\os}$.
The interpretation of these counts in terms of $J$-holomorphic maps from $(\D^2,S^1)$
to~$(X,Y)$ in~\cite{Jake} relaxed the restriction on~$(B,k)$ and led to open GW-invariants
\BE{Solinvdfn_e}
\big\{\cdot,\ldots,\cdot\big\}_{\be,k}^{\phi,\os}\!: 
\bigoplus_{l=1}^{\i} \wh{H}^{2*}_{\phi}(X,Y;\Q)^{\oplus l}\lra\Q,
 \qquad \be\!\in\!H_{2;\phi}^{\om}(X,Y),~k\!\in\!\Z^+,\EE
enumerating $J$-holomorphic degree~$\be$ disks so~that 
\BE{WelvsSol_e}N_{B,l}^{\phi,\os}=2^{l-1}(-1)^{\binom{k}2}\!\!\!\!\!
\sum_{\begin{subarray}{c}\be\in H_{2;\phi}^{\om}(X,Y)\\
\fd_Y(\be)=B\end{subarray}}\!\!\!\!\!\!
\big\{\unbr{\PD_X([\pt]),\ldots,\PD_X([\pt])}{l}\big\}_{\be,k}^{\phi,\os}\,.\EE
The scaling factor of $2^{l-1}$ above is because each real $J$-holomorphic sphere
passing through $l$~conjugate pairs of points corresponds to 2~disks
passing through $l$~conjugate pairs of half-points.
The precise sign is provided by \cite[Thm~13.2]{SpinPin}.
Since $R\!=\!\Q$ and $(X,\om,Y)$ satisfy the conditions of 
Proposition~\ref{bndch_prp2} and~\ref{psisot_prp2},
the open GW-invariants~\eref{tauinvdfn_e} are well-defined in this case.
In light of this identity, the next statement is analogous to \cite[Thm~5]{JS2}.

\begin{thm}\label{WelReal_thm}
Suppose $(X,\om)$ is a compact real symplectic sixfold,
$Y$ is a topological component of~$X^{\phi}$, and
$\os$ is an OSpin-structure on~$Y$.
If $(X,\om,Y)$ satisfies~\eref{strongpos_e1} and~\eref{strongpos_e2},
then the disk counts~\eref{tauinvdfn_e} and~\eref{Solinvdfn_e} agree.
\end{thm}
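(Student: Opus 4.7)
The plan is to use the positivity assumptions \eref{strongpos_e1}--\eref{strongpos_e2} to choose a specifically simple real bounding chain on $(\al,J)$, with $\fb_{\al'}$ nonempty only for the point insertions $\al'\!=\!(0,\{\pt\},\eset)$ with $\pt\!\in\!K(\al)$. With such a choice, the disk count \eref{JSinvdfn_e2b} reduces at once to a signed count of $J$-holomorphic disks in $(X,Y)$ with $k$ boundary point constraints and $l$ interior pseudocycle constraints. The same moduli-theoretic object underlies the definition of $\{\,\cdot\,\}^{\phi,\os}_{\be,k}$ in~\cite{Jake}, so the theorem reduces to a sign and normalization comparison.

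First I would verify that the collection defined by $\fb_{(0,\{\pt\},\eset)}\!=\!\pt$ for $\pt\!\in\!K(\al)$ and $\fb_{\al'}\!=\!\eset$ otherwise satisfies Definition~\ref{bndch_dfn} together with the real vanishing requirement~\eref{Rfbcond_e}. The only nontrivial check is condition~\ref{BCprt_it}: one must show that $\fbb_{\al'}\!=\!0$ as an oriented pseudocycle for each $\al'\!\neq\!(0,\{\pt\},\eset)$. Since $\fb_{\al_i(\eta)}\!=\!\eset$ when $\al_i(\eta)$ is not a point insertion, only $\eta\!\in\!\cD_\om^\phi(\al')$ with every $\al_i(\eta)$ a point insertion contribute, and the dimension formula combined with \eref{strongpos_e2}, the parity~\eref{dimeven_e}, and the mod-$4$ vanishing~\eref{psisot2_e2} force $\fbb_{\al'}$ to be either empty or a sum of pseudocycles that cancel in pairs via the reflection argument of Remark~\ref{bndch_rmk}. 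By Propositions~\ref{bndch_prp2} and~\ref{psisot_prp2} the resulting disk count is independent of the chosen real bounding chain, so this trivial one may be used to evaluate \eref{tauinvdfn_e}.

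With such a bounding chain in place, the only $\eta\!\in\!\cD_\om^\phi(\al)$ contributing to~\eref{JSinvdfn_e2b} are the elements of the form~\eref{etaptsdfn_e}: $k_\bu(\eta)\!=\!k$, $L_\bu(\eta)\!=\!L$, and $(\al_1(\eta),\ldots,\al_k(\eta))$ is one of the $k!$ orderings of the point insertions indexed by~$K$. By Lemma~\ref{rhoRsgn_lmm} all such orderings produce equal oriented fiber products, so \eref{tauinvdfn_e} becomes an explicit signed rational multiple of $|\M^\st_{k,L}(\be;J)\fiber((i,\pt_i)_{i\in[k]};(i,\Ga_i)_{\Ga_i\in L})|^\pm$, with $\Ga_i$ real pseudocycles representing the Poincar\'e duals of $\ga_i$, together with a lower-order correction from the second sum in~\eref{JSinvdfn_e2b} that vanishes by properties~\ref{deg0_it} and~\ref{ins1_it} of Theorem~\ref{main_thm}.

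The main obstacle is matching this explicit expression with the signed count $\{\,\cdot\,\}^{\phi,\os}_{\be,k}$ of~\cite{Jake}. Both counts are based on the same underlying oriented moduli space: the relative OSpin orientation of~$\M^{\st}_{k,L}(\be;J)$ constructed in Section~\ref{Ms_subs} coincides with the one used in~\cite{Jake} by Proposition~8.1.1 in~\cite{FOOO}. The remaining verification is combinatorial, balancing the factors $(-1)^{k_\bu(\eta)}$, $s^*(\eta)$, and the $k!$ from the orderings against the unordered normalization in~\cite{Jake}. This is a finite sign check, relying on the fiber-product orientation lemmas of Section~\ref{Fp_subs} and the comparisons developed in Section~\ref{countequiv_subs}, but it is the only step with genuine content beyond the verification of the trivial bounding chain.
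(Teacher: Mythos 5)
Your overall strategy is sound, and the ingredients you name — the trivial bounding chain supported only on point insertions, the vanishing $\fbb_{\al'}\!=\!0$ from~\eref{psisot2_e2}, the reduction to a signed disk count, and the comparison with~\cite{Jake} — are exactly what the argument requires. But the paper's proof is noticeably simpler, because for $n\!=\!3$ the structure you propose to ``choose'' is in fact \emph{forced}: combining the parity constraint~\eref{dimeven_e}, condition~\ref{BC0_it} (which kills $\fb_{\al'}$ for $\dim(\al')\!\ge\!n\!-\!1\!=\!2$ or for $\dim(\al')\!\le\!-2$ when $\al'$ is not a point insertion), and the real vanishing requirement~\eref{Rfbcond_e} (which kills $\fb_{\al'}$ for $\dim(\al')\!\equiv\!0$ mod $4$, i.e.\ $\dim(\al')\!=\!0$) leaves no freedom at all. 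Every real bounding chain on $(\al,J)$ therefore has $\fb_{\al'}\!=\!\eset$ unless $\al'\!=\!(0,\{\pt\},\eset)$. This makes your verification step and your appeal to Propositions~\ref{bndch_prp2}--\ref{psisot_prp2} for independence of the choice unnecessary; the only input from Proposition~\ref{bndch_prp2} that is actually used is existence, which is guaranteed since $Y$ is a connected orientable $3$-manifold and hence satisfies~\eref{bndch2_e} automatically.

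Two more specific issues. First, the cancellation you attribute to ``the reflection argument of Remark~\ref{bndch_rmk}'' is not the relevant mechanism here: that remark concerns cancellations of $0$-dimensional pseudocycles without a real structure present, while the vanishing needed for $\dim(\al')\!=\!0$ is precisely the content of~\eref{psisot2_e2}, which you already cite and which comes from the $\phi$-flip sign of Lemma~\ref{flipsign_lmm}. Second, your claim that the second sum $\frac12\sum_{\pt\in K}\lr{\cdot}_{\be;K-\{\pt\}}^{\om,\os}$ in~\eref{JSinvdfn_e2b} ``vanishes by properties~\ref{deg0_it} and~\ref{ins1_it}'' is wrong: those properties apply only to degree-$0$ classes or $H^0$ insertions. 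With the forced bounding chain, each $\lr{L}_{\be;K-\{\pt\}}^{\om,\os}\!=\!\deg\fbb_{\al_\pt^c}$ is itself a nonzero fiber-product count of disks through $|K|$ boundary points (the $|K|\!-\!1$ remaining points of $K$ plus the degree point), and it contributes to the total on equal footing with the first sum. The comparison with~\eref{Solinvdfn_e} must therefore account for both sums and the $s^*(\eta)$ weights together; the paper handles this by observing that all terms reduce to signed cardinalities of fiber products over $\eta$ of the form~\eref{etaptsdfn_e}, i.e.\ to counts of single disks meeting the constraints with the $k$ boundary points traversed in any cyclic order, which is precisely the unordered normalization built into~\cite{Jake}.
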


We now return to compact real symplectic manifolds $(X,\om,\phi)$ of dimensions~$2n$
with arbitrary odd~$n$.
Let $Y\!\subset\!X^{\phi}$ be a topological component as before,
\begin{gather*}
\wh{H}^{2*}_{\phi;-}(X,Y;\Q)=\big\{\ga\!\in\!\wh{H}^{2*}_{\phi}(X,Y;\Q)\!:
\deg\ga\!\not\in\!4\Z\big\}, \\
H^{2*}_{\phi;-}(X;\Q)=\big\{\ga\!\in\!H^{2*}(X;\Q)\!:
\phi^*\ga=-\ga,~\deg\ga\!\not\in\!4\Z\big\}.
\end{gather*}
A \sf{real bundle pair} over~$(X,\phi)$  consists of a complex vector bundle $E\!\lra\!X$
with a conjugation~$\wt\phi$ lifting~$\phi$.
The fixed locus~$E^{\wt\phi}$ of~$\phi$ is then a real vector bundle over~$X^{\phi}$.
Let 
$$\mu_Y^{\wt\phi}\!:H_2(X,Y;\Z)\lra\Z$$
be the Maslov index of $(E,E^{\wt\phi}|_Y)$.

As introduced by Georgieva in Definition~1.3 of~\cite{Penka2}, 
a \sf{$\phi$-orientation} on $(X,\phi,Y)$ consists of a real bundle pair~$(E,\wt\phi)$ 
over~$(X,\phi)$ such that 
\BE{Penkacond_e}
\mu_Y^{\om}(\be)\equiv 2\mu_Y^{\wt\phi}(\be)~~\hbox{mod}~4
\qquad\forall~\be\!\in\!H_2(X,Y;\Z)~\hbox{with}~\phi_*\be\!=\!-\be,\EE
an OSpin-structure~$\os_E$ on $E|_Y\!\oplus\!TY$, and
a choice of representatives $\be_i\!\in\!H_2^{\om}(X,Y)$ for the elements
of $H_{2;\phi}^{\om}(X,Y)$ in~\eref{H2phiomdfn_e}.
The existence of an OSpin-structure on $E|_Y\!\oplus\!TY$
implies that $w_2(TY)\!=\!w_2(E)|_Y$.
A $\phi$-orientation $(E,\wt\phi;\os_E)$ determines an orientation 
of $\det D_u$ for every $J$-holomorphic map \hbox{$u\!:(D^2,S^1)\!\lra\!(X,Y)$}
and thus an orientation on the moduli spaces
$\M^\st_{k,L}(\be;J)$ of $J$-holomorphic disks.
A $\phi$-orientation also determines real GW-invariants
\BE{Penkainvdfn_e}
\big\{\cdot,\ldots,\cdot\big\}_{\be}^{\phi}\!: 
\bigoplus_{l=1}^{\i}H^{2*}_{\phi;-}(X;\Q)^{\oplus l}\lra\Q,
 \qquad \be\!\in\!H_{2;\phi}^{\om}(X,Y)\!-\!q_Y\big(H_2(X;\Z)\!\big),\EE
enumerating real $J$-holomorphic degree~$\fd_Y(\be)$ spheres
without point constraints in~$Y$.
The equality of the dimensions of the moduli space and 
the constraints implies that $\mu_Y^{\om}(\be)\!\equiv\!n\!-\!3$ mod~4
whenever the invariants~\eref{Penkainvdfn_e} do not vanish.

\begin{thm}\label{PenkaReal_thm}
Suppose $(X,\om)$ is a compact symplectic manifold of real dimension~$2n$ with $n\!\ge\!3$ odd,
$Y$~is a topological component of~$X^{\phi}$, 
\hbox{$\ga_1,\ldots,\ga_l\!\in\!\wh{H}^{2*}_{\phi;-}(X,Y;\Q)$},
$(E,\wt\phi;\os_E)$ is a $\phi$-orientation on $(X,\phi,Y)$,
and $J\!\in\!\cJ_{\om}^{\phi}$ is generic.
Let $\al\!\in\!\cC_{\om}^{\phi}(Y)$ be generic so that 
$\be(\al)\!\not\in\!q_Y(H_2(X;\Z)\!)$, $K(\al)\!=\!\eset$, and
$L(\al)\!\equiv\!\{\Ga_1,\ldots,\Ga_l\}$ be a collection of representatives
for the Poincare duals of $\ga_1,\ldots,\ga_l$.
If $(X,\om,Y)$ satisfies~\eref{strongpos_e1} and~\eref{strongpos_e2} and
the disk moduli spaces are oriented by the $\phi$-orientation as in~\cite{Penka2},
then the collection $(\fb_{\al'}\!\equiv\!\eset)_{\al'\in\cC_{\om;\al}^{\phi}(Y)}$
is a real bounding chain on~$(\al,J)$.
The associated disk count~\eref{JSinvdfn_e2b} satisfies
\BE{PenkaReal_e}
\lr{L(\al)}_{\be(\al);\eset}^*=2^{1-l}
\big\{\ga_1|_X,\ldots,\ga_l|_X\big\}_{\be(\al)}^{\phi}\,.\EE
\end{thm}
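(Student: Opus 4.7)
The plan is to verify directly that $(\fb_{\al'}\!\equiv\!\eset)$ satisfies the axioms of a real bounding chain on $(\al,J)$, and then to unwind the resulting disk count \eref{JSinvdfn_e2b} and identify it with the Georgieva invariant via a sign and multiplicity computation adapted from \cite{Jake,Penka2}. Conditions \ref{BCdim_it}, \ref{BC0_it}, \ref{BCi_it} of Definition~\ref{bndch_dfn} and the reality condition \eref{Rfbcond_e} hold trivially from $K(\al)=\eset$, $\fb_{\al'}\equiv\eset$, and the inclusion $K(\al')\subseteq K(\al)$ for $\al'\preceq\al$. Only \ref{BCprt_it} has content, reducing to the claim that $\fbb_{\al'}=0$ as an $R$-pseudocycle for every $\al'\in\cC_{\om;\al}^\phi(Y)$ with $-2<\dim(\al')\le n-2$. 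Since any $\eta\in\cD_\om^\phi(\al')$ with $k_\bu(\eta)\ge1$ yields an empty fiber product (involving some $\fb_{\al_i(\eta)}=\eset$), only $\eta=(\be(\al'),0,L(\al'),()\!)$ contributes, giving
\begin{equation*}
\fbb_{\al'}=\bigl(\evb_1\!:\M_{1,L(\al')}^\st(\be(\al');J)\fiber\bigl(;(i,\Ga_i)_{i\in L(\al')}\bigr)\lra Y\bigr).
\end{equation*}

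To prove that this vanishes, I would use the anti-holomorphic involution
\begin{equation*}
\phi_*\!:[u,x_1,(z_i)]\longmapsto[\phi\circ u\circ c,\,c(x_1),\,(c(z_i))],
\end{equation*}
where $c$ denotes complex conjugation on $\D^2$. Under the convention of Section~\ref{RealGWsStat_subs} (disjoint union over representatives in a fixed $H_{2;\phi}^\om$-class), $\phi_*$ preserves $\M_{1,L(\al')}^\st(\be(\al');J)$. Since $u(x_1)\in Y=X^\phi$ is $\phi$-fixed and $c^2=\id$, it commutes with $\evb_1$:
\begin{equation*}
\evb_1(\phi_*\u)=\phi\bigl(u(c^2 x_1)\bigr)=\phi(u(x_1))=u(x_1)=\evb_1(\u),
\end{equation*}
and it preserves the fiber product since each $\Ga_i$ is real. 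The heart of this step is then the sign of $\phi_*$ with respect to the orientation from the $\phi$-orientation $(E,\wt\phi;\os_E)$: the sign calculations of \cite{Jake} and \cite{Penka2}, combined with the parity condition \eref{Penkacond_e}, the oddness of $n$, and the congruence $\codim\Ga_i-2\equiv 0\pmod 4$ (from $\ga_i\in\wh{H}^{2*}_{\phi;-}$), should produce $\sgn(\phi_*)=-1$ on this fiber product for every $\al'$ arising in the induction. An orientation-reversing involution commuting with $\evb_1$ and the fiber product structure exhibits the map as equal to its own negative as an oriented $R$-pseudocycle, hence zero over $R\supseteq\Q$.

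For the disk count, with the trivial bounding chain and $K(\al)=\eset$ the formula \eref{JSinvdfn_e2b} again collapses to a single term,
\begin{equation*}
\lr{L(\al)}^*_{\be(\al);\eset}=\bigl|\M^\st_{0,L(\al)}(\be(\al);J)\fiber\bigl(;(i,\Ga_i)_{i\in L(\al)}\bigr)\bigr|^\pm.
\end{equation*}
On this space $\phi_*$ acts with sign $+1$ (by the analogous sign calculation with $k=0$), and its orbits pair up with real $J$-holomorphic spheres of degree $\fd_Y(\be(\al))$ passing through the $\Ga_i$'s. Identifying the signed count of $\phi_*$-orbits with the Georgieva sphere count, and then accounting for how the $l$ ordered interior marked points on a disk arise from a real sphere with its $l$ ordered marked points distributed across the two hemispheres (modulo the hemisphere-swap symmetry), produces the multiplicative factor $2^{1-l}$ in \eref{PenkaReal_e}, paralleling the sixfold relation \eref{WelvsSol_e}.

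The main obstacle is the sign computation for $\phi_*$: extracting the signs $-1$ (for $k=1$) and $+1$ (for $k=0$) from the $\phi$-orientation $(E,\wt\phi;\os_E)$ requires tracking the orientation on $\det D_{J;\u}$ built via doubling of real Cauchy-Riemann operators, its dependence on $\os_E$ and on the chosen representatives in $H_{2;\phi}^\om$, and its comparison with the standard $\os$-orientation used in Sections~\ref{Notation_subs}-\ref{GWmainpf_sec}. The condition \eref{Penkacond_e} and the oddness of $n$ enter here in an essential way, making this the technical crux of the argument.
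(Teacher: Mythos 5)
Your proposal is correct and matches the paper's proof in all essentials: only Definition~\ref{bndch_dfn}\ref{BCprt_it} has content, the sum in~\eref{fbbdfn_e} collapses to the $k_\bu(\eta)=0$ term, and the vanishing of $\fbb_{\al'}$ follows from the anti-holomorphic reflection being orientation-reversing on $\M^+_{\eta;J}\fiber(\ldots)$, where the sign $-1$ comes (as you indicate) from the $\phi$-orientation, the condition~\eref{Penkacond_e}, the oddness of $n$, and $\codim\Ga_i\equiv 2\bmod 4$. The paper carries out the sign computation by adapting the explicit formulas of Lemma~\ref{flipsign_lmm} (with the Maslov term dropped), and for the final identification simply cites the correspondence with \cite{Penka2} and the analogy with~\eref{WelvsSol_e} for the $2^{1-l}$ factor rather than re-deriving a $+1$ sign in the $k=0$ case — your extra step there is harmless but unnecessary.
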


\vspace{.1in}

A $\phi$-orientation $(E,\wt\phi;\os_E)$ on $(X,\phi,Y)$ determines
an \sf{associated relative OSpin-structure}~$\os$ on~$Y$ with $w_2(\os)\!=\!w_2(E)$.
By the proof of Lemma~7.3 in~\cite{Penka2} and the statement of Corollary~3.8(1)
in~\cite{RGWsII}, there exists a map
\begin{gather}\notag
\ep\!:H_2(X,Y;\Z)\lra\Z_2 \qquad\hbox{with}\\
\label{Penkainvdfn_e3}
\ep(\be_i)=\flr{\frac{\mu_Y^{\wt\phi}(\be_i)}{2}}~~\forall\,\be_i,\quad
\ep(-\phi_*\be)=\ep(\be)\!+\!\frac{\mu_Y^{\om}(\be)\!-\!2\mu_Y^{\wt\phi}(\be)}{2}
~~\forall\,\be\!\in\!H_2(X,Y;\Z),
\end{gather}
such that the orientations on $\M^\st_{k,L}(\be;J)$ induced 
by $(E,\wt\phi;\os_E)$ and~$\os$ differ by $(-1)^{\ep(\be)}$.
By the second condition in~\eref{Penkainvdfn_e3}, 
$\ep(-\phi_*\be)=\ep(\be)$ if $\be$ satisfies the congruence in~\eref{Penkacond_e}.
If the latter is the case for all $\be\!\in\!H_2(X,Y;\Z)$, as assumed in Theorem~6 of~\cite{JS2},
the first claim of Theorem~\ref{PenkaReal_thm} also holds for the orientations
on the disk moduli spaces induced by~$\os$ (the same proof applies);
the second claim holds up to the multiplication by $(-1)^{\ep(\be_i)}$.
Thus, Theorem~\ref{PenkaReal_thm} is a more general and precise version of
\cite[Thm~6]{JS2}.

By Theorem~\ref{PenkaReal_thm}, the open GW-invariants~\eref{tauinvdfn_e}
depend only on the restrictions of the cohomology insertions~$\ga_1,\ldots,\ga_l$
to~$X$ whenever the assumptions of this theorem are satisfied.
This implication is non-vacuous if $n\!\equiv\!5$ mod~4.
The resulting invariants need not vanish, as illustrated by Table~2 in~\cite{RealEnum}
in the case of~$\P^5$.

\vspace{.1in}

\begin{rmk}\label{RealGWs_rmk}
For a relative OSpin-structure~$\os$ on~$Y$, 
\eref{fMsign_e} would also include $\lr{w_2(\os),\fd_Y(\be_\bu(\eta)\!)}$
to account for the difference in the trivializations of $u^*(TX,TY)$
and $\{\phi\!\circ\!u\}^*(TX,TY)$ induced by~$\os$
as in the CROrient~1$\os$(1) property in Section~7.2 of~\cite{SpinPin}.
For this reason, Propositions~\ref{bndch_prp2} and~\ref{psisot_prp2}
do not extend to relative OSpin-structures.
\end{rmk}

\subsection{Proofs of~\eref{psisot2_e2} and Theorems~\ref{WelReal_thm} and~\ref{PenkaReal_thm}}
\label{RealGWsPf_subs}

For $\al\!\in\!\cC_{\om}^{\phi}(Y)$, define 
$$\phi_*\!:\cD_\om^{\phi}(\al)\lra\cD_\om^{\phi}(\al), \quad
\phi_*\eta=\big(\be_\bu(\eta),k_\bu(\eta),L_\bu(\eta),
(\al_{k_\bu(\eta)+1-i}(\eta)\!)_{i\in[k_\bu(\eta)]}\big).$$ 
For a collection $(\fb_{\al'})_{\al'\in\cC_{\om;\al}^{\phi}(Y)}$ of maps to~$Y$ and
$\eta\!\in\!\cD_\om^{\phi}(\al)$, 
the composition of the map component to~$X$ with~$\phi$ induces a diffeomorphism 
\begin{equation*}\begin{split}
&\phi_{\eta}^+\!\!:
\M_{\eta;J}^+\!\fiber\!\!\big(\!(i\!+\!1,\fb_{\al_i(\eta)})_{i\in[k_\bu(\eta)]};
(i,\Ga_i)_{\Ga_i\in L_\bu(\eta)}\big)\\
&\hspace{1in}\lra 
\M_{\phi_*\eta;J}^+\!\fiber\!\!\big(\!(i\!+\!1,\fb_{\al_i(\phi_*\eta)})_{i\in[k_\bu(\phi_*\eta)]};
(i,\Ga_i)_{\Ga_i\in L_\bu(\phi_*\eta)}\big).
\end{split}\end{equation*}
For $\wt\al\!\in\!\wt\cC_{\om}^{\phi}(Y)$, $\wt\eta\!\in\!\cD_\om^{\phi}(\wt\al)$,
and a collection $(\fb_{\wt\al'})_{\wt\al'\in\cC_{\om;\wt\al}^{\phi}(Y)}$ of maps 
to~$[0,1]\!\times\!Y$, we define
\begin{gather*}
\phi_*\!:\cD_\om^{\phi}(\wt\al)\lra\cD_\om^{\phi}(\wt\al) \qquad\hbox{and}\\
\begin{split}
&\phi_{\wt\eta}^+\!\!:\,
\M_{\wt\eta;\wt{J}}^+\!\fiber\!\!\big(\!(i\!+\!1,\fb_{\wt\al_i(\wt\eta)})_{i\in[k_\bu(\wt\eta)]};
(i,\wt\Ga_i)_{\wt\Ga_i\in\wt{L}_\bu(\wt\eta)}\big)\\
&\hspace{1in}\lra 
\M_{\phi_*\wt\eta;\wt{J}}^+\!\fiber\!\!
\big(\!(i\!+\!1,\fb_{\wt\al_i(\phi_*\wt\eta)})_{i\in[k_\bu(\phi_*\wt\eta)]};
(i,\wt\Ga_i)_{\wt\Ga_i\in\wt{L}_\bu(\phi_*\wt\eta)}\big)
\end{split}\end{gather*}
in the same way.

\begin{lmm}\label{flipsign_lmm}
Suppose $n\!\equiv\!3$ mod~4 and $\os$ is an OSpin-structure on~$Y$. 
If $(\fb_{\al'})_{\al'\in\cC_{\om;\al}^{\phi}(Y)}$ is a real bounding chain
on~$(\al,J)$, the signs of the diffeomorphism~$\phi_{\eta}^+$ is 
$(-1)^{\dim(\al)/2+1}$ for every $\eta\!\in\!\cD_{\om}^{\phi}(\al)$.
If $(\fb_{\wt\al'})_{\wt\al'\in\wt\cC_{\om;\wt\al}^{\phi}(Y)}$ is 
a real pseudo-isotopy on~$(\wt\al,\wt{J})$, 
the sign of the diffeomorphism~$\phi_{\wt\eta}^+$ is 
$(-1)^{\dim(\wt\al)/2+1}$ for every $\wt\eta\!\in\!\cD_{\om}^{\phi}(\wt\al)$.
\end{lmm}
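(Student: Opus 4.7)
The plan is to decompose $\sgn(\phi^+_\eta)$ via a mild generalization of Lemma~\ref{fibprodflip_lmm} covering diffeomorphisms between two distinct fiber products linked by commuting involutions on the base, target, and constraint domains. Applied to the commutative diagram in which $\phi^+_\eta$ is realized by the conjugation $\M^+_{\eta;J}\!\to\!\M^+_{\phi_*\eta;J}$, the action of $\phi$ on $Y^{k_\bu(\eta)}\!\!\times\!X^{|L_\bu(\eta)|}$, and involutions~$\phi_Z$ on each constraint domain together with the permutation that realigns factors from the ordering of~$\eta$ to that of~$\phi_*\eta$, this expresses $\sgn(\phi^+_\eta)$ as the product of four pieces: (i)~the sign of complex conjugation on $\M^+_{\eta;J}$; (ii)~the sign $(-1)^{n|L_\bu(\eta)|}$ of $\phi$ on $X^{|L_\bu(\eta)|}$ (the $Y$-factors being pointwise fixed); (iii)~$\prod_i\sgn_\phi(\fb_{\al_i(\eta)})$; and (iv)~$\prod_i\sgn_\phi(\Ga_i)$, together with a factor-permutation sign that will turn out to be~$+1$.

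Each $\fb_{\al'}$ maps into $Y\!\subset\!X^\phi$, which is pointwise fixed by~$\phi$, so the natural choice $\phi_Z\!=\!\id$ is compatible with~\eref{phiZdfn_e} and yields $\sgn_\phi(\fb_{\al_i(\eta)})\!=\!+1$; condition~\eref{Rfbcond_e} combined with~\ref{BCdim_it} ensures every nonempty $\fb_{\al'}$ has dimension $\equiv 0$ mod~$4$, in accordance with~\eref{sgncond_e}. Each $\Ga_i\!\in\!L_\bu(\eta)\!\subset\!\FPC_\phi(X)$ is real, so $\sgn_\phi(\Ga_i)$ is determined by $\dim\Ga_i$ mod~$4$ via~\eref{sgncond_e}. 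The permutation sign reordering the $\fb$- and $\Ga$-factors is $+1$ because every such factor has even dimension. For~(i), I would invoke the sign formula for complex conjugation on $\M^\st_{k,L}(\be;J)$ in the honest-OSpin setting of~\cite{Jake} (see also~\cite[Thm~13.2]{SpinPin}); applied to $k\!=\!k_\bu(\eta)\!+\!1$, it yields an expression in $\mu_Y^\om(\be_\bu(\eta))/2$, $k_\bu(\eta)$, $|L_\bu(\eta)|$, and~$n$ modulo~$2$, with the $|L_\bu(\eta)|$-contribution reflecting that complex conjugation reverses the orientation of~$\D$ at each interior marked point.

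Assembling these and substituting the identity
$$\dim(\al)+1=\mu_Y^\om\bigl(\be_\bu(\eta)\bigr)+(n\!-\!3)+\bigl(k_\bu(\eta)\!+\!1\bigr)+2|L_\bu(\eta)|+\!\!\sum_{i=1}^{k_\bu(\eta)}\!\!\bigl(\dim(\al_i(\eta)\!)\!+\!2\bigr)-n\,k_\bu(\eta)-\!\!\!\!\sum_{\Ga_i\in L_\bu(\eta)}\!\!\!\!\!\codim\,\Ga_i$$
extracted from~\eref{BCpseudo_e3}, and invoking $n\!\equiv\!3$ mod~$4$ along with the mod-$4$ parity restrictions on $\dim(\al_i(\eta)\!)$ and $\dim\Ga_i$ dictated by~\eref{Rfbcond_e} and~\eref{sgncond_e}, the total exponent collapses to $\dim(\al)/2+1$ modulo~$2$. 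The pseudo-isotopy claim follows by the same argument with $(X,Y)$ replaced by $([0,1]\!\times\!X,[0,1]\!\times\!Y)$: since $\id_{[0,1]}\!\times\!\phi$ still fixes $[0,1]\!\times\!Y$ pointwise, contribution~(iii) is unchanged; the extra $[0,1]$-tangent direction in $\M^+_{\wt\eta;\wt J}$ is fixed parameter-wise and contributes $+1$; the analogous accounting with $\dim(\wt\al)$ in place of $\dim(\al)$ yields $(-1)^{\dim(\wt\al)/2+1}$. The principal obstacle is~(i), namely pinning down the exact sign of conjugation on $\M^+_{\eta;J}$ under the orientation conventions of Section~\ref{Ms_subs} and reconciling the boundary-marked-point relabeling implicit in~$\phi_*\eta$ with the natural conjugation on the disk up to the permutation contributions already isolated.
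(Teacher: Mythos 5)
Your proposal follows the same skeleton as the paper's proof: decompose $\sgn\phi^+_\eta$ via a commutative diagram and Lemma~\ref{fibprodflip_lmm}, invoke the known sign of disk-map conjugation, factor out the signs of $\phi$ acting on the pseudocycle constraints, and close the computation using the dimension identity~\eref{BCpseudo_e3} and the mod-$4$ parity restrictions forced by~\eref{Rfbcond_e} and~\eref{sgncond_e}. That structural outline is correct, and the mod-$4$ arithmetic indeed reduces the assembled exponent to $\tfrac12\dim(\al)\!+\!1$.

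However, the piece you flag as the ``principal obstacle'' is not a loose end to be tidied but the load-bearing step, and your accounting around it is incomplete in a way that would not close without it. Passing from the $\eta$-ordering to the $\phi_*\eta$-ordering reverses the boundary marked points, which shows up \emph{twice}: once in the middle column of the diagram as the reordering map $\si\colon Y^{k_\bu(\eta)}\!\lra\!Y^{k_\bu(\phi_*\eta)}$, which contributes $(-1)^{\binom{k_\bu(\eta)}{2}}$ since $\dim Y\!=\!n$ is odd, and once in the conjugation sign $\ep_\os^+(\eta)$ on $\M^+_{\eta;J}$, which acquires an extra $\binom{k_\bu(\eta)}{2}$ relative to the formulas of \cite{RealWDVV3,Jake} precisely because the tangent spaces at the boundary marked points are ordered differently after the relabeling. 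These two $\binom{k_\bu(\eta)}{2}$'s cancel in the paper's proof. Your decomposition records the constraint-domain reorderings $\si_\fb$ and the action $\phi^{L_\bu(\eta)}$ on $X^{|L_\bu(\eta)|}$, but omits $\si$ on $Y^{k_\bu(\eta)}$ entirely (you only note the $Y$-factors are ``pointwise fixed,'' which addresses $\phi$ but not the permutation), and leaves the relabeling contribution to $\ep_\os^+$ unspecified. As written, either you drop one of a cancelling pair or you double-count it; in general $\binom{k_\bu(\eta)}{2}$ is not $0\bmod 2$, so the answer would be wrong.

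A secondary point: arguing $\sgn_\phi(\fb_{\al'})\!=\!+1$ ``because $\phi_Z\!=\!\id$'' is addressing the wrong quantity---$\phi$ plays no role on the $\fb$-constraints, which are reordered, not conjugated. What matters is that $\si_\fb$ is orientation-preserving because each nonempty $\fb_{\al'}$ has dimension $\equiv 0\bmod 4$ (by~\eref{Rfbcond_e} and Definition~\ref{bndch_dfn}\ref{BCdim_it}), and hence in particular even. Your conclusion is right but via a detour.
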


\begin{proof} We denote by $\phi_{\M}\!:\M^+_{\eta;J}\!\lra\!\M^+_{\phi_*\eta;J}$
the diffeomorphism induced by the composition of the map component with~$\phi$.
Let
$$\si\!:Y^{k_\bu(\eta)}\lra Y^{k_\bu(\phi_*\eta)} \qquad\hbox{and}\qquad
\si_{\fb}\!:\prod_{i\in k_\bu(\eta)}\!\!\!\!\dom(\fb_{\al_i(\eta)})
\lra \prod_{i\in k_\bu(\phi_*\eta)}\!\!\!\!\!\!\!\dom(\fb_{\al_i(\phi_*\eta)})$$
be the diffeomorphisms reversing the orders of the components.
For each $\Ga_i\!\in\!L(\al)$, let $\phi_{\Ga_i}$ be an automorphism
of $\dom(\Ga_i)$ as in~\eref{phiZdfn_e} with \hbox{$Z\!=\!\dom(\Ga_i)$}.
Define
$$\phi_{\Ga}\!\equiv\!\prod_{\Ga_i\in L_\bu(\eta)}\!\!\!\!\!\!\phi_{\Ga_i}\!: 
\prod_{\Ga_i\in L_\bu(\eta)}\!\!\!\!\!\!\dom(\Ga_i) \lra 
\prod\limits_{\Ga_i\in L_\bu(\phi_*\eta)}\!\!\!\!\!\!\!\!\!\dom(\Ga_i), \quad
\ep_{\Ga}(\eta)=\prod_{\Ga_i\in L_\bu(\eta)}\!\!\!\!\!\!\sgn_{\phi}\Ga_i\,.$$
The diagram 
$$\xymatrix{ \M^+_{\eta;J}\ar[rr]\ar[d]^{\phi_{\M}} &&
Y^{k_\bu(\eta)}\!\times\!X^{L_\bu(\eta)}
\ar@<-20pt>[d]_{\si} \ar@<15pt>[d]^{\phi^{L_{\bu}(\eta)}}&&
\prod\limits_{i\in k_\bu(\eta)}\!\!\!\!\!\dom(\fb_{\al_i(\eta)})
\!\times\!\prod\limits_{\Ga_i\in L_\bu(\eta)}\!\!\!\!\!\!\!\dom(\Ga_i)  
\ar[ll]\ar@<-25pt>[d]^{\si_{\fb}}\ar@<40pt>[d]^{\phi_{\Ga}}\\
\M^+_{\phi_*\eta;J}\ar[rr] && Y^{k_\bu(\phi_*\eta)}\!\times\!X^{L_\bu(\phi_*\eta)} &&
\prod\limits_{i\in k_\bu(\phi_*\eta)}\!\!\!\!\!\!\!\!\dom(\fb_{\al_i(\phi_*\eta)})
\!\times\!\prod\limits_{\Ga_i\in L_\bu(\phi_*\eta)}\!\!\!\!\!\!\!\!\!\!\dom(\Ga_i) \ar[ll]}$$
then commutes.

By Lemma~5.1($\fo_{\os}8$) in~\cite{RealWDVV3}, which applies for all $n$ odd,
with $k\!=\!k_{\bu}(\eta)$, $l\!=\!|L_\bu(\eta)|$, and $|L^*|\!=\!1$ or 
Proposition~5.1 in~\cite{Jake}, 
the sign of~$\phi_{\M}$ is $(-1)^{\ep_{\os}^+(\eta)}$, where 
\BE{fMsign_e}\ep_{\os}^+(\eta)=
\frac{\mu_Y^{\om}(\be_\bu(\eta)\!)}{2}\!+\!\big(k_\bu(\eta)\!+\!1\big)\!+\!\big|L_\bu(\eta)\big|
\!+\!\binom{k_\bu(\eta)}{2};\EE
the extra binomial coefficient arises due to different orderings
of the  tangent spaces at the boundary marked points here vs.~\cite{RealWDVV3}.
The sign of the diffeomorphism $\si$ is $(-1)^{\binom{k_{\bu}(\eta)}2}$,
while the diffeomorphism~$\si_{\fb}$ is orientation-preserving. 
The signs of the diffeomorphisms $\phi^{L_{\bu}(\eta)}$ and~$\phi_{\Ga}$
are $(-1)^{n|L_{\bu}(\eta)|}$ and $\ep_{\Ga}(\eta)$, respectively.
Combining this with Lemma~\ref{fibprodflip_lmm}, \eref{sgncond_e}, and
the second equality in~\eref{BCpseudo_e3}, 
we conclude~that the sign of diffeomorphism~$\phi_{\eta}^+$ is $(-1)^{\ep}$,
where
\BE{fMsign_e3}\begin{split}
\ep\!-\!1 &=\frac{\mu_Y^{\om}(\be_\bu(\eta)\!)}{2}
\!+\!k_\bu(\eta)\!-\!\frac12\sum_{\Ga_i\in L_\bu(\eta)}\!\!\!\!\!\!
\big(\codim\,\Ga_i\!-\!2\big)\\
&=\frac12\dim(\al)-\frac12\big(n\!-\!3\!-\!(n\!+\!1)k_{\bu}(\eta)\!\big)
\!-\!\frac12\sum_{i=1}^{k_{\bu}(\eta)}\!\!\!\big(\dim(\al_i(\eta)\!)\!+\!2\big).
\end{split}\EE
Along~\eref{Rfbcond_e} applied with $\al'\!=\!\al_i(\eta)\!\prec\!\al$,
this establishes the first claim. 

The proof of the second claim is almost identical, with the signs of the analogues 
of the diffeomorphisms~$\si$ and~$\si_{\fb}$ interchanged.
In this case, we use~\eref{BCpseudo_e3b} and~\eref{Rfbbcond_e} 
instead of~\eref{BCpseudo_e3} and~\eref{Rfbcond_e}.
\end{proof}

\begin{proof}[\bf{\emph{Proof of~\eref{psisot2_e2}}}]
Since $\phi_*$ is an involution on $\cD_{\om}^{\phi}(\al)$ and $\cD_{\om}^{\phi}(\wt\al)$,
the two statements of~\eref{psisot2_e2} follow from the two statements of
Lemma~\ref{flipsign_lmm}.
\end{proof}

\begin{proof}[\bf{\emph{Proof of Theorem~\ref{WelReal_thm}}}]
Since $n\!=\!3$, Definition~\ref{bndch_dfn}\ref{BC0_it} and~\eref{Rfbcond_e} 
imply that $\fb_{\al'}\!=\!\eset$ unless $\al'\!=\!(0,\pt,\eset)$ for some $\pt\!\in\!K(\al)$.
The open GW-invariants~\eref{tauinvdfn_e} are thus the sums of 
the signed cardinalities of the fiber products as in~\eref{JSinvdfn_e2} 
and~\eref{JSinvdfn_e2b} over $\eta\!\in\!\cD_{\om}^{\phi}(\al)$ 
satisfying~\eref{etaptsdfn_e}.
These sums are counts of (single) disks passing through the collection~$L(\al)$ of constraints
in~$X$ and $|K(\al)|$ points in~$Y$ traversed  by the boundaries of the disks in any order
and thus are precisely the disk counts~\eref{Solinvdfn_e} defined in~\cite{Jake}.
\end{proof}

\begin{proof}[\bf{\emph{Proof of Theorem~\ref{PenkaReal_thm}}}]
Let $\al'\!\in\!\cC_{\om;\al}^{\phi}(Y)$ and $\eta\!\in\!\cD_{\om}^{\phi}(\al')$.
If $k_{\bu}(\eta)\!\neq\!0$, $\fbb_{\eta}\!=\!\eset$ because $\fb_{\al''}\!=\!\eset$
for all $\al''\!\in\!\cC_{\om;\al}^{\phi}(Y)$.
We suppose $k_{\bu}(\eta)\!=\!0$ and apply the proof of Lemma~\ref{flipsign_lmm}
with
the disk moduli spaces oriented by the $\phi$-orientation as in~\cite{Penka2}.
This proof now applies~with 
$$\ep_{\os}^+(\eta)=1\!+\!\big|L_\bu(\eta)\big|  \quad\hbox{and}\quad
\ep\!-\!1=-\frac12\sum_{\Ga_i\in L_\bu(\eta)}\!\!\!\!\!\!
\big(\codim\,\Ga_i\!-\!2\big)$$
in~\eref{fMsign_e} and~\eref{fMsign_e3}, respectively.
Since $\deg\ga_i\!\equiv\!2$ mod~4 for all $\Ga_i\!\in\!L(\al)$,
the diffeomorphism~$\phi_{\eta}^+$ is thus orientation-reversing in this case.
It follows that $\fbb_{\al'}\!=\!0$.
This establishes the first claim of the theorem.

Since $\fb_{\al'}\!=\!0$ for all $\al'\!\in\!\cC_{\om;\al}^{\phi}(Y)$,
the open GW-invariant on the left-hand side of~\eref{PenkaReal_e}
is the signed cardinality of the fiber product as in~\eref{JSinvdfn_e2b} with
$$\eta=\big(\be(\al),0,L(\al),()\!\big)\in \cD_{\om}^{\phi}(\al).$$  
This signed cardinality is precisely the disk count on the right-hand side 
of~\eref{PenkaReal_e} defined in~\cite{Penka2} times~$2^{1-l}$;
the scaling factor appears for the same reason as in~\eref{WelvsSol_e}.
\end{proof}

\vspace{.2in}

{\it Department of Mathematics, Stony Brook University, Stony Brook, NY 11794\\
xujia@math.stonybrook.edu}

\vspace{.2in}


\begin{thebibliography}{99}

\bibitem{RealWDVV} X.~Chen, {\it Steenrod pseudocycles, lifted cobordisms, and
Solomon's relations for Welschinger's invariants}, 
math/1809.08919v2 

\bibitem{JakeSaraWel} X.~Chen, {\it Solomon-Tukachinsky's vs.~Welschinger's
open Gromov-Witten invariants of symplectic sixfolds}, math/1912.05437

\bibitem{RealWDVV3} X.~Chen and A.~Zinger, 
{\it WDVV-type relations for disk Gromov-Witten invariants in dimension~6}, 
math/1904.04254v2

\bibitem{SpinPin} X.~Chen and A.~Zinger, {\it Spin/Pin Structures and Real Enumerative Geometry}, 
math/1905.11316v3

\bibitem{Fuk10} K.~Fukaya, 
{\it Cyclic symmetry and adic convergence in Lagrangian Floer theory}, 
Kyoto J.~Math.~50 (2010), no.~3, 521--590

\bibitem{Fuk11} K.~Fukaya, 
{\it Counting pseudo-holomorphic discs in Calabi-Yau 3-folds}, 
Tohoku Math.~J.~63 (2011), no.~4, 697--727

\bibitem{FOOO} K.~Fukaya, Y.-G.~Oh, H.~Ohta, and K.~Ono,
{\it Lagrangian Intersection Theory: Anomaly and Obstruction},
AMS Studies in Advanced Mathematics~46, 2009

\bibitem{FO} K.~Fukaya and K.~Ono,
{\it Arnold Conjecture and Gromov-Witten invariant},
Topology 38 (1999), no.~5, 933--1048

\bibitem{Penka1} P.~Georgieva,
{\it The orientability problem in open Gromov-Witten theory},
Geom.~Topol.~17 (2013), no.~4, 2485-–2512

\bibitem{Penka2} P.~Georgieva,
{\it Open Gromov-Witten invariants in the presence of an anti-symplectic involution},
 Adv.~Math.~301 (2016), 116-–160

\bibitem{RealEnum} P.~Georgieva and A.~Zinger,
{\it Enumeration of real curves in $\C\P^{2n-1}$ and a WDVV relation for 
real Gromov-Witten invariants}, Duke Math.~J.~166 (2017), no.~17, 3291-–3347

\bibitem{RGWsII} P.~Georgieva and A.~Zinger,
{\it Real Gromov-Witten theory in all genera and real enumerative geometry: properties},
J.~Symplectic Geom.~17 (2019), no.~4, 1083–-1158

\bibitem{HWZ} H.~Hofer, K.~Wysocki, and E.~Zehnder, 
{\it Applications of polyfold theory I: The polyfolds of Gromov-Witten theory}, 
Mem.~AMS 248 (2017), no.~1179

\bibitem{LT}  J.~Li and G.~Tian, 
\emph{Virtual moduli cycles and Gromov-Witten invariants of general symplectic manifolds}, 
Topics in Symplectic \hbox{$4$-Manifolds},
47-83, First Int.~Press Lect.~Ser., I, Internat.~Press, 1998

\bibitem{Mu2} J.~Munkres, {\it Elements of Algebraic Topology}, 
Addison-Wesley, 1984

\bibitem{Pardon} J.~Pardon, 
{\it An algebraic approach to virtual fundamental cycles on moduli spaces 
of pseudo-holomorphic curves},  Geom.~Topol.~20 (2016), no.~2, 779–-1034

\bibitem{Jake} J.~Solomon,  
{\it Intersection theory on the moduli space of holomorphic curves with 
Lagrangian boundary conditions}, math/0606429

\bibitem{JS1} J.~Solomon and S.~Tukachinsky,
{\it Differential forms, Fukaya $A_{\i}$ algebras, and Gromov-Witten
axioms}, math/1608.01304

\bibitem{JS2} J.~Solomon and S.~Tukachinsky,
{\it Point-like bounding chains in open Gromov-Witten theory}, math/1608.02495

\bibitem{JS3} J.~Solomon and S.~Tukachinsky,
{\it Relative quantum cohomology}, math/1906.04795v2

\bibitem{Wel4} J.-Y.~Welschinger, 
{\it Invariants of real symplectic 4-manifolds and lower bounds in real enumerative geometry}, 
Invent.~Math.~162 (2005), no.~1, 195–-234

\bibitem{Wel6} J.-Y.~Welschinger,  
{\it Spinor states of real rational curves in real algebraic convex 3-manifolds 
and enumerative invariants},
Duke Math.~J.~127 (2005), no.~1, 89–-121

\bibitem{Wel6b} J.-Y.~Welschinger,  
{\it Enumerative invariants of strongly semipositive real symplectic six-manifolds},
math/0509121v2

\bibitem{Wel13} J.-Y.~Welschinger, 
{\it Open Gromov-Witten invariants in dimension six}, 
Math.~Ann.~356 (2013), no.~3, 1163--1182

\bibitem{pseudo} A.~Zinger, {\it Pseudocycles and integral homology},
Trans.~AMS 360 (2008), no.~5, 2741--2765

\bibitem{detLB} A.~Zinger, {\it The determinant line bundle for Fredholm operators: construction, 
properties, and classification},  Math.~Scand.~118 (2016), no.~2, 203--268

\end{thebibliography}
\end{document}